\documentclass[12pt,oneside]{amsart}

\pagestyle{plain}

\usepackage{geometry}                
\usepackage{graphicx}
\usepackage{amssymb}
\usepackage{epstopdf}
\usepackage{color}
\usepackage{bbm, dsfont}
\usepackage[hidelinks]{hyperref}

\hypersetup{
	colorlinks=false,
	pdfborder={0 0 0},
	pdfborderstyle={/S/U/W 0},
}

\numberwithin{equation}{section}

\usepackage{verbatim}

\usepackage[dvipsnames]{xcolor}


\usepackage{ulem}
\DeclareGraphicsRule{.tif}{png}{.png}{`convert #1 `dirname #1`/`basename #1 .tif`.png}
\newtheorem{theorem}{Theorem}[section]
\newtheorem{lemma}[theorem]{Lemma}
\newtheorem{corollary}[theorem]{Corollary}
\newtheorem{proposition}[theorem]{Proposition}

\theoremstyle{definition}
\newtheorem{definition}[theorem]{Definition}
\theoremstyle{remark}
\newtheorem{remark}[theorem]{Remark}
\newtheorem*{remark*}{Note}

\theoremstyle{example}
\newtheorem{example}[theorem]{Example}

\numberwithin{equation}{section}

\usepackage{amsmath}
\usepackage{mathrsfs}

\usepackage{verbatim}
\usepackage{amsmath,amssymb,amsthm}             
\usepackage{amssymb}             
\usepackage{amsfonts}            
\usepackage{mathrsfs}            
\usepackage{amsthm}
\usepackage{algorithm}  
\usepackage{algorithmicx}  
\usepackage{algpseudocode}  
\usepackage{mathtools}
\usepackage{commath}
\usepackage{physics}
\usepackage{bm}
\usepackage{graphicx}

\usepackage{float}
\usepackage{listings}
\usepackage{subfigure}
\usepackage{multirow}
\usepackage{diagbox}
\usepackage{color}
\usepackage{bbm}
\usepackage[export]{adjustbox}
\geometry{a4paper} 
\usepackage{enumerate}
\usepackage{bbm}

\usepackage{amsmath}
\usepackage{mathrsfs}

\newcommand{\RNum}[1]{\uppercase\expandafter{\romannumeral #1\relax}}

\setcounter{tocdepth}{1}

\newcommand{\specificthanks}[1]{\@fnsymbol{#1}}

\DeclareFontFamily{OML}{rsfs}{\skewchar\font'177}
\DeclareFontShape{OML}{rsfs}{m}{n}{ <5> <6> rsfs5 <7> <8> <9>
	rsfs7 <10> <10.95> <12> <14.4> <17.28> <20.74> <24.88> rsfs10 }{}
\DeclareMathAlphabet{\mathfs}{OML}{rsfs}{m}{n}

\newcounter{cnstcnt}
\newcommand{\cl}{%
	\refstepcounter{cnstcnt}%
	\ensuremath{c_{\thecnstcnt}}}
\newcommand{\cref}[1]{\ensuremath{c_{\ref{#1}}}}

\newcounter{newcnstcnt}
\newcommand{\Cl}{%
	\refstepcounter{newcnstcnt}%
	\ensuremath{C_{\thenewcnstcnt}}}
\newcommand{\Cref}[1]{\ensuremath{C_{\ref{#1}}}}


\newcommand{\overbar}[1]{\mkern 1.5mu\overline{\mkern-1.5mu#1\mkern-1.5mu}\mkern 1.5mu}


\DeclareFontFamily{U}{mathx}{}
\DeclareFontShape{U}{mathx}{m}{n}{<-> mathx10}{}
\DeclareSymbolFont{mathx}{U}{mathx}{m}{n}
\DeclareMathAccent{\widehat}{0}{mathx}{"70}
\DeclareMathAccent{\widecheck}{0}{mathx}{"71}

\begin{document}

	\title{One-arm probabilities for metric graph Gaussian free fields below and at the critical dimension}



		\author{Zhenhao Cai$^1$}
		\address[Zhenhao Cai]{School of Mathematical Sciences, Peking University}
		\email{caizhenhao@pku.edu.cn}
		\thanks{$^1$School of Mathematical Sciences, Peking University}

		\author{Jian Ding$^1$}
		\address[Jian Ding]{School of Mathematical Sciences, Peking University}
		\email{dingjian@math.pku.edu.cn}
	\maketitle
	%
	%
	
	 	\begin{abstract}
	 	For the critical level-set of the Gaussian free field on the metric graph of $\mathbb Z^d$, we consider the one-arm probability $\theta_d(N)$, i.e., the probability that the boundary of a box of side length $2N$ is connected to the center. We prove that $\theta_d(N)$ is $O(N^{-\frac{d}{2}+1})$ for $3\le d\le 5$, and is at most $N^{-2+o(1)}$ for $d=6$. Our upper bounds match the lower bounds in a previous work by Ding and Wirth (2020) up to a constant factor for $3\le d\le 5$, and match the exponent therein for $d=6$. Combined with our previous result (2023) that $\theta_d(N) \asymp N^{-2}$ for $d>6$, this seems to present the first percolation model whose one-arm probabilities are essentially completely understood in all dimensions. In particular, these results fully confirm Werner's conjectures (2021) on the one-arm exponents: 
	 	\begin{equation*}
	 		\text{(1) for}\ 3\le d<d_c=6,\ \theta_d(N)=N^{-\frac{d}{2}+o(1)};\ \text{(2) for}\ d>d_c,\ \theta_d(N)=N^{-2+o(1)}.
	 	\end{equation*}
	 	Prior to our work, Drewitz, Prévost and Rodriguez (2023) obtained upper bounds for $d\in \{3, 4\}$, which are very sharp although lose some diverging factors. In the same work, they conjectured that $\theta_{d_c}(N) = N^{-2+o(1)}$, which is now confirmed. Moreover, in a recent concurrent work, Drewitz, Prévost and Rodriguez (2024) independently obtained the up-to-constant upper bound for $d=3$.
	 	\end{abstract}

	\section{Introduction}

	In this paper, we study the Gaussian free field (GFF) on the metric graph $\widetilde{\mathbb{Z}}^d$, where we assume $d\ge 3$ unless stated otherwise. Specifically, for each adjacent pair $x\sim y$ on the integer lattice $\mathbb{Z}^d$, consider a compact interval $I_{\{x,y\}}$ of length $d$ with two endpoints
	identical to $x$ and $y$ respectively. Then the metric graph $\widetilde{\mathbb{Z}}^d$ is defined as the union of all these intervals. The GFF on $\widetilde{\mathbb{Z}}^d$, denoted by $\{\widetilde{\phi}_v\}_{v\in \widetilde{\mathbb{Z}}^d}$, can be constructed by the following two steps: 
	\begin{enumerate}
		\item  Sample a discrete Gaussian free field $\{\phi_x\}_{x\in \mathbb{Z}^d}$, which is a mean-zero Gaussian field on the lattice $\mathbb{Z}^d$, whose covariance is given by
		\begin{equation*}
			\mathbb{E}\left(\phi_x\phi_y \right)= G(x,y), \ \forall x,y\in \mathbb{Z}^d.  
		\end{equation*}
		Here the Green's function $G(x,y)$ is the average number of visits at $y$ by a simple random walk on $\mathbb{Z}^d$ starting from $x$.

		\item  For any $x\sim y\in \mathbb{Z}^d$, the values of $\widetilde{\phi}_v$ for $v\in I_{\{x,y\}}$ are given by an independent bridge on $I_{\{x,y\}}$ of a Brownian motion with variance $2$ at time $1$, conditioned on the boundary values $\phi_x$ at $x$ and $\phi_y$ at $y$.

	\end{enumerate}

	Percolation for the level-set $\widetilde{E}^{\ge h}:=\big\{v\in \widetilde{\mathbb{Z}}^d: \widetilde{\phi}_v\ge h\big\}$ ($h\in \mathbb{R}$) has been widely studied. Notably, it was proved in \cite{lupu2016loop} that the critical level $\widetilde{h}_*$ of $\widetilde{E}^{\ge \cdot}$ exactly equals  $0$ for all $d\ge 3$. Precisely, for any $h<0$, $\widetilde{E}^{\ge h}$ almost surely percolates (i.e. contains an infinite connected component). At the critical level $h=\widetilde{h}_*=0$, \cite[Proposition 5.2]{lupu2016loop} shows that the \textit{two-point function} satisfies 
	\begin{equation}\label{two_point}
		\mathbb{P}\big(x \xleftrightarrow{\widetilde{E}^{\ge 0} } y\big) \asymp |x-y|^{2-d}, \ \ \forall x\neq y\in \mathbb{Z}^d, 
	\end{equation}
	where $\big\{A_1\xleftrightarrow{\widetilde{E}^{\ge 0} } A_2\big\}$ represents the event that $A_1$ and $A_2$ are connected by $\widetilde{E}^{\ge 0}$ (i.e. there exists a path in $\widetilde{E}^{\ge 0}$ connecting $A_1$ and $A_2$), ``$f \asymp g$'' means $cg \le  f \le  Cg$ for some constants $C>c>0$ depending only on $d$, and $|\cdot|$ is the Euclidean norm. As shown in the proof of \cite[Theorem 2]{lupu2016loop}, (\ref{two_point}) and the uniqueness of the infinite cluster imply that $\widetilde{E}^{\ge 0}$ does not percolate. Consequently, the \textit{one-arm probability} 
	\begin{equation}
		\theta_d(N):= \mathbb{P}\Big[ \bm{0}\xleftrightarrow{\widetilde{E}^{\ge 0}} \partial B(N) \Big]  \to 0\  \mbox{as}\ N\to \infty,
	\end{equation}
where $\bm{0}$ is the origin of $\mathbb{Z}^d$, $\partial A:= \{x\in A: \exists y\in \mathbb{Z}^d\setminus A\ \text{such that}\ y\sim x\}$ and $ B(N):=[-N,N]^d\cap \mathbb{Z}^d$. There is a series of articles that have estimated the decay rate of $\theta_d(N)$. In \cite{ding2020percolation}, polynomial bounds for all dimensions were established using a martingale argument: 
	\begin{itemize}
		\item  When $d=3$, $cN^{-\frac{1}{2}}\le \theta_3(N)\le CN^{-\frac{1}{2}}\ln^{\frac{1}{2}}(N)$;

		\item When $d\ge 4$, $cN^{-\frac{d}{2}+1}\le \theta_d(N) \le CN^{-\frac{1}{2}}$.

	\end{itemize}
	After that, these estimates were generalized to a broad class of transient
	graphs by \cite{drewitz2023critical}. Later, inspired by \cite{kozma2011arm,werner2021clusters}, the authors proved that in high dimensions (i.e. $d\ge 7$), the metric graph GFF falls into the mean-field regime. I.e., for $d\ge 7$, 
	\begin{equation}\label{ineq_high_d}
		cN^{-2} \le \theta_d(N)\le CN^{-2}.
	\end{equation}
	In three and four dimensions, more accurate estimates were obtained in \cite{drewitz2023arm}: 
	\begin{itemize}
		\item   When $d=3$, $\theta_3(N)\le CN^{-\frac{1}{2}}\ln\ln(N)$;

		\item  When $d=4$,  $\theta_4(N)\le CN^{-1} \ln^2(N)[\ln\ln(N)]^2$.

	\end{itemize}
	However, in addition to the findings already established, some fundamental questions regarding $\theta_d(N)$ in low dimensions (i.e. $3\le d\le 6$) remain unresolved. For $d\in \{3,4\}$, while it was known that $\theta_d(N)= N^{-\frac{d}{2}+1+o(1)}$, there remains a logarithmic disparity between the upper and lower bounds. In fact, as discussed in \cite[Section 1.4]{ding2020percolation}, these logarithmic factors hide important information about the geometry of critical clusters. To this end, here comes the first question. 
	    
	    \vspace{0.36em}  
	    
	    \begin{center}
	    \textbf{Question 1:} \textit{What is the exact order of $\theta_d(N)$ for $d\in \{3,4\}$?}
	    \end{center}
	    
	    \vspace{0.36em}

	    \noindent For $d\in \{5,6\}$, even the exponent of $\theta_d(N)$ is unknown. In \cite{werner2021clusters}, it was predicted through the dimensions of critical clusters that $\theta_d(N)= N^{-\frac{d}{2}+1+o(1)}$ for $3\le d\le 5$ (more details can be found in Remark \ref{remark_werner}). This prediction was later reiterated and extended to $d=6$ by \cite{drewitz2023arm}. Overall, the second question arises as follows.
	    
	     \vspace{0.36em}  
	    
	    \begin{center}
	    	\textbf{Question 2:} \textit{What is the exponent of $\theta_d(N)$ for $d\in \{5,6\}$? }
	    \end{center}
	    
	    \vspace{0.36em}

	    Our main result in this paper solves these two questions. Precisely, we prove

\begin{theorem}\label{thm1}
	For $d\in \{3,4,5\}$, there exists $\Cl\label{const_thm_1}>0$ such that for all $N\ge 1$, 
\begin{equation}\label{thm1_1.4}
   \theta_d(N) \le \Cref{const_thm_1} N^{-\frac{d}{2}+1}. 
\end{equation}
For $d=6$, there exists $\Cl\label{const_thm_2}>0$ such that for all sufficiently large $N\ge 1$,
\begin{equation}\label{one_arm_6d}
	\theta_6(N) \le   \Cref{const_thm_2}N^{-2}\mathrm{exp}\big(\ln^{\frac{1}{2}}(N)\ln\ln(N)\big)=N^{-2+o(1)}. 
\end{equation}
\end{theorem}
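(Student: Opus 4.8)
The plan is to bound the probability that the metric-graph cluster of $\bm{0}$ reaches scale $N$ by exploring this cluster from the inside out and tracking a capacity-weighted average of $\widetilde{\phi}$ on its boundary, an observable that behaves like a time-changed Brownian motion whose extinction is what kills the cluster. First, using the sign symmetry built into Lupu's isomorphism \cite{lupu2016loop} --- each connected component of $\{v\in\widetilde{\mathbb{Z}}^d:\widetilde{\phi}_v\neq 0\}$ carries an independent uniform sign --- one reduces, up to a universal constant, to bounding $\mathbb{P}(\mathcal{C}_{\bm{0}}\cap\partial B(N)\neq\emptyset)$, where $\mathcal{C}_{\bm{0}}$ is the component of $\bm{0}$. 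I would then explore $\mathcal{C}_{\bm{0}}$ along dyadic scales $r_n:=2^n$: let $\mathcal{F}_n$ be generated by $\widetilde{\phi}$ restricted to $B(r_n)$ together with the trace $K_n:=\mathcal{C}_{\bm{0}}\cap B(r_n)$, and let $A_n:=\{K_n\cap\partial B(r_n)\neq\emptyset\}$. On $A_n$, the domain Markov property of the metric-graph GFF says that, conditionally on $\mathcal{F}_n$, the field outside $B(r_n)$ is an independent GFF on $\widetilde{\mathbb{Z}}^d\setminus B(r_n)$ plus the harmonic extension of the nonnegative boundary data on $\partial_{\mathrm{out}}K_n$, and $A_{n+1}$ occurs iff this field has a nonnegative excursion from $\partial_{\mathrm{out}}K_n$ reaching $\partial B(r_{n+1})$.

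The natural one-dimensional observable is $M_n$, a weighted average of $\widetilde{\phi}$ over $\partial_{\mathrm{out}}K_n$ with weights proportional to the normalized equilibrium distribution of $K_n$ (equivalently, harmonic measure of $\partial B(r_{n+1})$ seen from deep inside); as the exploration proceeds $(M_n)$ is, up to corrections from the interfaces it creates, a martingale with conditional quadratic variation over scale $[r_n,r_{n+1}]$ comparable to $\mathrm{cap}_{B(r_{n+1})}(K_n)$, and the cluster dies at the first scale at which $M_n$ is driven down to $0$ by the fresh GFF. A gambler's-ruin estimate for $(M_n)$ then bounds $\mathbb{P}(A_{n+1}\mid\mathcal{F}_n)\mathbbm{1}_{A_n}$ by a constant times $M_n\,\mathrm{cap}_{B(r_{n+1})}(K_n)^{-1/2}$; combined with $M_0=O(1)$ and a telescoping over $1\le n\le\log_2 N$, this yields a bound of the shape $\theta_d(N)\lesssim\mathbb{E}\big[\mathrm{cap}_{B(2N)}(K_{\log_2 N})^{-1/2}\big]$, modulo making the martingale property and the ruin heuristic rigorous along the exploration filtration. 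The high-dimensional method from our previous work does not apply directly here, since the relevant cluster is no longer a.s.\ small and the capacity analysis is genuinely dimension-dependent.

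The heart of the matter is sharp two-sided control of $\mathrm{cap}_{B(r_{n+1})}(K_n)$. The upper bound $\mathrm{cap}_{B(r_{n+1})}(K_n)\le Cr_n^{\,d-2}$ is soft (Wiener/Beurling type). The substance is a matching lower bound \emph{conditionally on $A_n$}: a cluster that does reach $\partial B(r_n)$ is necessarily ``fat''. Using a second-moment argument fed by the near-diagonal regularity of the two-point function (\ref{two_point}) --- in particular that $\mathbb{E}[\#K_n\mid A_n]\asymp r_n^{2}\,\theta_d(r_n)^{-1}$, which self-consistently suggests a cluster of fractal dimension $\tfrac d2+1$ --- one is led to the energy integral $\int_0^{r_n}\rho^{2-d}\,\rho^{d/2}\,\mathrm{d}\rho$, whose convergence at the origin (i.e.\ $d<6$) is exactly what makes $\mathrm{cap}_{B(r_{n+1})}(K_n)\asymp r_n^{\,d-2}$ for $d\in\{3,4,5\}$. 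Plugging this back into the recursion and summing the (now convergent) error terms over scales gives (\ref{thm1_1.4}). When $d=6$ the same integral diverges logarithmically, so the capacity of a fat cluster is only $\asymp r_n^{4}/\ln r_n$, and, more importantly, the second-moment control of its conditional law degrades by a multiplicative factor of size $\exp\!\big(O(\ln\ln r_n/\sqrt{\ln r_n})\big)$ per scale; since $\sum_{n\le\log_2 N}\ln\ln r_n/\sqrt{\ln r_n}\asymp\ln^{1/2}(N)\ln\ln(N)$, iterating the marginal recursion produces the extra $N^{o(1)}$ factor and hence (\ref{one_arm_6d}).

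The main obstacle is precisely this conditional capacity lower bound at the critical dimension $d=6$: one must rule out that the surviving cluster is atypically thin in the shell $B(r_{n+1})\setminus B(r_n)$, and there the second-moment estimates that would do this are only logarithmically off, which is why the method cannot remove the $N^{o(1)}$ factor and must instead iterate a marginal estimate with carefully tuned slowly-growing error terms; keeping these error terms from compounding to a polynomial loss over the $\log_2 N$ scales is delicate. A secondary difficulty is that $(M_n)$ is only an approximate martingale --- revealing $\mathcal{C}_{\bm{0}}$ creates interfaces that perturb the harmonic extension of the boundary data --- so one must show these perturbations are summable across scales before the gambler's-ruin bound is legitimate, and similarly one must control the discrepancy between the $\{\widetilde{\phi}\geq 0\}$-cluster used in the exploration and the $\{\widetilde{\phi}\neq 0\}$-component to which the sign-symmetry reduction applies.
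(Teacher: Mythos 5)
Your proposal takes a genuinely different route from the paper's. You propose an inside-out exploration of the cluster of $\bm{0}$ along dyadic shells, a martingale/gambler's-ruin recursion at each scale, and a conditional capacity lower bound (the "fat cluster" lemma) via a second-moment argument; this is essentially the program carried out by Drewitz, Pr\'evost and Rodriguez in \cite{drewitz2023arm,drewitz2023critical,drewitz2024critical}, whose concurrent work \cite{drewitz2024critical} did indeed obtain (\ref{thm1_1.4}) for $d=3$ this way. The paper here deliberately departs from that program. Instead of controlling the capacity of the surviving cluster directly, it argues by contradiction: it fixes a large $\lambda$, lets $N_*$ be the \emph{first} scale at which $\theta_d(N_*)>\lambda N_*^{-d/2+1}$, explores the \emph{negative} clusters \emph{inward from} $\partial B(N_*)$, and then plays off two consequences of the assumed lower bound on $\theta_d(N_*)$: via \cite[Eq.~(18)]{lupu2018random} and the exploration martingale it forces the existence of ``good points'' at which a Brownian motion is very likely to hit those negative clusters (Propositions \ref{prop_1} and Lemma \ref{lemma_F}); but then the harmonic average $\overbar{\mathcal{H}}_*$ on a sphere near the middle becomes anomalously large, which contradicts the Gaussian tail bound of Proposition \ref{prop_2}. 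What the paper's route buys is that it never needs a conditional ``fat cluster'' lemma; what your route would buy, if completed, is a more direct, scale-by-scale picture without the contradiction scaffolding.

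That said, there are serious gaps in your proposal as written. First and most importantly, the quantitative input you lean on, $\mathbb{E}[\#K_n\mid A_n]\asymp r_n^{2}\,\theta_d(r_n)^{-1}$, has $\theta_d$ on the right-hand side, which is exactly the quantity being estimated; you acknowledge the circularity (``self-consistently'') but do not say how to break it. The paper breaks the analogous circularity by the minimality of $N_*$: for all $n<N_*$ one has $\theta_d(n)\le\lambda(n)n^{-d/2+1}$ \emph{by definition of $N_*$}, and this is fed into Proposition~\ref{lemma_bound_crossing} and (\ref{cal1})--(\ref{cal2}) as an a priori input. Your recursion needs an analogous bootstrap hypothesis propagated upward through scales, and the $d=6$ case in particular requires the error per scale to be summable after bootstrapping, which is delicate and not addressed. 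Second, the conditional capacity lower bound ``on $A_n$ the cluster has $\mathrm{cap}\asymp r_n^{d-2}$'' is far from a second-moment triviality: conditioning on survival tilts the law of the cluster, and a Paley--Zygmund computation with the unconditional two-point function (\ref{two_point}) gives an event of positive probability, not a statement uniform over the conditional law on $A_n$. Making this uniform is precisely what \cite{drewitz2023arm,drewitz2024critical} labor over, and your sketch does not engage with it. Third, the ruin estimate $\mathbb{P}(A_{n+1}\mid\mathcal{F}_n)\mathbbm{1}_{A_n}\lesssim M_n\,\mathrm{cap}^{-1/2}$ needs the observable $M_n$ not to inflate along the survival event when telescoped over $\log_2 N$ scales; that is exactly the failure mode that produces the superfluous logarithmic factors in \cite{ding2020percolation,drewitz2023arm}, so something sharper than a per-scale ruin bound is required to close the loop at the constant level for $d\in\{3,4,5\}$.
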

\begin{remark*}
	A week prior to our manuscript, Drewitz, Prévost and Rodriguez posted a very nice paper \cite{drewitz2024critical} which proved (\ref{thm1_1.4}) for $d=3$. We would like to express our thanks to Drewitz, Prévost and Rodriguez for pointing out the independence between our manuscript and \cite{drewitz2024critical}.
\end{remark*}

As a natural extension of the one-arm probability $\theta_d(N)$, the \textit{crossing probability} for an annulus is also of interest. Namely, for any $N\ge n\ge 1$, we consider
\begin{equation}\label{def_crossing}
	\rho_d(n,N):=\mathbb{P}\Big[ B(n)\xleftrightarrow{\widetilde{E}^{\ge 0}} \partial B(N) \Big].
\end{equation}
Our second result reveals the exact order of $\rho_d(n,N)$ for all dimensions except the critical dimension $6$. Additionally, we derive a bound which yields the exponent of $\rho_6(n,N)$ but with a subpolynomial disparity as in (\ref{one_arm_6d}). 
\begin{theorem}\label{thm2}
	For $d\in \{3,4,5\}$, there exist $\Cl\label{const_thm2_1},\cl\label{const_thm2_2}>0$ such that for all $N\ge n\ge 1$,
	\begin{equation}
\cref{const_thm2_2}\Big(\frac{n}{N}\Big)^{\frac{d}{2}-1}	\le 	\rho_d(n,N)\le \Cref{const_thm2_1}\Big(\frac{n}{N}\Big)^{\frac{d}{2}-1}. 
	\end{equation}
	For $d=6$, there exist $\Cl\label{const_thm2_3},\cl\label{const_thm2_4}>0$ such that for all sufficiently large $N\ge n\ge 1$,
	\begin{equation}
  \cref{const_thm2_4} \Big(\frac{n}{N}\Big)^{2}	\le 	\rho_6(n,N)\le \Cref{const_thm2_3}\Big(\frac{n}{N}\Big)^{2}\mathrm{exp}\big(2\ln^{\frac{1}{2}}(N)\ln\ln(N)\big). 
	\end{equation} 
	For any $d\ge 7$, there exist $\Cl\label{const_thm2_5}(d),\cl\label{const_thm2_6}(d)>0$ such that for all $N\ge n\ge 1$,
	\begin{equation}\label{thm2_1.9}
\cref{const_thm2_6}\Big(1+n^{4-d}N^2 \Big)^{-1} 	  \le 	\rho_d(n,N)\le \Cref{const_thm2_5} n^{d-4}N^{-2}.
	\end{equation} 
\end{theorem}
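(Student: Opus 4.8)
The plan is to treat the three ranges $3\le d\le 5$, $d=6$ and $d\ge 7$ separately, and in each case to reduce the annulus estimate to one-arm information that is already available: Theorem~\ref{thm1} for $d\le 6$, and the high-dimensional bounds (\ref{ineq_high_d}) with their capacity/second-moment refinements for $d\ge 7$. The basic object is the cluster of the inner box: let $\mathcal C$ be the union of all components of $\widetilde{E}^{\ge 0}$ meeting $B(n)$. Deterministically $\mathrm{cap}\big(\mathcal C\cap B(2n)\big)\le\mathrm{cap}(B(2n))\asymp n^{d-2}$; and since a positive proportion of the $\asymp n^{d}$ vertices of $B(n)$ carry a nonnegative field value (hence lie in $\mathcal C$), with probability at least a constant one also has $\mathrm{cap}\big(\mathcal C\cap B(2n)\big)\ge cn^{d-2}$. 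So up to constants $\mathcal C\cap B(2n)$ is ``well-proportioned'': its capacity matches its diameter, which is precisely the regime in which the one-arm martingale behaves without logarithmic loss.

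\emph{Upper bound for $3\le d\le 5$.} We may assume $N\ge 4n$, the complementary range $n>N/4$ being handled by a crossing estimate which also gives the lower bound there. Conditionally on $\mathcal C\cap B(2n)=K$, the Markov property of the metric-graph GFF presents the field off $K$ as a GFF with an explicit nonnegative boundary datum on $\partial K$, and $\{B(n)\leftrightarrow\partial B(N)\}$ becomes the event that a cluster of this conditioned field survives out to $\partial B(N)$. This is exactly the situation treated in the proof of Theorem~\ref{thm1}: the relevant harmonic average of $\widetilde{\phi}$ along the growing cluster is a continuous martingale whose quadratic variation grows like its capacity-time and which starts from a value $\asymp\sqrt{\mathrm{cap}(K)}$, and reaching spatial scale $N$ forces this martingale to stay positive up to capacity-time $\asymp N^{d-2}$; a gambler's-ruin bound then gives $\mathbb P\big[K\leftrightarrow\partial B(N)\ \big|\ \mathcal C\cap B(2n)=K\big]\le C\big(\mathrm{cap}(K)/N^{d-2}\big)^{1/2}$. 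As $\mathrm{cap}(K)\le Cn^{d-2}$ always, averaging over $K$ yields $\rho_d(n,N)\le C(n/N)^{\frac d2-1}$. Equivalently, phrased with moments: Theorem~\ref{thm1} gives $\mathbb E\big[\#\{x\in B(n):x\leftrightarrow\partial B(N)\}\big]\le Cn^{d}N^{-\frac d2+1}$, while the same analysis forces the crossing cluster to deposit $\gtrsim n^{\frac d2+1}$ vertices in $B(n)$ on average, and dividing produces the bound.

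\emph{Lower bound for $3\le d\le 5$, and the cases $d=6$ and $d\ge 7$.} For $d\le5$ the matching lower bound is soft: condition on the constant-probability event that $\mathcal C\cap B(2n)$ has capacity $\ge cn^{d-2}$, and apply the lower half of the gambler's-ruin estimate --- a martingale started from a positive value $\asymp\sqrt{n^{d-2}}$ stays positive up to capacity-time $\asymp N^{d-2}$ with probability $\gtrsim(n/N)^{\frac d2-1}$ --- which only uses ingredients already present in the Ding--Wirth lower bound. For $d=6$ the exponent $\frac d2-1=2$ coincides with the mean-field value, and the capacity of a critical cluster of a given diameter, and its survival probability, carry logarithmic corrections; carrying these through the argument replaces the constant in the upper bound by the subpolynomial factor $\exp\big(2\ln^{1/2}(N)\ln\ln(N)\big)$ --- the factor $2$ reflecting that the $d=6$ one-arm bound enters both the first-moment count and the conditional volume estimate --- while the lower bound persists unchanged. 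For $d\ge 7$ the cluster of $B(n)$ is no longer well-proportioned: it resembles an incipient infinite cluster with $\asymp n^{4}$ vertices inside $B(n)$, so the gambler's-ruin heuristic is replaced by the tree/second-moment machinery underlying (\ref{ineq_high_d}); the first-moment bound then gives $\rho_d(n,N)\le Cn^{d-4}N^{-2}$, the second moment the matching lower bound, and a crossing estimate the saturation $\rho_d(n,N)\asymp1$ once $n^{d-4}N^{-2}\gtrsim1$, yielding (\ref{thm2_1.9}).

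\emph{Main obstacle.} The one genuinely hard point is the upper bound for $3\le d\le 5$ (and, with corrections, for $d=6$): one must know that the cluster realizing the crossing is \emph{not} atypically thin inside $B(n)$ --- equivalently, that a well-proportioned cluster of capacity $\asymp n^{d-2}$ reaches $\partial B(N)$ with probability at most $C(n/N)^{\frac d2-1}$, with no spurious logarithm. This is at the level of Theorem~\ref{thm1} itself and relies on the full capacity-based exploration machinery developed there, together with careful bookkeeping of the $d=6$ corrections; the lower bounds and the high-dimensional regime are, by comparison, routine consequences of the corresponding one-arm results.
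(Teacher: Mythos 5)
You correctly identify that the upper bounds follow from Theorem~\ref{thm1} and that the hard part is for $3\le d\le 5$ (and $d=6$). However, the route you sketch does not match the paper's and contains a genuine gap. The paper reduces the crossing probability to a \emph{product of two one-arm probabilities} via Proposition~\ref{lemma_bound_crossing}: $\rho_d(n,N)\le \Cref{const_crossing} n^{d-2}\theta_d(n)\theta_d(N/4)$. This multiplicative inequality, proved by conditioning on the negative clusters of $\partial B(n)\cup\partial B(N)$ and carefully bounding the expected harmonic averages from inside and outside (Lemmas~\ref{lemma_crossing_prepare}--\ref{lemma_3.6}), is what lets one plug in Theorem~\ref{thm1} and get $\rho_d(n,N)\le C(n/N)^{d/2-1}$ with no extra work. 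You instead propose the \emph{conditional} bound $\mathbb P\bigl[K\leftrightarrow\partial B(N)\,\big|\,\mathcal C\cap B(2n)=K\bigr]\le C\bigl(\mathrm{cap}(K)/N^{d-2}\bigr)^{1/2}$, justified by the claim that reaching spatial scale $N$ forces the exploration martingale to accumulate quadratic variation $\asymp N^{d-2}$. But relating Euclidean diameter to quadratic variation (capacity) is precisely the difficulty that Theorem~\ref{thm1} solves, and Theorem~\ref{thm1} provides it only as an unconditional one-arm estimate, not as a bound conditioned on the inner cluster configuration $K$. No such conditional estimate is proved or available, so this step is circular or incomplete as written; the multiplicative Proposition~\ref{lemma_bound_crossing} is the missing ingredient that sidesteps the need for it.

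For the lower bound in $3\le d\le 6$ you explore \emph{outward} from $B(n)$ and appeal to a lower gambler's-ruin estimate conditioned on $\mathrm{cap}(\mathcal C\cap B(2n))\ge cn^{d-2}$. The paper instead explores \emph{inward}: it runs the sign-cluster exploration $\mathcal I^{\partial B(N),\pm}_t$ starting from $\partial B(N)$, observes that $\mathcal M_{\bm 0,0}^{\partial B(N),\pm}=\mathcal Q_N$ is Gaussian with variance $\gtrsim N^{2-d}$, and that on the non-intersection event $\{\mathcal C^{\pm}_{\partial B(N)}\cap B(n)=\emptyset\}$ the martingale terminates at $0$ with quadratic variation $\le \Cref{const_green_1} n^{2-d}$; Lemma~\ref{lemma_EM} then produces a Cauchy-tail bound and the result falls out. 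Your outward-exploration variant needs a lower bound on the starting value of the martingale in terms of $\mathrm{cap}(K)$, which you assert but do not justify. The paper's inward exploration avoids this entirely. For $d\ge 7$ your sketch (tree expansion plus Paley--Zygmund, with the regime $n/N\gtrsim 1$ handled by the $3\le d\le 6$ argument) does match the paper.
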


\begin{remark}[critical loop soup]
	\cite[Proposition 2.1]{lupu2016loop} introduced a coupling between the GFF and the loop soup at the critical intensity $\frac{1}{2}$ (the criticality of $\frac{1}{2}$ was proved by \cite{chang2023percolation}) on the metric graph. Under this coupling, every GFF sign cluster (i.e. maximal connected subgraph on which $\widetilde{\phi}_{\cdot}$ has the same sign) is exactly a loop soup cluster. Thus, by the symmetry of the GFF, the analogues of Theorems \ref{thm1} and \ref{thm2} are valid for the critical loop soup on $\widetilde{\mathbb{Z}}^d$. 
\end{remark}

\begin{remark}[Werner's conjectures]\label{remark_werner}
	(1) It was first predicted in \cite[Conjectures A and C]{werner2021clusters} that for $d\in \{3,4,5\}$, the scaling limit of critical clusters (for either the metric graph GFF or the critical loop soup) exists and has the fractal dimension $\frac{d}{2}+1$. In fact, this conjecture implies that in a box of side length $O(N)$, the number of vertices included in some macroscopic cluster should be approximately $N^{\frac{d}{2}+1+o(1)}$, which intuitively suggests that for $d\in \{3,4,5\}$ and any constant $C>0$,  
	\begin{equation}\label{1.3}
	 \theta_d(N)= |B(CN)|^{-1}\mathbb{E}\Big[\sum\nolimits_{x\in B(CN)}\mathbbm{1}_{x\xleftrightarrow{\widetilde{E}^{\ge 0} } \partial B_x(N)} \Big] = N^{-\frac{d}{2}+1+o(1)}, 
	\end{equation}
	where $B_x(N):= x+B(N)$. By Theorem \ref{thm1}, (\ref{1.3}) is now fully confirmed. We hope Theorem \ref{thm1} can in turn facilitate the proof of \cite[Conjectures A and C]{werner2021clusters}.

	  \vspace*{0.15cm}

    (2) For the high-dimensional cases (i.e. $d>6$), it was conjectured in \cite[Section 5]{werner2021clusters} that unlike in low dimensions, sign clusters for the metric graph GFF (or loop clusters for the critical loop soup) become asymptotically independent and thus behave similarly to the Bernoulli percolation. Notably, in high dimensions the mean-field behaviors have been established for the critical Bernoulli percolation: 
    \begin{itemize}
    	\item For $\mathbb{Z}^d$ ($d\ge 2$), let $\mathbb{P}_{p}$ represent the law of the Bernoulli percolation with parameter $p\in [0,1]$, and let $p_c(d)$ be the critical percolation parameter. The analogue of the two-point function estimate (\ref{two_point}), i.e., 
    	\begin{equation}\label{two_point_bernoulli}
    		\mathbb{P}_{p_c}(x\xleftrightarrow{} y) \asymp |x-y|^{2-d}, \ \ \forall x\neq y\in \mathbb{Z}^d,
    	\end{equation}
    	was established for $d\ge 19$ in \cite{hara1990mean}, and later extended to $d\ge 11$ in \cite{fitzner2017mean}.

    	\vspace*{0.1cm}

    	\item In \cite{aizenman1987uniqueness,barsky1991percolation}, it was proved that assuming the triangle condition, i.e., 
    	\begin{equation*}\label{tri_condition}
    		\sum\nolimits_{x,y\in \mathbb{Z}^d}\mathbb{P}_{p_c}(\bm{0}\leftrightarrow x)\mathbb{P}_{p_c}(x\leftrightarrow y)\mathbb{P}_{p_c}(y \leftrightarrow \bm{0})<\infty,
    	\end{equation*}
    	the decay rate of the critical cluster volume satifies 
    	\begin{equation*}\label{volumn_Bernoulli}
    		\mathbb{P}_{p_c}\big( \big| \{x\in \mathbb{Z}^d: x\xleftrightarrow{} \bm{0}\}\big|\ge M\big) \asymp  M^{-\frac{1}{2}}. 
    	\end{equation*}
    	
    
    	\vspace*{0.1cm}

    	\item It was proved in \cite{kozma2011arm} that for $d>6$, assuming the two-point function (\ref{two_point_bernoulli}), the one-arm probability satisfies $\mathbb{P}_{p_c}\big[ \bm{0}\xleftrightarrow{} \partial B(N) \big] \asymp N^{-2}$.

   	
    \end{itemize}
    To sum up, the aforementioned conjectures in \cite{werner2021clusters} proposed that $6$ is the critical dimension $d_c$ such that below $d_c$ the exponent of $\theta_d(N)$ relies on the dimension, and above $d_c$ it remains constant. Considering that our previous result in \cite{cai2023one} (see (\ref{ineq_high_d})) demonstrates﻿ the mean-field behavior for $\widetilde{\phi}$ in high dimensions, and that (\ref{1.3}) is established by Theorem \ref{thm1}, now we can confirm that $d_c=6$.

 \vspace*{0.15cm}

    (3) Although the case in the critical dimension (i.e. when $d=6$) was deliberately not mentioned in \cite{werner2021clusters} (see the end of \cite[Section 3]{werner2021clusters}), our Theorem \ref{thm1} shows that $\theta_6(N)= N^{-2+o(1)}$, where the exponent matches the rules both  below and above $d_c$ (since $\frac{d}{2}-1=2$ when $d=6$), as expected by \cite{drewitz2023arm}.

\end{remark}


\begin{remark}(conjecture on the order of $\theta_6(N)$) 
	Inspired by a discussion with Tom Hutchcroft, the following seems to be a reasonable conjecture for the one-arm probability in the critical dimension $6$: there exist constants $C,c,\delta>0$ such that for all sufficiently large $N\ge 1$,
	\begin{equation}
	cN^{-2}\ln^{\delta}(N) \le \theta_6(N) \le CN^{-2}\ln^{\delta}(N).
	\end{equation}
	This conjecture is mainly based on similar behaviors of several other models. E.g., the critical two-point function of the weakly self-avoiding walk \cite{bauerschmidt2015critical}, and the decay rate of the critical cluster volume in the hierarchical percolation \cite{hutchcroft2022critical}.

\end{remark}

\subsection{Related literature}


In this subsection, we review some previous results related to level-sets of Gaussian free fields, both on lattices and on metric graphs.

\vspace*{0.15cm}

\textbf{Discrete GFF.} The level-set $E^{\ge h}:= \{x\in \mathbb{Z}^d:\phi_x\ge h\}$ ($h\in \mathbb{R}$) of the discrete GFF has been widely studied. It was proved in \cite{bricmont1987percolation, rodriguez2013phase} that for $d\ge 3$, the percolation of $E^{\ge h}$ exhibits a non-trivial phase transition, where the critical level $h_*(d)$ is non-negative. Using the metric graph GFF as an auxiliary model, \cite{drewitz2018sign} further proved that $h_*(d)$ is strictly positive for all $d\ge 3$. It was established in \cite{drewitz2015high} that the asymptotic value of $h_*(d)$ (as $d\to \infty$) is $\sqrt{2G(\bm{0},\bm{0})\ln(d)}$. Notably, the phase transition of $E^{\ge h}$ was proved to be sharp in \cite{duminil2023equality}. I.e., for any $h>h_*(d)$, the radius of the cluster of $E^{\ge h}$ decays exponentially; for any $h<h_*(d)$, with high probability $E^{\ge h}\cap B(N)$ includes a macroscopic cluster (i.e. a cluster with radius of order $N$) and moreover, every two macroscopic clusters are connected by $E^{\ge h}\cap B(2N)$. In \cite{muirhead2024percolation}, this sharpness was extended to a wide class of Gaussian percolation models. \cite{drewitz2014chemical} established the shape theorem for supercritical level-sets of discrete GFFs. Furthermore, different versions of the so-called decoupling inequality, which is a powerful tool to handle the correlation of the GFF, were presented in \cite{popov2015decoupling, popov2015soft}. For subcritical level-sets of discrete GFFs, numerous estimates for the decay rate of the cluster radius were established in \cite{goswami2022radius}. These estimates were extended to a broad class of Gaussian percolation models in \cite{Franco2024percolation}. For discrete GFFs on regular trees, many bounds on critical and near-critical quantities were proved in \cite{vcerny2023critical}.

\vspace*{0.15cm}

\textbf{Metric graph GFF.} Several types of dimensions for the incipient infinite cluster of the metric graph GFF on $\widetilde{\mathbb{Z}}^d$ in high dimensions were calculated in \cite{ganguly2024ant}. Moreover, a variant of the metric graph GFF named the ``gauge-twisted GFF'' was introduced and studied in \cite{lupu2022equivalence}. Notably, as an interesting application of the metric graph GFF, \cite{duminil2020existence} employed the metric graph GFF as an auxiliary model to prove that the Bernoulli percolation on any graph with isoperimetric dimension greater than $4$ exhibits a non-trivial percolation phase transition.

\subsection{Statements about constants}

We use the notations $C$ and $c$ for the constants whose values change according to the context. The numbered notations $C_1,C_2,c_1,c_2,...$ are used for the constants fixed throughout the paper. We use the upper-case letter $C$ (possibly with some superscript or subscript) to represent large constants, and the lower-case letter $c$ to denote small ones. When a constant depends on some parameter or variable, we will point it out in parentheses. A constant without additional specification can only depend on the dimension $d$.

\subsection{Outline of the proof of Theorem \ref{thm1}}


Thanks to \cite{lupu2016loop}, the two-point connecting probability for the critical GFF level-set on the metric graph has been computed precisely. Thus, the main difficulty in computing the one-arm probability is to enhance a point-to-point estimate to a point-to-set estimate. For $d>6$, this was carried out in \cite{cai2023one} following the framework of \cite{kozma2011arm}. The main challenge in \cite{cai2023one} stems from the strong correlation of the GFF, due to which many arguments in \cite{kozma2011arm} cannot be simply adapted. It turns out that for $d>6$, despite being strong, the correlation can be controlled and in the conceptual level this leads to the mean-field behavior; in the technical level, the manipulation of the correlation used many properties of the GFF including a powerful coupling with the critical loop soup introduced in \cite{lupu2016loop}. For $d\le 6$, however, even in the conceptual level the correlation is sufficiently strong to lead to a different behavior. In a sense, it is the favorable structural properties of the GFF that enable the derivation of one-arm probabilities. For instance, thanks to the Markov property and the harmonicity of the GFF, the authors in \cite{lupu2018random} proposed an exploration martingale which was then further developed in \cite{ding2020percolation} to derive the one-arm exponent for $d=3$. The main obstacle for more precise estimates is that a priori there was no accurate relation between the quadratic variation of such exploration martingale and the Euclidean diameter of the critical cluster. After \cite{ding2020percolation}, much progress was made in \cite{drewitz2023arm,drewitz2023critical,drewitz2024critical} which includes a careful study on the capacity of the critical level-set cluster (and this is closely related to the quadratic variation of the exploration martingale). In particular, \cite{drewitz2023arm} used a clever interplay between the GFF and the loop soup to control the capacity for $d\in \{3, 4\}$, thereby improving \cite{ding2020percolation}.

%
%
%
%
%
%
%
%
%
%
%
%
%
%
%
%
%
%
%
%
%
%
%
%
%
%

In this paper, our main contribution is to develop a new method to control the quadratic variation of the exploration martingale and to relate it to the Euclidean diameter of the critical cluster. While we will also use the coupling between the GFF and the loop soup, our primary strategy lies in exploring the intrinsic property of the GFF. In fact, the starting point of our proof can be viewed as \cite[Equation (18)]{lupu2018random}, which gives an explicit formula for the set-to-set connecting probability \textit{conditioned on positive GFF values for all vertices in both sets}. This motivates us to first explore the negative clusters from the boundary of the box (and thus after the exploration we effectively have positive boundary conditions, though on a random set) and then investigate quantities in the explicit formula \cite[Equation (18)]{lupu2018random} for the one-arm probability. Notably, the following two fighting forces (under the assumption of a large one-arm probability) are the key to our proof strategy. On the one hand, a large one-arm probability indicates that the negative clusters explored from the boundary of the box (note that by symmetry they share the same law as the positive ones) are expected to be sizable, and thus the conditional expectation at the origin (which can be considered as the final value of the exploration martingale) is required to be small. On the other hand, if the explored negative clusters are large, then the exploration process typically takes a long time to stop, i.e., the quadratic variation of the exploration martingale is big, implying that the final value of the martingale has a good chance of being large. Of course, transforming the preceding heuristics into a rigorous mathematical proof is by all means non-trivial. In the rest of this subsection, we elaborate our proof outline in a more precise (and thus inevitably more technical) manner.

The rest of this subsection consists of the following three parts. In the first part, we present the outline for our proof of Theorem \ref{thm1} in the case of $d=3$, which also includes the backbone of our proof for higher dimensions (i.e. $4\le d\le 6$). In the second part, we point out why the approach provided in the first part is insufficient for $4\le d\le 6$ and then depict how we use further techniques to extend it. As demonstrated later, the critical dimension $d_c=6$ naturally emerges along with our computations since numerous crucial steps hold true for $d<6$ but fail exactly at $d=6$ due to the simultaneous vanishing of some leading terms in certain estimates. In the third part, we explain how to slightly modify the proof setting to obtain the upper bound for $d=6$ in (\ref{one_arm_6d}) with the subpolynomial factor.

\noindent(Warning: for the convenience of exposition, the notations and definitions in this subsection are not necessarily consistent with the formal proof in later sections.)



\subsubsection{Proof outline for $d=3$}\label{section_1.2.1}

Our proof strategy is based on proof by contradiction. We take a large constant $\lambda>0$ and assume that $\theta_3(N)\le \lambda N^{-\frac{1}{2}}$ does not hold for all $N\ge 1$. Let $N_*$ be the smallest integer satisfying $\theta_3(N_*)> \lambda N_*^{-\frac{1}{2}}$ (see Definitions \ref{def_es} and \ref{def_af}). Our proof is then based on the analysis of harmonic averages in scales comparable to $N_*$, and it naturally takes advantage of the minimality of $N_*$ as assumed. Precisely, for any $x\in \widetilde{\mathbb{Z}}^d$, the harmonic average at $x$, denoted by $\mathcal{H}_x^*$ (see (\ref{4.9})), is the conditional expectation of $\widetilde{\phi}_{x}$ given the values of $\widetilde{\phi}_\cdot$ on $\mathcal{C}^-_{\partial \mathcal{B}_x(N_*/2)}$, where $\mathcal{B}_x(N_*/2)$ is the Euclidean ball with center $x$ and radius $N_*/2$, and $\mathcal{C}^-_{\partial\mathcal{B}_x(N_*/2)}$ is the union of $\partial \mathcal{B}_x(N_*/2)$ and all negative clusters of $\widetilde{\phi}_{\cdot}$ intersecting $\partial \mathcal{B}_x(N_*/2)$. The contradiction arises from the following two ingredients.

\begin{enumerate}
	\item   With the assumption on the existence of $N_*$, we are able to construct an event $\mathsf{F}$ with a significant probability (measurable with respect to $\widetilde{\phi}$) on which an independent Brownian motion on $\widetilde{\mathbb{Z}}^d$ (refer to Section \ref{subsection_BM} for the definition) starting from $\bm{0}$ will hit $\mathcal{C}^-_{\partial \mathcal{B}(N_*/4)}$ before reaching $\partial \mathcal{B}(0.01N_*)$ with high probability (see Lemma \ref{lemma_F}). As a result, we establish that the sum of harmonic averages on $\partial \mathcal{B}(\frac{3N_*}{16})$ may reach an unexpectedly high level with a significant probability (see Proposition \ref{prop_1}).

	\item   Through an analysis of the conditional distribution of the average of $\widetilde{\phi}_\cdot$ on $\partial \mathcal{B}(\frac{3N_*}{16})$, we establish an upper bound for the probability that the sum of harmonic averages on $\partial \mathcal{B}(\frac{3N_*}{16})$ takes a large value (see Proposition \ref{prop_2}), which leads to a contradiction with the result stated in Item (1).

\end{enumerate}

In what follows, we provide a sketch for the most crucial components of this paper, namely, the construction of the event $\mathsf{F}$ and the analysis of the Brownian motion as mentioned in Item (1). First of all, using the assumption that $\theta_3(N_*)> \lambda N_*^{-\frac{1}{2}}$ and the formula in \cite{lupu2018random} for the connecting probability of $\widetilde{E}^{\ge 0}$ (see Lemma \ref{lemma_connecting_boundary}), we show that there exists an integer $k_*\ge 1$ such that for each $x\in \mathbb{Z}^d$, the harmonic average $\mathcal{H}_x^*$ exceeds $\lambda N_*^{-\frac{d}{2}+1}k_*^{-C}2^{-k_*}$ with probability at least $2^{-k_*}$ (see Lemma \ref{lemma_k_diamond}). Moreover, applying the exploration martingale argument (see Section \ref{section_EM}), we know that $\mathcal{H}_x^*\ge \lambda N_*^{-\frac{d}{2}+1}k_*^{-C}2^{-k_*}$ (if this happens, we call $x$ a good point; see Definition \ref{def_good_point}) indeed indicates that there exists an integer $j\ge 1$ such that an independent Brownian motion starting from $x$ will hit $\mathcal{C}^-_{\partial \mathcal{B}_x(N_*/2)}$ before exiting $B
_x(2^{-j}k_*^{-C}2^{-0.5k_*}N_*)$ with probability at least $p_j:=j^{-2}2^{-j(d-2)}\lambda^2 k_*^{-C}2^{(\frac{6-d}{2})k_*}$ (if this happens, we call $x$ a $j$-nice point; see (\ref{newadd_5.49})). (Note that here the natural requirement that $p_j \le 1$ in fact poses a constraint on $j$, but we will ignore such technical complications in our discussion since such a constraint is in our favor anyway.) Since the probability of having a good point is at least $2^{-k_*}$, typically a box of size $N_*$ should contain a $2^{-k_*}$ fraction of good points (we call it a good box if this happens; see Definition \ref{def_good_box}). Through a second moment method (see Lemma \ref{lemma_excellent_box}), we show that a good box must contain a sub-box from which an independent Brownian motion will visit at least $k_*^{-C}2^{-k_*}N_*^2$ good points before escaping faraway from this sub-box with probability $k_*^{-C}$ (we call this sub-box an excellent box; see Definition \ref{def_excellent_box}). In fact, given that the Brownian motion visits $k_*^{-C}2^{-k_*}N_*^2$ good points, it hits the negative clusters with high probability. To see this, recall that every good point must be $j$-nice for some $j\ge 1$. As a result, there exists some $j\ge 1$ such that among these $k_*^{-C}2^{-k_*}N_*^2$ good points, more than $n_{j}^{(1)}:=j^{-2}k_*^{-C}2^{-k_*}N_*^2$ points are $j$-nice. Every time when the Brownian motion starts from a $j$-nice point $x$, with probability $p_j$ it will hit the negative clusters before exiting $B_x(2^{-j}k_*^{-C}2^{-0.5k_*}N_*)$, which typically takes $n_{j}^{(2)}:=2^{-2j}k_*^{-C}2^{-k_*}N_*^2$ steps. Therefore, the number of such exits is at least $n_{j}^{(3)}:=n_{j}^{(1)}/n_{j}^{(2)}=j^{-2}2^{2j}k_*^{-C}$. Thus, by the strong Markov property, the probability that the Brownian motion does not hit the negative clusters after visiting $k_*^{-C}2^{-k_*}N_*^2$ good points can be bounded from above by (recalling that $p_j=j^{-2}2^{-j(d-2)}\lambda^2 k_*^{-C}2^{(\frac{d-2}{2})k_*}$)
\begin{equation}\label{outline_1.14}
	(1-p_j)^{n_{j}^{(3)}}\le \mathrm{exp}\big(-p_jn_{j}^{(3)}\big)\le \mathrm{exp}\big(-j^{-4}2^{(4-d)j}\lambda^2k_*^{-C}2^{(\frac{6-d}{2})k_*}\big).
\end{equation}
When $d=3$, since $j^{-4}2^{(4-d)j}=j^{-4}2^{j}\ge c$ and $k_*^{-C}2^{(\frac{6-d}{2})k_*}= k_*^{-C}2^{\frac{3}{2}k_*}\ge c 2^{k_*}$ for all $j,k_*\ge 1$, the right-hand side of (\ref{outline_1.14}) is at most $e^{-c\lambda^22^{k_*}}$. In conclusion, by taking a sufficiently large $\lambda$, the Brownian motion starting from an excellent box will hit the negative clusters before escaping faraway with probability at least $k_*^{-C}-e^{-c\lambda^22^{k_*}}\ge k_*^{-C'}$. Thus, by defining $\mathsf{F}$ as the event that $\bm{0}$ is surrounded by $k_*^{10C'}$ layers of excellent boxes (see (\ref{def_F})), we obtain the desired property for the Brownian motion on $\mathsf{F}$. I.e., hitting the negative clusters with high probability.

\subsubsection{Extension to higher dimensions}

For $d\ge 4$, since $j^{-4}2^{(4-d)j}\ge c$ does not hold for all $j\ge 1$, it is no longer straightforward to conclude that the right-hand side of (\ref{outline_1.14}) is small. Therefore, we need some additional restriction on the scale $j$ in (\ref{outline_1.14}). To achieve this, we establish an a priori upper bound for the crossing probability of $\widetilde{E}^{\ge 0}$ (see Proposition \ref{lemma_bound_crossing}), which is then used to exclude cases where $j$ is too large. Precisely, by adding an additional criterion to the definition of a good point $x$, stating that $x$ has to be somewhat faraway from $\mathcal{C}^-_{\partial \mathcal{B}_x(N_*/2)}$ (see Definition \ref{def_good_point}), we can further require that $j$ satisfies (see (\ref{cal1})) 
\begin{equation}\label{outline_1.15}
	2^{j}\le k_*^{C}2^{(\frac{6-d}{2(d-2)})k_*}\lambda^{\frac{4}{d-2}}.
\end{equation}
Combined with (\ref{outline_1.14}), it implies that the escape probability for the Brownian motion starting from an excellent box is upper-bounded by (see (\ref{5.61}))
\begin{equation}\label{outline_1.16}
\mathrm{exp}\big(-ck_*^{-C}2^{(\frac{2(6-d)}{d-2})k_*}\lambda^{\frac{2(6-d)}{d-2}}\log_2^{-C}(\lambda)\big). 
\end{equation}
For $d\in \{4,5\}$, since $2^{(\frac{2(6-d)}{d-2})k_*}\gg k_*^{C}$ and $\lambda^{\frac{2(6-d)}{d-2}}\gg \log_2^{C}(\lambda)$, we derive the same bound as in Section \ref{section_1.2.1} from (\ref{outline_1.16}), thus confirming the desired property for $\mathsf{F}$.

When $d=6$, all the leading factors in (\ref{outline_1.16}), namely $2^{(\frac{2(6-d)}{d-2})k_*}$ and $\lambda^{\frac{2(6-d)}{d-2}}$, vanish simultaneously. Consequently, regardless of any restrictions imposed on $j$ and $k_*$, there is no hope to conclude that the right-hand side of (\ref{outline_1.14}) is small. At this stage, it is readily seen that these computations provide a manifestation (in the technical level) on the emergence of the critical dimension $d_c=6$.

\subsubsection{Upper bound for $d=6$}

To derive the upper bound for $\theta_6(N)$, we assume that $\lambda(N)\ge 1$ is now a non-decreasing function depending on $N$, as opposed to being a constant as in previous cases. Then the estimates (\ref{outline_1.14}) and (\ref{outline_1.15}) are respectively updated to 
\begin{equation}\label{outline_1.17}
	(1-p_j)^{n_{j}^{(3)}}\le  \mathrm{exp}\big(-j^{-4}2^{-2j}\lambda^2(N_*)k_*^{-C}\big),
\end{equation}
\begin{equation}\label{outline_1.18}
	2^{j}\le k_*^{C}\big[\lambda(N_*)\lambda(\widecheck{N}_*)\big]^{\frac{1}{2}},
\end{equation}
where $\widecheck{N}_*\le[\lambda(N_*)]^{-\frac{1}{d}}2^{-\frac{1}{d}k_*}N_*$. By  (\ref{outline_1.17}) and (\ref{outline_1.18}), the escape probability for the Brownian motion starting from an excellent box is at most 
\begin{equation}\label{1.19}
		\mathrm{exp}\Big(-k_*^{C}\lambda(N_*)\Big[\lambda\Big([\lambda(N_*)]^{-\frac{1}{d}}2^{-\frac{1}{d}k_*}N_*\Big)\Big]^{-1}\log_2^{-C}\big(\lambda(N_*)\big)\Big). 
\end{equation}
By selecting the function $\lambda(N)=C\mathrm{exp}\big(\ln^{\frac{1}{2}}(N)\ln\ln(N)\big)$ (the reason behind this choice is detailed in Remark \ref{remark_lambda}), we ensure that (\ref{1.19}) is small and thus establish the upper bound in Theorem \ref{thm1} for $\theta_6(N)$.

\subsection{Organization of the paper}

In Section \ref{section_prepare}, we fix some necessary notations and review some useful results. Section \ref{Section_crossing} establishes an upper bound for the crossing probability (Proposition \ref{lemma_bound_crossing}), which not only plays a crucial role in the proof of Theorem \ref{thm1} but also provides a conditional proof for the upper bounds in Theorem \ref{thm2} assuming Theorem \ref{thm1}. In Section \ref{section_proof_thm1}, we provide the two main ingredients in the proof of Theorem \ref{thm1} (Propositions \ref{prop_1} and \ref{prop_2}) and then demonstrate Theorem \ref{thm1} assuming them. We also verify Proposition \ref{prop_2} in Section \ref{section_proof_thm1}. Subsequently, Proposition \ref{prop_1} is established in Section \ref{section_block}. Finally, Section \ref{section_thm2} presents the proofs of the lower bounds in Theorem \ref{thm2}.


\section{Preliminaries}\label{section_prepare}
    
  In order to facilitate the exposition, in this section we collect some necessary notations and useful results for the Brownian motion and the Gaussian free field on the metric graph $\widetilde{\mathbb{Z}}^d$.

\subsection{Basic notations for graphs}

\begin{itemize}

		\item  For any $x\in \mathbb{Z}^d$ and $N\ge 0$, we denote the box $B_x(N):= x+[-N,N]^d$. We also denote the Euclidean ball $\mathcal{B}_x(N):=\{y\in \mathbb{Z}^d:|y-x|\le N\}$ and the continuous box (i.e. the metric graph of the box)
		\begin{equation}\label{def_continuous_box}
			\widetilde{B}_x(N):=\bigcup_{y_1\sim y_2\in B_x(N): \{y_1,y_2\}\cap B_x(N-1)\neq \emptyset} I_{\{y_1,y_2\}}.
		\end{equation}
		We need the Euclidean ball mainly because its exiting distribution for a simple random walk is comparable to the uniform distribution on the boundary (see e.g. \cite[Lemma 6.3.7]{lawler2010random}). Note that 
	\begin{equation}\label{inclusion_box}
		B_x(d^{-\frac{1}{2}}N) \subset  \mathcal{B}_x(N)\subset  B_x(N).
	\end{equation}
	When $x=\bm{0}$, we abbreviate 
	$$
	B(N):=B_{\bm{0}}(N), \ \ \mathcal{B}(N):=\mathcal{B}_{\bm{0}}(N)\ \ \text{and} \ \ \widetilde{B}(N):=\widetilde{B}_{\bm{0}}(N).
	$$


	\item  Recall that $\partial A:= \{x\in A: \exists y\in \mathbb{Z}^d\setminus A\ \text{such that}\ y\sim x\}$ for all $A\subset \mathbb{Z}^d$. We also denote by $\partial^{\mathrm{e}} A:=\{x\in \mathbb{Z}^d\setminus A:\exists y\in A\ \text{such that}\ y\sim x\}$ the external boundary of $A$.

	\item  For any $v_1,v_2\in \widetilde{\mathbb{Z}}^d$, we denote by $\|v_1-v_2\|$ the graph distance between $v_1$ and $v_2$ on the metric graph $\widetilde{\mathbb{Z}}^d$. Note that $\|x-y\|=d$ for all $x\sim y\in \mathbb{Z}^d$.

	\item For any $U_1,U_2\subset \widetilde{\mathbb{Z}}^d$, let $\mathrm{dist}(U_1,U_2):=\inf_{v_1\in U_1,v_2\in U_2}\|v_1-v_2\|$.

	\item For any $U\subset  \widetilde{\mathbb{Z}}^d$, let $\overbar{U}:= \{v\in  \widetilde{\mathbb{Z}}^d:\mathrm{dist}(\{v\},U)=0 \}$ be the closure of $U$. Let $U^{\circ}:=\{v\in \widetilde{\mathbb{Z}}^d:\mathrm{dist}(\{v\}, \widetilde{\mathbb{Z}}^d \setminus U)>0 \}$ be the interior of $U$. We denote the boundary of $U$ in $\widetilde{\mathbb{Z}}^d$ by $\widetilde{\partial} U:=\overbar{U}\setminus U^{\circ}$.

	\item In this paper, we use $D$ (possibly with superscript or subscript) to represent a subset of $\widetilde{\mathbb{Z}}^d$ consisting of finitely many compact connected components.

\end{itemize}


\subsection{Brownian motion $\big\{\widetilde{S}_t\big\}_{t \ge 0}$ on $\widetilde{\mathbb{Z}}^d$}\label{subsection_BM}
The Brownian motion $\big\{\widetilde{S}_t\big\}_{t \ge 0}$ on the metric graph $\widetilde{\mathbb{Z}}^d$ is constructed as follows. In the interior of some interval $I_e$, $\widetilde{S}_t$ behaves as a one-dimensional standard Brownian motion. When visiting a lattice point $x\in \mathbb{Z}^d$, $\widetilde{S}_t$ uniformly chooses an interval from $\{I_{\{x,y\}}\}_{y\sim x}$ and behaves as a Brownian excursion from $x$ in this interval. Once there is an excursion hitting a point $y$ adjacent to $x$, the next step continues as the same process from the new starting point $y$. The total local time of all Brownian excursions
at $x$ in this single step (i.e. the part of $\widetilde{S}_t$ from $x$ to $y$) is an independent exponential random variable with rate $1$. For any $v\in \widetilde{\mathbb{Z}}^d$, we denote by $\widetilde{\mathbb{P}}_v$ the law of $\big\{\widetilde{S}_t\big\}_{t \ge 0}$ starting from $v$ and denote by $\widetilde{\mathbb{E}}_v$ the expectation under $\widetilde{\mathbb{P}}_v$. As mentioned in \cite[Section 2]{lupu2016loop} (see also \cite[Section 2.5]{cai2023one}), for any $x\in \mathbb{Z}^d$, the projection of $\big\{\widetilde{S}_t\big\}_{t \ge 0}\sim \widetilde{\mathbb{P}}_x$ on $\mathbb{Z}^d$ is exactly a simple random walk $\big\{S_n\big\}_{n \ge 0}$ on $\mathbb{Z}^d$ starting from $x$.


\textbf{Hitting times.} For any $D\subset \widetilde{\mathbb{Z}}^d$, let $\tau_{D}:= \inf\{t\ge 0:\widetilde{S}_t\in D \}$ be the first time when $\widetilde{S}_t$ hits $D$ (we set $\inf\emptyset =+\infty$ for completeness). Especially, when $D=\{v\}$ for some $v\in \widetilde{\mathbb{Z}}^d$, we may omit the braces and write $\tau_{v}:=\tau_{\{v\}}$. Moreover, when $D=\emptyset$, one has $\tau_{\emptyset}=\inf\emptyset=+\infty$.



\subsection{Green's function (for a set)}

For $D\subset \widetilde{\mathbb{Z}}^d$, the Green's function for $D$ is
\begin{equation}
	\widetilde{G}_D(x,y):=\int_{0}^{\infty} \Big\{\widetilde{q}_t(x,y) - \widetilde{\mathbb{E}}_x \big[ \widetilde{q}_{t-\tau_{D}}(\widetilde{S}_{\tau_D},y)\cdot \mathbbm{1}_{\tau_D<t}  \big]\Big\}dt,\ \ \forall x, y\in \widetilde{\mathbb{Z}}^d\setminus D,
\end{equation}
where $\widetilde{q}_t(\cdot,\cdot)$ is the transition density of the Brownian motion on $\widetilde{\mathbb{Z}}^d$ relative to the Lebesgue measure on $\widetilde{\mathbb{Z}}^d$. Note that $\widetilde{G}_D(\cdot,\cdot)$ is decreasing with respect to $D$. For $D=\emptyset$, we abbreviate $\widetilde{G}(\cdot ,\cdot ):=\widetilde{G}_{\emptyset}(\cdot ,\cdot )$. Also note that restricted to $\mathbb{Z}^d$, $\widetilde{G}(\cdot ,\cdot )$ exactly equals to $G(\cdot ,\cdot )$, implying that there exist $\Cl\label{const_green_1}(d)>1>\cl\label{const_green_2}(d)>0$ such that (for brevity, we set $0^{-a}:=1$ for all $a>0$ throughout this paper)
\begin{equation}\label{bound_green}
\cref{const_green_2} |v-w|^{2-d}\le 	\widetilde{G}(v ,w)\le \Cref{const_green_1} |v-w|^{2-d}, \ \ \forall v,w\in \widetilde{\mathbb{Z}}^d. 
\end{equation}
As stated in \cite[Section 3]{lupu2016loop}, $\widetilde{G}_D(\cdot,\cdot)$ is finite, symmetric, continuous and extends continuously to $\widetilde{\partial} D$ by taking value $0$ on the boundary. In addition, the GFF on $\widetilde{\mathbb{Z}}^d\setminus D^{\circ}$ (with zero boundary condition) is exactly the Gaussian field on $\widetilde{\mathbb{Z}}^d\setminus D^{\circ}$, where the covariance between $\widetilde{\phi}_{v}$ and $\widetilde{\phi}_{w}$ equals $\widetilde{G}_D(v,w)$ for all $v,w\in \widetilde{\mathbb{Z}}^d\setminus D^{\circ}$. We denote its law and the corresponding expectation by $\mathbb{P}^D$ and $\mathbb{E}^D$ respectively.

\begin{lemma}\label{lemma2.1}
	For any $d\ge 3$, there exists $C(d)>0$ such that for any $D\subset \widetilde{\mathbb{Z}}^d$ and $x\sim y\in \mathbb{Z}^d\setminus D$ such that $D\cap I_{\{x,y\}}^{\circ}= \emptyset$, 
	\begin{equation}\label{new_add_2.3}
	\widetilde{G}_D(x,x)\le C	\widetilde{\mathbb{P}}_x(\tau_D>\tau_y). 
	\end{equation}
\end{lemma}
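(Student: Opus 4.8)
The plan is to read both sides probabilistically via the excursion structure of $\widetilde{S}$ at the lattice point $x$, using that, with the normalization of Section~\ref{subsection_BM}, $\widetilde{G}_A(x,x)=\widetilde{\mathbb{E}}_x\big[\ell^x_{\tau_A}\big]$ with $\ell^x$ the local time at $x$, for which the amount of local time accumulated at $x$ during one sojourn there is exponential of rate $1$ (as in the construction of $\widetilde{S}$). In the local-time clock at $x$, the excursions of $\widetilde{S}$ into the $2d$ adjacent intervals form a Poisson point process in which, for each $w\sim x$: excursions into $I_{\{x,w\}}$ reaching the far endpoint $w$ occur at rate $\tfrac1{2d}$, and, if $D\cap I_{\{x,w\}}\ne\emptyset$, excursions into $I_{\{x,w\}}$ hitting $D$ occur at rate $\tfrac1{2a_w}$, where $a_w:=\mathrm{dist}\big(x,D\cap I_{\{x,w\}}\big)\in(0,d]$ (this uses the Itô excursion measure of one-dimensional Brownian motion, which assigns rate $\tfrac1{2a}$ to excursions of height at least $a$, consistent at $a=d$ with the first rate). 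The hypotheses force $D\cap I_{\{x,y\}}=\emptyset$, so the neighbor $y$ contributes only the first type. Writing $R$ for the total rate at which a sojourn of $\widetilde{S}$ at $x$ terminates — either $\widetilde{S}$ reaches a neighbor of $x$, or it hits $D$ — one has $R=\sum_{w\sim x:\,D\cap I_{\{x,w\}}=\emptyset}\tfrac1{2d}+\sum_{w\sim x:\,D\cap I_{\{x,w\}}\ne\emptyset}\tfrac1{2a_w}$, and $\tfrac1{2d}\le R<\infty$.

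I would then establish two facts. First, enlarging the killing set to $\widehat{D}:=D\cup\partial^{\mathrm{e}}\{x\}$, a single sojourn at $x$ ends exactly at $\tau_{\widehat{D}}$, so $\widetilde{G}_{\widehat{D}}(x,x)=1/R$; and by the competing-clock property of the above Poisson process, $\widetilde{\mathbb{P}}_x(\text{the first sojourn at }x\text{ ends by }\widetilde{S}\text{ reaching }y)=\tfrac{1/(2d)}{R}=\tfrac1{2d}\,\widetilde{G}_{\widehat{D}}(x,x)$. Since this event is contained in $\{\tau_D>\tau_y\}$, it follows that $\widetilde{G}_{\widehat{D}}(x,x)\le 2d\,\widetilde{\mathbb{P}}_x(\tau_D>\tau_y)$. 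Second, decomposing the path at the successive visits of $\widetilde{S}$ to $x$, the local time collected during each sojourn is an independent rate-$R$ exponential, independent of whether a subsequent sojourn occurs, whence $\widetilde{G}_D(x,x)=\widetilde{\mathbb{E}}_x\big[\ell^x_{\tau_D}\big]=\widetilde{\mathbb{E}}_x[M]\cdot\tfrac1R=\widetilde{\mathbb{E}}_x[M]\,\widetilde{G}_{\widehat{D}}(x,x)$, where $M$ is the number of sojourns of $\widetilde{S}$ at $x$ strictly before $\tau_D$. Each time a sojourn ends by $\widetilde{S}$ reaching a neighbor $z$ of $x$ (necessarily with $z\notin D$) rather than by hitting $D$, the conditional probability of $\widetilde{S}$ returning to $x$ before $\tau_D$ is at most $\widetilde{\mathbb{P}}_z(\tau_x<\infty)=G(z,x)/G(x,x)=:q_d<1$ — using that the projection of $\widetilde{S}$ on $\mathbb{Z}^d$ is a transient simple random walk, that $q_d$ is the same for every $z\sim x$, and that $G(z,x)<G(x,x)$. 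Hence $\widetilde{\mathbb{P}}_x(M\ge m)\le q_d^{\,m-1}$ and $\widetilde{\mathbb{E}}_x[M]\le(1-q_d)^{-1}$. Combining, $\widetilde{G}_D(x,x)\le(1-q_d)^{-1}\,\widetilde{G}_{\widehat{D}}(x,x)\le \tfrac{2d}{1-q_d}\,\widetilde{\mathbb{P}}_x(\tau_D>\tau_y)$, which is the claim with $C(d)=2d/(1-q_d)$.

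The arithmetic (the competing-clock identity, the geometric summation) is routine. The two delicate points are: pinning down the local-time normalization so that the excursion rates are exactly $\tfrac1{2d}$ and $\tfrac1{2a_w}$ — this is what makes $\widetilde{G}_{\widehat{D}}(x,x)=1/R$ hold with no constant and hence fixes $C(d)$ — and handling the degenerate case $a_w=d$ with $w\in D$, where ``reaching $w$'' and ``hitting $D$'' are the same event and one checks the rate $\tfrac1{2a_w}=\tfrac1{2d}$ is counted exactly once in $R$. I do not expect a genuine obstacle beyond this bookkeeping: conceptually the lemma just says that killing on $D$ and killing on $D\cup\partial^{\mathrm{e}}\{x\}$ give comparable Green's functions at $x$ (because $\widetilde{S}$ returns to $x$ a bounded-mean number of times, by transience), and that the second of these is the reciprocal of a sojourn-termination rate in which escape straight to $y$ is a fixed positive fraction.
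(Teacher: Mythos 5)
Your proof is correct, but it takes a genuinely different route from the paper's. The paper proves the lemma in four lines by introducing the midpoint $z=\tfrac12(x+y)$ of $I_{\{x,y\}}$: the strong Markov property and symmetry of $\widetilde G_D$ give $\widetilde{G}_D(x,x)=\widetilde{\mathbb{P}}_x(\tau_D>\tau_z)\widetilde{G}_D(z,z)/\widetilde{\mathbb{P}}_z(\tau_D>\tau_x)$; the hypothesis $D\cap I_{\{x,y\}}^{\circ}=\emptyset$ lets one bound $\widetilde{\mathbb{P}}_z(\tau_D>\tau_x)\ge\tfrac12$ and $\widetilde{\mathbb{P}}_x(\tau_D>\tau_y)\ge\tfrac12\widetilde{\mathbb{P}}_x(\tau_D>\tau_z)$ by symmetry of the one-dimensional BM on the free interval, and $\widetilde G_D(z,z)\le\widetilde G(z,z)\le \Cref{const_green_1}$ does the rest. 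Your argument instead works entirely at the vertex $x$ via excursion theory: you decompose the pre-$\tau_D$ path into sojourns at $x$, identify $\widetilde G_{\widehat D}(x,x)=1/R$ as the reciprocal of the sojourn-termination rate, extract the factor $\widetilde{\mathbb{P}}_x(\tau_D>\tau_y)\ge\tfrac{1/(2d)}{R}$ from the competing-clock identity, and then bound the expected number of sojourns by $(1-q_d)^{-1}$ using transience. The excursion normalization you rely on — rate $\tfrac{1}{2a}$ per unit Lebesgue-normalized local time for excursions into a fixed leg reaching distance $a$, hence rate $\tfrac1{2d}$ of reaching an adjacent vertex and total free rate $1$ — is consistent with the paper's statement that each sojourn's local time is $\mathrm{Exp}(1)$, so this is not a gap, just the bookkeeping you flagged. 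Your approach buys a transparent explicit constant $C(d)=2d/(1-q_d)$ and exposes the mechanism (the Green's function at $x$ is controlled by a bounded number of sojourns, each of which escapes straight to $y$ with probability at least $1/(2dR)$); the paper's approach buys brevity and avoids pinning down the local-time normalization by passing to the midpoint, where only a $\tfrac12$–$\tfrac12$ coin flip and the global Green's function bound are needed.
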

\begin{proof}
	Let $z:=\frac{1}{2}(x+y)$ be the midpoint of $I_{\{x,y\}}$. By the strong Markov property and the symmetry of the Green's function, we have
	\begin{equation}\label{new_add_2.4}
		\widetilde{G}_D(x,x) = \tfrac{\widetilde{G}_D(x,z)}{\widetilde{\mathbb{P}}_z(\tau_D>\tau_x)}= \tfrac{  \widetilde{\mathbb{P}}_x(\tau_D>\tau_z)\widetilde{G}_D(z,z)}{\widetilde{\mathbb{P}}_z(\tau_D>\tau_x)}.
	\end{equation} 
	Since $x\sim y$ and $D\cap I_{\{x,y\}}^{\circ}= \emptyset$, one has  
   \begin{equation}\label{new_add_2.5}
   	 \widetilde{\mathbb{P}}_z(\tau_D>\tau_x) \ge \widetilde{\mathbb{P}}_z(\tau_y>\tau_x) \overset{\text{(by symmetry)}}{=} \tfrac{1}{2}. 
   \end{equation}
	Meanwhile, by the strong Markov property, 
	\begin{equation}\label{new_add_2.6}
		\widetilde{\mathbb{P}}_x(\tau_D>\tau_y) \ge \widetilde{\mathbb{P}}_x(\tau_D>\tau_z)  \widetilde{\mathbb{P}}_z(\tau_x>\tau_y) \overset{\text{(by symmetry)}}{=}  \tfrac{1}{2}\widetilde{\mathbb{P}}_x(\tau_D>\tau_z). 
	\end{equation}
	Combining (\ref{new_add_2.4}), (\ref{new_add_2.5}) and (\ref{new_add_2.6}), we get 
	\begin{equation}\label{new_add_2.7} 
			\widetilde{G}_D(x,x) \le 4\widetilde{G}_D(z,z) \widetilde{\mathbb{P}}_x(\tau_D>\tau_y). 
	\end{equation}
	Noting that $\widetilde{G}_D(z,z)\le \widetilde{G}(z,z)\le \Cref{const_green_1}$ (by (\ref{bound_green})), we derive (\ref{new_add_2.3}) from (\ref{new_add_2.7}). 
			\end{proof}

\subsection{Strong Markov property for the GFF}

The strong Markov property is a fundamental property of $\widetilde{\phi}$. To be precise, we introduce the following notation.

\begin{definition}[harmonic average]\label{def_harmonic_average}
	For any $D_1\subset D_2\subset \widetilde{\mathbb{Z}}^d$ and $v\in \widetilde{\mathbb{Z}}^d$, suppose that all values of $\widetilde{\phi}$ on $D_2$ are given. Then we define the average of the boundary condition with respect to the harmonic measure of $D_2$ (i.e. the hitting distribution on $D_2$ of the Brownian motion) restricted to $D_1$ as follows:
	\begin{equation}\label{def_Hv}
		\mathcal{H}_v(D_1,D_2):=\left\{\begin{array}{ll}
			0   &\   \text{if}\ v\in D_2^{\circ}; \\
			\sum\nolimits_{w\in \widetilde{\partial} D_1} \widetilde{\mathbb{P}}_v\big(\tau_{D_2}=\tau_w<\infty \big) \widetilde{\phi}_{w}  &\  \text{otherwise}.
		\end{array}
		\right.
	\end{equation} 
	Note that $\widetilde{\partial} D_1$ is countable since $D_1$ is composed of finitely many connected components and $\widetilde{\mathbb{Z}}^d$ is locally one-dimensional. Especially, when $D_1=D_2=D$, we abbreviate $\mathcal{H}_v(D):=\mathcal{H}_v(D,D)$. 
\end{definition}

\begin{lemma}[{\cite[Theorem 8]{ding2020percolation}}]\label{lemma_strong_markov}
	Suppose that $\mathcal{A}\subset \widetilde{\mathbb{Z}}^d$ is a random compact set measuable with respect to $\{\widetilde{\phi}_v\}_{v\in \widetilde{\mathbb{Z}}^d}$ such that for any open set $U\subset \widetilde{\mathbb{Z}}^d$, the event $\{\mathcal{A}\subset U\}$ is measurable with respect to $\mathcal{F}_U$ (i.e. the $\sigma$-field generated by $\{\widetilde{\phi}_v\}_{v\in U}$). Then conditioning on $\mathcal{F}_{\mathcal{A}}$ (i.e. the $\sigma$-field generated by the configuration of $\mathcal{A}$ and all values of $\widetilde{\phi}$ on $\mathcal{A}$), on the event $\{\mathcal{A}=D\}$ (for some $D\subset \widetilde{\mathbb{Z}}^d$), $\{\widetilde{\phi}_v\}_{v\in \widetilde{\mathbb{Z}}^d\setminus D}$ has the same distribution as $\{\widetilde{\phi}_v'+\mathcal{H}_v(D) \}_{v\in \widetilde{\mathbb{Z}}^d\setminus D}$, where $\{\widetilde{\phi}_v'\}_{v\in \widetilde{\mathbb{Z}}^d\setminus D}$ is an independent GFF with law $\mathbb{P}^{D}$.  
\end{lemma}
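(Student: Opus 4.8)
\textbf{Proof plan for Lemma~\ref{lemma_strong_markov}.}

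The plan is to reduce the stated strong Markov property for random compact sets to the ordinary (domain) Markov property for the metric graph GFF, exactly as in the proof of the analogous statement for discrete GFF. First I would recall the deterministic domain Markov property for $\widetilde\phi$: for a fixed $D\subset\widetilde{\mathbb Z}^d$ composed of finitely many compact connected components, conditioning on $\{\widetilde\phi_v\}_{v\in D}$, the field $\{\widetilde\phi_v\}_{v\in\widetilde{\mathbb Z}^d\setminus D}$ decomposes as $\{\widetilde\phi'_v+\mathcal H_v(D)\}$ with $\widetilde\phi'$ an independent GFF of law $\mathbb P^{D}$; this is the content recalled in the paragraph preceding Lemma~\ref{lemma2.1} (covariance $\widetilde G_D$) together with the harmonicity of conditional expectations, since $\mathcal H_v(D)=\mathbb E[\widetilde\phi_v\mid\mathcal F_D]$ by the Brownian-motion representation of the Green's function. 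So the only real issue is upgrading from deterministic $D$ to the random set $\mathcal A$, using the measurability hypothesis that $\{\mathcal A\subset U\}\in\mathcal F_U$ for every open $U$.

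The key steps, in order, are as follows. (1) Approximate $\mathcal A$ from outside by a countable family of deterministic sets: since $\widetilde{\mathbb Z}^d$ is locally one-dimensional and $\mathcal A$ is compact, fix a countable collection $\mathcal D$ of ``dyadic-type'' open neighborhoods (finite unions of metric-graph balls with rational centers and radii) that is cofinal for compact sets, and for each $\varepsilon$ let $\mathcal A_\varepsilon$ be the smallest element of the finite subfamily at scale $\varepsilon$ containing $\mathcal A$; then $\mathcal A_\varepsilon$ takes countably many deterministic values, $\mathcal A_\varepsilon\downarrow\overline{\mathcal A}=\mathcal A$, and crucially $\{\mathcal A_\varepsilon=D\}=\{\mathcal A\subset D^{\circ}\}\setminus\bigcup_{D'\subsetneq D}\{\mathcal A\subset (D')^{\circ}\}\in\mathcal F_{D^{\circ}}\subset\mathcal F_D$ by the hypothesis. (2) On each atom $\{\mathcal A_\varepsilon=D\}$ apply the deterministic domain Markov property to the fixed set $D$: conditionally on $\mathcal F_D$, $\{\widetilde\phi_v\}_{v\notin D}$ equals $\widetilde\phi'_v+\mathcal H_v(D)$ with $\widetilde\phi'\sim\mathbb P^{D}$ independent of $\mathcal F_D$, hence independent of the event $\{\mathcal A_\varepsilon=D\}$ which lies in $\mathcal F_D$. (3) Take $\varepsilon\downarrow0$: the sets $\mathcal F_{\mathcal A_\varepsilon}$ decrease to $\mathcal F_{\mathcal A}$ (this uses exactly the measurability hypothesis, to identify the limiting $\sigma$-field with the one generated by the configuration of $\mathcal A$ and the values on $\mathcal A$), $\mathcal H_v(\mathcal A_\varepsilon)\to\mathcal H_v(\mathcal A)$ by continuity of the hitting distributions as the exploration region shrinks to its closure, and $\mathbb P^{\mathcal A_\varepsilon}\Rightarrow\mathbb P^{\mathcal A}$ since $\widetilde G_{\mathcal A_\varepsilon}\to\widetilde G_{\mathcal A}$ pointwise (monotone convergence of Green's functions under decreasing $D$, as noted after the definition of $\widetilde G_D$). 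A martingale/reverse-martingale convergence argument along the decreasing filtration then passes the conditional law through the limit, yielding the claim for $\mathcal A$ itself.

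The main obstacle is step~(3): making rigorous that the ``post-$\mathcal A$'' field and the independent GFF behave well in the limit $\varepsilon\downarrow0$, i.e.\ that $\bigcap_\varepsilon\mathcal F_{\mathcal A_\varepsilon}=\mathcal F_{\mathcal A}$ and that conditional laws converge. This is where the hypothesis ``$\{\mathcal A\subset U\}\in\mathcal F_U$ for all open $U$'' is doing the work, and where one must be careful that $\mathcal A$ is closed (hence the distinction between $\mathcal A$, $\mathcal A^{\circ}$, $\overline{\mathcal A}$ matters for the boundary behavior of $\widetilde\phi'$, which vanishes on $\widetilde\partial D$). In practice I would cite \cite[Theorem~8]{ding2020percolation} for this and only sketch the reduction, since the metric-graph version is proved there; alternatively, if a self-contained argument is wanted, I would run the reverse-martingale convergence theorem for $\mathbb E[f(\widetilde\phi)\mid\mathcal F_{\mathcal A_\varepsilon}]$ for bounded continuous cylinder functions $f$ and match it against $\mathbb E^{D}[\,\cdot\,]$ evaluated at the (convergent) harmonic-average shift, using that the pair $(\mathcal A_\varepsilon,\{\widetilde\phi_v\}_{v\in\mathcal A_\varepsilon})$ converges to $(\mathcal A,\{\widetilde\phi_v\}_{v\in\mathcal A})$ and dominated convergence.
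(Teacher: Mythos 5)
The paper does not prove this lemma; it is imported verbatim from \cite[Theorem~8]{ding2020percolation} and used as a black box, so there is no in-paper proof to compare against. Your sketch — outer approximation of $\mathcal{A}$ by a decreasing, countably-valued family $\mathcal{A}_\varepsilon$ of deterministic compact sets with $\{\mathcal{A}_\varepsilon=D\}\in\mathcal{F}_D$, applying the ordinary domain Markov property on each atom, then passing to the limit via reverse-martingale convergence and continuity of $\widetilde G_D$ and $\mathcal H_\cdot(D)$ in $D$ — is exactly the strategy of that reference, and you correctly locate the delicate point at step (3). One small caution: the equality $\bigcap_\varepsilon\mathcal{F}_{\mathcal{A}_\varepsilon}=\mathcal{F}_{\mathcal{A}}$ is stronger than what is needed and not obviously true; what the reverse-martingale argument actually delivers is the conditional law given $\bigcap_\varepsilon\mathcal{F}_{\mathcal{A}_\varepsilon}\supset\mathcal{F}_{\mathcal{A}}$, and one then checks that this limiting conditional law is itself $\mathcal{F}_{\mathcal{A}}$-measurable (being a function of the configuration of $\mathcal{A}$ and $\widetilde\phi|_{\mathcal{A}}$ alone), which by the tower property identifies it as the conditional law given $\mathcal{F}_{\mathcal{A}}$.
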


For brevity, we write ``$\xleftrightarrow{\widetilde{E}^{\ge 0}}$'' as ``$\xleftrightarrow{\ge 0}$''. We also denote by ``$A_1 \xleftrightarrow{\le 0} A_2$'' the event that $A_1$ and $A_2$ are connected by the negative level-set $\widetilde{E}^{\le 0}:= \big\{v\in \widetilde{\mathbb{Z}}^d: \widetilde{\phi}_v\le 0\big\}$. Note that $A_1 \xleftrightarrow{\ge 0} A_2$ and $A_1 \xleftrightarrow{\le 0} A_2$ occur if $A_1\cap A_2\neq \emptyset$.

\begin{example}[negative cluster] 
	For any non-empty $A\subset \mathbb{Z}^d$, we denote 
	\begin{equation}
		\mathcal{C}_{A}^{-}:= \Big\{v\in \widetilde{\mathbb{Z}}^d: v\xleftrightarrow{\le 0} A  \Big\}.
	\end{equation}
	Note that $A \subset  \mathcal{C}_{A}^{-}$. Moreover, by the continuity of $\widetilde{\phi}_\cdot$, we have $\widetilde{\phi}_w=0$ for all $w\in  \widetilde{\partial} 	\mathcal{C}_{A}^{-}\setminus A$, which implies that  $\mathcal{H}_v(\mathcal{C}_{A}^{-})=\mathcal{H}_v(A,\mathcal{C}_{A}^{-})$ for all $v\in \widetilde{\mathbb{Z}}^d$. Therefore, since $\mathcal{C}_{A}^{-}$ satisfies the condition for $\mathcal{A}$ in Lemma \ref{lemma_strong_markov}, we have: conditioning on $\mathcal{F}_{\mathcal{C}_{A}^{-}}$, on $\{\mathcal{C}_{A}^{-}=D\}$ (for some $D\subset \widetilde{\mathbb{Z}}^d$), $\{\widetilde{\phi}_v\}_{v\in \widetilde{\mathbb{Z}}^d\setminus D}$ has the same distribution as $\{\widetilde{\phi}_v'+\mathcal{H}_v(A,D) \}_{v\in \widetilde{\mathbb{Z}}^d\setminus D}$, where $\{\widetilde{\phi}_v'\}_{v\in \widetilde{\mathbb{Z}}^d\setminus D}$ is an independent GFF with law $\mathbb{P}^{D}$. 
\end{example}

See Figure \ref{fig_hm} for an illustration of the harmonic average $\mathcal{H}_x(\mathcal{C}^-_{\partial B_x(N)})$.


 \begin{figure}[h!]
	\centering
	\includegraphics[width=0.3\textwidth]{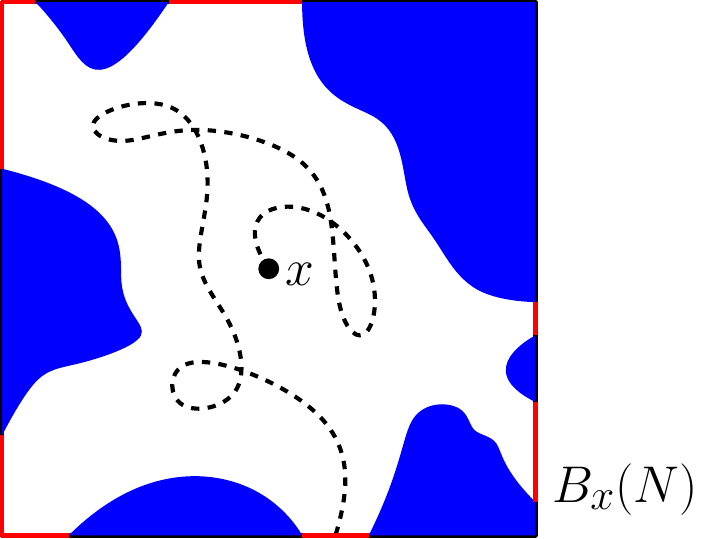}
	\caption{In this figure, the blue area represents the portion of $\mathcal{C}^-_{\partial B_x(N)}$ that lies inside $B_x(N)$, where $x\in \mathbb{Z}^d$ and $N\ge 1$. Note that the GFF values on the boundary of the blue area inside $B_x(N)$ are exactly $0$. The red part of $\partial B_x(N)$ represents the positive boundary values on $\mathcal{C}^-_{\partial B_x(N)}$. The harmonic average $\mathcal{H}_x(\mathcal{C}^-_{\partial B_x(N)})$ is the average of the positive boundary values with respect to the hitting distribution of an independent Brownian motion (i.e. the dashed curve) starting from $x$ and stopped upon hitting $\mathcal{C}^-_{\partial B_x(N)}$.}\label{fig_hm}
\end{figure}

\subsection{Connecting probability}

In this subsection, we review a useful formula in \cite{lupu2018random} for the connecting probability between two sets, and then provide some applications. For any $D\subset \widetilde{\mathbb{Z}}^d$ and $v,w\in \widetilde{\partial} D$, we denote by $\mathbb{K}_D(v,w)$ the effective equivalent
 conductance between $v$ and $w$ in the metric graph $\widetilde{\mathbb{Z}}^d\setminus D^{\circ}$. Note that in \cite{lupu2018random} $\mathbb{K}_D$ is written as $C^{\mathrm{eff}}_D$, and we changed the notation because the letter ``$C$'' is already used to represent a constant. As presented in \cite[Section 2.2]{lupu2018random}, $\mathbb{K}_D$ can be considered as the boundary
excursion kernel. I.e., 
\begin{equation}\label{def_KD}
	\mathbb{K}_D(v,w) := \lim\limits_{\epsilon \to 0+} \epsilon^{-1}\sum\nolimits_{v'\in \widetilde{\mathbb{Z}}^d: \|v'-v\|=\epsilon} \widetilde{\mathbb{P}}_{v'} (\tau_{D}=\tau_{w}<\infty).  
\end{equation}

\begin{lemma}[{\cite[Equation (18)]{lupu2018random}}]\label{lemma_connecting_boundary}
	For any disjoint $D_1,D_2\subset \widetilde{\mathbb{Z}}^d$, suppose that the values of $\widetilde{\phi}$ on $D_1\cup D_2$ are given and are all non-negative. Then the conditional probability of the event $\{D_1\xleftrightarrow{\ge 0}D_2\}$ is given by 
	\begin{equation}
		1-e^{-2\sum_{z_1\in \widetilde{\partial }D_1,z_2\in \widetilde{\partial} D_2}\mathbb{K}_{D_1\cup D_2}(z_1,z_2)\widetilde{\phi}_{z_1}\widetilde{\phi}_{z_2}}. 
	\end{equation}
\end{lemma}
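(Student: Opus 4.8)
\textbf{Proof strategy for Lemma \ref{lemma_connecting_boundary}.} The plan is to reduce the statement to Lupu's isomorphism theorem coupling the metric-graph GFF with the critical loop soup, exactly as in \cite[Section 2.2]{lupu2018random}. First I would set up the decomposition of the metric graph $\widetilde{\mathbb Z}^d$ into the compact ``obstacle'' $D_1\cup D_2$ and its complement. Conditioning on the prescribed non-negative boundary values of $\widetilde\phi$ on $D_1\cup D_2$, the strong Markov property (Lemma \ref{lemma_strong_markov}, in the form stated for deterministic $D$) tells us that in $\widetilde{\mathbb Z}^d\setminus (D_1\cup D_2)^\circ$ the field is an independent GFF with law $\mathbb P^{D_1\cup D_2}$ shifted by the harmonic extension of the boundary data; what we really need to analyze is the probability that the sign cluster of this shifted field connects $D_1$ to $D_2$, i.e.\ that there is a path in the complement on which the field stays $\ge 0$ all the way from $\widetilde\partial D_1$ to $\widetilde\partial D_2$.

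The key step is to invoke Lupu's coupling: under the isomorphism between $|\widetilde\phi|$ and the occupation field of the critical ($\alpha=\tfrac12$) loop soup together with an independent collection of excursions/Brownian bridges encoding the boundary values, the event $\{D_1\xleftrightarrow{\ge0}D_2\}$ translates into a connectivity event for the loop-soup clusters augmented by the boundary excursions. The boundary values $\widetilde\phi_{z_1}$ on $\widetilde\partial D_1$ and $\widetilde\phi_{z_2}$ on $\widetilde\partial D_2$ contribute, through the Poisson structure of the excursions from the boundary, a Poissonian number of connecting ``bridges'' whose intensity is precisely $2\,\mathbb K_{D_1\cup D_2}(z_1,z_2)\widetilde\phi_{z_1}\widetilde\phi_{z_2}$ — here the factor $\mathbb K_{D_1\cup D_2}(z_1,z_2)$ enters as the boundary excursion kernel by its definition \eqref{def_KD}, and the factor $2$ is the standard normalization coming from the variance-$2$ Brownian motion (equivalently, from the relation between the GFF and the loop soup at intensity $\tfrac12$). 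Summing the intensities over all pairs $(z_1,z_2)\in\widetilde\partial D_1\times\widetilde\partial D_2$ and using that the number of connections is Poisson, the conditional probability that at least one connection occurs — hence that $D_1\xleftrightarrow{\ge0}D_2$ — is $1-\exp\bigl(-2\sum_{z_1\in\widetilde\partial D_1,\,z_2\in\widetilde\partial D_2}\mathbb K_{D_1\cup D_2}(z_1,z_2)\widetilde\phi_{z_1}\widetilde\phi_{z_2}\bigr)$, as claimed.

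I expect the main obstacle to be making the ``Poissonian number of connecting bridges with the stated intensity'' step rigorous: one has to justify taking the $\epsilon\to 0$ limit in \eqref{def_KD}, i.e.\ that the excursions emanating from the $\epsilon$-neighborhoods of the boundary, after conditioning on the boundary values, organize into a Poisson point process of bridges between $\widetilde\partial D_1$ and $\widetilde\partial D_2$ whose mean measure is exactly $2\,\mathbb K_{D_1\cup D_2}(z_1,z_2)\widetilde\phi_{z_1}\widetilde\phi_{z_2}$, and that distinct pairs contribute independently. This is precisely the content of the derivation of \cite[Equation (18)]{lupu2018random}, so I would carry it out by citing and, if needed, reproducing that argument: combine the Le Jan-type isomorphism on the metric graph, the description of loop soup clusters touching the boundary via the boundary excursion kernel, and the independence of disjoint boundary excursions. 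A secondary technical point, easily handled, is that $\widetilde\partial D_1$ and $\widetilde\partial D_2$ are countable (as noted after Definition \ref{def_harmonic_average}, since each $D_i$ has finitely many components and $\widetilde{\mathbb Z}^d$ is locally one-dimensional), so the sum in the exponent is a well-defined, possibly countable, sum of non-negative terms and all the Poissonian bookkeeping goes through by monotone limits from finite sub-collections.
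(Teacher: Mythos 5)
The paper does not prove this lemma: it is cited verbatim as Equation (18) of Lupu (2018) and used as a black box. What you wrote is a plausible reconstruction of the route Lupu's derivation actually takes --- condition on the boundary data, pass through the isomorphism coupling the conditioned metric-graph GFF with the critical loop soup together with a Poisson cloud of boundary excursions whose intensity is governed by the boundary values and the excursion kernel, and read off the connection probability from the Poissonian structure. You also correctly flag the main technical point (justifying the $\epsilon\to 0$ limit in the definition of $\mathbb{K}$ and the Poisson point process description of the excursions), which is indeed the substance of the cited derivation, so ultimately your plan is ``cite and, if pressed, reproduce Lupu's argument'', which is exactly what the paper does.

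Two cautions on the sketch as written. First, the phrase ``a Poissonian number of connecting bridges'' flattens the real content of the argument: a priori the sign cluster of the conditioned field containing $D_1$ can reach $D_2$ by chaining boundary excursions through loop-soup clusters, not only by a single direct excursion, and it is a nontrivial fact (the point of Lupu's derivation) that the effective excursion kernel $\mathbb{K}_{D_1\cup D_2}$ already encodes this so that the exact exponential formula holds. If you were to fill in this step you would need to surface that identity rather than fold it into ``Poissonian bookkeeping''. Second, attributing the overall factor of $2$ to ``the variance-$2$ Brownian motion'' is too loose; the $2$ is a normalization artifact of the GFF-to-loop-soup isomorphism (occupation field $= \widetilde\phi^2/2$ at intensity $1/2$) and of the excursion-measure normalization, and simply pointing at the edge-level variance is not a derivation. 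Neither caution is fatal since you defer to the cited reference, but both steps would need actual work if you intended the argument to stand on its own.
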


By applying Lemma \ref{lemma_connecting_boundary}, we are able to relate the connecting probability and the harmonic average as follows.

\begin{lemma}\label{lemma_connecting_hm}
	For any non-empty, disjoint $A_1,A_2\subset \mathbb{Z}^d$, 
	\begin{equation}
		\mathbb{P}\big( A_1 \xleftrightarrow{\ge 0} A_2\big) = \mathbb{E}\Big(1- e^{-4\sum\nolimits_{y\in A_1} \widehat{\mathcal{H}}_{y}(A_2, \mathcal{C}^-_{A_1\cup A_2})\widetilde{\phi}_{y} } \Big),
	\end{equation}
	where for any $D_1\subset D_2\subset \widetilde{\mathbb{Z}}^d$ and $y\in \mathbb{Z}^d$, 
	\begin{equation}\label{def_hat_H}
		\widehat{\mathcal{H}}_y(D_1, D_2) := (2d)^{-1}\sum\nolimits_{z\sim y:I_{\{y,z\}}^{\circ}\cap D_2=\emptyset } \mathcal{H}_z(D_1, D_2).
	\end{equation} 
\end{lemma}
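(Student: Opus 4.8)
The plan is to derive Lemma \ref{lemma_connecting_hm} from Lemma \ref{lemma_connecting_boundary} by first conditioning on the negative cluster $\mathcal{C}^-_{A_1 \cup A_2}$ (to create the non-negative boundary values that Lemma \ref{lemma_connecting_boundary} requires), and then identifying the resulting boundary excursion kernel sum with a harmonic average. Concretely, write $A := A_1 \cup A_2$ and $\mathcal{C} := \mathcal{C}^-_{A}$. On the event $\{\mathcal{C} = D\}$, conditionally on $\mathcal{F}_{\mathcal{C}}$, the field $\{\widetilde{\phi}_v\}_{v \notin D}$ decomposes as $\{\widetilde{\phi}_v' + \mathcal{H}_v(A, D)\}$ with $\widetilde{\phi}'$ an independent GFF of law $\mathbb{P}^D$, exactly as recorded in the negative-cluster Example. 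The key observation is that $\{A_1 \xleftrightarrow{\ge 0} A_2\}$, after this conditioning, is equivalent to connecting the two (random) closed sets $D_1 := \mathcal{C} \cap (\text{component reaching } A_1)$ and $D_2 := \mathcal{C} \cap (\text{component reaching } A_2)$ — but more efficiently, one connects $\widetilde{\partial} D$ to itself across the complement, and the boundary values on $\widetilde{\partial}D$ are non-negative (they equal $0$ on $\widetilde{\partial}D \setminus A$ and the given non-negative values on $A$). So Lemma \ref{lemma_connecting_boundary} applies with the roles of $D_1, D_2$ being the portions of $D$ containing $A_1$ and $A_2$ respectively, giving the conditional connection probability $1 - \exp(-2\sum_{z_1 \in \widetilde{\partial}D_1, z_2 \in \widetilde{\partial}D_2} \mathbb{K}_D(z_1,z_2)\widetilde{\phi}_{z_1}\widetilde{\phi}_{z_2})$; since the boundary values vanish off $A$, the sum collapses to $z_1 \in A_1$, $z_2 \in A_2$.

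Next I would convert the $\mathbb{K}_D$-sum into the harmonic-average form. Using the excursion-kernel representation \eqref{def_KD}, $\sum_{z_2 \in A_2} \mathbb{K}_D(z_1, z_2)\widetilde{\phi}_{z_2}$ should be interpreted as a limit of $\epsilon^{-1} \sum_{v'} \widetilde{\mathbb{P}}_{v'}(\tau_D = \tau_{z_2} < \infty)\widetilde{\phi}_{z_2}$ where $v'$ ranges over points at metric-graph distance $\epsilon$ from $z_1$. Because $z_1 \in A_1 \subset \mathbb{Z}^d$ has exactly $2d$ incident edges, and $\mathcal{C}$ meets $I_{\{z_1,z\}}^\circ$ for some of those edges, the contributing $v'$ are the $\epsilon$-perturbations of $z_1$ along the edges $I_{\{z_1,z\}}$ with $I_{\{z_1,z\}}^\circ \cap D = \emptyset$; along those edges $\widetilde{\mathbb{P}}_{v'}(\tau_D = \tau_{z_2}) = (1 - \tfrac{\epsilon}{d})\,\widetilde{\mathbb{P}}_z(\tau_D = \tau_{z_2}) + O(\epsilon)$ plus an $O(\epsilon)$ term for first hitting $D$ at the far end — a standard one-dimensional Brownian computation on the interval $I_{\{z_1,z\}}$. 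Summing over the $2d$ edges and taking $\epsilon \to 0$ yields $\tfrac{1}{2d}\sum_{z \sim z_1 : I^\circ_{\{z_1,z\}} \cap D = \emptyset} \mathcal{H}_z(A_2, D) = \widehat{\mathcal{H}}_{z_1}(A_2, D)$ after one further factor of $\tfrac{1}{2}$ is accounted for; combined with the $2$ in the exponent this produces the stated $4$. Thus conditionally the connection probability is $1 - \exp(-4 \sum_{z_1 \in A_1} \widehat{\mathcal{H}}_{z_1}(A_2, D)\widetilde{\phi}_{z_1})$.

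Finally, I would take expectations: integrating over the law of $\mathcal{C} = D$ (note $\widehat{\mathcal{H}}_{z_1}(A_2, D)$ and $\widetilde{\phi}_{z_1}$ for $z_1 \in A_1$ are measurable with respect to $\mathcal{F}_{\mathcal{C}}$ — the first by construction, the second because $A_1 \subset \mathcal{C}$) and using the tower property gives exactly $\mathbb{P}(A_1 \xleftrightarrow{\ge 0} A_2) = \mathbb{E}(1 - \exp(-4\sum_{y \in A_1}\widehat{\mathcal{H}}_y(A_2, \mathcal{C}^-_{A_1 \cup A_2})\widetilde{\phi}_y))$. One small point needing care: if $A_1$ and $A_2$ already lie in the same negative cluster, then $\{A_1 \xleftrightarrow{\ge 0} A_2\}$ is handled by the convention that the event holds when $A_1 \cap \mathcal{C}_{A_2}^- \neq \emptyset$ — but actually in that case $A_1$ and $A_2$ need not be $\ge 0$-connected, so one must check the formula degenerates correctly (e.g. some $\widehat{\mathcal{H}}$ term blows up or the event is measured separately); alternatively one observes the statement as written only needs the generic case and the boundary case is consistent.

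The main obstacle I anticipate is the $\epsilon \to 0$ excursion-kernel computation in the second paragraph: one must carefully handle the limit in \eqref{def_KD} when the base point $z_1$ is a lattice vertex of degree $2d$ and some incident edges are blocked by $D$ (so the count of contributing directions is exactly the set in \eqref{def_hat_H}), and track all the constant factors (the $\tfrac{1}{2d}$ from uniform edge choice, a $\tfrac12$ from the symmetric excursion/last-exit structure, and the $2$ already in Lemma \ref{lemma_connecting_boundary}) to land precisely on $4$. The measurability bookkeeping and the reduction from "$A_1 \leftrightarrow A_2$ in $\widetilde{\mathbb{Z}}^d \setminus D^\circ$" to "$\widetilde{\partial}D_1 \leftrightarrow \widetilde{\partial}D_2$" via Lemma \ref{lemma_connecting_boundary} are routine given the strong Markov property already established.
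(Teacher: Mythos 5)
Your high-level strategy is the same as the paper's: condition on $\mathcal{F}_{\mathcal{C}^-_{A_1\cup A_2}}$, use Lemma~\ref{lemma_strong_markov} (in the negative-cluster Example form) to create non-negative boundary values so that Lemma~\ref{lemma_connecting_boundary} applies, and then evaluate the boundary excursion kernel $\mathbb{K}_{\mathcal{C}^-_{A_1\cup A_2}}$ at lattice points by separating the one-dimensional Brownian behaviour on each incident edge. The paper even uses the same observation that only $y_1\in A_1$ and $y_2\in A_2$ contribute, since $\widetilde\phi\equiv 0$ on the rest of $\widetilde\partial\mathcal{C}^-_{A_1\cup A_2}$.

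However, your one-dimensional Brownian computation is wrong as written, and this in turn scrambles your constant accounting. Starting from $v'=y_1'(z,\epsilon)$ at distance $\epsilon$ from $y_1$ along an edge $I_{\{y_1,z\}}$ with $I^\circ_{\{y_1,z\}}\cap D=\emptyset$, the BM either hits $y_1\in D$ first (probability $1-\epsilon/d$, in which case $\tau_D=\tau_{y_1}\neq\tau_{z_2}$, so this contributes $0$) or reaches $z$ first (probability exactly $\epsilon/d$). Hence
\[
\widetilde{\mathbb{P}}_{v'}\big(\tau_D=\tau_{z_2}\big) \;=\; \tfrac{\epsilon}{d}\,\widetilde{\mathbb{P}}_{z}\big(\tau_D=\tau_{z_2}\big),
\]
exactly, with no $O(\epsilon)$ error. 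Your expression $(1-\tfrac{\epsilon}{d})\widetilde{\mathbb{P}}_z(\cdot)+O(\epsilon)$ has the factors swapped; it would make the limit in~\eqref{def_KD} diverge. With the corrected factor, $\mathbb{K}_D(y_1,y_2)=d^{-1}\sum_{z\sim y_1:\, I^\circ\cap D=\emptyset}\widetilde{\mathbb{P}}_z(\tau_D=\tau_{y_2})$, so $\sum_{y_2\in A_2}\mathbb{K}_D(y_1,y_2)\widetilde\phi_{y_2}=d^{-1}\sum_z\mathcal{H}_z(A_2,D)=2\widehat{\mathcal{H}}_{y_1}(A_2,D)$; the factor $4$ then comes from this $2$ multiplied by the $2$ already in Lemma~\ref{lemma_connecting_boundary}. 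There is no extra ``$\tfrac12$ from the symmetric excursion/last-exit structure'' — the $2$ is just the ratio $d^{-1}/(2d)^{-1}$. The residual concern you raise about $A_1\xleftrightarrow{\le 0}A_2$ is not a problem in the paper's derivation because the formula only picks up the boundary values on $A_1\cup A_2$ (everything else vanishes); you would not need to treat a degenerate case separately.
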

\begin{proof}
	Note that $\mathcal{C}_{A_1}^{-}\cup \mathcal{C}_{A_2}^{-}=\mathcal{C}_{A_1\cup A_2}^{-}$. Combining Lemmas \ref{lemma_strong_markov} and \ref{lemma_connecting_boundary} (where we take $D_1=\mathcal{C}_{A_1}^{-}$ and $D_2=\mathcal{C}_{A_2}^{-}$) with the facts that $\widetilde{\phi}_v=0$ for all $i\in \{1,2\}$ and $v\in  \widetilde{\partial} 	\mathcal{C}_{A_i}^{-}\setminus A_i$, we get that 
		\begin{equation}\label{eq_A1_A2}
			\begin{split}
					\mathbb{P}\big( A_1 \xleftrightarrow{\ge 0} A_2\big)=&  \mathbb{E}\Big[\mathbb{P}\Big( A_1 \xleftrightarrow{\ge 0}  A_2 \mid  \mathcal{F}_{	\mathcal{C}_{A_1\cup A_2}^{-}} \Big) \Big]\\
					=& \mathbb{E}\Big(1- e^{- 2\sum_{y_1\in A_1,y_2\in A_2} \mathbb{K}_{\mathcal{C}_{A_1\cup A_2}^{-}}(y_1,y_2) \widetilde{\phi}_{y_1}\widetilde{\phi}_{y_2} }   \Big).
			\end{split} 
	\end{equation}
For any $y_1\in A_1$ and $y_2\in A_2$, it follows from (\ref{def_KD}) that 
	\begin{equation}\label{newadd_2.14}
		\mathbb{K}_{\mathcal{C}_{A_1\cup A_2}^{-}}(y_1,y_2) = \lim\limits_{\epsilon \to 0+} \epsilon^{-1}\sum\nolimits_{z\sim y_1} \widetilde{\mathbb{P}}_{y_1'(z,\epsilon)} \big(\tau_{\mathcal{C}_{A_1\cup A_2}^{-}}=\tau_{y_2}<\infty\big),
	\end{equation}
	where $y_1'(z,\epsilon)$ is the point in $I_{\{y_1,z\}}$ with $\|y_1'(z,\epsilon)-y_1\|=\epsilon$. For any $z\sim y_1$ and small enough $\epsilon>0$, since the Brownian motion from $y_1'(z,\epsilon)$ (if stopped upon hitting $\mathcal{C}_{A_1\cup A_2}^{-}$) must reach $z$ before $A_2$, we have 
	\begin{equation}\label{newadd_2.15}
		\begin{split}
				&\widetilde{\mathbb{P}}_{y_1'(z,\epsilon)} \big(\tau_{\mathcal{C}_{A_1\cup A_2}^{-}}=\tau_{y_2}<\infty\big)\\
				= & \widetilde{\mathbb{P}}_{y_1'(z,\epsilon)} \big(\tau_{\mathcal{C}_{A_1\cup A_2}^{-}}>\tau_{z} \big)\widetilde{\mathbb{P}}_{z} \big(\tau_{\mathcal{C}_{A_1\cup A_2}^{-}}=\tau_{y_2}<\infty\big). 
		\end{split}
	\end{equation}
	Moreover, if $I_{\{z,y_1\}}^{\circ}\cap \mathcal{C}_{A_1\cup A_2}^{-}\neq \emptyset$, then for all sufficiently small $\epsilon>0$, we have $\widetilde{\mathbb{P}}_{y_1'(z,\epsilon)} \big(\tau_{\mathcal{C}_{A_1\cup A_2}^{-}}>\tau_{z} \big)=0$  since within $I_{\{y_1,z\}}$ there must be some point in $\mathcal{C}_{A_1\cup A_2}^{-}$ between $y_1'(z,\epsilon)$ and $z$. Otherwise (i.e. $I_{\{z,y_1\}}^{\circ}\cap \mathcal{C}_{A_1\cup A_2}^{-}= \emptyset$), applying the optional stopping theorem, we have 
	\begin{equation}\label{newadd_2.16}
	\widetilde{\mathbb{P}}_{y_1'(z,\epsilon)} \big(\tau_{\mathcal{C}_{A_1\cup A_2}^{-}}>\tau_{z} \big) = \widetilde{\mathbb{P}}_{y_1'(z,\epsilon)}\big(\tau_{y_1}>\tau_{z}\big)=d^{-1}\epsilon. 
	\end{equation}
	Combining (\ref{newadd_2.14}), (\ref{newadd_2.15}) and (\ref{newadd_2.16}), we have 
	\begin{equation*}
	\mathbb{K}_{\mathcal{C}_{A_1\cup A_2}^{-}}(y_1,y_2) =  d^{-1}\sum\nolimits_{z\sim y_1:I_{\{z,y_1\}}^{\circ}\cap \mathcal{C}_{A_1\cup A_2}^{-} = \emptyset}\widetilde{\mathbb{P}}_{z} \big(\tau_{\mathcal{C}_{A_1\cup A_2}^{-}}=\tau_{y_2}<\infty\big), 
	\end{equation*}
	which implies that 
\begin{equation*}
	\begin{split}
		&\sum\nolimits_{y_2\in A_2} \mathbb{K}_{ \mathcal{C}_{A_1\cup A_2}^{-}}(y_1,y_2) \widetilde{\phi}_{y_2} \\
		=& d^{-1}\sum\nolimits_{y_2\in A_2} \sum\nolimits_{z\sim y_1:I_{\{z,y_1\}}^{\circ}\cap \mathcal{C}_{A_1\cup A_2}^{-} = \emptyset} \widetilde{\mathbb{P}}_{z} \big(\tau_{\mathcal{C}_{A_1\cup A_2}^{-}}=\tau_{y_2}<\infty\big)\widetilde{\phi}_{y_2}\\ \overset{(\ref{def_Hv})}{=}&d^{-1} \sum\nolimits_{z\sim y_1:I_{\{z,y_1\}}^{\circ}\cap \mathcal{C}_{A_1\cup A_2}^{-} = \emptyset}  \mathcal{H}_z(A_2 , \mathcal{C}_{A_1\cup A_2}^{-})
		\overset{(\ref{def_hat_H})}{=} 2\widehat{\mathcal{H}}_{y_1}(A_2 , \mathcal{C}_{A_1\cup A_2}^{-}). 
	\end{split}
\end{equation*}
		Combined with (\ref{eq_A1_A2}), it concludes this lemma.
	\end{proof}

\subsection{Loop soup and isomorphism theorem}\label{section_loop_soup}

The loop soup is a useful tool in the study of percolation for the metric graph GFF (see \cite{cai2023one,drewitz2023arm,lupu2016loop}). Specifically, the loop soup on $\widetilde{\mathbb{Z}}^d$ of intensity $\alpha>0$ (denoted by $\widetilde{\mathcal{L}}_{\alpha}$) is a Poisson point process of rooted loops (i.e. continuous paths that start and end at the same point) on $\widetilde{\mathbb{Z}}^d$ with intensity measure $\alpha\widetilde{\mu}$, where $\widetilde{\mu}$ is defined as (see \cite[Equation (3.1)]{drewitz2023arm})
\begin{equation}
	\widetilde{\mu}(\cdot):= \int_{\widetilde{\mathbb{Z}}^d} \mathrm{d}m(v) \int_{0}^{\infty} t^{-1} \widetilde{q}_t(v,v) \widetilde{\mathbb{P}}^t_{v,v}(\cdot)\mathrm{d} t, 
\end{equation}
where $m(\cdot)$ is the Lebesgue measure on $\widetilde{\mathbb{Z}}^d$ and $\widetilde{\mathbb{P}}^t_{v,v}$ is the law of the Brownian bridge on $\widetilde{\mathbb{Z}}^d$ conditioning on $\widetilde{S}_0=\widetilde{S}_t=v$. By forgetting the roots of all rooted loops, $\widetilde{\mu}$ can also be considered as a measure on the equivalence classes of rooted loops under time-shift (we maintain this understanding throughout this paper). Readers may refer to \cite[Section 2]{lupu2016loop} or \cite[Section 2.6]{cai2023one} for an equivalent construction of $\widetilde{\mu}$ based on the discrete loop soup. For any $D\subset \widetilde{\mathbb{Z}}^d$, let $\widetilde{\mu}^{D}$ be the restriction of $\widetilde{\mu}$ to the space of loops contained in $\widetilde{\mathbb{Z}}^d\setminus D$. Note that $\widetilde{\mathcal{L}}_\alpha^{D}:=\widetilde{\mathcal{L}}_\alpha\cdot \mathbbm{1}_{\widetilde{\ell}\ \text{is contained in}\ \widetilde{\mathbb{Z}}^d\setminus D}$ is a Poisson point process with intensity measure $\alpha\widetilde{\mu}^{D}$.

 By Lupu's isomorphism theorem \cite[Proposition 2.1]{lupu2016loop}, we have the following coupling between the GFF $\{\widetilde{\phi}_v\}_{v\in \widetilde{\mathbb{Z}}^d\setminus D}\sim \mathbb{P}^{D}$ and the occupation field $\{\widehat{\mathcal{L}}^{D,v}_{1/2}\}_{v\in \widetilde{\mathbb{Z}}^d\setminus D}$, where $\widehat{\mathcal{L}}^{D,v}_{1/2}$ is the total local time at $v$ of all loops in $\widetilde{\mathcal{L}}_{1/2}^{D}$. Especially, when $D=\emptyset$, we abbreviate $\widehat{\mathcal{L}}^{v}_{1/2}:=\widehat{\mathcal{L}}^{\emptyset,v}_{1/2}$.

\begin{lemma}\label{lemma_iso}
	For any $D\subset \widetilde{\mathbb{Z}}^d$, there is a coupling between the loop soup $\widetilde{\mathcal{L}}_{1/2}^D$ and the GFF $\{\widetilde{\phi}_v\}_{v\in \widetilde{\mathbb{Z}}^d\setminus D}\sim  \mathbb{P}^{D}$ such that 
	\begin{itemize}
		\item  for any $v\in \widetilde{\mathbb{Z}}^d\setminus D$, $\widehat{\mathcal{L}}^{D,v}_{1/2}=\frac{1}{2}\widetilde{\phi}_v^{2}$;

		\item  the sign clusters of $\widetilde{\phi}_\cdot$ are exactly the clusters composed of loops in $\widetilde{\mathcal{L}}_{1/2}^D$.

	\end{itemize}
	
\end{lemma}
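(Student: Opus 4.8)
The plan is to obtain the coupling by combining Le Jan's isomorphism for the discrete loop soup with a cable-by-cable analysis, working throughout on $\widetilde{\mathbb{Z}}^d\setminus D^\circ$ so that $D$ is handled uniformly (its edge portions simply subdivide the relevant cables, and the domain Markov property of $\widetilde\phi$ — as in Lemma~\ref{lemma_strong_markov} — identifies the GFF there with law $\mathbb{P}^D$). First recall that on $\mathbb{Z}^d$ (directly, since $d\ge3$ is transient, or via a finite exhaustion) the occupation field of the \emph{discrete} loop soup at intensity $\tfrac12$ equals in law $\tfrac12\phi_\cdot^2$ for the discrete GFF $\phi$. Then use the standard decomposition of the Brownian loops constituting $\widetilde{\mathcal{L}}_{1/2}$ into loops contained in a single open edge $I_e^\circ$ and loops visiting $\mathbb{Z}^d$; the latter project to the discrete loop soup, decorated on each edge they use by independent Brownian excursions, exactly as in the construction of $\widetilde{\mathcal{L}}_{1/2}$ from the discrete loop soup in \cite[Section 2]{lupu2016loop}. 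On the field side, conditionally on $\{\widetilde\phi_z\}_{z\in\mathbb{Z}^d\setminus D}$, the restriction of $\widetilde\phi$ to the edges is an independent family of Brownian bridges.

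For the occupation-field identity, condition on $\{\widetilde\phi_z\}_{z\in\mathbb{Z}^d}$ and fix an edge $I_e=I_{\{x,y\}}\cong[0,L]$ with endpoint values $a=\widetilde\phi_x,\ b=\widetilde\phi_y$. The conditional law of $\widetilde\phi|_{I_e}$ is a Brownian bridge $\beta$ from $a$ to $b$. On the loop-soup side, conditionally on the endpoint occupation times being $\tfrac12 a^2$ and $\tfrac12 b^2$, the occupation field on $I_e^\circ$ is that of an independent superposition of the intensity-$\tfrac12$ loop soup of loops inside $I_e^\circ$ and two Poissonian families of Brownian excursions into $I_e^\circ$ emanating from $x$ and from $y$ with the prescribed total local times; the cable version of Le Jan's isomorphism identifies the law of this field with that of $\tfrac12\beta^2$ (a squared-Bessel-bridge computation). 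Hence $\widehat{\mathcal{L}}^{D,v}_{1/2}=\tfrac12\widetilde\phi_v^2$ on edge interiors, while at lattice points it is the discrete isomorphism; this gives the first bullet.

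For the cluster identification, the first bullet already gives $\{v:\widetilde\phi_v\neq0\}=\{v:\widehat{\mathcal{L}}^{D,v}_{1/2}>0\}$, an open set which is the union of the loop traces and on each connected component of which $\widetilde\phi$ has constant sign (by continuity). It therefore remains to compare the two partitions of this set, which reduces to a single-cable statement: along each $I_e$ the event $\{\beta\ \text{vanishes somewhere in}\ I_e^\circ\}$ should coincide with the event that the conditioned loop-soup occupation field has a zero in $I_e^\circ$ separating the two endpoints into distinct clusters. For a Brownian bridge from $a$ to $b$ over length $L$ (with the normalization of $\widetilde\phi$), the probability of avoiding $0$ is $1-e^{-\kappa_e ab}$ when $ab>0$ and $0$ when $ab\le0$, for an explicit $\kappa_e>0$; an Itô-excursion-theory computation shows that, conditionally on the endpoint occupation times $\tfrac12 a^2,\tfrac12 b^2$, the loop-soup occupation field stays strictly positive on $I_e^\circ$ with exactly this probability, and that on this event the excursions and loops link the two endpoints into a single cluster, while on the complement the first gap separates them. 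Combining over all edges, two points of $\{\widetilde\phi\neq0\}$ lie in the same topological component iff they lie in the same loop cluster. Finally, since loops carry no sign, conditionally on $\{|\widetilde\phi_v|\}$ one may attach an independent fair $\pm$ sign to each loop cluster; the symmetry of the GFF shows this reproduces the conditional law of the signs of $\widetilde\phi$ on its sign clusters, so that the signed occupation field is $\widetilde\phi$ itself and the two partitions are literally identical.

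The main obstacle is the single-cable analysis of the third step: verifying via excursion theory for the one-dimensional diffusion on $[0,L]$ that the zero set of the Brownian bridge coincides with the gap set of the conditioned loop-soup occupation field — in particular the bookkeeping of excursions near a gap needed to conclude that the endpoints genuinely become disconnected precisely when the bridge hits $0$ — together with the matching $1-e^{-\kappa_e ab}$ probability and the cable Le Jan identity used in the second step. The remaining ingredients (the discrete isomorphism, the loop decomposition, conditioning through $D$, and the finite-volume limit) are standard.
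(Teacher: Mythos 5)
The paper does not prove Lemma~\ref{lemma_iso}: it invokes it directly as Lupu's isomorphism theorem, citing \cite[Proposition~2.1]{lupu2016loop}. Your proposal is therefore not a reconstruction of any argument given in the paper, but an attempted reproof of Lupu's result from first principles.

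As a sketch of Lupu's argument it has the right skeleton: the discrete Le~Jan isomorphism, the decomposition of cable loops into edge loops and excursion-decorated discrete loops, the Brownian-bridge interpolation of the cable GFF given its lattice values, a cable-by-cable Ray--Knight/squared-Bessel-bridge computation for the conditional occupation field, and sign attribution by symmetry. Two remarks. First, once the occupation-field identity $\widehat{\mathcal{L}}^{D,v}_{1/2}=\tfrac12\widetilde\phi_v^2$ is obtained as an identity of \emph{processes} on each cable (which is what step~2 is meant to yield), the zero sets of $\widetilde\phi$ and of $\widehat{\mathcal{L}}^{D,\cdot}_{1/2}$ coincide automatically, and the sign clusters are exactly the connected components of the open set $\{\widehat{\mathcal{L}}^{D,\cdot}_{1/2}>0\}$. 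What then still needs a separate argument is that the loop clusters (defined via pairwise intersection of loop traces) are precisely these connected components; the delicate point is that one-dimensional local time can vanish at boundary points of a trace, so the union of traces and $\{\widehat{\mathcal{L}}^{D,\cdot}_{1/2}>0\}$ could a priori differ on a thin set and one must rule out spurious disconnections. Your step~3 is aimed at this, but the matching of the avoidance probability $1-e^{-\kappa_e ab}$ on both sides is only a one-dimensional marginal check, not the path-level disconnection statement that is actually needed, and you flag exactly this yourself. Second, you explicitly leave the excursion-theoretic single-cable computation --- which drives both the second and third bullets --- unverified, so the proposal is incomplete as a self-contained proof. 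Since the paper delegates the whole lemma to \cite{lupu2016loop}, this is not a gap relative to the paper, but it is a gap if the goal is an independent proof rather than a citation.
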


We denote the union of ranges of loops in a point measure $\mathcal{L}$ by $\cup \mathcal{L}$.

\begin{corollary}\label{coro_iso}
	(1) For any $D,U_1,U_2\subset \widetilde{\mathbb{Z}}^d$, 
	\begin{equation}\label{coro2.1_1}
		\mathbb{P}^{D}\big(U_1 \xleftrightarrow{\ge 0} U_2 \big)=  \mathbb{P}^{D}\big(U_1 \xleftrightarrow{\le 0} U_2 \big) =  \tfrac{1}{2}\mathbb{P}\big(U_1 \xleftrightarrow{\cup  \widetilde{\mathcal{L}}_{1/2}^{D}} U_2 \big). 
	\end{equation}
	(2) For any $D_1\subset D_2\subset \widetilde{\mathbb{Z}}^d$, $U\subset \widetilde{\mathbb{Z}}^d$ and $v\in \widetilde{\mathbb{Z}}^d\setminus U$, 
	\begin{equation}\label{coro2.1_2}
	\mathbb{E}^{D_1}\Big(\widetilde{\phi}_v\cdot \mathbbm{1}_{v\xleftrightarrow{\ge 0}U } \Big) \ge 	\mathbb{E}^{D_2}\Big(\widetilde{\phi}_v\cdot \mathbbm{1}_{v\xleftrightarrow{\ge 0}U } \Big). 
	\end{equation}
\end{corollary}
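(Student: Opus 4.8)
The statement to prove is Corollary~\ref{coro_iso}, which has two parts, both following from Lupu's isomorphism theorem (Lemma~\ref{lemma_iso}).

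\medskip

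\noindent\textbf{Plan for part (1).} The plan is to exploit the two bullet points of Lemma~\ref{lemma_iso} under the coupling on $\widetilde{\mathbb{Z}}^d\setminus D$. First, the symmetry $\mathbb{P}^D(U_1\xleftrightarrow{\ge 0}U_2)=\mathbb{P}^D(U_1\xleftrightarrow{\le 0}U_2)$ is immediate from the sign-flip symmetry $\widetilde{\phi}\mapsto -\widetilde{\phi}$ of the GFF with law $\mathbb{P}^D$, which preserves the event structure while swapping positive and negative level-sets. For the second equality, I would argue that the event $\{U_1\xleftrightarrow{\ge 0}U_2\}\cup\{U_1\xleftrightarrow{\le 0}U_2\}$ coincides (up to a null set, using continuity of $\widetilde{\phi}$ so that zeros do not help connect) with the event that $U_1$ and $U_2$ lie in a common sign cluster of $\widetilde{\phi}_\cdot$; by the second bullet of Lemma~\ref{lemma_iso} this is exactly the event $\{U_1\xleftrightarrow{\cup\widetilde{\mathcal{L}}_{1/2}^D}U_2\}$. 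Since the two events on the left are disjoint (a connected set cannot be simultaneously positive and negative unless it meets a zero, again a null event when $U_1\cap U_2=\emptyset$; if $U_1\cap U_2\ne\emptyset$ both sides are trivially probability one) and have equal probability by the symmetry just established, each has probability $\tfrac12\mathbb{P}(U_1\xleftrightarrow{\cup\widetilde{\mathcal{L}}_{1/2}^D}U_2)$. The one mild subtlety is the disjointness/null-set bookkeeping around vertices where $\widetilde{\phi}=0$; I expect this to be routine given the continuity of $\widetilde{\phi}_\cdot$ already invoked elsewhere in the paper.

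\medskip

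\noindent\textbf{Plan for part (2).} Here the goal is the monotonicity $\mathbb{E}^{D_1}(\widetilde{\phi}_v\mathbbm{1}_{v\xleftrightarrow{\ge 0}U})\ge\mathbb{E}^{D_2}(\widetilde{\phi}_v\mathbbm{1}_{v\xleftrightarrow{\ge 0}U})$ for $D_1\subset D_2$. First I would note that, by the sign symmetry of $\mathbb{P}^{D_i}$, one has $\mathbb{E}^{D_i}(\widetilde{\phi}_v\mathbbm{1}_{v\xleftrightarrow{\ge 0}U})=\tfrac12\mathbb{E}^{D_i}(|\widetilde{\phi}_v|\,\mathbbm{1}_{v\xleftrightarrow{\widetilde{E}^{\ne 0}}U})$ where the event inside is ``$v$ and $U$ lie in a common sign cluster''; equivalently, using Lemma~\ref{lemma_iso}, this equals $\tfrac12\mathbb{E}\big(\sqrt{2\widehat{\mathcal{L}}^{D_i,v}_{1/2}}\,\mathbbm{1}_{v\xleftrightarrow{\cup\widetilde{\mathcal{L}}_{1/2}^{D_i}}U}\big)$. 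So it suffices to prove the corresponding inequality for the loop soup occupation field and the loop-cluster connection event. The key point is the monotone coupling of loop soups: since $D_1\subset D_2$, every loop contained in $\widetilde{\mathbb{Z}}^d\setminus D_2$ is also contained in $\widetilde{\mathbb{Z}}^d\setminus D_1$, so $\widetilde{\mu}^{D_2}\le\widetilde{\mu}^{D_1}$ as measures and hence $\widetilde{\mathcal{L}}_{1/2}^{D_2}$ can be coupled as a sub-collection of $\widetilde{\mathcal{L}}_{1/2}^{D_1}$ (thin out the extra loops as an independent Poisson process with intensity $\tfrac12(\widetilde{\mu}^{D_1}-\widetilde{\mu}^{D_2})$). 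Under this coupling, adding loops can only increase the local time at $v$ and can only enlarge clusters, so both $\widehat{\mathcal{L}}^{\cdot,v}_{1/2}$ and the indicator $\mathbbm{1}_{v\xleftrightarrow{}U}$ are monotone; since $\sqrt{2\,\cdot}$ is increasing, the integrand is pointwise larger for $D_1$, and taking expectations gives the claim.

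\medskip

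\noindent\textbf{Main obstacle.} The conceptual content is light — both parts are essentially immediate consequences of Lemma~\ref{lemma_iso} plus sign symmetry, plus (for part (2)) the elementary monotone coupling of loop soups under set inclusion. The only place requiring a little care is matching ``connected by $\widetilde{E}^{\ge 0}$'' (a closed level-set event) with ``in a common sign cluster'' (an open level-set event): one must check that the discrepancy, which lives on the zero set of $\widetilde{\phi}$, carries no probability, and handle separately the degenerate case $U_1\cap U_2\ne\emptyset$ (or $v\in U$). I do not anticipate this causing real difficulty, but it is the step where the proof needs to be written carefully rather than asserted.
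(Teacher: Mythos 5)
Your proposal is correct and matches the paper's proof in both structure and substance: part (1) is handled by sign symmetry plus the isomorphism identifying sign clusters with loop clusters (which the paper compresses into ``a direct consequence of Lemma \ref{lemma_iso} and the symmetry of $\widetilde{\phi}$''), and part (2) rewrites the expectation exactly as in the paper's equation (\ref{add2.15}) and then applies monotonicity of both the occupation field and the connection indicator under the inclusion $\widetilde{\mathcal{L}}_{1/2}^{D_2}\le\widetilde{\mathcal{L}}_{1/2}^{D_1}$.
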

\begin{proof}
	Item (1) is a direct consequence of Lemma \ref{lemma_iso} and the symmetry of $\widetilde{\phi}$. 
	

	For Item (2), by Lemma \ref{lemma_iso} and the symmetry of $\widetilde{\phi}$, we have 
	\begin{equation}\label{add2.15}
		 	\mathbb{E}^{D}\Big(\widetilde{\phi}_v\cdot \mathbbm{1}_{v\xleftrightarrow{\ge 0}U } \Big)= 2^{-\frac{1}{2}}\mathbb{E}\Big(\sqrt{\widehat{\mathcal{L}}_{1/2}^{D,v}}\cdot   \mathbbm{1}_{v\ \text{and}\ U\ \text{are connected by}\ \cup \widetilde{\mathcal{L}}_{1/2}^{D} } \Big). 
	\end{equation}
	Note that the random variable on the right-hand side of (\ref{add2.15}) is increasing with respect to the collection of loops. Thus, since $\widetilde{\mathcal{L}}_{1/2}^{D_1}\ge  \widetilde{\mathcal{L}}_{1/2}^{D_2}$, we obtain Item (2).
\end{proof}

We also review some concepts about loop soups presented in \cite{cai2023one}.

\textbf{Glued loops.} The loops in $\widetilde{\mathcal{L}}_{1/2}$ can be divided into the following types:
\begin{itemize}
	\item  fundamental loop: a loop that visits at least two lattice points;

	\item  point loop: a loop that visits exactly one lattice point;

	\item  edge loop: a loop that is contained by a single interval $I_e$ and visits no lattice point.

\end{itemize} 
Correspondingly, three types of glued loops, each of which is the union of ranges of specific loops, are defined as follows.
\begin{itemize}
	\item  For any connected $A\subset \mathbb{Z}^d$ containing at least two lattice points, the glued fundamental loop supported on $A$ is the union of ranges of fundamental loops in $\widetilde{\mathcal{L}}_{1/2}$ that visit every point in $A$ and do not visit any other lattice point.

	\item  For any $x\in \mathbb{Z}^d$, the glued point loop supported on $x$ is the union of ranges of point loops in $\widetilde{\mathcal{L}}_{1/2}$ including $x$.

	\item  For any $x\sim y\in \mathbb{Z}^d$, the glued edge loop supported on $I_{\{x,y\}}$ is the union of ranges of edge loops in $\widetilde{\mathcal{L}}_{1/2}$ contained in $I_{\{x,y\}}$.

\end{itemize}


\textbf{van den Berg-Kesten-Reimer (BKR) inequality.} For two events $\mathsf{A}$ and $\mathsf{B}$ measurable with respect to $\widetilde{\mathcal{L}}_{1/2}$, let $\mathsf{A} \circ \mathsf{B}$ be the event that there exist two disjoint collections of glued loops such that one collection certifies $\mathsf{A}$, and the other certifies $\mathsf{B}$. Note that in this context, ``two disjoint collections'' implies that the collections do not contain any glued loops with matching types and supports, but it
does not necessarily mean that every glued loop in one collection does not intersect any glued loop in the other collection.

 We say an event $\mathsf{A}$ is a connecting event if there exist two finite subsets $A_1,A_2\subset \mathbb{Z}^d$ such that $\mathsf{A}=\{A_1\xleftrightarrow{\cup \widetilde{\mathcal{L}}_{1/2}}A_2\}$. 

\begin{lemma}[BKR inequality; {\cite[Corollary 3.4]{cai2023one}}]\label{lemma_BKR}
	If events $\mathsf{A}_1,\mathsf{A}_2,...,\mathsf{A}_j$ ($j\ge 2$) are connecting events, then we have
	\begin{equation}
		\mathbb{P}\big(\mathsf{A}_1\circ \mathsf{A}_2\circ ... \circ \mathsf{A}_j\big)\le \prod\nolimits_{1\le i\le j} \mathbb{P}\big(\mathsf{A}_i\big).
	\end{equation}
\end{lemma}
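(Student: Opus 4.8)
The plan is to identify the loop-soup configuration, insofar as it affects connectivity, with a point in an infinite product of \emph{independent} probability spaces, and then invoke the van den Berg--Kesten--Reimer (BKR) inequality on that product, exploiting that connecting events are increasing.

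First I would record the structural input: every loop in $\widetilde{\mathcal{L}}_{1/2}$ has a well-defined type (fundamental, point, or edge) and, accordingly, a well-defined support (the set of lattice points it visits, respectively the single lattice point it visits, respectively the interval containing it). Partitioning the loops of $\widetilde{\mathcal{L}}_{1/2}$ by their (type, support) label $e$ and using the independence property of Poisson point processes over a partition of the loop space, the family $(L_e)_{e\in\mathcal{I}}$ of glued loops — where $L_e$ is the union of ranges of all loops carrying label $e$, and $L_e=\emptyset$ if there are none, and $\mathcal{I}$ is the (countable) set of all possible labels — is a family of independent random compact subsets of $\widetilde{\mathbb{Z}}^d$. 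Since $\cup\widetilde{\mathcal{L}}_{1/2}=\bigcup_{e\in\mathcal{I}}L_e$, each connecting event $\mathsf{A}_i=\{A_1^{(i)}\xleftrightarrow{\cup\widetilde{\mathcal{L}}_{1/2}}A_2^{(i)}\}$ is a measurable function of $(L_e)_{e}$ alone, and it is \emph{increasing} for coordinatewise inclusion, because enlarging glued loops can only create connections. Moreover, any path realizing a connection meets finitely many intervals of $\widetilde{\mathbb{Z}}^d$, hence finitely many glued loops, so $\mathsf{A}_i$ holds if and only if there is a \emph{finite} witness set $S\subseteq\mathcal{I}$ with $A_1^{(i)}\cup A_2^{(i)}\cup\bigcup_{e\in S}L_e$ containing a path joining $A_1^{(i)}$ and $A_2^{(i)}$. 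With this dictionary, the event $\mathsf{A}_1\circ\cdots\circ\mathsf{A}_j$ as defined above is exactly the event that there exist pairwise disjoint finite sets $S_1,\dots,S_j\subseteq\mathcal{I}$ with $S_i$ a witness for $\mathsf{A}_i$ — precisely the $j$-fold disjoint occurrence in the sense of BKR on the product space $\prod_{e\in\mathcal{I}}\Omega_e$, $\Omega_e$ being the law of $L_e$; here it is used that ``disjoint collections of glued loops'' refers to non-matching (type, support) labels, not to geometric disjointness of ranges.

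Next I would apply the inequality. Because all $\mathsf{A}_i$ are increasing, it suffices to use the van den Berg--Kesten inequality for increasing events, which holds for products of arbitrary probability spaces (via the general forms of the Reimer / van den Berg--Fiebig theorems). The case $j=2$ gives $\mathbb{P}(\mathsf{A}_1\circ\mathsf{A}_2)\le\mathbb{P}(\mathsf{A}_1)\mathbb{P}(\mathsf{A}_2)$; for general $j$ one iterates, using that $\mathsf{A}_1\circ\cdots\circ\mathsf{A}_{j-1}$ is again an increasing event of $(L_e)_e$ and that $\mathsf{A}_1\circ\cdots\circ\mathsf{A}_j\subseteq(\mathsf{A}_1\circ\cdots\circ\mathsf{A}_{j-1})\circ\mathsf{A}_j$. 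To handle the infinitude of $\mathcal{I}$ and the fact that each $\Omega_e$ is a continuum, I would truncate: restrict to glued loops with support in $B(M)$ and diameter at most $M$, run the argument on the resulting finite product of standard Borel spaces (where classical BKR applies after the usual measure-isomorphism encoding), and then let $M\to\infty$, noting that the truncated connecting events and their disjoint occurrences increase monotonically to the untruncated ones; alternatively one can carry out the whole argument on the discrete loop soups at mesh $2^{-n}$, which likewise decompose into independent glued loops and converge to $\widetilde{\mathcal{L}}_{1/2}$, and pass to the limit $n\to\infty$.

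The main obstacle is exactly this last passage: transferring the combinatorial BKR inequality, native to finite products of discrete spaces, to a setting where each coordinate encodes a continuum object produced by a Poisson process and there are infinitely many coordinates. The resolution hinges on two checks that must be made carefully: (i) the combinatorial notion of disjoint collections of glued loops coincides with disjointness of witness label-sets, and every connecting event admits a finite witness — both of which follow from the precise definitions above; and (ii) the monotone truncation/limit preserves both sides of the inequality, which is where it is essential that only the \emph{increasing}-event version of BK is needed, since it behaves well under monotone limits, rather than the full Reimer inequality. Granting these, the bound $\mathbb{P}(\mathsf{A}_1\circ\cdots\circ\mathsf{A}_j)\le\prod_{1\le i\le j}\mathbb{P}(\mathsf{A}_i)$ follows.
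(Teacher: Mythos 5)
Your proposal matches the mechanism that the glued-loop formalism is designed to exploit, and is essentially the route behind the cited Corollary~3.4 of Cai--Ding: partition the Poisson loop measure by $(\text{type},\text{support})$ labels to obtain an independent family $(L_e)_{e\in\mathcal{I}}$, observe that connecting events are increasing with finite witness label-sets, recognize the paper's label-disjoint $\circ$ as disjoint occurrence on that product, and reduce to the van den Berg--Kesten inequality for increasing events. Two points should be argued rather than asserted. First, ``BK for products of arbitrary probability spaces'' is not an off-the-shelf theorem; the honest route is the double approximation you sketch afterwards --- encode each $L_e$ in $[0,1]$, truncate to finitely many labels, discretize each coordinate, apply BK on $\{0,1\}^n$, and pass both limits --- and it is precisely because only the increasing-event version of BK is monotone under these limits that the reduction to increasing events is load-bearing (Reimer's theorem would not pass so cleanly). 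Second, the finite-witness claim deserves a line: a.s.\ only finitely many nonempty glued loops meet a fixed compact set, since the loop measure of loops visiting more than one lattice point that hit a compact set is finite, and only finitely many point-loop and edge-loop labels are adjacent to it; hence the compact image of a connecting path is covered by finitely many glued loops. With those two points tightened, your argument is complete and coincides, up to presentation, with the intended proof.
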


\subsection{Exploration martingale}\label{section_EM}

In this subsection, we review a useful tool called ``exploration martingale'', which in our context went back to \cite{lupu2018random} and was further developed in \cite{ding2020percolation}. We first record some necessary notations as follows.
\begin{itemize}
	\item  For any non-empty $A\subset \mathbb{Z}^d$ and $t\ge 0$, let $\mathcal{I}^{A,+}_t$ (resp. $\mathcal{I}^{A,-}_t$) be the collection of points $v\in \widetilde{\mathbb{Z}}^d$ such that there exists a path $\eta$ of length at most $t$ in $\widetilde{E}^{\ge 0}$ (resp. $\widetilde{E}^{\le 0}$) connecting $v$ and $A$. We denote $\mathcal{I}^{A,\pm}_t:=\mathcal{I}^{A,+}_t\cup \mathcal{I}^{A,-}_t$.


	\item   For any non-empty $A\subset \mathbb{Z}^d$ and $x\in \mathbb{Z}^d\setminus A$, we consider the martingales
	\begin{equation}
		\mathcal{M}_{x,t}^{A,\zeta}= \mathbb{E}\big(\widetilde{\phi}_x\mid  \mathcal{F}_{\mathcal{I}^{A,\zeta}_t}\big), \ \ \forall \zeta\in \{+,-,\pm\}. 
	\end{equation}
	We denote the quadratic variation of $\mathcal{M}_{x,t}^{A,\zeta}$ by $\langle \mathcal{M}_x^{A,\zeta} \rangle_t$.
	

\end{itemize}

The martingale $\mathcal{M}_{x,t}^{A,+}$ was discussed in detail in \cite{ding2020percolation,lupu2018random}. For any $\zeta\in \{+,-,\pm\}$, the quadratic variation $\langle\mathcal{M}_x^{A,\zeta} \rangle_t$ can be written as 
\begin{equation}\label{new_2.17}
	\begin{split}
		\langle \mathcal{M}_x^{A,\zeta} \rangle_t  =&  \widetilde{G}(x,x)-\widetilde{G}_{\mathcal{I}^{A,\zeta}_t}(x,x)\\
		=& 	\sum\nolimits_{v\in \mathcal{I}^{A,\zeta}_t} \widetilde{\mathbb{P}}_x\big(\tau_{\mathcal{I}^{A,\zeta}_t}=\tau_v<\infty\big)\widetilde{G}(x,v). 
	\end{split}
\end{equation}
In addition, the following process is a Brownian motion stopped at time $\langle\mathcal{M}_x^{A,\zeta}\rangle_\infty$: 
\begin{equation}\label{new_2.18}
	W_{x,t}^{A,\zeta}:=\left\{\begin{array}{ll}
		\mathcal{M}_{x,T_t}^{A,\zeta}- \mathcal{M}_{x,0}^{A,\zeta}   &\   \ \forall 0\le t<\langle\mathcal{M}_x^{A,\zeta} \rangle_\infty; \\
		\mathcal{M}_{x,\infty}^{A,\zeta}- \mathcal{M}_{x,0}^{A,\zeta}  &\ \  \forall t\ge\langle\mathcal{M}_x^{A,\zeta}\rangle_\infty,
	\end{array}
	\right. 
\end{equation}
where $T_{t}:= \inf\{s\ge 0:\langle \mathcal{M}_x^{A,\zeta} \rangle_s>t \}$. (\ref{new_2.17}) and (\ref{new_2.18}) were proved for $\zeta=+$ in \cite[Section 2]{ding2020percolation}, and the analogues for $\zeta\in \{-,\pm\}$ follow similarly.

%
﻿ 

\begin{lemma}\label{lemma_EM}
	For any $\zeta\in \{+,-,\pm\}$ and $t,T>0$,  
	\begin{equation}\label{new_2.19}
		\mathbb{P}\Big(\mathcal{M}_{x,\infty}^{A,\zeta}-\mathcal{M}_{x,0}^{A,\zeta} \ge t, \langle\mathcal{M}_x^{A,\zeta}\rangle_\infty\le T \mid \mathcal{F}_{\mathcal{I}^{A,\zeta}_0} \Big)\\
		\le  \mathbb{P}\big(|X|\ge \tfrac{t}{\sqrt{T}}\big),
	\end{equation}
	where $X$ is a standard normal random variable. 
\end{lemma}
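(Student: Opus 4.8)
The plan is to reduce the statement to a standard fact about Brownian motion stopped at a (possibly random) time, using the process $W_{x,t}^{A,\zeta}$ recorded in (\ref{new_2.18}). First I would note that by (\ref{new_2.18}), conditionally on $\mathcal{F}_{\mathcal{I}^{A,\zeta}_0}$, the process $\{W_{x,t}^{A,\zeta}\}_{t\ge 0}$ is a standard one-dimensional Brownian motion started at $0$ which is frozen after the random time $\langle\mathcal{M}_x^{A,\zeta}\rangle_\infty$; in particular $W^{A,\zeta}_{x,\infty}=\mathcal{M}^{A,\zeta}_{x,\infty}-\mathcal{M}^{A,\zeta}_{x,0}$. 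Hence the event in the left-hand side of (\ref{new_2.19}) is the event that this Brownian motion, run for a total (quadratic-variation) time at most $T$, reaches level $t$; that is, the event $\{W^{A,\zeta}_{x,\langle\mathcal{M}_x^{A,\zeta}\rangle_\infty}\ge t,\ \langle\mathcal{M}_x^{A,\zeta}\rangle_\infty\le T\}$.

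The key step is then the elementary inequality: if $B$ is a standard Brownian motion and $\sigma$ is any stopping time (with respect to a filtration making $B$ a Brownian motion) with $\sigma\le T$, then $\mathbb{P}(B_\sigma\ge t)\le \mathbb{P}(\sup_{s\le T}B_s\ge t)=\mathbb{P}(|B_T|\ge t)=\mathbb{P}(|X|\ge t/\sqrt T)$, where $X$ is standard normal and the first equality is the reflection principle. To invoke this I need $\langle\mathcal{M}_x^{A,\zeta}\rangle_\infty$ to be a stopping time for the time-changed filtration; this holds because $\langle\mathcal{M}_x^{A,\zeta}\rangle_t$ is the quadratic variation process of the martingale $\mathcal{M}_{x,t}^{A,\zeta}$ and $T_t$ in (\ref{new_2.18}) is its right-continuous inverse, so the Dambis--Dubins--Schwarz construction applies (this is exactly what is being asserted when the paper says $W^{A,\zeta}_{x,t}$ is ``a Brownian motion stopped at time $\langle\mathcal{M}_x^{A,\zeta}\rangle_\infty$''). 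Restricting to the event $\{\langle\mathcal{M}_x^{A,\zeta}\rangle_\infty\le T\}$ and applying the maximal inequality on $[0,T]$ then yields (\ref{new_2.19}) directly, since on that event $\mathcal{M}^{A,\zeta}_{x,\infty}-\mathcal{M}^{A,\zeta}_{x,0}=W^{A,\zeta}_{x,\langle\mathcal{M}_x^{A,\zeta}\rangle_\infty}\le \sup_{s\le T}W^{A,\zeta}_{x,s}$.

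The only genuine subtlety — and the main obstacle — is purely measure-theoretic: one must make sure that the conditioning on $\mathcal{F}_{\mathcal{I}^{A,\zeta}_0}$ is compatible with the Brownian structure, i.e.\ that under the regular conditional law given $\mathcal{F}_{\mathcal{I}^{A,\zeta}_0}$ the process $W^{A,\zeta}_{x,\cdot}$ is still a Brownian motion (up to the freezing time) and $\langle\mathcal{M}_x^{A,\zeta}\rangle_\infty$ is still a stopping time of the appropriate filtration. This follows from the strong Markov property of the GFF (Lemma~\ref{lemma_strong_markov}): conditioning on $\mathcal{F}_{\mathcal{I}^{A,\zeta}_0}$ just replaces the initial data, and the exploration martingale construction of \cite[Section~2]{ding2020percolation} runs verbatim inside $\widetilde{\mathbb{Z}}^d\setminus\mathcal{I}^{A,\zeta}_0$. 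Once this is in place, everything else is the one-line reflection-principle bound above, so no heavy computation is needed.
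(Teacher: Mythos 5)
Your proof is correct and is essentially the same argument as the paper's: both identify $W_{x,\cdot}^{A,\zeta}$ from \eqref{new_2.18} as a standard Brownian motion stopped at time $\langle\mathcal{M}_x^{A,\zeta}\rangle_\infty$, observe that the event in \eqref{new_2.19} forces that Brownian motion to reach level $t$ by time $T$, and conclude by the reflection principle. Your remarks on the measure-theoretic compatibility of the conditioning are a fair elaboration of what the paper leaves implicit when it invokes \eqref{new_2.18}, but they do not change the route.
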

\begin{proof}
Let $\{W_s\}_{s\ge 0}$ be a standard Brownian motion. By (\ref{new_2.18}) one has 
\begin{equation}\label{2.20}
	\begin{split}
&	\mathbb{P}\Big(\mathcal{M}_{x,\infty}^{A,\zeta}-\mathcal{M}_{x,0}^{A,\zeta}\ge t, \langle\mathcal{M}_x^{A,\zeta}\rangle_\infty\le T \mid \mathcal{F}_{\mathcal{I}^{A,\zeta}_0} \Big)\\ 
	\le & \mathbb{P}\big(\inf\{s\ge 0:W_s=t \}\le T\big).
	\end{split}
\end{equation}
In addition, by the reflection principle, we have
\begin{equation}\label{2.21}
	\mathbb{P}\big(\inf\{s\ge 0:W_s=t \}\le T\big) = 2\mathbb{P}\big(W_T\ge t\big) = \mathbb{P}\big(|X|\ge \tfrac{t}{\sqrt{T}}\big). 
\end{equation}
Combining (\ref{2.20}) and (\ref{2.21}), we obtain the inequality (\ref{new_2.19}).
\end{proof}

\subsection{Average of GFF values}

The following estimate for the normal distribution will be used multiple times in this papar. Readers may refer to e.g. \cite[Proposition 2.1.2]{vershynin2020high} for a detailed proof.

\begin{lemma}\label{lemma_tail}
	For any $\delta>0$, the normal random variable $X\sim N(0,\delta^2)$ satisfies 
	\begin{equation}
		\mathbb{P}(X\ge t) \le (2\pi)^{-\frac{1}{2}}\delta t^{-1} e^{-\frac{t^2}{2\delta^2}},\ \ \forall t>0.  
	\end{equation}
\end{lemma}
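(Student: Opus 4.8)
The final statement to prove is Lemma~\ref{lemma_tail}, the standard Gaussian tail bound $\mathbb{P}(X\ge t)\le (2\pi)^{-1/2}\delta t^{-1}e^{-t^2/(2\delta^2)}$ for $X\sim N(0,\delta^2)$.

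\textbf{Proof proposal.} The plan is to reduce immediately to the standard normal case and then use the elementary inequality that the integrand is dominated by a scaled copy of its own derivative. First I would write $X=\delta Z$ with $Z\sim N(0,1)$, so that $\mathbb{P}(X\ge t)=\mathbb{P}(Z\ge t/\delta)$, and it suffices to prove $\mathbb{P}(Z\ge s)\le (2\pi)^{-1/2}s^{-1}e^{-s^2/2}$ for all $s>0$; substituting $s=t/\delta$ recovers the claimed bound. Next I would write out the defining integral $\mathbb{P}(Z\ge s)=(2\pi)^{-1/2}\int_s^\infty e^{-u^2/2}\,du$ and observe that on the range $u\ge s>0$ we have $1\le u/s$, hence $e^{-u^2/2}\le (u/s)e^{-u^2/2}$.

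Then the key step is the exact integration $\int_s^\infty u\,e^{-u^2/2}\,du = e^{-s^2/2}$ (antiderivative $-e^{-u^2/2}$, with the boundary term at infinity vanishing). Combining, $\int_s^\infty e^{-u^2/2}\,du \le s^{-1}\int_s^\infty u\,e^{-u^2/2}\,du = s^{-1}e^{-s^2/2}$, which after multiplying by $(2\pi)^{-1/2}$ gives the desired one-sided bound for $Z$, and undoing the substitution finishes the proof. There is essentially no obstacle here: this is a classical Mills-ratio estimate, and the only things to be careful about are the change of variables (monotone, so probabilities transfer directly) and the convergence of the improper integral defining the antiderivative, both of which are routine. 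As the excerpt notes, a detailed proof can also be found in \cite[Proposition 2.1.2]{vershynin2020high}, so one may alternatively just cite it; I would include the two-line argument above for self-containedness.
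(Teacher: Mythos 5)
Your proof is correct and is the classical Mills-ratio argument. The paper itself does not supply a proof of Lemma~\ref{lemma_tail}; it only points to \cite[Proposition 2.1.2]{vershynin2020high}, which gives essentially the same estimate via the same device (bounding the integrand $e^{-u^2/2}$ by $(u/s)e^{-u^2/2}$ and integrating exactly). Your reduction to the standard normal and the subsequent comparison are exactly right, so this is a faithful self-contained rendering of the cited result.
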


Applying Lemma \ref{lemma_tail}, we derive the following bound for the tail probability of the average of GFF values.


\begin{lemma}\label{lemma_average_phi}
 For any $d\ge 3$, there exists $\cl\label{const_average_2}(d)\in (0,1)$ such that for all $D\subset \widetilde{\mathbb{Z}}^d$, $N\ge 1$ and $s>0$,  
	\begin{equation}\label{newadd_2.30}
	\mathbb{P}^D\Big(|\partial^{\mathrm{e}}\mathcal{B}(N)|^{-1}\sum\nolimits_{x\in [\partial^{\mathrm{e}}\mathcal{B}(N)]\setminus D}\widetilde{\phi}_x\ge sN^{-\frac{d}{2}+1}\Big)\le e^{-\cref{const_average_2}s^2}, 
	\end{equation}
		\begin{equation}\label{newadd_2.30_2}
		\mathbb{P}^D\Big(\sum\nolimits_{x\in [\partial^{\mathrm{e}}\mathcal{B}(N)]\setminus D}\widetilde{\mathbb{P}}_{\bm{0}}\big(\tau_{\partial^{\mathrm{e}}  \mathcal{B}(N)}=\tau_x\big)\widetilde{\phi}_x\ge sN^{-\frac{d}{2}+1}\Big)\le e^{-\cref{const_average_2}s^2}. 
	\end{equation}
\end{lemma}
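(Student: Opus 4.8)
The plan is to establish \eqref{newadd_2.30} and \eqref{newadd_2.30_2} in one stroke, since in both cases the relevant random variable has the form
\[
Z \ :=\ \sum_{x\in[\partial^{\mathrm e}\mathcal B(N)]\setminus D} a_x\,\widetilde\phi_x
\]
for a deterministic weight vector $(a_x)_{x\in\partial^{\mathrm e}\mathcal B(N)}$ with $a_x\ge 0$, $\sum_x a_x\le 1$, and — the point we will use — $\max_x a_x\le C N^{1-d}$. For the uniform weights $a_x=|\partial^{\mathrm e}\mathcal B(N)|^{-1}$ this is immediate from $|\partial^{\mathrm e}\mathcal B(N)|\asymp N^{d-1}$; for the harmonic weights $a_x=\widetilde{\mathbb P}_{\bm 0}(\tau_{\partial^{\mathrm e}\mathcal B(N)}=\tau_x)$ it follows because the exit distribution of the simple random walk (the $\mathbb Z^d$-projection of $\widetilde S$) from a Euclidean ball is comparable to the uniform measure on the boundary, see \cite[Lemma 6.3.7]{lawler2010random}, together with $\sum_x a_x=\widetilde{\mathbb P}_{\bm 0}(\tau_{\partial^{\mathrm e}\mathcal B(N)}<\infty)\le 1$.

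Under $\mathbb P^D$ the field $\widetilde\phi_\cdot$ is centered Gaussian with covariance $\widetilde G_D$, so $Z$ is centered Gaussian with variance
\[
\sigma^2=\sum_{x,y\in[\partial^{\mathrm e}\mathcal B(N)]\setminus D} a_xa_y\,\widetilde G_D(x,y)\ \le\ \sum_{x,y\in\partial^{\mathrm e}\mathcal B(N)} a_xa_y\,\widetilde G(x,y),
\]
where we used $0\le\widetilde G_D\le\widetilde G$ and then added back nonnegative terms. The main step is to show $\sigma^2\le C N^{2-d}$. To do so, bound $\widetilde G(x,y)\le\Cref{const_green_1}|x-y|^{2-d}$ by \eqref{bound_green}, factor the double sum, and use $\max_y a_y\le CN^{1-d}$ and $\sum_x a_x\le 1$ to get $\sigma^2\le C N^{1-d}\max_{x\in\partial^{\mathrm e}\mathcal B(N)}\sum_{y\in\partial^{\mathrm e}\mathcal B(N)}|x-y|^{2-d}$. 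The inner sum is at most $CN$ by a dyadic decomposition: $\partial^{\mathrm e}\mathcal B(N)$ is a shell of bounded thickness around a sphere of radius $\asymp N$, so for $x\in\partial^{\mathrm e}\mathcal B(N)$ and $r\ge 1$ there are at most $C\min(r^{d-1},N^{d-1})$ points $y\in\partial^{\mathrm e}\mathcal B(N)$ with $|x-y|\le r$, whence (recalling the convention $0^{2-d}:=1$ for the diagonal term) $\sum_y|x-y|^{2-d}\le 1+\sum_{j\ge 0:\,2^j\le CN}C\,2^{j(d-1)}2^{j(2-d)}\le CN$. This gives $\sigma^2\le \Cl N^{2-d}$, i.e. $\sigma\le C N^{-\frac d2+1}$.

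With this variance bound, Lemma \ref{lemma_tail} applied with $t=sN^{-\frac d2+1}$ gives
\[
\mathbb P^D\big(Z\ge sN^{-\tfrac d2+1}\big)\ \le\ (2\pi)^{-1/2}\,\sigma\, s^{-1}N^{\tfrac d2-1}\,e^{-\tfrac{s^2N^{2-d}}{2\sigma^2}}\ \le\ C s^{-1}e^{-cs^2}.
\]
It then remains to turn $Cs^{-1}e^{-cs^2}$ into $e^{-\cref{const_average_2}s^2}$ for all $s>0$, which is routine bookkeeping: for $s$ larger than a fixed constant the factor $Cs^{-1}$ is absorbed by halving the exponent (once $s^2\ge \tfrac2c\ln(2C)$ one has $Cs^{-1}e^{-cs^2}\le e^{-\tfrac c2 s^2}$), while for $s$ below that constant we instead use the trivial bound $\mathbb P^D(Z\ge sN^{-\frac d2+1})\le\tfrac12$ valid for any centered Gaussian and any positive threshold; choosing $\cref{const_average_2}\in(0,1)$ small enough to cover the bounded range of $s$ completes the argument. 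The only genuinely technical ingredients are the variance estimate — the harmonic-measure comparison and the sum $\sum_{y\in\partial^{\mathrm e}\mathcal B(N)}|x-y|^{2-d}\le CN$ — and this final bookkeeping; neither is a serious obstacle.
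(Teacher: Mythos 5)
Your proposal is correct and takes essentially the same route as the paper: bound the variance by $CN^{2-d}$ using $\widetilde G_D\le\widetilde G\le C|x-y|^{2-d}$ together with the shell-counting estimate for $\partial^{\mathrm e}\mathcal B(N)$, then apply the Gaussian tail bound (Lemma \ref{lemma_tail}) and absorb the $s^{-1}$ prefactor by case-splitting on $s$. The only cosmetic difference is that you unify the two displays from the start by abstracting to weights with $\max_x a_x\le CN^{1-d}$ and $\sum_x a_x\le 1$, whereas the paper proves the uniform-weight case first and then observes that the harmonic weights are comparable by \cite[Lemma 6.3.7]{lawler2010random}.
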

\begin{proof}
We first prove (\ref{newadd_2.30}). Note that $|\partial^{\mathrm{e}}\mathcal{B}(N)|^{-1}\sum\limits_{x\in [\partial^{\mathrm{e}}\mathcal{B}(N)]\setminus D}\widetilde{\phi}_x$ (where $\widetilde{\phi}_{\cdot}\sim \mathbb{P}^D$) is a mean-zero normal random variable, whose variance is given by 
\begin{equation}\label{def_sigma}
	\begin{split}
		\sigma^2:= |\partial^{\mathrm{e}}\mathcal{B}(N)|^{-2} \sum\nolimits_{x,y\in [\partial^{\mathrm{e}}\mathcal{B}(N)]\setminus D} \widetilde{G}_D(x,y).
	\end{split}
\end{equation}
For any $x,y\in \mathbb{Z}^d$, by (\ref{bound_green}) we have 
\begin{equation}\label{add_2.35}
	\widetilde{G}_D(x,y)\le \widetilde{G}(x,y)\le \Cref{const_green_1} |x-y|^{2-d}.
\end{equation}
Meanwhile, for any $x\in \partial^{\mathrm{e}} \mathcal{B}(N)$ and $l\ge 0$, one has
\begin{equation}\label{add_2.36}
	|\partial^{\mathrm{e}} \mathcal{B}(N)\cap \partial \mathcal{B}_x(l)|\le C(l+1)^{d-2}\cdot \mathbbm{1}_{0\le l\le 4N}. 
\end{equation}
Thus, by (\ref{add_2.35}), (\ref{add_2.36}) and $|\partial^{\mathrm{e}}\mathcal{B}(N)|\asymp N^{d-1}$, we have 
\begin{equation}\label{new_add_2.31}
	\begin{split}
		\sigma^2 \le &  |\partial^{\mathrm{e}}\mathcal{B}(N)|^{-1}\max_{x\in \partial^{\mathrm{e}} \mathcal{B}(N)} \sum\nolimits_{y\in [\partial^{\mathrm{e}}\mathcal{B}(N)]\setminus D} \widetilde{G}_D(x,y)   \\
		\le & CN^{-(d-1)}\sum\nolimits_{0\le l\le 4N}(l+1)^{d-2}\cdot l^{2-d}\le C'N^{2-d}.
	\end{split}
\end{equation}
By Lemma \ref{lemma_tail} and (\ref{new_add_2.31}), the left-hand side of (\ref{newadd_2.30}) is bounded from above by $C_{\ddagger} s^{-1} e^{-c_{\ddagger}s^2}$, where $C_{\ddagger}$ and $c_{\ddagger}$ are constants only depending on $d$. Therefore, when $s\ge C_{\ddagger}$, we obtain (\ref{newadd_2.30}) with $\cref{const_average_2}=c_{\ddagger}\land \frac{1}{2}$. When $0<s<C_{\ddagger}$, since $\mathbb{P}(X\ge t)\le \frac{1}{2}$ holds for all mean-zero random variable $X$ and all non-negative number $t$, we know that (\ref{newadd_2.30}) holds with $\cref{const_average_2}=[\ln(2)C_{\ddagger}^{-2}]\land \frac{1}{2}$. In conclusion, we establish (\ref{newadd_2.30}).



For (\ref{newadd_2.30_2}), by \cite[Lemma 6.3.7]{lawler2010random}, we have 
\begin{equation}\label{2.34}
	\widetilde{\mathbb{P}}_{\bm{0}}\big(\tau_{\partial^{\mathrm{e}}  \mathcal{B}(N)}=\tau_x\big) \asymp  N^{1-d}  \asymp |\partial^{\mathrm{e}}\mathcal{B}(N)|^{-1}, \ \ \forall x\in \partial^{\mathrm{e}}  \mathcal{B}(N). 
\end{equation}
By (\ref{2.34}), we know that the variance of $\sum_{x\in [\partial^{\mathrm{e}}\mathcal{B}(N)]\setminus D}\widetilde{\mathbb{P}}_{\bm{0}}\big(\tau_{\partial^{\mathrm{e}}  \mathcal{B}(N)}=\tau_x\big)\widetilde{\phi}_x$ is of the same order as $\sigma^2$ in (\ref{def_sigma}), and therefore, is $O(N^{2-d})$ (by (\ref{new_add_2.31})). Thus, using the same argument as in the proof of (\ref{newadd_2.30}), we also obtain (\ref{newadd_2.30_2}).
\end{proof}

\section{Bound the crossing probability by one-arm probabilities}\label{Section_crossing}


The aim of this section is to establish the following proposition, which bounds the crossing probability from above by a product of one-arm probabilities. Recall that $\rho_d(n,N)=\mathbb{P}\big[ B(n)\xleftrightarrow{\ge 0} \partial B(N) \big]$ in (\ref{def_crossing}).

\begin{proposition}\label{lemma_bound_crossing}
	For $d\ge 3$, there exists $\Cl\label{const_crossing}(d)> 1$ such that for any $N\ge  n\ge 1$, 
	\begin{equation}\label{ineq_lemma_crossing}
		\rho_d(n,N) \le \Cref{const_crossing}n^{d-2} \theta_d(n)\theta_d(N/4). 
	\end{equation}
\end{proposition}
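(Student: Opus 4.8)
The plan is to reduce (\ref{ineq_lemma_crossing}) to a ``set-to-set two-point estimate'': using Lupu's explicit connecting-probability formula (Lemma~\ref{lemma_connecting_boundary}, equivalently Lemma~\ref{lemma_connecting_hm}) to rewrite $\rho_d(n,N)$ as an expectation of products of effective conductances past the negative clusters, and then decomposing each such conductance multiplicatively across the three scale-regions: near $B(n)$, in the bulk, and near $\partial B(N)$. Two easy reductions come first. If $n\le N\le 4n$ the claim is trivial, since $\rho_d(n,N)\le 1$ while the lower bounds $\theta_d(m)\ge cm^{-\frac d2+1}$ of \cite{ding2020percolation,drewitz2023critical} give $n^{d-2}\theta_d(n)\theta_d(N/4)\ge c^2$; so we may assume $N\ge 4n$. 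Also, a path of $\widetilde{E}^{\ge 0}$ joining $\widetilde{B}(n)$ to $\partial B(N)$ must pass through a lattice vertex of $B(n)$ and through one of $\partial^{\mathrm e}B(N)$, so it suffices to bound $\mathbb P\big(B(n)\xleftrightarrow{\ge 0}\partial^{\mathrm e}B(N)\big)$ with these regarded as subsets of $\mathbb Z^d$.

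Applying Lemma~\ref{lemma_connecting_hm} (or repeating its proof with $D_1=\mathcal C^-_{B(n)}$, $D_2=\mathcal C^-_{\partial^{\mathrm e}B(N)}$) together with $1-e^{-x}\le x$ gives
\begin{equation*}
	\rho_d(n,N)\le C\,\mathbb E\Big[\sum_{y\in B(n)}\sum_{w\in\partial^{\mathrm e}B(N)}\mathbb K_{\mathcal C^-}(y,w)\,\widetilde\phi_y^{+}\widetilde\phi_w^{+}\Big],\qquad \mathcal C^-:=\mathcal C^-_{B(n)}\cup\mathcal C^-_{\partial^{\mathrm e}B(N)},
\end{equation*}
where a vertex with $\widetilde\phi<0$ lies in the interior of $\mathcal C^-$ and hence carries zero $\mathbb K_{\mathcal C^-}$-weight, so the positive parts cost nothing. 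The heart of the argument is to control $\mathbb K_{\mathcal C^-}(y,w)$ for $y\in B(n)$, $w\in\partial^{\mathrm e}B(N)$. Any excursion realising the kernel (\ref{def_KD}) must leave $\mathcal B(2n)$ and later cross $\partial^{\mathrm e}\mathcal B(N/2)$; combining the strong Markov property of $\widetilde{S}$ at $\partial^{\mathrm e}\mathcal B(N/2)$ with a \emph{last-exit} decomposition at $\mathcal B(2n)$ gives, schematically,
\begin{equation*}
	\mathbb K_{\mathcal C^-}(y,w)\le C\,N^{2-d}\Big(\sum_{x}\mathbb K_{\mathcal C^-_{B(n)}}(y,x)\,\mathrm e_{\mathcal B(2n)}(x)\Big)\Big(\sup_{z\in\partial^{\mathrm e}\mathcal B(N/2)}\widetilde{\mathbb{P}}_{z}\big(\tau_{\mathcal C^-_{\partial^{\mathrm e}B(N)}}=\tau_w<\infty\big)\Big),
\end{equation*}
where $\mathrm e_{\mathcal B(2n)}$ is the equilibrium measure of $\mathcal B(2n)$ (total mass $\asymp n^{d-2}$), and the factor $N^{2-d}$ — the contribution of travelling from scale $2n$ to scale $N/2$ — comes out of the Green's function estimates (\ref{bound_green}), (\ref{2.34}). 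Using a last-exit rather than a first-entrance decomposition at scale $2n$ is essential: it is what produces the capacity factor $n^{d-2}$ in place of the surface factor $n^{d-1}$, the latter being precisely the loss that defeats a naive union bound over $\partial\mathcal B(2n)$.

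Since the first bracket depends only on the field near $B(2n)$ and the second only on the field near $\partial B(N)$, and these regions are well separated once $N\ge 4n$, the expectations decouple (e.g.\ reveal $\mathcal C^-_{B(n)}$ first and note that its harmonic influence near $\partial B(N)$ is negligible), and one is left with $\rho_d(n,N)\le C\,N^{2-d}\,\mathsf I_n\,\mathsf O_N$, where $\mathsf I_n$, $\mathsf O_N$ are the expectations of the two brackets against $\widetilde\phi_y^{+}$, respectively $\widetilde\phi_w^{+}$. It then remains to prove the two ``capacity $\times$ one-arm'' estimates $\mathsf I_n\le Cn^{d-2}\theta_d(n)$ and $\mathsf O_N\le CN^{d-2}\theta_d(N/4)$, which multiplied by $N^{2-d}$ yield exactly (\ref{ineq_lemma_crossing}). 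In $\mathsf O_N$ the total mass $\asymp N^{d-2}$ of the relevant equilibrium normalisation factors out, and — keeping $\widetilde\phi_w^{+}$ paired with the hitting probability so that the remaining sum is read as a connection probability rather than a field value times an indicator (this is what avoids a spurious logarithm) — what is left is, after conditioning on $\mathcal C^-_{\partial^{\mathrm e}B(N)}$ and using Lemma~\ref{lemma_strong_markov}, comparable to $\mathbb P\big(z\xleftrightarrow{\ge 0}\partial^{\mathrm e}B(N)\big)\le\theta_d(N/4)$ for $z\in\partial^{\mathrm e}\mathcal B(N/2)$ (at distance $\asymp N$ from $\partial B(N)$). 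The bound on $\mathsf I_n$ is obtained in the same way, the capacity of $\mathcal B(2n)$ supplying $n^{d-2}$ and a harmonic-average identity identifying the remaining averaged conductance with a connection quantity at scale $n$ of order $\theta_d(n)$.

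I expect the main difficulty to lie precisely in these last two identities together with the bookkeeping behind the scale-decomposition of $\mathbb K_{\mathcal C^-}$: one has to run the last-exit (not first-entrance) decomposition so that the capacity $n^{d-2}$ — rather than the surface area $n^{d-1}$ — appears, keep the Gaussian field values tied to the connectivity events throughout in order not to pick up logarithmic factors, track the error in the linearisation $1-e^{-x}\le x$, and justify the decoupling of the field near $B(2n)$ from the field near $\partial B(N)$ when $N\ge 4n$.
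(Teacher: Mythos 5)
Your proposal follows essentially the same route as the paper: linearise Lupu's formula, decompose the resulting conductance $\mathbb{K}_{\mathcal{C}^-}$ by a last-exit argument at scale $\asymp n$ together with the strong Markov property at scale $\asymp N$ to extract the factor $n^{d-2}N^{2-d}$ (the paper's Lemma~\ref{lemma_hat_Hy}), and reduce the remaining inner and outer factors to one-arm probabilities via a moment bound of the form $\mathbb{E}\big(\widetilde\phi_z\,\mathbbm{1}_{z\leftrightarrow A}\big)\le C\,\mathbb{P}(z\leftrightarrow A)$ (Lemmas~\ref{lemma_crossing_prepare} and~\ref{lemma3.4}). The two pieces you flag as the real work are exactly where the paper's effort lies: the moment bound is proved by a loop-soup/BKR-type argument, and the ``decoupling'' is not simply that the influence of $\mathcal{C}^-_{B(n)}$ near $\partial B(N)$ is negligible — instead the paper conditions on $\mathcal{F}_{\mathcal{C}^-_{B(n)}}$, proves $\mathbb{E}\big[\overbar{\mathcal{H}}^{\mathrm{out}}\mid\mathcal{F}_{\mathcal{C}^-_{B(n)}}\big]\le C\big[\theta_d(N/4)+(n/N)^{d-2}\widecheck{\mathcal{H}}^{\mathrm{in}}\big]$ (Lemma~\ref{lemma_3.6}), and absorbs the cross term using the truncation $1-e^{-x}\le x\wedge 1$, Jensen, and $\theta_d(N/4)\ge cN^{-\frac{d}{2}+1}$.
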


\begin{remark}\label{remark_thm2}
	The upper bounds in Theorem \ref{thm2} directly follow from Proposition \ref{lemma_bound_crossing}, Theorem \ref{thm1} and the upper bound in (\ref{ineq_high_d}). 
\end{remark}

To prove Proposition \ref{lemma_bound_crossing}, we need some preparations as follows. To begin with, we show that given a point $z$ is connected to a set by $\widetilde{E}^{\ge 0}$, the expected value of $\widetilde{\phi}_z$ is still uniformly bounded.

\begin{lemma}\label{lemma_crossing_prepare}
	For any $d\ge 3$, there exists $\Cl\label{const_crossing_prepare}(d)>0$ such that for any $A\subset \mathbb{Z}^d$ and $z\in\mathbb{Z}^d\setminus A$, we have  
	\begin{equation}\label{ineq_crossing_prepare}
		\mathbb{E}\Big(\widetilde{\phi}_z\cdot \mathbbm{1}_{z\xleftrightarrow{\ge 0}A}  \Big) \le  \Cref{const_crossing_prepare}  \mathbb{P}\big( z\xleftrightarrow{\ge 0}A\big). 
	\end{equation}
\end{lemma}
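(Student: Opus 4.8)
The plan is to condition on the negative cluster $\mathcal{C}^-_A$ and use the strong Markov property (Lemma \ref{lemma_strong_markov}), reducing the claim to a statement about the GFF with law $\mathbb{P}^D$ on the complement of a (random) set $D$, together with a harmonic-average drift term. Concretely, writing $D=\mathcal{C}^-_A$, on the event $\{z\xleftrightarrow{\ge 0}A\}$ the point $z$ lies outside $D$, and conditionally on $\mathcal{F}_D$ we have $\widetilde{\phi}_z=\widetilde{\phi}_z'+\mathcal{H}_z(A,D)$ where $\widetilde{\phi}'_\cdot\sim\mathbb{P}^D$ is independent. Since $\{z\xleftrightarrow{\ge 0}A\}$ forces $\widetilde{\phi}_z\ge 0$, the harmonic average $\mathcal{H}_z(A,D)$ is bounded by $\widetilde{\phi}_z$ on this event, but more usefully: the contribution of the drift term is controlled because $\{D_1\xleftrightarrow{\ge 0}D_2\}$ with nonnegative boundary data has the explicit Lupu formula, so $\mathbb{E}(\mathcal{H}_z(A,D)\mathbbm{1}_{z\xleftrightarrow{\ge 0}A})$ can be compared to $\mathbb{P}(z\xleftrightarrow{\ge 0}A)$ directly. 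The remaining piece is to bound $\mathbb{E}^D(\widetilde{\phi}'_z\cdot\mathbbm{1}_{z\xleftrightarrow{\ge 0}U})$ uniformly in $D$ and $U$, which is exactly where Corollary \ref{coro_iso}(2) applies: it gives monotonicity in $D$, so $\mathbb{E}^D(\widetilde{\phi}'_z\mathbbm{1}_{z\xleftrightarrow{\ge 0}U})\le \mathbb{E}^{\emptyset}(\widetilde{\phi}_z\mathbbm{1}_{z\xleftrightarrow{\ge 0}U})=\mathbb{E}(\widetilde{\phi}_z\mathbbm{1}_{z\xleftrightarrow{\ge 0}U})$.

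First I would set up the conditioning on $\mathcal{C}^-_A$ carefully, noting (as in the Example after Lemma \ref{lemma_strong_markov}) that $\mathcal{C}^-_A$ satisfies the measurability hypothesis and that $\widetilde\phi_w=0$ on $\widetilde\partial\mathcal{C}^-_A\setminus A$. Then on $\{\mathcal{C}^-_A=D\}$ I would split $\widetilde{\phi}_z = \widetilde{\phi}'_z + \mathcal{H}_z(A,D)$ and correspondingly split the expectation into a ``fresh field'' term and a ``harmonic drift'' term. For the drift term, $\mathcal{H}_z(A,D)\ge 0$ on $\{z\xleftrightarrow{\ge 0}A\}$ and one wants $\mathbb{E}[\mathcal{H}_z(A,D)\mathbbm{1}_{z\xleftrightarrow{\ge 0}A}]\le C\,\mathbb{P}(z\xleftrightarrow{\ge 0}A)$; here I expect to use that $\mathcal H_z(A,D)$ is itself an average of boundary values $\widetilde\phi_w$, $w\in A$, and that on the event the connection happens those are nonnegative, so a Cauchy--Schwarz or direct second-moment estimate using $\mathrm{Var}(\mathcal H_z)=\widetilde G(z,z)-\widetilde G_D(z,z)\le \widetilde G(z,z)\le C_{\ref{const_green_1}}$ controls it. Actually the cleanest route for the drift term is to observe that $\mathcal H_z(A,D)\le \widetilde\phi_z$ pointwise on the event (since $\widetilde\phi_z=\widetilde\phi'_z+\mathcal H_z$ and $\widetilde\phi'_z$ has conditional mean zero), so it suffices to bound $\mathbb E[\widetilde\phi_z\mathbbm 1_{\cdots}]$ itself — which is circular — hence I will instead bound the drift term's \emph{conditional} expectation given $\mathcal F_D$ by $\sqrt{\mathbb E[\mathcal H_z(A,D)^2\mid\mathcal F_D]}$ times the conditional connection probability and then integrate, using Lemma \ref{lemma_connecting_boundary} to see the conditional connection probability is comparable to $\min(1, \text{(the drift-weighted sum)})$.

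For the fresh-field term, on $\{\mathcal{C}^-_A=D\}$ and given $\mathcal F_D$, the event $\{z\xleftrightarrow{\ge 0}A\}$ is determined by $\widetilde\phi'$ connecting $z$ to some subset $U=U(D)\subset\widetilde\partial D$ (namely the positive-boundary part of $\widetilde\partial D$); then $\mathbb E^D(\widetilde\phi'_z\mathbbm 1_{z\xleftrightarrow{\ge 0}U})\le \mathbb E(\widetilde\phi_z\mathbbm 1_{z\xleftrightarrow{\ge 0}U})$ by Corollary \ref{coro_iso}(2) applied with $D_1=\emptyset$, $D_2=D$. The last step is then to bound $\mathbb E(\widetilde\phi_z\mathbbm 1_{z\xleftrightarrow{\ge 0}U})$ for an arbitrary set $U$ by a constant times $\mathbb P(z\xleftrightarrow{\ge 0}U)$, and here Corollary \ref{coro_iso}(1) converts to the loop soup: $\mathbb E(\widetilde\phi_z\mathbbm 1_{z\xleftrightarrow{\ge 0}U})=2^{-1/2}\mathbb E(\sqrt{\widehat{\mathcal L}^z_{1/2}}\,\mathbbm 1_{z\xleftrightarrow{\cup\widetilde{\mathcal L}_{1/2}}U})$, and one decomposes according to the loops through $z$: $\widehat{\mathcal L}^z_{1/2}$ has a tail uniformly bounded (its law does not depend on $U$), while given its value the connection event has probability at most $\mathbb P(z\xleftrightarrow{\cup\widetilde{\mathcal L}_{1/2}}U\mid \text{loops through }z)$, and an integration over the square-root against this tail yields the constant.

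The main obstacle I anticipate is making the drift-term bound genuinely non-circular and uniform: one must show $\mathbb E[\mathcal H_z(A,D)\mathbbm 1_{z\xleftrightarrow{\ge 0}A}]\lesssim\mathbb P(z\xleftrightarrow{\ge 0}A)$ without already knowing the lemma. I expect the resolution is to note that conditionally on $\mathcal F_D$, if $\mathcal H_z(A,D)=:h\ge 0$ is large then the fresh field $\widetilde\phi'_z$ only needs to avoid being very negative for the connection, so the conditional connection probability is bounded below by a constant once $h$ exceeds, say, $1$; quantitatively, $\mathbb P(z\xleftrightarrow{\ge 0}A\mid\mathcal F_D)\ge c\min(1,h)$ on $\{\mathcal{C}^-_A=D\}$ using that $\widetilde\phi'_z+h\ge 0$ with the explicit Lupu connection probability being at least the probability that the single edge/vertex at $z$ carries enough local time — this is the quantitatively delicate point. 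Granting a bound of the form $\mathbb P(z\xleftrightarrow{\ge 0}A\mid\mathcal F_D)\ge c\min(1,\mathcal H_z(A,D))$, one gets $\mathbb E[\mathcal H_z\mathbbm 1_{z\xleftrightarrow{\ge 0}A}]\le \mathbb E[(\mathcal H_z\vee 1)\mathbbm 1_{z\xleftrightarrow{\ge 0}A}]$ and $\mathbb E[(\mathcal H_z-1)_+\mathbbm 1_{z\xleftrightarrow{\ge 0}A}]\le c^{-1}\mathbb E[(\mathcal H_z-1)_+\cdot\min(1,\mathcal H_z)\cdots]\le c^{-1}\mathbb E[\mathbbm 1_{z\xleftrightarrow{\ge 0}A}]$ after absorbing, giving the claim. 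I would structure the write-up so that this lower bound on the conditional connection probability is isolated as a sub-claim proved via Lemma \ref{lemma_connecting_boundary} and an elementary Gaussian computation.
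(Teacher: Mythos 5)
The approach you outline is genuinely different from the paper's — the paper does not condition on $\mathcal{C}^-_A$ at all, but goes directly to the loop soup via Lemma~\ref{lemma_iso}, decomposing on whether the connection to $A$ requires loops through $z$ (events $\mathsf{A}^{(1)}_z$ and $\mathsf{A}^{(2)}_z$), and for $\mathsf{A}^{(2)}_z$ uses a forward/backward crossing decomposition of loops through $z$ together with the spatial Markov property of loop soups. The paper explicitly explains why it avoids your route: the statement is a BKR-type decorrelation, and for Gaussian fields such decorrelations are hard to get without a Cauchy--Schwarz loss, whereas Poisson processes have built-in independence structure.

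Your plan has a genuine gap in both halves of the decomposition $\widetilde{\phi}_z=\widetilde{\phi}'_z+\mathcal{H}_z(A,D)$. For the drift term, your key sub-claim $\mathbb{P}(z\xleftrightarrow{\ge 0}A\mid\mathcal F_D)\ge c\min(1,\mathcal H_z(A,D))$ is not proved and is not obviously true — conditionally on $\mathcal F_D$, having mean $\mathcal H_z\approx 1$ at $z$ does not force a positive-field \emph{path} to $A$ with a uniform probability, since the path must thread through the complement of $D$ along which the drift may be small — and even granting it, the algebra you sketch is reversed: the lower bound yields $\mathbb E[(\mathcal H_z-1)_+\mathbbm 1]\ge c\,\mathbb E[(\mathcal H_z-1)_+\min(1,\mathcal H_z)]$, not $\le$, and $(\mathcal H_z-1)_+\min(1,\mathcal H_z)=(\mathcal H_z-1)_+$ on the region that matters, which you never show is controlled by $\mathbb P(z\xleftrightarrow{\ge 0}A)$. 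For the fresh-field term, conditionally on $\mathcal F_D$ the connection event is $\{z\xleftrightarrow{\widetilde\phi'_\cdot+\mathcal H_\cdot\ge 0}U\}$, \emph{not} $\{z\xleftrightarrow{\widetilde\phi'\ge 0}U\}$, so Corollary~\ref{coro_iso}(2) (which is a monotonicity statement for the undrifted GFF connection event) does not apply directly; and if you retreat to Cauchy--Schwarz, $\mathbb E[\widetilde\phi'_z\mathbbm 1\mid\mathcal F_D]\le \sqrt{\widetilde G_D(z,z)}\sqrt{\mathbb P(\cdot\mid\mathcal F_D)}$, you lose a square root on the final probability, which is precisely the obstruction that forces the paper's Poisson-process route.
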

\begin{proof}
	To derive (\ref{ineq_crossing_prepare}), what we need to prove is indeed a van den Berg-Kesten-Reimer type inequality (note that the FKG inequality yields an inequality in the opposite direction). Usually the idea of establishing such an inequality is to decompose the random variable into two independent (or approximately independent) parts. In general, compared to random fields, achieving such a decomposition is much simpler in the context of Poisson point processes, which motivates us to employ the isomorphism theorem (i.e. Lemma \ref{lemma_iso}) to transfer the problem to that of the loop soup.

	For any $z\in \mathbb{Z}^d$, let $\mathcal{L}_{z}$ be the point measure composed of loops in $\widetilde{\mathcal{L}}_{1/2}$ intersecting $z$. By (\ref{add2.15}), the left-hand side of (\ref{ineq_crossing_prepare}) can be written as 
	\begin{equation}\label{2.12}
		\mathbb{E}\Big(\widetilde{\phi}_z\cdot \mathbbm{1}_{z\xleftrightarrow{\ge 0}A}  \Big) \le 	2^{-\frac{1}{2}} \Big[ \mathbb{E}\Big( \sqrt{\widehat{\mathcal{L}}^z_{1/2}} \cdot \mathbbm{1}_{\mathsf{A}^{(1)}_z}\Big)+\mathbb{E}\Big( \sqrt{\widehat{\mathcal{L}}^z_{1/2}} \cdot \mathbbm{1}_{\mathsf{A}^{(2)}_z}\Big) \Big],  
	\end{equation}
	where $\mathsf{A}^{(1)}_z:=\cup_{y\sim z}\big\{y \xleftrightarrow{\cup  (\widetilde{\mathcal{L}}_{1/2}-\mathcal{L}_{z})} A\big\}$ and $\mathsf{A}^{(2)}_z=\big\{z \xleftrightarrow{\cup  \widetilde{\mathcal{L}}_{1/2}} A\big\}\cap \big[\mathsf{A}^{(1)}_z\big]^c$. Since $\widehat{\mathcal{L}}^z_{1/2}$ is measurable with respect to $\mathcal{L}_{z}$ and hence is independent of the event $\mathsf{A}^{(1)}_z$,
		\begin{equation}\label{adding_3.4}
		\begin{split}
			\mathbb{E}\Big(\sqrt{\widehat{\mathcal{L}}^v_{1/2}} \cdot \mathbbm{1}_{\mathsf{A}^{(1)}_z}\Big)=& \mathbb{E}\Big(\sqrt{\widehat{\mathcal{L}}^v_{1/2}} \Big)  \mathbb{P} \big[ \mathsf{A}^{(1)}_z\big]\\
			\overset{(\text{Lemma}\ \ref{lemma_iso})}{=} &2^{-\frac{1}{2}}\mathbb{E}\big(|\widetilde{\phi}_z| \big)   \mathbb{P} \big[ \mathsf{A}^{(1)}_z\big]
			\le C\sum\nolimits_{y\sim z}  \mathbb{P} \big(y \xleftrightarrow{\cup  \widetilde{\mathcal{L}}_{1/2}} A\big). 
		\end{split}
	\end{equation}
		Moreover, for each $y\sim z$, by the FKG inequality one has  
	\begin{equation}\label{adding_3.5}
		\begin{split}
			\mathbb{P} \big(z \xleftrightarrow{\cup  \widetilde{\mathcal{L}}_{1/2}} A\big) \ge& \mathbb{P} \big( y \xleftrightarrow{\cup  \widetilde{\mathcal{L}}_{1/2}} A, y\xleftrightarrow{\cup  \widetilde{\mathcal{L}}_{1/2}}z \big)\\
			\ge& \mathbb{P} \big( y \xleftrightarrow{\cup  \widetilde{\mathcal{L}}_{1/2}} A \big) \mathbb{P} \big( y\xleftrightarrow{\cup  \widetilde{\mathcal{L}}_{1/2}}z \big)\ge c \mathbb{P} \big( y \xleftrightarrow{\cup  \widetilde{\mathcal{L}}_{1/2}} A \big). 
		\end{split}
	\end{equation} 
	Combining (\ref{adding_3.4}) and (\ref{adding_3.5}), we get
	\begin{equation}\label{2.14}
		\mathbb{E}\Big(\sqrt{\widehat{\mathcal{L}}^z_{1/2}} \cdot \mathbbm{1}_{\mathsf{A}^{(1)}_z}\Big) \le C \mathbb{P} \big(z \xleftrightarrow{\cup  \widetilde{\mathcal{L}}_{1/2}} A\big)\overset{(\ref{coro2.1_1})}{=}2C\mathbb{P} \big(z \xleftrightarrow{\ge 0} A\big).
	\end{equation}



	Next, we estimate the term $	\mathbb{E}\Big(\sqrt{\widehat{\mathcal{L}}^z_{1/2}} \cdot \mathbbm{1}_{\mathsf{A}^{(2)}_z}\Big)$. First, we present a decomposition for loops in $\mathcal{L}_{z}':= \mathcal{L}_{z}\cdot \mathbbm{1}_{\widetilde{\ell}\ \text{intersects}\ z\ \text{and}\ \widetilde{\partial} \widetilde{B}_z(1)}$ (note that $\widetilde{\partial} \widetilde{B}_z(1)=\{w\in \mathbb{Z}^d:w\sim z\}$ by (\ref{def_continuous_box})). Specifically, for any $\widetilde{\ell}\in \mathcal{L}_{z}'$ (recall that we consider $\widetilde{\ell}$ as an equivalence class of rooted loops under time-shift), arbitrarily take a rooted loop $\widetilde{\varrho}:[0,T ]\to \widetilde{\mathbb{Z}}^d$ in $\widetilde{\ell}$ satisfying the following:
	\begin{itemize}
		\item  $\widetilde{\varrho}(0)=z$;

		\item  $\exists t\in (0,T)$ such that $\widetilde{\varrho}(t)\in \widetilde{\partial} \widetilde{B}_z(1)$ and $\widetilde{\varrho}(t')\neq z$ for all $t'\in (t,T)$. 
		
	\end{itemize}
	Then we define a sequence of stopping times for $\widetilde{\varrho}$ as follows: 
	\begin{enumerate}
		\item $\widetilde{\tau}_0:=0$;

		\item for integer $k\ge 0$, $\widetilde{\tau}_{2k+1}:= \inf\{t>\widetilde{\tau}_{2k}:\widetilde{\varrho}(t)\in \widetilde{\partial} \widetilde{B}_z(1) \}$;

		\item for integer $k\ge 0$, $\widetilde{\tau}_{2k+2}:= \inf\{t>\widetilde{\tau}_{2k+1}:\widetilde{\varrho}(t)=z \}$.

	\end{enumerate}
	Let $\kappa$ be the unique integer such that $\widetilde{\tau}_{2\kappa}=T$. For each $1\le i\le \kappa$, we define the $i$-th forward crossing path of $\widetilde{\varrho}$ as the sub-path $\widetilde{\eta}_i^{\mathrm{F}}:[0,\widetilde{\tau}_{2i-1}-\widetilde{\tau}_{2i-2}]\to \widetilde{\mathbb{Z}}^d$ with $\widetilde{\eta}_i^{\mathrm{F}}(s)= \widetilde{\varrho}(\widetilde{\tau}_{2i-2}+s)$, and define the $i$-th backward crossing path of $\widetilde{\varrho}$ as the sub-path $\widetilde{\eta}_i^{\mathrm{B}}:[0,\widetilde{\tau}_{2i}-\widetilde{\tau}_{2i-1}]\to \widetilde{\mathbb{Z}}^d$ with $\widetilde{\eta}_i^{\mathrm{B}}(s)= \widetilde{\varrho}(\widetilde{\tau}_{2i-1}+s)$. Note that $\kappa$, $\{\widetilde{\eta}_i^{\mathrm{F}}\}_{i=1}^{\kappa}$ and $\{\widetilde{\eta}_i^{\mathrm{B}}\}_{i=1}^{\kappa}$ do not depend on the way of choosing $\widetilde{\varrho}\in \widetilde{\ell}$. A general version of this decomposition can be found in \cite[Section 2.6.3]{cai2023one}. We denote by $\mathfrak{L}_z^{\mathrm{F}}$ (resp. $\mathfrak{L}_z^{\mathrm{B}}$) the collection of forward (resp. backward) crossing paths of all loops in $\mathcal{L}_{z}'$. Here are some useful properties of $\mathfrak{L}_z^{\mathrm{F}}$ and $\mathfrak{L}_z^{\mathrm{B}}$: 
	\begin{enumerate}
		\item[(a)]  Every path in $\mathfrak{L}_z^{\mathrm{F}}$ is contained in $\widetilde{B}_z(1)$;

		\item[(b)]  $\widehat{\mathcal{L}}^z_{1/2}$ is measurable with respect to $\mathfrak{L}_z^{\mathrm{F}}$ and $\mathcal{L}_z-\mathcal{L}_z'$ since every path in $\mathfrak{L}_z^{\mathrm{B}}$ has zero local time at $z$;

		\item[(c)]  By the spatial Markov property of loop soups (see \cite[Lemma 2.3]{cai2023one}), conditioning on $\mathfrak{L}_z^{\mathrm{F}}=\{\widetilde{\eta}_i\}_{i=1}^{\hat{\kappa}}$ (let $y_i$ be the last point of $\widetilde{\eta}_i$), the backward crossing paths in $\mathfrak{L}_z^{\mathrm{B}}$ are distributed as independent Brownian motions on $\widetilde{\mathbb{Z}}^d$ with law $\widetilde{\mathbb{P}}_{y_i}\big(\{\widetilde{S}_t\}_{0\le t\le \tau_{z}}\in \cdot \mid \tau_{z}<\infty\big)$ for $1\le i\le \hat{\kappa}(\mathfrak{L}_z^{\mathrm{F}})$.


	\end{enumerate}

	It follows from Property (b) that
	\begin{equation}\label{add_3.7}
			\mathbb{E}\Big(\sqrt{\widehat{\mathcal{L}}^z_{1/2}}\cdot  \mathbbm{1}_{\mathsf{A}^{(2)}_z}\Big) = \mathbb{E}\Big(\sqrt{\widehat{\mathcal{L}}^z_{1/2}}\cdot \mathbb{P}\big[ \mathsf{A}^{(2)}_z\mid \mathfrak{L}_z^{\mathrm{F}},\mathcal{L}_z-\mathcal{L}_z'\big]\Big).
	\end{equation}
		Moreover, on the event $\mathsf{A}^{(2)}_z$, there exists $\widetilde{\eta}'\in \mathfrak{L}_z^{\mathrm{B}}$ such that $\widetilde{\eta}'$ is connected to $A$ by $\cup  (\widetilde{\mathcal{L}}_{1/2}-\mathcal{L}_{z})$. This is because on $\big[\mathsf{A}^{(1)}_z\big]^c$ we have that $\widetilde{B}_z(1)$ is not connected to $A$ by $\cup  (\widetilde{\mathcal{L}}_{1/2}-\mathcal{L}_{z})$, and in addition all paths in $\mathfrak{L}_z^{\mathrm{F}}$ and loops in $\mathcal{L}_z-\mathcal{L}_z'$ are contained in $\widetilde{B}_z(1)$ (by Property (a)). As a result, 
		\begin{equation}\label{add3.8}
			\mathbb{P}\big[ \mathsf{A}^{(2)}_z\mid \mathfrak{L}_z^{\mathrm{F}},\mathcal{L}_z-\mathcal{L}_z'\big] \le \mathbb{P}\big(\exists\ \widetilde{\eta}'\in \mathfrak{L}_z^{\mathrm{B}}\ \text{with}\ \widetilde{\eta}' \xleftrightarrow{\cup  (\widetilde{\mathcal{L}}_{1/2}-\mathcal{L}_{z})} A \mid \mathfrak{L}_z^{\mathrm{F}}\big),
		\end{equation}
	where we also used the fact that $\mathfrak{L}_z^{\mathrm{B}}$ and $\widetilde{\mathcal{L}}_{1/2}-\mathcal{L}_{z}$ are both independent of $\mathcal{L}_z-\mathcal{L}_z'$. For any non-empty configuration of $\mathfrak{L}_z^{\mathrm{F}}$, say $\{\widetilde{\eta}_i\}_{i=1}^{\hat{\kappa}}$ (recall that $y_i$ denotes the last point of $\widetilde{\eta}_i$), let $\widetilde{\eta}_i'$ be the backward crossing path right after $\widetilde{\eta}_i$. By the union bound and Property (c), we have 
	\begin{equation}\label{newadd_3.6}
		\begin{split}
			&\mathbb{P}\big(\exists\ \widetilde{\eta}'\in \mathfrak{L}_z^{\mathrm{B}}\ \text{with}\ \widetilde{\eta}' \xleftrightarrow{\cup  (\widetilde{\mathcal{L}}_{1/2}-\mathcal{L}_{z})} A \mid \mathfrak{L}_z^{\mathrm{F}}=\{\widetilde{\eta}_i\}_{i=1}^{\hat{\kappa}}\big)\\
			\le &\sum\nolimits_{1\le i\le \hat{\kappa}} \mathbb{P}\big( \widetilde{\eta}'_i \xleftrightarrow{\cup  (\widetilde{\mathcal{L}}_{1/2}-\mathcal{L}_{z})} A \mid \mathfrak{L}_z^{\mathrm{F}}=\{\widetilde{\eta}_i\}_{i=1}^{\hat{\kappa}}\big)\\
			= & \sum\nolimits_{1\le i\le \hat{\kappa}}\mathbb{P}\big( \widetilde{\eta}''_i \xleftrightarrow{\cup  (\widetilde{\mathcal{L}}_{1/2}-\mathcal{L}_{z})} A \big),
		\end{split}
	\end{equation}
	where every $\widetilde{\eta}_i''\sim \widetilde{\mathbb{P}}_{y_i}\big(\{\widetilde{S}_t\}_{0\le t\le \tau_{z}}\in \cdot \mid \tau_{z}<\infty\big)$ is a Brownian motion independent of $\widetilde{\mathcal{L}}_{1/2}$. For each $1\le i\le \hat{\kappa}$, we denote $\mathcal{L}_{z,y_i}:=\mathcal{L}_{z}\cdot \mathbbm{1}_{\widetilde{\ell}\ \text{intersects}\ y_i}$. Since $\mathcal{L}_{z,y_i}$, $ \widetilde{\eta}_i''$ and $\widetilde{\mathcal{L}}_{1/2}-\mathcal{L}_{z}$ are independent, one has 
	\begin{equation}\label{newadd_3.7}
		\begin{split}
			&\mathbb{P}\big( \widetilde{\eta}''_i \xleftrightarrow{\cup  (\widetilde{\mathcal{L}}_{1/2}-\mathcal{L}_{z})} A,\mathcal{L}_{z,y_i}\neq \emptyset \big)\\
			=& \mathbb{P}\big( \widetilde{\eta}''_i \xleftrightarrow{\cup  (\widetilde{\mathcal{L}}_{1/2}-\mathcal{L}_{z})} A \big)\mathbb{P}\big( \mathcal{L}_{z,y_i}\neq \emptyset \big) \ge c\mathbb{P}\big( \widetilde{\eta}''_i \xleftrightarrow{\cup  (\widetilde{\mathcal{L}}_{1/2}-\mathcal{L}_{z})} A \big). 
		\end{split}
	\end{equation}
	Meanwhile, since $ \widetilde{\eta}_i''$ is stochastically dominated by $\cup \mathcal{L}_{z,y_i}$ when $\mathcal{L}_{z,y_i}\neq \emptyset$ happens, 
	\begin{equation}\label{newadd_3.7_2}
		\begin{split}
			\mathbb{P}\big( \widetilde{\eta}''_i \xleftrightarrow{\cup  (\widetilde{\mathcal{L}}_{1/2}-\mathcal{L}_{z})} A,\mathcal{L}_{z,y_i}\neq \emptyset \big)\le   \mathbb{P}\big(\cup\mathcal{L}_{z,y_i} \xleftrightarrow{\cup  (\widetilde{\mathcal{L}}_{1/2}-\mathcal{L}_{z})} A \big) \le \mathbb{P} \big(z \xleftrightarrow{\cup  \widetilde{\mathcal{L}}_{1/2}} A\big). 
		\end{split}
	\end{equation}
		Combining (\ref{newadd_3.6}), (\ref{newadd_3.7}) and (\ref{newadd_3.7_2}), we have 
	\begin{equation}\label{newadd_3.7_3}
		\mathbb{P}\big(\exists\ \widetilde{\eta}'\in \mathfrak{L}_z^{\mathrm{B}}\ \text{with}\ \widetilde{\eta}' \xleftrightarrow{\cup  (\widetilde{\mathcal{L}}_{1/2}-\mathcal{L}_{z})} A \mid \mathfrak{L}_z^{\mathrm{F}}\big) \le C\hat{\kappa}\mathbb{P} \big(z \xleftrightarrow{\cup  \widetilde{\mathcal{L}}_{1/2}} A\big),
	\end{equation}
	which together with (\ref{add_3.7}) and (\ref{add3.8}) implies that 
\begin{equation}\label{2.15}
	\mathbb{E}\Big(\sqrt{\widehat{\mathcal{L}}^z_{1/2}}\cdot  \mathbbm{1}_{\mathsf{A}^{(2)}_z}\Big) \le C\mathbb{P} \big(z \xleftrightarrow{\cup  \widetilde{\mathcal{L}}_{1/2}} A\big) \mathbb{E}\Big(\sqrt{\widehat{\mathcal{L}}^z_{1/2}}\cdot \hat{\kappa}\Big). 
\end{equation}

	The spatial Markov property of loop soups implies that given $\hat{\kappa}$ (i.e. the number of paths in $\mathfrak{L}_z^{\mathrm{F}}$), the paths in $\mathfrak{L}_z^{\mathrm{F}}$ are independent Brownian motions on $\widetilde{\mathbb{Z}}^d$ with law $\widetilde{\mathbb{P}}_{z}\big(\{\widetilde{S}_t\}_{0\le t\le \tau_{\widetilde{\partial} \widetilde{B}_z(1)}}\in \cdot\big)$, whose local times at $z$ are independent exponential random variables with rate $1$. As a result, one has 
	\begin{equation}\label{kappa_2}
		\mathbb{E}\big(\hat{\kappa}^{2}\big)\le C \mathbb{E}\Big[\big(\widehat{\mathcal{L}}^z_{1/2}\big)^2\Big]. 
	\end{equation}
	 Thus, by the Cauchy-Schwarz inequality and the inequality that $(\mathbb{E}|X|)^2 \le \mathbb{E}(X^2)$,  
	 \begin{equation*}
	 	\begin{split}
	 	\mathbb{E}\Big(\sqrt{\widehat{\mathcal{L}}^z_{1/2}}\cdot \hat{\kappa}\Big)\le & \Big(\mathbb{E}\big|\widehat{\mathcal{L}}^z_{1/2}\big|\Big)^{\frac{1}{2}}\Big[\mathbb{E}\big(\hat{\kappa}^{2}\big)\Big]^{\frac{1}{2}}\\
	 	 \le  & \Big\{\mathbb{E}\Big[\big(\widehat{\mathcal{L}}^z_{1/2}\big)^2\Big]\Big\}^{\frac{1}{4}}\Big[\mathbb{E}\big(\hat{\kappa}^{2}\big)\Big]^{\frac{1}{2}}\\
	 	  \overset{(\ref{kappa_2})}{\le } & C\Big\{\mathbb{E}\Big[\big(\widehat{\mathcal{L}}^z_{1/2}\big)^2\Big]\Big\}^{\frac{3}{4}} \overset{(\text{Lemma}\ \ref{lemma_iso})}{=} 2^{-\frac{3}{2}}C\Big[\mathbb{E}\big(\widetilde{\phi}_z^4\big)\Big]^{\frac{3}{4}}< C',
	 	\end{split} 	
	 \end{equation*}
	 	which together with (\ref{2.15}) implies that 
	 \begin{equation}\label{2.16}
	 	\mathbb{E}\Big(\sqrt{\widehat{\mathcal{L}}^z_{1/2}}\cdot  \mathbbm{1}_{\mathsf{A}^{(2)}_z}\Big) \le C\mathbb{P} \big(z \xleftrightarrow{\cup  \widetilde{\mathcal{L}}_{1/2}} A\big)\overset{(\ref{coro2.1_1})}{=}2C\mathbb{P} \big(z \xleftrightarrow{\ge 0} A\big). 
	 \end{equation}
	 	Combining (\ref{2.12}), (\ref{2.14}) and (\ref{2.16}), we conclude this lemma. 
	 \end{proof}

The subsequent lemma, taking Lemma \ref{lemma_crossing_prepare} as an input, shows that the expected values of harmonic averages can be bounded from above by one-arm probabilities.

\begin{lemma}\label{lemma3.4}
	For any $d\ge 3$, there exists $C(d)>0$ such that for any non-empty $A\subset \mathbb{Z}^d$ and $y\in \mathbb{Z}^d\setminus A$,  
	\begin{equation}\label{add_3.8}
		\mathbb{E}\Big[\mathcal{H}_y(A, \mathcal{C}^-_{A})\Big]\le C \mathbb{P}\big(y \xleftrightarrow{\ge 0} A \big). 
	\end{equation}
\end{lemma}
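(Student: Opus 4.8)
The plan is to rewrite the left-hand side of \eqref{add_3.8} as an expectation of exactly the form controlled by Lemma \ref{lemma_crossing_prepare}, using the strong Markov property, the centering $\mathbb{E}[\widetilde{\phi}_y]=0$, and the sign symmetry of the GFF.

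First I would recall, from the negative-cluster Example (which checks that $\mathcal{C}^-_A$ satisfies the hypotheses of Lemma \ref{lemma_strong_markov} and that $\widetilde{\phi}_w=0$ on $\widetilde{\partial}\mathcal{C}^-_A\setminus A$), that $\mathbb{E}\big[\widetilde{\phi}_y\mid\mathcal{F}_{\mathcal{C}^-_A}\big]=\mathcal{H}_y(A,\mathcal{C}^-_A)$ on the event $\{y\notin\mathcal{C}^-_A\}$. On the complementary event $\{y\in\mathcal{C}^-_A\}$, since $y\notin A$, Definition \ref{def_harmonic_average} gives $\mathcal{H}_y(A,\mathcal{C}^-_A)=0$ directly: either $y\in(\mathcal{C}^-_A)^{\circ}$, so the first line of the definition applies; or $y\in\widetilde{\partial}\mathcal{C}^-_A\setminus A$, in which case $\widetilde{\phi}_y=0$ and the Brownian motion started at $y$ is already on $\mathcal{C}^-_A$, so again $\mathcal{H}_y(A,\mathcal{C}^-_A)=\widetilde{\phi}_y=0$. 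Since $\widetilde{\phi}_y$ is $\mathcal{F}_{\mathcal{C}^-_A}$-measurable on $\{y\in\mathcal{C}^-_A\}$, combining the two cases gives, almost surely,
\[
\mathbb{E}\big[\widetilde{\phi}_y\mid\mathcal{F}_{\mathcal{C}^-_A}\big]=\mathcal{H}_y(A,\mathcal{C}^-_A)+\widetilde{\phi}_y\,\mathbbm{1}_{y\in\mathcal{C}^-_A}.
\]

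Taking expectations and using $\mathbb{E}[\widetilde{\phi}_y]=0$ (all terms are dominated by $|\widetilde{\phi}_y|$, hence integrable) yields $\mathbb{E}[\mathcal{H}_y(A,\mathcal{C}^-_A)]=-\mathbb{E}[\widetilde{\phi}_y\,\mathbbm{1}_{y\in\mathcal{C}^-_A}]$. Because $y\notin A$, on $\{y\in\mathcal{C}^-_A\}=\{y\xleftrightarrow{\le 0}A\}$ one has $\widetilde{\phi}_y\le 0$, so $\mathbb{E}[\mathcal{H}_y(A,\mathcal{C}^-_A)]=\mathbb{E}[|\widetilde{\phi}_y|\,\mathbbm{1}_{y\xleftrightarrow{\le 0}A}]$. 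Applying the symmetry $\widetilde{\phi}\mapsto-\widetilde{\phi}$ turns this into $\mathbb{E}[|\widetilde{\phi}_y|\,\mathbbm{1}_{y\xleftrightarrow{\ge 0}A}]=\mathbb{E}[\widetilde{\phi}_y\,\mathbbm{1}_{y\xleftrightarrow{\ge 0}A}]$, the last equality since $\widetilde{\phi}_y\ge 0$ on $\{y\xleftrightarrow{\ge 0}A\}$. Finally, Lemma \ref{lemma_crossing_prepare} bounds the right-hand side by $\Cref{const_crossing_prepare}\,\mathbb{P}(y\xleftrightarrow{\ge 0}A)$, which is \eqref{add_3.8}.

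The computation is short, and the one genuinely substantive input, Lemma \ref{lemma_crossing_prepare}, enters as a black box. The only point demanding care is the bookkeeping of Definition \ref{def_harmonic_average} on the event $\{y\in\mathcal{C}^-_A\}$: it is the vanishing of $\mathcal{H}_y(A,\mathcal{C}^-_A)$ there that identifies the defect between $\mathbb{E}[\widetilde{\phi}_y\mid\mathcal{F}_{\mathcal{C}^-_A}]$ and the harmonic average as exactly $\widetilde{\phi}_y\mathbbm{1}_{y\in\mathcal{C}^-_A}$, and thereby shows that $\mathbb{E}[\mathcal{H}_y(A,\mathcal{C}^-_A)]$ is actually \emph{equal} to $\mathbb{E}[|\widetilde{\phi}_y|\mathbbm{1}_{y\xleftrightarrow{\le 0}A}]$, not merely bounded by it.
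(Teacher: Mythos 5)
Your proof is correct and follows essentially the same route as the paper: conditioning on $\mathcal{F}_{\mathcal{C}^-_A}$, observing that $\mathcal{H}_y(A,\mathcal{C}^-_A)$ vanishes on $\{y\in\mathcal{C}^-_A\}$ and equals the conditional mean of $\widetilde{\phi}_y$ otherwise, then using $\mathbb{E}[\widetilde{\phi}_y]=0$ and the sign symmetry of the GFF to reduce to Lemma~\ref{lemma_crossing_prepare}. The only small cosmetic difference is that you package the two cases into a single identity $\mathbb{E}[\widetilde{\phi}_y\mid\mathcal{F}_{\mathcal{C}^-_A}]=\mathcal{H}_y(A,\mathcal{C}^-_A)+\widetilde{\phi}_y\mathbbm{1}_{y\in\mathcal{C}^-_A}$ before taking expectations, whereas the paper writes $\mathbb{E}[\mathcal{H}_y(A,\mathcal{C}^-_A)]=\mathbb{E}[\widetilde{\phi}_y\mathbbm{1}_{\{y\leftrightarrow^{\le0}A\}^c}]$ directly; the subsequent manipulation is the same.
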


\begin{proof}
	Conditioning on $\mathcal{F}_{\mathcal{C}^{-}_{A}}$, if $y\in ( \mathcal{C}^-_{A})^{\circ}$ happens (i.e. $y \xleftrightarrow{\le 0} A$), then we have $\mathcal{H}_y(A, \mathcal{C}^-_{A})=0$; otherwise (i.e. $\{y \xleftrightarrow{\le 0} A\}^c$), by Lemma \ref{lemma_strong_markov}, we know that the conditional expectation of $\widetilde{\phi}_y$ equals   $\mathcal{H}_y(A,  \mathcal{C}^-_{A})$. Therefore, we have 
	\begin{equation}\label{3.9}
		\mathbb{E}\Big[\mathcal{H}_y(A,  \mathcal{C}^-_{A})\Big] = \mathbb{E}\Big( \widetilde{\phi}_y \cdot \mathbbm{1}_{\{y\xleftrightarrow{\le 0} A\}^c} \Big). 
	\end{equation}
	In addition, by the symmetry of $\widetilde{\phi}$, one has 
	\begin{equation*}
		\begin{split}
			 \mathbb{E}\Big( \widetilde{\phi}_y \cdot \mathbbm{1}_{\{y\xleftrightarrow{\le 0} A\}^c} \Big)
			=\mathbb{E}\big(\widetilde{\phi}_y \big)- \mathbb{E}\Big(\widetilde{\phi}_y \cdot \mathbbm{1}_{y\xleftrightarrow{\le 0} A} \Big)
			= \mathbb{E}\Big(\widetilde{\phi}_y \cdot \mathbbm{1}_{y\xleftrightarrow{\ge 0} A} \Big). 
		\end{split}
	\end{equation*}
	Combined with (\ref{3.9}) and Lemma \ref{lemma_crossing_prepare}, it implies the desired bound (\ref{add_3.8}). 
	\end{proof}

For any $N,n\ge 1$ with $N\ge 100dn$, we denote $\mathcal{C}_{n,N}^{-}:=\mathcal{C}^-_{\partial B(n)}\cup \mathcal{C}^-_{\partial B(N)}$ and 
\begin{equation}\label{def_H_in}
	\overbar{\mathcal{H}}^{\mathrm{in}}= |\partial \mathcal{B}(4dn)|^{-1} \sum\nolimits_{z\in \partial \mathcal{B}(4dn)} \mathcal{H}_z\big(\partial B(n),\mathcal{C}_{n,N}^{-}\big),
\end{equation}
\begin{equation}\label{def_H_out}
	\overbar{\mathcal{H}}^{\mathrm{out}}= |\partial^{\mathrm{e}} \mathcal{B}(5N/8)|^{-1} \sum\nolimits_{z\in \partial^{\mathrm{e}} \mathcal{B}(5N/8)}  \mathcal{H}_z\big(\partial B(N),\mathcal{C}_{n,N}^{-}\big). 
\end{equation}

\begin{lemma}\label{lemma_hat_Hy}
	Recall $\widehat{\mathcal{H}}_{\cdot}$ in (\ref{def_hat_H}). Then we have 
	\begin{equation}\label{3.20_hat_Hy}
		\sum\nolimits_{y\in \partial B(n)} \widehat{\mathcal{H}}_{y}(\partial B(N),\mathcal{C}_{n,N}^{-})\widetilde{\phi}_{y}\le C(d)n^{d-2}\overbar{\mathcal{H}}^{\mathrm{in}}   \overbar{\mathcal{H}}^{\mathrm{out}}. 
	\end{equation}
\end{lemma}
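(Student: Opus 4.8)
Write $\mathcal C:=\mathcal C_{n,N}^{-}$. The plan is to reduce \eqref{3.20_hat_Hy} to a deterministic, nonnegative, termwise estimate and then prove that by cutting a Brownian path at two intermediate spheres.

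\textbf{Reduction.} Only indices $y\in\partial B(n)$, $w\in\partial B(N)$ with $\widetilde\phi_y>0$, $\widetilde\phi_w>0$ can contribute on either side: if $\widetilde\phi_y<0$ then by continuity a whole neighbourhood of $y$ lies in $\widetilde E^{\le 0}$, hence in $\mathcal C^-_{\partial B(n)}\subset\mathcal C$, so $y\in\mathcal C^{\circ}$; consequently the sum over $z\sim y$ in \eqref{def_hat_H} is empty (every incident edge meets $\mathcal C$ near $y$) and $\widetilde{\mathbb P}_z(\tau_{\mathcal C}=\tau_y)=0$ for all $z$ (an interior point of $\mathcal C$ cannot be the first point of $\mathcal C$ visited), the case $\widetilde\phi_y=0$ makes the term vanish, and symmetrically for $w$. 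Using the identities from the proof of Lemma~\ref{lemma_connecting_hm} --- $\widehat{\mathcal H}_y(\partial B(N),\mathcal C)=\tfrac12\sum_{w}\mathbb K_{\mathcal C}(y,w)\widetilde\phi_w$ and $d\,\mathbb K_{\mathcal C}(y,w)=\sum_{z\sim y:\,I^{\circ}_{\{y,z\}}\cap\mathcal C=\emptyset}\widetilde{\mathbb P}_z(\tau_{\mathcal C}=\tau_w)$ --- together with \eqref{def_H_in}--\eqref{def_H_out}, the left side of \eqref{3.20_hat_Hy} is $\tfrac12\sum_{y,w}\mathbb K_{\mathcal C}(y,w)\widetilde\phi_y\widetilde\phi_w$ and the right side is $Cn^{d-2}\sum_{y,w}\mathsf A_y\mathsf B_w\widetilde\phi_y\widetilde\phi_w$, where
\[
\mathsf A_y:=\frac{1}{|\partial\mathcal B(4dn)|}\sum_{z\in\partial\mathcal B(4dn)}\widetilde{\mathbb P}_z(\tau_{\mathcal C}=\tau_y),\qquad
\mathsf B_w:=\frac{1}{|\partial^{\mathrm e}\mathcal B(5N/8)|}\sum_{z\in\partial^{\mathrm e}\mathcal B(5N/8)}\widetilde{\mathbb P}_z(\tau_{\mathcal C}=\tau_w).
\]
All weights $\widetilde\phi_y\widetilde\phi_w$ appearing are $\ge 0$, so it suffices to prove $d\,\mathbb K_{\mathcal C}(y,w)\le Cn^{d-2}\mathsf A_y\mathsf B_w$ for each $y\in\partial B(n)$, $w\in\partial B(N)$. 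Since $N\ge 100dn$ we have $\sqrt d\,n<4dn<5N/8<N$, so by \eqref{inclusion_box} the point $y$ lies strictly inside $\mathcal B(4dn)\subset\mathcal B(5N/8)$ and $w$ strictly outside $\mathcal B(5N/8)$.

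\textbf{Decomposition and the far factor.} Since $y$ is strictly inside $\mathcal B(4dn)$ and $w$ strictly outside, a Brownian path from near $y$ to $w$ avoiding $\mathcal C$ until $w$ must first hit $\partial\mathcal B(4dn)$ at some $u\notin\mathcal C$; letting $\epsilon\to0^{+}$ in \eqref{def_KD} and applying the strong Markov property,
\[
d\,\mathbb K_{\mathcal C}(y,w)=\sum_{u\in\partial\mathcal B(4dn)}d\,\mathbb K_{\mathcal C\cup\partial\mathcal B(4dn)}(y,u)\;\widetilde{\mathbb P}_u(\tau_{\mathcal C}=\tau_w).
\]
For every $u\in\partial\mathcal B(4dn)$ I claim $\widetilde{\mathbb P}_u(\tau_{\mathcal C}=\tau_w)\le C\mathsf B_w$: such $u$ lies at distance $\ge N/3$ from $\mathbb Z^d\setminus\mathcal B(5N/8)$, so by the comparability of the exit distribution of a Euclidean ball with the uniform distribution on its boundary (\cite[Lemma~6.3.7]{lawler2010random}) one has $\widetilde{\mathbb P}_u(\tau_{\partial^{\mathrm e}\mathcal B(5N/8)}=\tau_v)\le C|\partial^{\mathrm e}\mathcal B(5N/8)|^{-1}$ for every $v$; adding the event $\{\tau_{\partial^{\mathrm e}\mathcal B(5N/8)}<\tau_{\mathcal C}\}$ only lowers this, and the strong Markov property at the first hit of $\partial^{\mathrm e}\mathcal B(5N/8)$ gives
\[
\widetilde{\mathbb P}_u(\tau_{\mathcal C}=\tau_w)=\sum_{v}\widetilde{\mathbb P}_u\!\big(\tau_{\partial^{\mathrm e}\mathcal B(5N/8)}=\tau_v<\tau_{\mathcal C}\big)\widetilde{\mathbb P}_v(\tau_{\mathcal C}=\tau_w)\le\frac{C}{|\partial^{\mathrm e}\mathcal B(5N/8)|}\sum_{v}\widetilde{\mathbb P}_v(\tau_{\mathcal C}=\tau_w)=C\mathsf B_w.
\]

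\textbf{The near factor and conclusion.} By \eqref{def_KD} (again with $\epsilon\to0^+$, now along the $\mathcal C$-free edge(s) at $y$) the remaining factor is an escape probability,
\[
\sum_{u\in\partial\mathcal B(4dn)}d\,\mathbb K_{\mathcal C\cup\partial\mathcal B(4dn)}(y,u)=\sum_{z\sim y:\,I^{\circ}_{\{y,z\}}\cap\mathcal C=\emptyset}\widetilde{\mathbb P}_z\big(\tau_{\partial\mathcal B(4dn)}<\tau_{\mathcal C}\big),
\]
and I would bound it by $Cn^{d-2}\mathsf A_y$. Here reversibility of the metric-graph Brownian motion (symmetry of killed Green's functions, and $\mathbb K_{\mathcal C\cup\partial\mathcal B(4dn)}(y,u)=\mathbb K_{\mathcal C\cup\partial\mathcal B(4dn)}(u,y)$) lets one express both this escape probability and $\mathsf A_y$ through killed Green's functions --- e.g.\ $\widetilde{\mathbb P}_z(\tau_{\mathcal C}=\tau_y)=\widetilde G_{\mathcal C\setminus\{y\}}(z,y)/\widetilde G_{\mathcal C\setminus\{y\}}(y,y)$, valid because $y$ has a $\mathcal C$-free incident edge, and an analogous identity for $\widetilde{\mathbb P}_z(\tau_{\partial\mathcal B(4dn)}=\tau_{z'}<\tau_{\mathcal C})$ --- and then compare them at scale $n$: routing the escaping excursion through an auxiliary sphere $\partial\mathcal B(2dn)$ detaches it from $\partial\mathcal B(4dn)$, and comparability of Euclidean hitting distributions together with \eqref{bound_green} produces the capacity factor $n^{d-2}$ (morally the reciprocal of $\widetilde G$ at scale $n$), while the possibly degenerate local behaviour of $\mathcal C$ near $y$ is absorbed through Lemma~\ref{lemma2.1}. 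This step is the main obstacle, and the estimate survives precisely because $\mathsf A_y$ is a sphere average, hence robust even when $\mathcal C$ nearly encircles $y$ or reaches close to $\partial\mathcal B(4dn)$. Combining the two bounds with the decomposition,
\[
d\,\mathbb K_{\mathcal C}(y,w)\le\Big(\max_{u\in\partial\mathcal B(4dn)}\widetilde{\mathbb P}_u(\tau_{\mathcal C}=\tau_w)\Big)\sum_{u\in\partial\mathcal B(4dn)}d\,\mathbb K_{\mathcal C\cup\partial\mathcal B(4dn)}(y,u)\le C\mathsf B_w\cdot Cn^{d-2}\mathsf A_y,
\]
which is the required termwise inequality; summing over $y,w$ against $\widetilde\phi_y\widetilde\phi_w\ge0$ yields \eqref{3.20_hat_Hy}.
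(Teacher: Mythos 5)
Your reduction to the termwise inequality $d\,\mathbb{K}_{\mathcal{C}}(y,w)\le Cn^{d-2}\mathsf{A}_y\mathsf{B}_w$ against the non-negative weights $\widetilde\phi_y\widetilde\phi_w$, the first-hit decomposition at $\partial\mathcal{B}(4dn)$, and the far-factor bound $\max_{u\in\partial\mathcal{B}(4dn)}\widetilde{\mathbb{P}}_u(\tau_{\mathcal{C}}=\tau_w)\le C\mathsf{B}_w$ are all correct, and they parallel what the paper does in \eqref{3.11}--\eqref{3.12} to produce $\overbar{\mathcal{H}}^{\mathrm{out}}$. The problem is the near factor $\sum_u d\,\mathbb{K}_{\mathcal{C}\cup\partial\mathcal{B}(4dn)}(y,u)\le Cn^{d-2}\mathsf{A}_y$, i.e.
\begin{equation*}
\sum_{z\sim y:\,I^{\circ}\cap\mathcal{C}=\emptyset}\widetilde{\mathbb{P}}_z\big(\tau_{\partial\mathcal{B}(4dn)}<\tau_{\mathcal{C}}\big)\ \le\ \frac{C}{n}\sum_{z_1\in\partial\mathcal{B}(4dn)}\widetilde{\mathbb{P}}_{z_1}(\tau_{\mathcal{C}}=\tau_y).
\end{equation*}
You flag it yourself as ``the main obstacle'' and then only list ingredients (reversibility, killed Green's functions, an auxiliary sphere $\partial\mathcal{B}(2dn)$, Lemma~\ref{lemma2.1}) without carrying out the comparison. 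This step is where the lemma actually lives: it converts an escape probability localised at $y$, which is sensitive to the unknown geometry of $\mathcal{C}$ near $y$, into the sphere average $\mathsf{A}_y$, and that conversion is not a formality.

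The paper avoids the escape probability entirely by running a \emph{last-exit} decomposition at $\partial\mathcal{B}(4dn)$ (its \eqref{3.10}, via \cite[Section 8.2]{morters2010brownian}), which produces the killed Green's function $\widetilde{G}_{\mathcal{C}_{n,N}^-}(y',z_1)$ directly as the weight attached to $z_1\in\partial\mathcal{B}(4dn)$ --- the kernel $n^{-1}N^{1-d}$ then comes from \eqref{3.12}. The comparison with $\mathsf{A}_y$ is completed by the chain $\widetilde{\mathbb{P}}_{z_1}(\tau_{\mathcal{C}}=\tau_y)\ge\widetilde{\mathbb{P}}_{z_1}(\tau_{\mathcal{C}}>\tau_{y'})\,\widetilde{\mathbb{P}}_{y'}(\tau_{\mathcal{C}}=\tau_y)=\big[\widetilde{G}_{\mathcal{C}}(y',z_1)/\widetilde{G}_{\mathcal{C}}(y',y')\big]\widetilde{\mathbb{P}}_{y'}(\tau_{\mathcal{C}}=\tau_y)$, together with Lemma~\ref{lemma2.1} which supplies $\widetilde{G}_{\mathcal{C}}(y',y')\le C\widetilde{\mathbb{P}}_{y'}(\tau_{\mathcal{C}}=\tau_y)$ (this is where the ``degenerate local behaviour of $\mathcal{C}$ near $y$'' is absorbed). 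In your first-hit formulation the analogous work is to express $\widetilde{\mathbb{P}}_z(\tau_{\partial\mathcal{B}(4dn)}<\tau_{\mathcal{C}})$ as $\sum_{z_1}\widetilde{G}_{\mathcal{C}}(z,z_1)\,e(z_1)$ with a uniform bound $e(z_1)\le Cn^{-1}$ on the escape rate, and only then hand off to Lemma~\ref{lemma2.1}; the auxiliary sphere $\partial\mathcal{B}(2dn)$ on its own does not supply this, because the delicate part is the dependence on $\mathcal{C}$, not the Euclidean geometry. So the architecture is right and the easy half matches the paper, but the step that actually produces the factor $n^{d-2}\mathsf{A}_y$ is asserted rather than proven, and this is a genuine gap.
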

\begin{proof}

		For any $y\in\partial B(n)$, $y'\sim y$ with $I_{\{y,y'\}}^{\circ}\cap \mathcal{C}_{n,N}^{-}=\emptyset$ and for any $z\in \partial B(N)$, by the last-exit decomposition for the Brownian motion (see e.g. \cite[Section 8.2]{morters2010brownian}),
	\begin{equation}\label{3.10}
		\begin{split}
		&\widetilde{\mathbb{P}}_{y'}\big(\tau_{\mathcal{C}_{n,N}^{-}}=\tau_{z}\big)\\
		\le  &C\sum_{z_1\in \partial \mathcal{B}(4dn)}\widetilde{G}_{\mathcal{C}_{n,N}^{-}}(y',z_1) \sum_{z_1'\in \partial^{\mathrm{e}} \mathcal{B}(4dn): z_1' \sim z_1}  \widetilde{\mathbb{P}}_{z_1'}\big(\tau_{\mathcal{C}_{n,N}^{-}\cup \mathcal{B}(4dn)}=\tau_{z}\big).
		\end{split} 
	\end{equation}
	Moreover, by the strong Markov property, we have 
	\begin{equation}\label{3.11}
		\begin{split}
			&\widetilde{\mathbb{P}}_{z_1'}\big(\tau_{\mathcal{C}_{n,N}^{-}\cup \mathcal{B}(4dn)}=\tau_{z}\big)\\
			 \le  &\sum\nolimits_{z_2\in \partial^{\mathrm{e}} \mathcal{B}(5N/8)} \widetilde{\mathbb{P}}_{z_1'}\big(\tau_{\mathcal{B}(4dn)}>\tau_{\partial^{\mathrm{e}}\mathcal{B}(5N/8)}=\tau_{z_2}\big) \widetilde{\mathbb{P}}_{z_2}\big(\tau_{\mathcal{C}_{n,N}^{-}}=\tau_{z}\big).	
		\end{split}
	\end{equation}
	In addition, by the strong Markov property and \cite[Lemmas 6.3.4 and 6.3.7]{lawler2010random} (recalling that the projection of $\widetilde{S}_{\cdot}$ on $\mathbb{Z}^d$ is a simple random walk), we have: for any $z_1\in \partial \mathcal{B}(4dn)$ and $z_1'\in \partial^{\mathrm{e}} \mathcal{B}(4dn)$ with $z_1 \sim z_1'$,
	\begin{equation}\label{3.12}
		\begin{split}
			&\widetilde{\mathbb{P}}_{z_1'}\big(\tau_{\mathcal{B}(4dn)}>\tau_{\partial^{\mathrm{e}} \mathcal{B}(5N/8)}=\tau_{z_2}\big)\\
			\le& \widetilde{\mathbb{P}}_{z_1'}\big(\tau_{\mathcal{B}(4dn)}>\tau_{\partial^{\mathrm{e}} \mathcal{B}(8dn)}\big) \max_{w\in \partial^{\mathrm{e}} \mathcal{B}(8dn)}\widetilde{\mathbb{P}}_{w}\big(\tau_{\partial^{\mathrm{e}} \mathcal{B}(5N/8)}=\tau_{z_2}\big)
			\le  Cn^{-1}N^{-d+1}.  
		\end{split}
	\end{equation}
 By (\ref{3.10}), (\ref{3.11}) and (\ref{3.12}), we get
	\begin{equation*}
	\widetilde{\mathbb{P}}_{y'}\big(\tau_{\mathcal{C}_{n,N}^{-}}=\tau_{z}\big) \le Cn^{-1}N^{-d+1} \sum_{z_1\in \partial \mathcal{B}(4dn)}\widetilde{G}_{\mathcal{C}_{n,N}^{-}}(y',z_1) \sum_{z_2\in \partial^{\mathrm{e}} \mathcal{B}(5N/8) } \widetilde{\mathbb{P}}_{z_2}\big(\tau_{\mathcal{C}_{n,N}^{-}}=\tau_{z}\big).
	\end{equation*}
Combined with the fact that all points in $\mathcal{C}_{n,N}^{-}$ with positive harmonic measures have non-negative GFF values, it implies that 
	\begin{equation}\label{3.13}
	\begin{split}
		&\mathcal{H}_{y'}\big(\partial B(N),\mathcal{C}_{n,N}^{-}\big)\\
		\le &Cn^{-1}N^{-d+1} \sum_{z_1\in  \partial \mathcal{B}(4dn)} \widetilde{G}_{\mathcal{C}_{n,N}^{-}}(y',z_1) \sum_{z_2\in \partial^{\mathrm{e}} \mathcal{B}(5N/8) }\mathcal{H}_{z_2}\big(\partial B(N),\mathcal{C}_{n,N}^{-}\big)\\
		\le & C'n^{-1}\overbar{\mathcal{H}}^{\mathrm{out}} \sum_{z_1\in  \partial \mathcal{B}(4dn)} \widetilde{G}_{\mathcal{C}_{n,N}^{-}}(y',z_1),
	\end{split}	 
\end{equation}
	where in the last inequality we used $|\partial^{\mathrm{e}} \mathcal{B}(5N/8)|\asymp N^{d-1}$ and (\ref{def_H_out}). Meanwhile, since $I_{\{y,y'\}}^{\circ}\cap \mathcal{C}_{n,N}^{-}=\emptyset$, it follows from Lemma \ref{lemma2.1} that
	\begin{equation}\label{newadd_3.17}
		\widetilde{G}_{\mathcal{C}_{n,N}^{-}}(y',y')\le C\widetilde{\mathbb{P}}_{y'}\big(\tau_{\mathcal{C}_{n,N}^{-}}=\tau_{y}\big).
	\end{equation} 
Therefore, by the strong Markov property and (\ref{newadd_3.17}), one has 
	\begin{equation*}
		\begin{split}
			\widetilde{\mathbb{P}}_{z_1}\big(\tau_{\mathcal{C}_{n,N}^{-}}=\tau_{y}\big)\ge&\widetilde{\mathbb{P}}_{z_1}\big(\tau_{\mathcal{C}_{n,N}^{-}}>\tau_{y'}\big)\widetilde{\mathbb{P}}_{y'}\big(\tau_{\mathcal{C}_{n,N}^{-}}=\tau_{y}\big)\\
			=&  \tfrac{\widetilde{\mathbb{P}}_{y'}\big(\tau_{\mathcal{C}_{n,N}^{-}}=\tau_{y}\big)}{\widetilde{G}_{\mathcal{C}_{n,N}^{-}}(y',y')}\cdot \widetilde{G}_{\mathcal{C}_{n,N}^{-}}(y',z_1) \overset{(\ref{newadd_3.17})}{\ge} c\widetilde{G}_{\mathcal{C}_{n,N}^{-}}(y',z_1).
		\end{split}  
	\end{equation*}
	Combined with (\ref{3.13}) (recall $\widehat{\mathcal{H}}_\cdot$ in (\ref{def_hat_H}) and $\overbar{\mathcal{H}}^{\mathrm{in}}$ in (\ref{def_H_in})), it concludes (\ref{3.20_hat_Hy}): 
	\begin{equation}
		\begin{split}
			&	\sum\nolimits_{y\in \partial B(n)} \widehat{\mathcal{H}}_{y}(\partial B(N),\mathcal{C}_{n,N}^{-})\widetilde{\phi}_{y}\\
			\le &Cn^{-1}\overbar{\mathcal{H}}^{\mathrm{out}}  \sum\nolimits_{z_1\in \partial \mathcal{B}(4dn)} \sum\nolimits_{y\in \partial B(n)} \widetilde{\mathbb{P}}_{z_1}\big(\tau_{\mathcal{C}_{n,N}^{-}}=\tau_{y}\big)\widetilde{\phi}_{y} \\
			=&  Cn^{-1}\overbar{\mathcal{H}}^{\mathrm{out}}  \sum\nolimits_{z_1\in \partial \mathcal{B}(4dn)}\mathcal{H}_{z_1}\big(\partial B(n),\mathcal{C}_{n,N}^{-}\big) \overset{(|\partial \mathcal{B}(4dn)|\asymp n^{d-1})}{\le}
			 C'n^{d-2}\overbar{\mathcal{H}}^{\mathrm{out}}\overbar{\mathcal{H}}^{\mathrm{in}}.   \qedhere 
		\end{split}
	\end{equation}
	
	\end{proof}


Next, we present a uniform upper bound for the hitting distribution on a ball.

\begin{lemma}\label{lemma_uniform_distribution}
	For $d\ge 3$, there exists $C(d)>0$ such that for any $n\ge 1$, $y\in \partial \mathcal{B}(n)$, any $z\in \mathbb{Z}^d\setminus \mathcal{B}(10n)$, 
	\begin{equation}\label{new_add_3.22}
		\widetilde{\mathbb{P}}_z\big(\tau_{\partial \mathcal{B}(n)}=\tau_{y}<\infty\big) \le Cn^{-1}|z|^{2-d}. 
	\end{equation}
\end{lemma}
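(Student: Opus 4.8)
The plan is to factor the event as: $\widetilde S_\cdot$ started at $z$ reaches the ball $\mathcal B(n)$ at all, and, given that, it does so for the first time at the boundary point $y$. Since $z\notin\mathcal B(n)$, the first point of $\mathcal B(n)$ that $\widetilde S_\cdot$ visits (if any) lies on $\partial\mathcal B(n)$, so $\tau_{\mathcal B(n)}=\tau_{\partial\mathcal B(n)}$ on $\{\tau_{\mathcal B(n)}<\infty\}$ and $\{\tau_{\partial\mathcal B(n)}=\tau_y<\infty\}=\{\widetilde S_{\tau_{\mathcal B(n)}}=y\}\cap\{\tau_{\mathcal B(n)}<\infty\}$; hence
\begin{equation*}
	\widetilde{\mathbb{P}}_z\big(\tau_{\partial \mathcal{B}(n)}=\tau_{y}<\infty\big)= \widetilde{\mathbb{P}}_z\big(\tau_{\mathcal B(n)}<\infty\big)\cdot \widetilde{\mathbb{P}}_z\big(\widetilde S_{\tau_{\mathcal B(n)}}=y\mid\tau_{\mathcal B(n)}<\infty\big),
\end{equation*}
and it suffices to bound the first factor by $Cn^{d-2}|z|^{2-d}$ and the second by $Cn^{1-d}$.

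For the first factor I would use only the Green's function. Since $\bm{0}\in\mathcal B(n)$, all of the local time of $\widetilde S_\cdot$ at $\bm{0}$ is accumulated after $\tau_{\mathcal B(n)}$, so the strong Markov property gives $\widetilde G(z,\bm{0})=\widetilde{\mathbb{E}}_z[\mathbbm{1}_{\tau_{\mathcal B(n)}<\infty}\,\widetilde G(\widetilde S_{\tau_{\mathcal B(n)}},\bm{0})]$. As $\widetilde S_{\tau_{\mathcal B(n)}}\in\partial\mathcal B(n)$ we have $\widetilde G(\widetilde S_{\tau_{\mathcal B(n)}},\bm{0})\ge\cref{const_green_2}n^{2-d}$ by (\ref{bound_green}), while $\widetilde G(z,\bm{0})\le\Cref{const_green_1}|z|^{2-d}$, so $\widetilde{\mathbb{P}}_z(\tau_{\mathcal B(n)}<\infty)\le(\Cref{const_green_1}/\cref{const_green_2})\,n^{d-2}|z|^{2-d}$. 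For the second factor I would pass to the projected simple random walk (the lattice visits of $\widetilde S_\cdot$ form a simple random walk on $\mathbb Z^d$, which enters $\mathcal B(n)$ for the first time exactly when $\widetilde S_\cdot$ hits $\partial\mathcal B(n)$) and invoke that the hitting distribution of a ball from a point at distance at least twice its radius — here $|z|>10n$ — is comparable to the uniform distribution on the boundary, see e.g.\ \cite[Lemmas 6.3.4 and 6.3.7]{lawler2010random}; together with $|\partial\mathcal B(n)|\asymp n^{d-1}$ this gives $\widetilde{\mathbb{P}}_z(\widetilde S_{\tau_{\mathcal B(n)}}=y\mid\tau_{\mathcal B(n)}<\infty)\le Cn^{1-d}$, and the two bounds together yield (\ref{new_add_3.22}). (For $n$ bounded, (\ref{new_add_3.22}) is immediate from $\widetilde{\mathbb{P}}_z(\tau_y<\infty)=\widetilde G(z,y)/\widetilde G(y,y)\le C|z|^{2-d}$, so one may assume $n$ large throughout.)

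The bound on the first factor uses nothing beyond (\ref{bound_green}) and the strong Markov property, so the one genuinely non-routine ingredient — and the step I expect to require the most care — is the comparability of the harmonic measure of a ball (from a point outside at comparable or larger distance) to the uniform measure on its boundary. If one prefers not to quote it, it can be established in a self-contained way by combining a discrete Harnack inequality for the nonnegative function $u\mapsto\widetilde{\mathbb{P}}_u(\tau_{\partial\mathcal B(n)}=\tau_y<\infty)$, which is harmonic on $\mathbb Z^d\setminus\mathcal B(n)$, on an annulus such as $\{2n\le|u|\le 4n\}$ with the reversibility estimate $\sum_{w\in\partial\mathcal B(2n)}\widetilde{\mathbb{P}}_w(\tau_{\partial\mathcal B(n)}=\tau_y<\infty)\le C$; the latter follows from the symmetry of $\widetilde G$ together with a gambler's-ruin bound for the walk from $\partial^{\mathrm{e}}\mathcal B(n)$ escaping to $\partial\mathcal B(2n)$ without re-entering $\mathcal B(n)$, and Harnack then reduces $\max_{w\in\partial\mathcal B(2n)}$ to $C|\partial\mathcal B(2n)|^{-1}\asymp n^{1-d}$.
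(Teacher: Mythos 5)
Your argument is correct, and it takes a genuinely different route from the paper's. You factor the event as
\begin{equation*}
\widetilde{\mathbb{P}}_z\big(\tau_{\partial\mathcal B(n)}=\tau_y<\infty\big)=\widetilde{\mathbb{P}}_z\big(\tau_{\mathcal B(n)}<\infty\big)\,\widetilde{\mathbb{P}}_z\big(\widetilde S_{\tau_{\mathcal B(n)}}=y\mid\tau_{\mathcal B(n)}<\infty\big),
\end{equation*}
bound the first factor by $Cn^{d-2}|z|^{2-d}$ via the identity $\widetilde G(z,\bm 0)=\widetilde{\mathbb{E}}_z[\mathbbm{1}_{\tau_{\mathcal B(n)}<\infty}\widetilde G(\widetilde S_{\tau_{\mathcal B(n)}},\bm 0)]$ together with (\ref{bound_green}), and bound the second by $Cn^{1-d}$ via uniformity of the (conditional) hitting measure of $\mathcal B(n)$. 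The paper instead first passes to a neighbor $y'\in\partial^{\mathrm{e}}\mathcal B(n)$ of $y$, then uses the Green's-function identity $\widetilde{\mathbb{P}}_z(\tau_{y'}<\tau_{\mathcal B(n)},\tau_{y'}<\infty)=\widetilde G_{\mathcal B(n)}(z,y')/\widetilde G_{\mathcal B(n)}(y',y')$ together with symmetry of $\widetilde G_{\mathcal B(n)}$ to \emph{reverse} the direction, obtaining $\widetilde{\mathbb{P}}_{y'}(\tau_z<\tau_{\mathcal B(n)},\tau_z<\infty)$ up to a constant (using $\widetilde G_{\mathcal B(n)}(z,z)\le\Cref{const_green_1}$ and $\widetilde G_{\mathcal B(n)}(y',y')\ge c$), and then decomposes this through $\partial^{\mathrm{e}}\mathcal B(2n)$: escape from $y'$ to the shell costs $\asymp n^{-1}$ (gambler's ruin, \cite[Lemma 6.3.4]{lawler2010random}) and hitting the far point $z$ from the shell costs $\asymp|z|^{2-d}$ (\cite[Proposition 6.4.2]{lawler2010random}). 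Both decompositions give the same bound; the paper's avoids entirely the input you rely on in your second factor, namely uniformity of the \emph{hitting} (as opposed to exit) distribution of a ball from a remote exterior point. In that respect your proof is a bit more top-down, while the paper's is more hands-on.

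Two small caveats on your version. First, the citation: the paper uses \cite[Lemma 6.3.7]{lawler2010random} for the \emph{exit} distribution of a ball from an interior point; for the conditional \emph{hitting} distribution of $\mathcal B(n)$ from $z$ with $|z|>10n$ you still need a short additional argument (route the walk through $\partial^{\mathrm{e}}\mathcal B(2n)$, use uniformity of the hitting distribution from that shell, and observe that $\widetilde{\mathbb{P}}_w(\tau_{\mathcal B(n)}<\infty)\asymp1$ uniformly over $w\in\partial^{\mathrm{e}}\mathcal B(2n)$ so that conditioning is harmless). This is standard, but worth writing out rather than leaving implicit in the citation. Second, in your self-contained backup, the estimate $\sum_{w\in\partial\mathcal B(2n)}\widetilde{\mathbb{P}}_w(\tau_{\partial\mathcal B(n)}=\tau_y<\infty)\le C$ is correct, but it does not follow from symmetry plus gambler's ruin alone: after reversing to $\sum_w\widetilde G_{\mathcal B(n)}(y',w)$ (expected visits to $\partial\mathcal B(2n)$ from $y'$ before killing) you need to route through the shell and then sum $|v-w|^{2-d}$ over $w\in\partial\mathcal B(2n)$ to get $\asymp n$, which combined with the $\asymp n^{-1}$ escape probability gives the $O(1)$ bound. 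The conclusion is right, but the middle step you elide is a nontrivial one.
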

\begin{proof}
	   For any $y\in \partial \mathcal{B}(n)$, $y'\in \partial^{\mathrm{e}} \mathcal{B}(n)$ with $y'\sim y$ and for any $z\in \mathbb{Z}^d\setminus \mathcal{B}(10n)$, by the strong Markov property and the symmetry of the Green's function, we have 
	   \begin{equation}\label{new_add_3.18}
	   	\begin{split}
	   			\widetilde{\mathbb{P}}_z\big( \tau_{y'}<\tau_{ \mathcal{B}(n)},\tau_{y'}<\infty\big)=&\widetilde{G}_{\mathcal{B}(n)}(z,y')\big[\widetilde{G}_{\mathcal{B}(n)}(y',y')\big]^{-1}\\
	   		=&\tfrac{\widetilde{G}_{\mathcal{B}(n)}(z,z)}{\widetilde{G}_{\mathcal{B}(n)}(y',y')}\cdot \widetilde{\mathbb{P}}_{y'}\big( \tau_{z}<\tau_{ \mathcal{B}(n)},\tau_{z}<\infty\big)\\
	   		\le &C\widetilde{\mathbb{P}}_{y'}\big( \tau_{z}<\tau_{ \mathcal{B}(n)},\tau_{z}<\infty\big). 
	   	\end{split}
	   \end{equation}
	   Note that the Brownian motion starting from $y'$ must hit $\partial^{\mathrm{e}} \mathcal{B}(2n)$ before $z$. Thus, by the strong Markov property and \cite[Lemmas 6.3.4 and 6.4.2]{lawler2010random} (recalling that the projection of $\widetilde{S}_{\cdot}$ on $\mathbb{Z}^d$ is a simple random walk),  
	   \begin{equation*}
	   	\begin{split}
	   		&\widetilde{\mathbb{P}}_{y'}\big( \tau_{z}<\tau_{ \mathcal{B}(n)},\tau_{z}<\infty\big)\\
	   		\le &\widetilde{\mathbb{P}}_{y'}\big( \tau_{\partial^{\mathrm{e}} \mathcal{B}(2n)}<\tau_{\mathcal{B}(n)}\big)\max_{w\in \partial^{\mathrm{e}} \mathcal{B}(2n) }\widetilde{\mathbb{P}}_{w}\big( \tau_{z}<\infty\big)\le Cn^{-1}|z|^{2-d}. 
	   	\end{split}  
	   \end{equation*}	
	    Combined with (\ref{new_add_3.18}), this implies that 
	   \begin{equation}\label{new_add_3.19}
	   		\widetilde{\mathbb{P}}_z\big( \tau_{y'}<\tau_{ \mathcal{B}(n)},\tau_{y'}<\infty\big) \le Cn^{-1}|z|^{2-d}. 
	   \end{equation}
	   Note that if the Brownian motion starts from $z$ and first hits $\partial \mathcal{B}(n)$ at $y$, then it must reach some $y'\in \partial^{\mathrm{e}} \mathcal{B}(n)$ with $y'\sim y$ before $\mathcal{B}(n)$. Therefore, by (\ref{new_add_3.19}), we obtain the desired bound:
	\begin{equation*}
		\widetilde{\mathbb{P}}_z\big(\tau_{\partial \mathcal{B}(n)}=\tau_{y}<\infty\big) \le \sum_{y'\in \partial^{\mathrm{e}} \mathcal{B}(n):y'\sim y}\widetilde{\mathbb{P}}_z\big( \tau_{y'}<\tau_{ \mathcal{B}(n)},\tau_{y'}<\infty\big) \le Cn^{-1}|z|^{2-d}. \qedhere
	\end{equation*}
	\end{proof}

The following lemma is crucial for the proof of Proposition \ref{lemma_bound_crossing}, which helps us handle the correlation between $\overbar{\mathcal{H}}^{\mathrm{in}}$ and $\overbar{\mathcal{H}}^{\mathrm{out}}$. We denote 
\begin{equation}\label{new_add_3.17}
	\widecheck{\mathcal{H}}^{\mathrm{in}}:= |\partial \mathcal{B}(4dn)|^{-1} \sum\nolimits_{z\in \partial \mathcal{B}(4dn)} \mathcal{H}_z\big(\partial B(n),\mathcal{C}_{\partial B(n)}^{-}\big).  
\end{equation}
Note that $\widecheck{\mathcal{H}}^{\mathrm{in}}$ can be obtained from $\overbar{\mathcal{H}}^{\mathrm{in}}$ by replacing $\mathcal{C}_{n,N}^{-}$ with $\mathcal{C}_{\partial B(n)}^{-}$. Moreover, since $\mathcal{C}_{\partial B(n)}^{-}\subset \mathcal{C}_{n,N}^{-}$ and $\widetilde{\phi}_z\ge 0$ for all $z\in \partial B(n)\cap \widetilde{\partial}\mathcal{C}_{\partial B(n)}^{-}$, we have $\overbar{\mathcal{H}}^{\mathrm{in}} \le \widecheck{\mathcal{H}}^{\mathrm{in}}$. Furthermore, we derive from Lemma \ref{lemma3.4} that 
\begin{equation}\label{ineq_coro_lemma3.4}
	\begin{split}
			\mathbb{E}\big(\widecheck{\mathcal{H}}^{\mathrm{in}}\big)\le& \max_{z\in \partial \mathcal{B}(4dn)} \mathbb{E}\Big[\mathcal{H}_z\big(\partial B(n),\mathcal{C}_{\partial B(n)}^{-}\big)\Big] \le   C\theta_d(n). 
	\end{split}
\end{equation}

%
%

\begin{lemma}\label{lemma_3.6}
	For any $z\in \partial^{\mathrm{e}} \mathcal{B}(5N/8)$, we have 
	\begin{equation}\label{add_3.18}
		\mathbb{E}\Big[\mathcal{H}_z\big(\partial B(N),\mathcal{C}_{n,N}^{-}\big) \mid \mathcal{F}_{\mathcal{C}^-_{B(n)}} \Big]\le C(d)\Big[ \theta_d(N/4)+(n/N)^{d-2}\widecheck{\mathcal{H}}^{\mathrm{in}}\Big].  
	\end{equation}
\end{lemma}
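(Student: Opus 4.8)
The plan is to condition on the negative clusters attached to $B(n)$, apply the strong Markov property of the GFF twice, and split $\mathcal{H}_z(\partial B(N),\mathcal{C}_{n,N}^-)$ into a harmonic‑shift contribution --- which produces the $(n/N)^{d-2}\widecheck{\mathcal{H}}^{\mathrm{in}}$ term --- and a genuine GFF contribution --- which produces $\theta_d(N/4)$ via Lemma~\ref{lemma_crossing_prepare}. Set $\mathcal{D}:=\mathcal{C}^-_{B(n)}$. Since $\partial B(n)\subset B(n)$, any negative point connected to $\partial B(n)$ already lies in $\mathcal{D}$, so $\mathcal{C}^-_{\partial B(n)}$ together with the field on it --- hence $\widecheck{\mathcal{H}}^{\mathrm{in}}$ --- is $\mathcal{F}_{\mathcal{D}}$‑measurable, and the right‑hand side of (\ref{add_3.18}) makes sense. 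By Lemma~\ref{lemma_strong_markov}, conditionally on $\mathcal{F}_{\mathcal{D}}$ the field on $\widetilde{\mathbb{Z}}^d\setminus\mathcal{D}$ equals $\widetilde{\Phi}_v:=\widetilde{\phi}'_v+h_v$ with $\widetilde{\phi}'\sim\mathbb{P}^{\mathcal{D}}$ independent of $\mathcal{F}_{\mathcal{D}}$ and $h_v:=\mathcal{H}_v(\mathcal{D})$.

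Let $\mathcal{K}$ be the negative cluster of the ($\mathcal{F}_{\mathcal{D}}$‑measurable) set $\partial B(N)\setminus\mathcal{D}$ in the field $\widetilde{\Phi}$; then $\mathcal{K}\cap\mathcal{D}=\emptyset$, since a component of $\mathcal{K}$ meeting $\mathcal{D}$ would, being connected, join a point of $\partial B(N)\setminus\mathcal{D}$ to $B(n)$ through $\widetilde{E}^{\le 0}$, contradicting the definition of $\mathcal{D}$. We may assume $z\notin\mathcal{C}_{n,N}^-$ (otherwise $z\in(\mathcal{C}_{n,N}^-)^\circ$ forces $\mathcal{H}_z(\partial B(N),\mathcal{C}_{n,N}^-)=0$); in particular $z\notin\mathcal{D}$, as $z\in\partial^{\mathrm{e}}\mathcal{B}(5N/8)$ lies far outside $B(n)$. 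Writing $\mathcal{H}_z(\partial B(N),\mathcal{C}_{n,N}^-)=\sum_{w\in\partial B(N)}\widetilde{\mathbb{P}}_z(\tau_{\mathcal{C}_{n,N}^-}=\tau_w)\widetilde{\phi}_w$, any $w$ with a nonzero summand has $\widetilde{\phi}_w>0$ (a negative $w\in\partial B(N)$ is interior to $\mathcal{C}_{n,N}^-$), hence $w\notin\mathcal{D}$ and $w\in\partial B(N)\setminus\mathcal{D}\subset\mathcal{K}\subset\mathcal{C}_{n,N}^-$, so $\{\tau_{\mathcal{C}_{n,N}^-}=\tau_w\}\subset\{\tau_{\mathcal{K}}=\tau_w\}$ under $\widetilde{\mathbb{P}}_z$. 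Since the boundary values of $\mathcal{K}$ on $\partial B(N)\setminus\mathcal{D}$ coincide with these $\widetilde{\phi}_w$ while its remaining boundary values vanish, this yields pointwise $\mathcal{H}_z(\partial B(N),\mathcal{C}_{n,N}^-)\le\mathcal{H}_z(\mathcal{K})$, and it therefore suffices to bound $\mathbb{E}^{\mathcal{D}}[\mathcal{H}_z(\mathcal{K})]$.

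For the shift, the only points of $\widetilde{\partial}\mathcal{D}$ carrying a nonzero GFF value lie on $\partial B(n)$, and a Brownian motion can be stopped there only where $\widetilde{\phi}\ge 0$; hence $h_v\ge 0$ for all $v\notin\mathcal{D}$, and for $v\notin B(n)$ one has $h_v=\mathcal{H}_v(\partial B(n),\mathcal{C}^-_{\partial B(n)})$, because $\mathcal{D}\setminus\mathcal{C}^-_{\partial B(n)}$ is enclosed by $\partial B(n)$, so $\mathcal{D}$ and $\mathcal{C}^-_{\partial B(n)}$ present the same surface to a Brownian motion started outside $B(n)$. As $\partial B(n)\subset\mathcal{B}(dn)$ while $|z|\asymp N\ge 100dn$, a Brownian motion from $z$ that is eventually stopped on $\partial B(n)$ must visit $\partial\mathcal{B}(4dn)$ first; decomposing at this first visit and invoking Lemma~\ref{lemma_uniform_distribution} (which gives $\widetilde{\mathbb{P}}_z(\tau_{\partial\mathcal{B}(4dn)}=\tau_{z'}<\infty)\le C(n/N)^{d-2}|\partial\mathcal{B}(4dn)|^{-1}$) yields
\[
h_z=\mathcal{H}_z(\mathcal{D})\ \le\ C\,(n/N)^{d-2}\,|\partial\mathcal{B}(4dn)|^{-1}\sum_{z'\in\partial\mathcal{B}(4dn)}\mathcal{H}_{z'}\big(\partial B(n),\mathcal{C}^-_{\partial B(n)}\big)\ =\ C\,(n/N)^{d-2}\,\widecheck{\mathcal{H}}^{\mathrm{in}}.
\]
For the GFF part, applying the strong Markov property once more, now to $\widetilde{\Phi}$ with explored set $\mathcal{K}$, and using $\mathbb{E}^{\mathcal{D}}[\widetilde{\Phi}_z]=h_z$, gives $\mathbb{E}^{\mathcal{D}}[\mathcal{H}_z(\mathcal{K})]=h_z-\mathbb{E}^{\mathcal{D}}[\widetilde{\Phi}_z\mathbbm{1}_{\mathsf{G}}]$, where $\mathsf{G}$ is the event that $z$ is connected to $\partial B(N)\setminus\mathcal{D}$ by $\{\widetilde{\Phi}\le 0\}$. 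On $\mathsf{G}$ one has $\widetilde{\Phi}_z\le 0$ and, since $h\ge 0$, also $\widetilde{\phi}'_z\le-h_z\le 0$, so $-\widetilde{\Phi}_z\le|\widetilde{\phi}'_z|$; moreover $\mathsf{G}\subset\mathsf{G}'$, the analogous event with $\widetilde{\Phi}$ replaced by $\widetilde{\phi}'$. Hence
\[
\mathbb{E}^{\mathcal{D}}\big[\mathcal{H}_z(\mathcal{K})\big]\ \le\ h_z+\mathbb{E}^{\mathcal{D}}\big[|\widetilde{\phi}'_z|\,\mathbbm{1}_{\mathsf{G}'}\big]\ \overset{\text{symmetry}}{=}\ h_z+\mathbb{E}^{\mathcal{D}}\big[\widetilde{\phi}'_z\,\mathbbm{1}_{z\xleftrightarrow{\ge 0}\partial B(N)}\big],
\]
and by Corollary~\ref{coro_iso}(2) (with $D_1=\emptyset$, $D_2=\mathcal{D}$) followed by Lemma~\ref{lemma_crossing_prepare} the last term is at most $\mathbb{E}[\widetilde{\phi}_z\mathbbm{1}_{z\xleftrightarrow{\ge 0}\partial B(N)}]\le C\,\mathbb{P}(z\xleftrightarrow{\ge 0}\partial B(N))\le C\,\theta_d(N/4)$, using $\mathrm{dist}(z,\partial B(N))\ge N/4$. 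Combining the two displays with $\mathcal{H}_z(\partial B(N),\mathcal{C}_{n,N}^-)\le\mathcal{H}_z(\mathcal{K})$ gives (\ref{add_3.18}).

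I expect the main obstacle to be the treatment of the harmonic shift $h$: it is precisely $h$ that injects the $(n/N)^{d-2}\widecheck{\mathcal{H}}^{\mathrm{in}}$ term, and its nonnegativity is what permits bounding the connectivity indicator trivially by $1$ on the $h$‑part while leaving the $\widetilde{\phi}'$‑part in a form to which the symmetry of $\mathbb{P}^{\mathcal{D}}$, Corollary~\ref{coro_iso}(2) and Lemma~\ref{lemma_crossing_prepare} apply. The supporting points that need care are: making rigorous the ``same surface'' comparisons above (identifying first hitting points of $\mathcal{D}$ with those of $\mathcal{C}^-_{\partial B(n)}$, and of $\mathcal{C}_{n,N}^-$ with those of $\mathcal{K}$ on $\partial B(N)$); passing from the per‑point estimate of Lemma~\ref{lemma_uniform_distribution} to the \emph{average} $\widecheck{\mathcal{H}}^{\mathrm{in}}$; and verifying that $\mathcal{K}$ genuinely coincides with the negative cluster of $\partial B(N)\setminus\mathcal{D}$ computed inside $\widetilde{\mathbb{Z}}^d\setminus\mathcal{D}$, so that the second application of the strong Markov property (in the conditioned field) is legitimate.
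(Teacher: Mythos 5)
Your plan follows the same strategy as the paper's proof: condition on $\mathcal{D}=\mathcal{C}^-_{B(n)}$, decompose the conditioned field into a nonnegative harmonic shift $h$ plus a fresh GFF $\widetilde{\phi}'\sim\mathbb{P}^{\mathcal{D}}$, control the shift via Lemma~\ref{lemma_uniform_distribution} to produce the $(n/N)^{d-2}\widecheck{\mathcal{H}}^{\mathrm{in}}$ term, and control the $\widetilde{\phi}'$-contribution via symmetry, Corollary~\ref{coro_iso}(2) and Lemma~\ref{lemma_crossing_prepare} to produce $\theta_d(N/4)$. However, the route you take through the auxiliary cluster $\mathcal{K}$ contains a genuine imprecision that your own closing remark gestures toward but does not resolve.

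The problem is the identity $\mathbb{E}^{\mathcal{D}}[\mathcal{H}_z(\mathcal{K})]=h_z-\mathbb{E}^{\mathcal{D}}[\widetilde{\Phi}_z\mathbbm{1}_{\mathsf{G}}]$. The strong Markov property for $\widetilde{\phi}'$ (Lemma~\ref{lemma_strong_markov} applied to the GFF on $\widetilde{\mathbb{Z}}^d\setminus\mathcal{D}^\circ$, with explored set $\mathcal{K}$) yields the harmonic average computed with the Brownian motion \emph{killed on $\mathcal{D}$}, i.e.\ the quantity $\sum_w\widetilde{\mathbb{P}}_z(\tau_{\mathcal{K}}=\tau_w<\tau_{\mathcal{D}})\,\widetilde{\phi}_w$. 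But $\mathcal{H}_z(\mathcal{K})$ in the paper's notation (Definition~\ref{def_harmonic_average}) is the ambient harmonic average $\sum_w\widetilde{\mathbb{P}}_z(\tau_{\mathcal{K}}=\tau_w<\infty)\,\widetilde{\phi}_w$, where the Brownian motion on $\widetilde{\mathbb{Z}}^d$ is free to pass through $\mathcal{D}$ on its way to $\mathcal{K}$. Since $z\in\partial^{\mathrm{e}}\mathcal{B}(5N/8)$ lies between $B(n)$ and $\partial B(N)$, the walk can and does hit $\mathcal{D}$ before $\mathcal{K}$ with positive probability, and as the boundary weights $\widetilde{\phi}_w$ are strictly positive the ambient average strictly exceeds the killed one; the claimed identity is false for the ambient object. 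It is a fixable gap: (i) your containment argument in fact gives the \emph{stronger} pointwise bound $\mathcal{H}_z(\partial B(N),\mathcal{C}_{n,N}^-)\le\sum_w\widetilde{\mathbb{P}}_z(\tau_{\mathcal{K}}=\tau_w<\tau_{\mathcal{D}})\widetilde{\phi}_w$ because a walk that first hits $\mathcal{C}_{n,N}^-$ on $\partial B(N)$ has not entered $B(n)$ and hence not visited $\mathcal{D}$; (ii) for the killed average, the $h$-part is at most $h_z$ (optional stopping for the nonnegative harmonic function $h$ on $\widetilde{\mathbb{Z}}^d\setminus\mathcal{D}$, dropping the nonnegative contribution from stopping on $\mathcal{D}$), and the $\widetilde{\phi}'$-part then has conditional mean exactly $\mathbb{E}^{\mathcal{D}}[\widetilde{\phi}'_z\mathbbm{1}_{\mathsf{G}^c}]=-\mathbb{E}^{\mathcal{D}}[\widetilde{\phi}'_z\mathbbm{1}_{\mathsf{G}}]$, after which your steps go through. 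The paper avoids the $\mathcal{K}$ detour altogether by using the identity $\mathcal{H}_z(\mathcal{C}_{n,N}^-)=\mathbb{E}[\widetilde{\phi}_z\cdot\mathbbm{1}_{\{z\xleftrightarrow{\le0}\partial B(n)\}^c\cap\{z\xleftrightarrow{\le0}\partial B(N)\}^c}\mid\mathcal{F}_{\mathcal{C}^-_{n,N}}]$ from (\ref{add3.31}), projecting onto $\mathcal{F}_{\mathcal{C}^-_{B(n)}}$, and then changing coordinates $\widetilde{\phi}=\widetilde{\phi}'+\mathcal{H}'$ directly in the resulting indicator $\{z\xleftrightarrow{\le0}\partial B(N)\}^c$, which sidesteps the ambient-versus-killed distinction entirely.
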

\begin{proof}

   When $z\in \mathcal{C}^-_{B(n)}$, we have $\mathcal{H}_z\big(\partial B(N),\mathcal{C}_{n,N}^{-}\big)=0$ and thus (\ref{add_3.18}) holds.

   Next, we consider the case when $z\notin \mathcal{C}^-_{B(n)}$. Similar to (\ref{3.9}), we have
   \begin{equation}\label{add3.31}
 	\mathbb{E}\Big[\mathcal{H}_z\big(\mathcal{C}_{n,N}^{-}\big) \mid \mathcal{F}_{\mathcal{C}^-_{n,N}} \Big]  =	\mathbb{E}\Big[\widetilde{\phi}_z \cdot \mathbbm{1}_{\{z\xleftrightarrow{\le 0} \partial B(n)\}^c\cap \{z\xleftrightarrow{\le 0} \partial B(N)\}^c} \mid \mathcal{F}_{\mathcal{C}^-_{n,N}} \Big]. 
   \end{equation}
   By taking the integral on both sides of (\ref{add3.31}) (with respect to $\mathcal{C}^-_{B(N)}$ given $\mathcal{C}^-_{B(n)}$) and using $\mathcal{H}_z\big(\partial B(N),\mathcal{C}_{n,N}^{-}\big)\le \mathcal{H}_z\big(\mathcal{C}_{n,N}^{-}\big)$, we have 
 \begin{equation}\label{3.19}
 	\begin{split}
 		\mathbb{E}\Big[\mathcal{H}_z\big(\partial B(N),\mathcal{C}_{n,N}^{-}\big) \mid \mathcal{F}_{\mathcal{C}^-_{B(n)}} \Big] \le \mathbb{E}\Big[\widetilde{\phi}_z \cdot \mathbbm{1}_{\{z\xleftrightarrow{\le 0} \partial B(N)\}^c} \mid \mathcal{F}_{\mathcal{C}^-_{B(n)}} \Big].
 	\end{split}
 \end{equation}
    By Lemma \ref{lemma_strong_markov}, conditioning on $\mathcal{F}_{\mathcal{C}^-_{B(n)}}$, we have that $\{\widetilde{\phi}_v\}_{v\in \widetilde{B}(N)\setminus \mathcal{C}^-_{B(n)}}$ is distributed as $\{\widetilde{\phi}_v'+\mathcal{H}_v'\}_{v\in \widetilde{B}(N)\setminus \mathcal{C}^-_{B(n)}}$, where $\widetilde{\phi}_{\cdot}'\sim \mathbb{P}^{\mathcal{C}^-_{B(n)}}$ and $\mathcal{H}_v':=\mathcal{H}_v\big(\partial B(n),\mathcal{C}_{\partial B(n)}^{-}\big)$. Thus, noting that $\widetilde{\phi}_v\le 0$ is equivalent to $\widetilde{\phi}_v'\le -\mathcal{H}'_{v}$ under this coupling, the right-hand side of (\ref{3.19}) can be written as 
    \begin{equation}\label{3.20}
   	\begin{split}
   		&\mathbb{E}^{\mathcal{C}_{\partial B(n)}^{-}}\Big[\big(\widetilde{\phi}_z'+\mathcal{H}_z'\big) \cdot \mathbbm{1}_{\big\{z\xleftrightarrow{\le -\mathcal{H}'_{\cdot }} \partial B(N)\big\}^{c}}  \Big]\\
   		=&\mathbb{E}^{\mathcal{C}_{\partial B(n)}^{-}}\big(\widetilde{\phi}_z'\big)+\mathcal{H}_z' -	\mathbb{E}^{\mathcal{C}_{\partial B(n)}^{-}}\Big[\big(\widetilde{\phi}_z'+\mathcal{H}_z'\big) \cdot \mathbbm{1}_{z\xleftrightarrow{\le -\mathcal{H}'_{\cdot }} \partial B(N)}  \Big]\\
   	=&\mathcal{H}_z' +   \mathbb{E}^{\mathcal{C}_{\partial B(n)}^{-}}\Big[\big(\widetilde{\phi}_z'-\mathcal{H}_z'\big) \cdot \mathbbm{1}_{z\xleftrightarrow{\ge \mathcal{H}'_{\cdot }} \partial B(N)} \Big], 
   	\end{split}
   \end{equation}
   where $z \xleftrightarrow{\le -\mathcal{H}'_{\cdot }}\partial B(N)$ (resp. $z \xleftrightarrow{\ge \mathcal{H}'_{\cdot }} \partial B(N)$) means that there exists a path connecting $z$ and $\partial B(N)$ on which the values of $\widetilde{\phi}'_{\cdot}$ is at most $-\mathcal{H}'_{\cdot }$ (resp. at least $\mathcal{H}'_{\cdot }$). Note that we used the symmetry of $\widetilde{\phi}'_{\cdot}$ in the last equality of (\ref{3.20}). Moreover, since $\mathcal{H}'_{v}\ge 0$ for all $v\in  \widetilde{B}(N)\setminus \mathcal{C}^-_{B(n)}$, we have 
   \begin{equation}\label{new_add_3.21}
   	\begin{split}
   		&\mathbb{E}^{\mathcal{C}_{\partial B(n)}^{-}}\Big[\big(\widetilde{\phi}_z'-\mathcal{H}_z'\big) \cdot \mathbbm{1}_{z\xleftrightarrow{\ge \mathcal{H}'_{\cdot }} \partial B(N)} \Big] \\
   		\le & \mathbb{E}^{\mathcal{C}_{\partial B(n)}^{-}}\Big[\widetilde{\phi}_z' \cdot \mathbbm{1}_{z\xleftrightarrow{\ge 0} \partial B(N)} \Big] \\
   		\overset{(\ref{coro2.1_2})}{\le } &\mathbb{E}\Big[\widetilde{\phi}_z' \cdot \mathbbm{1}_{z\xleftrightarrow{\ge 0} \partial B(N)} \Big]
   		\overset{(\text{Lemma}\ \ref{lemma_crossing_prepare})}{\le }  C \mathbb{P}\big[z\xleftrightarrow{\ge 0}\partial B(N) \big]  \le C\theta_d(N/4),
   	\end{split}
   \end{equation}
   where in the last inequality we used $z\in \partial^{\mathrm{e}} \mathcal{B}(5N/8)$. By (\ref{3.19}), (\ref{3.20}) and (\ref{new_add_3.21}), 
   \begin{equation}\label{3.21}
   	\begin{split}
   			\mathbb{E}\Big[\mathcal{H}_z\big(\partial B(N),\mathcal{C}_{n,N}^{-}\big) \mid \mathcal{F}_{\mathcal{C}^-_{B(n)}} \Big]
   			\le  &\mathcal{H}_z'+ C \theta_d(N/4).
   	\end{split}
   \end{equation}
   Recall $\widecheck{\mathcal{H}}^{\mathrm{in}}$ in (\ref{new_add_3.17}). For $z\in\partial^{\mathrm{e}} \mathcal{B}(5N/8)$, by the strong Markov property we have  
   \begin{equation*}\label{3.22}
   	\begin{split}
   		\mathcal{H}_z'\le& \sum\nolimits_{y\in \partial \mathcal{B}(4dn)} \widetilde{\mathbb{P}}_z\big(\tau_{\partial \mathcal{B}(4dn)}=\tau_{y}<\infty\big) \mathcal{H}_y'\\
   		\overset{(\text{Lemma}\ \ref{lemma_uniform_distribution})}{\le } &Cn^{-1}N^{2-d}  \sum\nolimits_{y\in \partial \mathcal{B}(4dn)}  \mathcal{H}_y'  \overset{(|\partial \mathcal{B}(4dn)|\asymp n^{d-1})}{\le }C'(n/N)^{d-2}  \widecheck{\mathcal{H}}^{\mathrm{in}}.
   	\end{split}
   \end{equation*}
   Combined with (\ref{3.21}), it concludes this lemma. 
   \end{proof}

Recall $\overbar{\mathcal{H}}^{\mathrm{out}}$ in (\ref{def_H_out}). Then it follows from Lemma \ref{lemma_3.6} that 
 \begin{equation}\label{bound_H_out}
 	\overbar{\mathcal{H}}^{\mathrm{out}} \le C\Big[ \theta_d(N/4)+(n/N)^{d-2}\widecheck{\mathcal{H}}^{\mathrm{in}}\Big]. 
 \end{equation}
With these preparations, we are ready to prove Proposition \ref{lemma_bound_crossing}.

\begin{proof}[Proof of Proposition \ref{lemma_bound_crossing}]
Recall that \cite[Theorem 5]{ding2020percolation} shows 
\begin{equation}\label{ineq_thm5}
\theta_d(k)\ge ck^{-\frac{d}{2}+1}, \ \ \forall k\ge 1. 
\end{equation}
By (\ref{ineq_thm5}), we have $n^{d-2} \theta_d(n)\theta_d(N/4) \ge c(n/N)^{\frac{d}{2}-1}$. Thus, it suffices to prove this lemma in the case when $N/n$ is sufficiently large (otherwise, we can take a large enough $\Cref{const_crossing}$ such that the right-hand side of (\ref{ineq_lemma_crossing}) exceeds $1$).


	By Lemmas \ref{lemma_connecting_hm} and \ref{lemma_hat_Hy}, and the inequality that $1-e^{-a}\le a\land 1$ for all $a>0$, 
	\begin{equation}\label{3.18}
		\begin{split}
		\rho_d(n,N)= \mathbb{E}\Big[1- e^{-2\sum\nolimits_{y\in \partial B(n)} \widehat{\mathcal{H}}_{y}(\partial B(N),\mathcal{C}_{n,N}^{-})\widetilde{\phi}_{y} } \Big]
		\le  C \mathbb{E}\Big[\big(n^{d-2}\overbar{\mathcal{H}}^{\mathrm{out}}\overbar{\mathcal{H}}^{\mathrm{in}}\big)\land 1 \Big]. 
		\end{split}	
	\end{equation}
		Recall that $\overbar{\mathcal{H}}^{\mathrm{in}}\le \widecheck{\mathcal{H}}^{\mathrm{in}}$ below (\ref{new_add_3.17}). Also note that $\widecheck{\mathcal{H}}^{\mathrm{in}}$ is measurable with respect to $\mathcal{F}_{\mathcal{C}^-_{B(n)}}$. Thus, by Jensen's inequality (i.e. $\mathbb{E}(Y\land 1)\le \mathbb{E}(Y)\land 1$) and (\ref{bound_H_out}),
	\begin{equation}\label{3.25}
		\begin{split}
	&	\mathbb{E}\Big[\big(n^{d-2}\overbar{\mathcal{H}}^{\mathrm{out}}\overbar{\mathcal{H}}^{\mathrm{in}}\big)\land 1 \Big]\\
\overset{(\overbar{\mathcal{H}}^{\mathrm{in}}\le \widecheck{\mathcal{H}}^{\mathrm{in}})}{\le } &\mathbb{E}\Big\{\mathbb{E}\Big[\big(n^{d-2}\overbar{\mathcal{H}}^{\mathrm{out}}\widecheck{\mathcal{H}}^{\mathrm{in}}\big)\land 1 \mid \mathcal{F}_{\mathcal{C}^-_{B(n)}} \Big]\Big\}  \\
	\overset{(\text{Jensen's ineq})}{\le } &      \mathbb{E}\Big\{\Big(\mathbb{E}\big[\overbar{\mathcal{H}}^{\mathrm{out}} \mid \mathcal{F}_{\mathcal{C}^-_{B(n)}} \big]n^{d-2}\widecheck{\mathcal{H}}^{\mathrm{in}}\Big)\land 1\Big\}  \\
		\overset{(\ref{bound_H_out})}{\le } &C \mathbb{E}\Big\{\Big(\big[\theta_d(N/4)+ (n/N)^{d-2}\widecheck{\mathcal{H}}^{\mathrm{in}}\big]n^{d-2}\widecheck{\mathcal{H}}^{\mathrm{in}}\Big) \land 1\Big\}.
		\end{split}
	\end{equation}
	Note that for any $a,a',b>0$, one has 
	\begin{equation}\label{ineq_a_a'_b}
		[(a+a')b]\land 1\le ab+ [(a'b)\land 1]\le ab+ \sqrt{a'b}. 
	\end{equation}
	Applying (\ref{ineq_a_a'_b}), we can bound the right-hand side of (\ref{3.25}) from above by 
\begin{equation}\label{new_add_3.26}
		\begin{split}
	Cn^{d-2} \Big[ \theta_d(N/4)\mathbb{E}\big(\widecheck{\mathcal{H}}^{\mathrm{in}}\big)  + N^{-\frac{d}{2}+1} \mathbb{E}\big(\widecheck{\mathcal{H}}^{\mathrm{in}}\big) \Big]  \overset{(\ref{ineq_thm5})}{\le}  C'n^{d-2} \theta_d(N/4)\mathbb{E}\big(\widecheck{\mathcal{H}}^{\mathrm{in}}\big). 
		\end{split}
	\end{equation}
	It follows from (\ref{ineq_coro_lemma3.4}), (\ref{3.25}) and (\ref{new_add_3.26}) that 
	\begin{equation*}
		\begin{split}	\mathbb{E}\Big[\big(n^{d-2}\overbar{\mathcal{H}}^{\mathrm{out}}\overbar{\mathcal{H}}^{\mathrm{in}}\big)\land 1 \Big]\overset{(\ref{3.25}),(\ref{new_add_3.26})}{\le } Cn^{d-2}\theta_d(N/4) \mathbb{E}\big(\widecheck{\mathcal{H}}^{\mathrm{in}}\big)\overset{(\ref{ineq_coro_lemma3.4})}{\le}C'n^{d-2}\theta(n)\theta_d(N/4).
		\end{split}
	\end{equation*}
  Combined with (\ref{3.18}), it completes the proof of this proposition. 
\end{proof}


\section{Proof of Theorem \ref{thm1}}\label{section_proof_thm1}


In this section, we present the proof of Theorem \ref{thm1}. Before showing the proof details, we first provide an overview as follows. As explained in Section \ref{section_1.2.1}, our proof strategy is based on proof by contradiction. As described in Definitions \ref{def_es} and \ref{def_af} below, we take a sufficiently large function $\lambda$ and consider the first scale $N_*$ such that $\theta_d(N_*)>\lambda(N_*)N_*^{-\frac{d}{2}+1}$. Assuming that $N_*<\infty$, we establish in Lemma \ref{lemma_k_diamond} that there exists a scale $k_*\ge 1$ such that for any $v\in \mathbb{Z}^d$, with probability at least $2^{-k_*}$ the harmonic average $\mathcal{H}^*_v$ (see (\ref{4.9})) satisfies $\mathcal{H}^*_v\ge \lambda(N_*)N_*^{-\frac{d}{2}+1}k_*^{-C}2^{k_*}$. Based on this property, we prove in Propsition \ref{prop_1} that with probability at least $2^{-k_*^{40d}}$, the average of harmonic averages on $\partial^{\mathrm{e}} \mathcal{B}(\frac{3N_*}{16})$, denoted by $\overbar{\mathcal{H}}_{*}$ (see (\ref{4.23})), is at least $e^{k_*^{5}}\lambda_*N_*^{-\frac{d}{2}+1}$. The proof of Proposition \ref{prop_1}, which is the core of this paper, will be implemented in Section \ref{section_block}. Meanwhile, by analyzing the conditional distribution of the average of $\widetilde{\phi}_\cdot$ on $\partial^{\mathrm{e}} \mathcal{B}(\frac{3N_*}{16})$, in Proposition \ref{prop_2} we show that the probability for $\overbar{\mathcal{H}}_{*}$ to reach $e^{k_*^{5}}\lambda_*N_*^{-\frac{d}{2}+1}$ is at most $Ce^{-c[\lambda(N_*)]^2e^{2k_*^5}}$, which then causes a contradiction with Proposition \ref{prop_1} and thus concludes Theorem \ref{thm1}.

\subsection{Exceeding size}\label{section4.1}


To upper-bound $\theta_d(\cdot)$, we introduce the concept of \textit{exceeding size}, which is the first scale such that $\theta_d$ exceeds a predetermined threshold.

We take a sufficiently small $\cl\label{const_small}(d)\in (0,(10d)^{-100d})$ such that for $k\ge \log_2(1/\cref{const_small})$, 
	\begin{equation}\label{require_constant1}
	(2\pi)^{-\frac{1}{2}}[G(\bm{0},\bm{0})]^{\frac{1}{2}}k^{-1} e^{-\frac{k^2}{2G(\bm{0},\bm{0})}} < 2^{-k},
	\end{equation}
		\begin{equation}\label{require_constant2.3}
		\tfrac{1}{2}e^{k^5} > \Cref{const_G_2}   \ \ (\text{defined below Equation}\ (\ref{new_4.34})),
	\end{equation}
		\begin{equation}\label{require_constant1.1}
	     k^{-1}\le \min\{c_\dagger^{(1)},c_\dagger^{(2)},c_\dagger^{(3)}\}  \ \ (\text{defined in the proof of Lemma}\ \ref{lemma_excellent_box}),
	\end{equation}
	\begin{equation}\label{require_constant1_section5}
		e^{k^{5.5}-k^{5.4}}\ge \Cref{const_lemma_6.3.7} \ \ (\text{defined in Inequality}\ (\ref{add5.74})).
	\end{equation}
	We also take a sufficiently large constant $\Cl\label{const_section_block}(d)>0$ satisfying that 
	\begin{equation}\label{equation_def_C9}
		\Cref{const_section_block}\ge \mathrm{exp}\big((\cref{const_average_2}^{-1}\cref{const_small}^{-1}\cref{const_hit}^{-1}\Cref{const_green_1}\Cref{const_crossing}\Cref{const_ldp})^{100d}e^{10^6d^2}\big)
	\end{equation}
	(where the constants $\cref{const_hit}$ and $\Cref{const_ldp}$ are defined in Lemma \ref{lemma_green} and (\ref{ineq_ldp}) respectively) and that for any $N\ge \Cref{const_section_block}^{1/d}$,  
		\begin{equation}\label{require_constant2}
		\theta_d(\tfrac{N}{2d})< (10d)^{-1}  \ \ (\text{recall that}\ \lim\nolimits_{N\to \infty}\theta_d(N)= 0),
	\end{equation}
	\begin{equation}\label{require_constant2.2}
		\Cref{const_k_prepare}e^{-\cref{const_k_prepare_1}\ln^{2}(N)}<\tfrac{1}{8} N^{-\frac{d}{2}+1}\ \ (\Cref{const_k_prepare}\ \text{and}\ \cref{const_k_prepare_1}\ \text{are defined in Lemma}\ \ref{lemma_k_diamond_prepare}). 
	\end{equation}


%


	

%
%
%

\begin{definition}[exceeding size]\label{def_es}
	For any $d\ge 3$ and any non-decreasing function $\lambda: (0,\infty)\to (1,\infty)$, we define 
	\begin{equation}
		N_*=N_*(d,\lambda):= \min\big\{\text{integer}\ N\ge \Cref{const_section_block}^{1/d}: \theta_d(N)> \lambda(N)N^{-\frac{d}{2}+1}\big\},
	\end{equation}
	where we set $\min \emptyset =+\infty$ for completeness. 
	
\end{definition}

We also consider functions that have a finite exceeding size.

\begin{definition}[admissible function]\label{def_af}
	For any $d\ge 3$, let $\mathfrak{X}_d$ be the collection of non-decreasing functions $\lambda: (0,\infty)\to (1,\infty)$ such that $N_*(d,\lambda)<\infty$. Moreover, for any $\lambda\in \mathfrak{X}_d$, we denote $\lambda_*:=\lambda(N_*)$.
\end{definition}


\begin{remark}[A sufficient condition for Theorem \ref{thm1}]\label{remark_sufficient}
For any non-decreasing function $\lambda(\cdot)$ with $\lambda\ge \Cref{const_section_block}$, since $\theta_{d}(N)\le 1< \lambda N^{-\frac{d}{2}+1}$ holds for all $N\le \Cref{const_section_block}^{1/d}$, it follows from Definitions \ref{def_es} and \ref{def_af} that 
\begin{equation}\label{add4.10}
	\theta_{d}(N)\le \lambda(N) N^{-\frac{d}{2}+1}, \ \ \forall 1\le N<N_*, 
\end{equation}
where $N_*$ may be infinity. Thus, to establish Theorem \ref{thm1}, it suffices to prove
\begin{enumerate}
	\item For any $d\in \{3,4,5\}$, $\mathfrak{X}_d$ does not contain the constant function $\lambda(N)=\Cref{const_section_block}$.

	\item For $d=6$, $\mathfrak{X}_6$ does not contain the function $\lambda(N)= \Cref{const_section_block}\mathrm{exp}\big(\ln^{\frac{1}{2}}(N)\ln\ln(N)\big)$ (for convenience, we set $\ln(a):=0$ for $a<1$ throughout this paper). 
	
\end{enumerate}





\end{remark}

\begin{remark}\label{remark_lambda}
	In the case when $d=6$, the reason why we choose the function $\lambda(N)=\Cref{const_section_block}\mathrm{exp}\big(\ln^{\frac{1}{2}}(N)\ln\ln(N)\big)$($=\Cref{const_section_block}\mathrm{exp}\big(\ln^{\frac{1}{2}+\frac{\ln\ln\ln(N )}{\ln\ln(N)}}(N)\big)$) in Proposition \ref{prop_1} is that $\lambda(N)$ satisfies the following inequality. Let $K_1=\frac{1}{d}$ and $K_2=2000d$. For any $N\ge \Cref{const_section_block}^{1/d}$ and $M\in [1,N^d]$ (where $M$ may depend on $N$), one has 	
	\begin{equation}\label{4.29}
		\frac{\lambda(N)}{\lambda\big(N[\lambda(N)M]^{-K_1}\big)} \ge  \ln^{K_2}\big(\lambda(N)M\big). 
	\end{equation}
	Next, we prove (\ref{4.29}) separately in two cases. When $N[\lambda(N)M]^{-K_1}\le 100$, to get (\ref{4.29}), it is sufficient to have 
	\begin{equation*}
	\begin{split}
			&\lambda(N)\ge \lambda(100) \ln^{K_2}\big(\lambda(N)M\big)\\
		\overset{(\lambda(N)\le \Cref{const_section_block}N\le N^2)}{\Leftarrow}	 &\mathrm{exp}\big(\ln^{\frac{1}{2}}(N)\ln\ln(N)\big)\ge \mathrm{exp}\big(\ln^{\frac{1}{2}}(100)\ln\ln(100)\big) \ln^{K_2}(N^{d+2}), 
	\end{split}
	\end{equation*}
		which holds true for all $N\ge \Cref{const_section_block}^{1/d} > \mathrm{exp}(e^{10^6d^2})$. When $N[\lambda(N)M]^{-K_1}\ge 100$, let $\xi(N):=\frac{\ln\ln\ln(N )}{\ln\ln(N)}$ and $t_N:=\ln(N)$. For (\ref{4.29}), it suffices to have that 
	\begin{equation*}
		\begin{split}
			&t_N^{\frac{1}{2}+\xi(N) }-\big[t_N-K_1\big(t_N^{\frac{1}{2}+\xi(N)}+\ln(\Cref{const_section_block}M)\big)\big]^{\frac{1}{2}+\xi(N)}\ge  K_2\ln\big(t_N^{\frac{1}{2}+\xi(N)}+\ln(\Cref{const_section_block}M)\big) \\
			\Leftarrow\ &t_N^{\frac{1}{2}+\xi(N) } \Big\{1- \Big[1- K_1\Big(t_N^{-\frac{1}{2}+\xi(N)}+t_N^{-1}\ln(\Cref{const_section_block}M)\Big)\Big]^{\frac{1}{2}+\xi(N)}\Big\} \ge  K_2\ln\big(t_N^{\frac{1}{2}+\xi(N)}+\ln(\Cref{const_section_block}M)\big)\\
			\Leftarrow\ & \tfrac{1}{4}K_1\Big[t_N^{2\xi(N) }+ t_N^{-1}\ln(\Cref{const_section_block}M)\Big]    \ge  K_2\big[2\ln\ln(N) +t_N^{-2}\ln(\Cref{const_section_block}M)\big]\\
			\Leftarrow\ &  t_N^{2\xi(N)}  \ge  8K_1^{-1}K_2\ln\ln(N)\ \text{and}\ \ln(N)\ge 4K_1^{-1}K_2 \\
			\Leftarrow\  &  \xi(N)\ge \tfrac{\ln\ln\ln(N)+\ln(8K_1^{-1}K_2)}{2\ln\ln(N)}\ \text{and}\ \ln(N)\ge 4K_1^{-1}K_2\\
			\Leftarrow\  & N \ge \max\big\{ \mathrm{exp}\big(e^{8K_1^{-1}K_2}\big),e^{ 4K_1^{-1}K_2}\big\},
		\end{split}
	\end{equation*} 
	which also holds for all $N\ge \Cref{const_section_block}^{1/d} > \mathrm{exp}(e^{10^6d^2})$. In conclusion, we establish (\ref{4.29}).
\end{remark}

\subsection{Effective scale}

The key idea for the proof of Theorem \ref{thm1} relies on a coarse-graining method, which shows that with significant probability, the negative cluster $\mathcal{C}^-_{\partial^{\mathrm{e}} \mathcal{B}(N_*/4)}$ is so dense that an independent Brownian motion starting from $\bm{0}$ will hit $\mathcal{C}^-_{\partial^{\mathrm{e}} \mathcal{B}(N_*/4)}$ before reaching $\partial \mathcal{B}(0.01N_*)$ with high probability (see Lemma \ref{lemma_F}).  As a notable difference from most of the classical coarse-graining methods used in the study of percolation models, in our proof the involved scale is not predetermined, but chosen in a rather implicit manner depending on certain intrinsic properties of the model. Specifically, we present the following lemma to ensure the existence of such an effective scale (that is, a scale $k_*$ such that (\ref{ineq_lemma_k_diamond}) holds).


For any $\lambda\in \mathfrak{X}_d$ and $v\in \mathbb{Z}^d$, we define (recalling $\mathcal{H}_v$ in Definition \ref{def_harmonic_average})
\begin{equation}\label{4.9}
	\mathcal{H}^*_v= \mathcal{H}^*_v(d,\lambda):= \mathcal{H}_v\big( \partial^{\mathrm{e}} \mathcal{B}_v(N_*/2) , \mathcal{C}^{-}_{\partial^{\mathrm{e}} \mathcal{B}_v(N_*/2)}\big). 
\end{equation}

\begin{lemma}\label{lemma_k_diamond}
	For $d\ge 3$ and $\lambda\in \mathfrak{X}_d$, there exists an integer $k_*=k_*(d,\lambda)>0$ with 
	\begin{equation}\label{range_k_*}
			\lambda_* N_*^{-\frac{d}{2}+1}\ln^{-10}(N_*) \le 2^{-k_*}\le \cref{const_small}
	\end{equation}
	such that for any $v\in \mathbb{Z}^d$,  
	\begin{equation}\label{ineq_lemma_k_diamond}
		\mathbb{P}\big(\mathcal{H}^*_v\ge \cref{const_small}^2\lambda_* N_*^{-\frac{d}{2}+1} k_*^{-3}2^{k_*}\big)\ge 2^{-k_*}. 
	\end{equation}
\end{lemma}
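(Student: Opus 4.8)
\textbf{Proof proposal for Lemma \ref{lemma_k_diamond}.}

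The plan is to extract the effective scale $k_*$ from a dyadic decomposition of the expectation $\mathbb{E}(\mathcal{H}^*_v)$, combined with the lower bound on $\theta_d(N_*)$ coming from the exceeding-size hypothesis $\lambda\in\mathfrak{X}_d$. First I would relate $\mathcal{H}^*_v$ to a connecting probability. By translation invariance we may take $v=\bm 0$. Writing $A_1=\{\bm 0\}$ and $A_2=\partial^{\mathrm e}\mathcal B(N_*/2)$ and applying Lemma \ref{lemma_connecting_hm}, we have $\theta_d(N_*/2)\le \mathbb P\big(\bm 0\xleftrightarrow{\ge 0}\partial^{\mathrm e}\mathcal B(N_*/2)\big)=\mathbb E\big(1-e^{-4\widehat{\mathcal H}_{\bm 0}(A_2,\mathcal C^-_{A_1\cup A_2})\widetilde\phi_{\bm 0}}\big)$; since $1-e^{-a}\le a$, this is at most $4\,\mathbb E\big(\widehat{\mathcal H}_{\bm 0}(A_2,\mathcal C^-_{A_1\cup A_2})\widetilde\phi_{\bm 0}\big)$, and $\widehat{\mathcal H}_{\bm 0}$ is, up to bounded factors, an average of $\mathcal H_z$ over neighbours $z\sim\bm 0$, each of which is comparable to $\mathcal H^*_{\bm 0}$ by a short-range Harnack/strong-Markov argument. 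So up to a constant, $\mathbb E(\mathcal H^*_{\bm 0}\,\widetilde\phi_{\bm 0})\gtrsim \theta_d(N_*)$ (using $\theta_d(N_*/2)\gtrsim\theta_d(N_*)$ by a standard gluing estimate, or by monotonicity up to constants, and that $N_*$ is an exceeding size so $\theta_d(N_*)>\lambda_* N_*^{-\frac d2+1}$). Since $\mathcal H^*_{\bm 0}$ is measurable w.r.t.\ $\mathcal F_{\mathcal C^-_{A_2}}$ and $\widetilde\phi_{\bm 0}$ has bounded second moment given that field, Cauchy--Schwarz (or Lemma \ref{lemma_strong_markov} plus $\mathcal H^*_{\bm 0}=\mathbb E(\widetilde\phi_{\bm 0}\mid\mathcal F_{\mathcal C^-_{A_2}})$ on the relevant event) upgrades this to a lower bound on $\mathbb E\big((\mathcal H^*_{\bm 0})^2\big)$ or directly on $\mathbb E(\mathcal H^*_{\bm 0})$ of the form $\gtrsim \lambda_* N_*^{-\frac d2+1}$; either suffices to run the dyadic argument below. (The auxiliary Lemma \ref{lemma_k_diamond_prepare} referenced in \eqref{require_constant2.2} presumably supplies the Gaussian tail bound $\mathbb P(\mathcal H^*_{\bm 0}\ge t)\le \Cref{const_k_prepare}e^{-\cref{const_k_prepare_1}(t N_*^{\frac d2-1})^2}$ which I will also use, together with \eqref{require_constant2.2}, to cut off the contribution of atypically large $\mathcal H^*_{\bm 0}$.)

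Next I would run the pigeonhole. Set $h_k:=\cref{const_small}^2\lambda_* N_*^{-\frac d2+1}k^{-3}2^{k}$ for $k\ge 1$ and let $p_k:=\mathbb P(\mathcal H^*_{\bm 0}\ge h_k)$. Suppose for contradiction that $p_k<2^{-k}$ for every $k$ in the admissible range \eqref{range_k_*}. Decompose
\begin{equation*}
\mathbb E(\mathcal H^*_{\bm 0}) = \mathbb E\big(\mathcal H^*_{\bm 0}\mathbbm 1_{\mathcal H^*_{\bm 0}<h_1}\big) + \sum_{k\ge 1}\mathbb E\big(\mathcal H^*_{\bm 0}\mathbbm 1_{h_k\le \mathcal H^*_{\bm 0}<h_{k+1}}\big).
\end{equation*}
The first term is at most $h_1\asymp \lambda_* N_*^{-\frac d2+1}$, but with a small constant $\cref{const_small}^2$; the tail sum over $k$ up to $k_{\max}:=\log_2(1/\cref{const_small})$ is bounded by $\sum_{k\ge 1} h_{k+1}p_k\le \sum_k \cref{const_small}^2\lambda_* N_*^{-\frac d2+1}(k+1)^{-3}2^{k+1}\cdot 2^{-k} = C\cref{const_small}^2\lambda_* N_*^{-\frac d2+1}$, while the contribution of the range $2^{-k}<\lambda_* N_*^{-\frac d2+1}\ln^{-10}(N_*)$ (i.e.\ $k$ too large) is controlled by the Gaussian tail bound and \eqref{require_constant2.2} to be at most $\tfrac18\lambda_* N_*^{-\frac d2+1}$. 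Summing, $\mathbb E(\mathcal H^*_{\bm 0})\le C'\cref{const_small}^2\lambda_* N_*^{-\frac d2+1}+\tfrac18\lambda_*N_*^{-\frac d2+1}$, which for $\cref{const_small}$ small enough contradicts the lower bound $\mathbb E(\mathcal H^*_{\bm 0})\gtrsim \lambda_* N_*^{-\frac d2+1}$ from the previous paragraph. Hence some $k=:k_*$ in the admissible range satisfies $p_{k_*}\ge 2^{-k_*}$, which is \eqref{ineq_lemma_k_diamond}; and $2^{-k_*}\le\cref{const_small}$ holds because $k_*\ge k_{\max}$ would already be excluded, so in fact we may take $k_*\ge\log_2(1/\cref{const_small})$, giving the upper bound in \eqref{range_k_*}, while the lower bound in \eqref{range_k_*} is exactly the cutoff we imposed.

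The main obstacle I anticipate is the first step: cleanly converting the two-point/one-arm lower bound $\theta_d(N_*)>\lambda_*N_*^{-\frac d2+1}$ into a lower bound on $\mathbb E(\mathcal H^*_{\bm 0})$ (as opposed to $\mathbb E(\mathcal H^*_{\bm 0}\widetilde\phi_{\bm 0})$) with the correct polynomial power of $N_*$. This requires (i) the explicit formula of Lemma \ref{lemma_connecting_hm} to pass from the connection event to harmonic averages, (ii) a Harnack-type comparison of $\widehat{\mathcal H}_{\bm 0}$ with $\mathcal H^*_{\bm 0}$ at scale $N_*$, and (iii) using $\mathcal H^*_{\bm 0}=\mathbb E(\widetilde\phi_{\bm 0}\mid\mathcal F_{\mathcal C^-})$ together with uniform boundedness of the conditional variance of $\widetilde\phi_{\bm 0}$ (which follows from $\widetilde G(\bm 0,\bm 0)<\infty$) to strip off the extra $\widetilde\phi_{\bm 0}$ factor via Cauchy--Schwarz, at the cost of passing through $\mathbb E((\mathcal H^*_{\bm 0})^2)$ and re-running the dyadic decomposition on the second moment instead — a routine but bookkeeping-heavy variant. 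Everything else (the pigeonhole, the geometric sum, the tail cutoff using \eqref{require_constant1} and \eqref{require_constant2.2}) is elementary.
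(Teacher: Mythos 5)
Your overall plan is sound — run a dyadic pigeonhole to extract the effective scale $k_*$, starting from the lower bound $\theta_d(N_*)>\lambda_* N_*^{-\frac d2+1}$ and a tail cutoff at the other end — but the middle step, where you try to strip the factor $\widetilde\phi_{\bm 0}$ off $\mathbb E(\widetilde\phi_{\bm 0}\mathcal H^*_{\bm 0})$ \emph{before} the pigeonhole, does not go through as you describe. Both routes you propose for that step — Cauchy--Schwarz, or the conditional-expectation identity $\mathcal H^*_{\bm 0}=\mathbb E(\widetilde\phi_{\bm 0}\mid\mathcal F_{\mathcal C^-})$ — lead to the \emph{second} moment: they give $\mathbb E\big((\mathcal H^*_{\bm 0})^2\big)\gtrsim \lambda_*^2 N_*^{-d+2}$ (note the squared power, not the $\lambda_*N_*^{-\frac d2+1}$ you wrote), not a lower bound on $\mathbb E(\mathcal H^*_{\bm 0})$. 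And running the dyadic pigeonhole on the second moment fails for a scaling reason: with $h_k\asymp \lambda_* N_*^{-\frac d2+1}k^{-3}2^k$, the dyadic contribution to $\mathbb E\big((\mathcal H^*_{\bm 0})^2\big)=\int 2t\,\mathbb P(\mathcal H^*_{\bm 0}\ge t)\,dt$ under the contradiction hypothesis $\mathbb P(\mathcal H^*_{\bm 0}\ge h_k)<2^{-k}$ is of order $h_k^2\cdot 2^{-k}\asymp\lambda_*^2N_*^{-d+2}k^{-6}2^{k}$, which grows geometrically in $k$; the sum is dominated by the largest admissible scale $K$ (with $2^{-K}\asymp\lambda_*N_*^{-\frac d2+1}\ln^{-10}(N_*)$), and the resulting bound is of order $\lambda_*N_*^{-\frac d2+1}\ln^4(N_*)$, which is \emph{much larger} than $\lambda_*^2N_*^{-d+2}$ whenever $\lambda_*\ll N_*^{\frac d2-1}$ (which always holds since $\theta_d\le 1$). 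So no contradiction is obtained, no matter how small $\cref{const_small}$ is.

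What would rescue your ordering is a genuine \emph{first}-moment bound $\mathbb E(\mathcal H^*_{\bm 0})\gtrsim\lambda_* N_*^{-\frac d2+1}$, and that is available, but via FKG rather than Cauchy--Schwarz: from the identity $\mathbb E(\mathcal H^*_{\bm 0})=\mathbb E\big(\widetilde\phi_{\bm 0}\cdot\mathbbm 1_{\bm 0\xleftrightarrow{\ge 0}\partial^{\mathrm e}\mathcal B(N_*/2)}\big)=\mathbb E\big(\widetilde\phi_{\bm 0}^+\cdot\mathbbm 1_{\bm 0\xleftrightarrow{\ge 0}\partial^{\mathrm e}\mathcal B(N_*/2)}\big)$ (the indicator forces $\widetilde\phi_{\bm 0}\ge 0$; compare the proof of Lemma \ref{lemma3.4}), FKG for the two increasing functionals $\widetilde\phi_{\bm 0}^+$ and the indicator gives $\mathbb E(\mathcal H^*_{\bm 0})\ge\mathbb E(\widetilde\phi_{\bm 0}^+)\cdot\mathbb P\big(\bm 0\xleftrightarrow{\ge 0}\partial^{\mathrm e}\mathcal B(N_*/2)\big)\gtrsim\theta_d(N_*)>\lambda_*N_*^{-\frac d2+1}$. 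With this in hand, your first-moment dyadic pigeonhole on $\int\mathbb P(\mathcal H^*_{\bm 0}\ge t)\,dt$ does close (the $2^k$ from $h_k$ now exactly cancels the $2^{-k}$, leaving a convergent $\sum k^{-3}$), together with the tail cutoff from the bound on $\max_{\partial^{\mathrm e}\mathcal B(N_*/2)}\widetilde\phi$ used inside Lemma \ref{lemma_k_diamond_prepare}. The paper itself avoids this issue by keeping the product $\widetilde\phi_{\bm 0}\mathcal H^*_{\bm 0}$ intact all the way through: it pigeonholes on $\mathbb E[\widetilde\phi_{\bm 0}\mathcal H^*_{\bm 0}\mathbbm 1_{\text{mid range}}]$, extracts a single scale $\epsilon$ (whence $k_*=\lceil\log_2(1/\epsilon)\rceil$), and strips off $\widetilde\phi_{\bm 0}$ only once, at that one scale, using the Gaussian tail $\mathbb P(\widetilde\phi_{\bm 0}\ge\log_2(1/\epsilon))\le\epsilon$ from (\ref{require_constant1}). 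This costs exactly the $k_*^{-3}$ rather than $k_*^{-2}$ in the threshold, which is the price of the $\log_2(1/\epsilon)$ cut; your argument, once fixed with FKG, incurs the same $k^{-3}$ through the definition of $h_k$ and is a legitimate alternative ordering.
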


Note that the upper bound in (\ref{range_k_*}) implies that $k_*\ge \log_2(1/\cref{const_small})$. Before proving Lemma \ref{lemma_k_diamond}, we need some estimates for $\widetilde{\phi}_{\bm{0}} \mathcal{H}^*_{\bm{0}}$ as follows. 
\begin{lemma}\label{lemma_k_diamond_prepare}
	For any $d\ge 3$, there exist $\Cl\label{const_k_prepare}(d),\cl\label{const_k_prepare_1}(d)>0$ such that for all $\lambda\in \mathfrak{X}_d$, 
	\begin{equation}\label{ineq_lemma_k_diamond_prepare}
		\mathbb{P}\big[\widetilde{\phi}_{\bm{0}} \mathcal{H}^*_{\bm{0}} \ge \ln^{5}(N_*)\big] \le  \Cref{const_k_prepare} e^{-\cref{const_k_prepare_1}\ln^{2}(N_*)}. 
	\end{equation}
\end{lemma}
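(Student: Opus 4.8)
The plan is to bound the product $\widetilde{\phi}_{\bm 0}\mathcal{H}^*_{\bm 0}$ by controlling the two factors separately, each on a suitable good event, using that both $\widetilde{\phi}_{\bm 0}$ and the harmonic average $\mathcal{H}^*_{\bm 0}$ have Gaussian-type tails. First I would recall the exploration-martingale picture of Section \ref{section_EM}: conditioning on $\mathcal{F}_{\mathcal{C}^-_{\partial^{\mathrm e}\mathcal{B}(N_*/2)}}$, the quantity $\mathcal{H}^*_{\bm 0}=\mathcal{H}_{\bm 0}(\partial^{\mathrm e}\mathcal{B}(N_*/2),\mathcal{C}^-_{\partial^{\mathrm e}\mathcal{B}(N_*/2)})$ is the conditional expectation of $\widetilde\phi_{\bm 0}$ on the event $\{\bm 0\not\xleftrightarrow{\le 0}\partial^{\mathrm e}\mathcal{B}(N_*/2)\}$, and it is the terminal value of the martingale $\mathcal{M}^{\partial^{\mathrm e}\mathcal{B}(N_*/2),-}_{\bm 0,t}$ whose quadratic variation at infinity is $\langle\mathcal{M}^{\partial^{\mathrm e}\mathcal{B}(N_*/2),-}_{\bm 0}\rangle_\infty = \widetilde G(\bm 0,\bm 0)-\widetilde G_{\mathcal{I}^{\partial^{\mathrm e}\mathcal{B}(N_*/2),-}_\infty}(\bm 0,\bm 0)\le \widetilde G(\bm 0,\bm 0)\le\Cref{const_green_1}$, by (\ref{new_2.17}) and (\ref{bound_green}). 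Hence by Lemma \ref{lemma_EM} (with $T=\Cref{const_green_1}$, deterministic), $\mathcal{H}^*_{\bm 0}$ has a sub-Gaussian tail with a variance proxy bounded by an absolute constant: $\mathbb{P}(\mathcal{H}^*_{\bm 0}\ge s)\le \mathbb{P}(|X|\ge s/\sqrt{\Cref{const_green_1}})$ for standard normal $X$. Likewise $\widetilde\phi_{\bm 0}\sim N(0,\widetilde G(\bm 0,\bm 0))$ has a Gaussian tail with bounded variance.

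The second step is to split according to whether each factor is ``large.'' Set $t_*:=\ln^{5/2}(N_*)$ (or some comparable split point; the precise exponent $5/2$ is convenient so that $t_*^2=\ln^5(N_*)$ up to constants). On the event $\{\widetilde\phi_{\bm 0}\le t_*\}\cap\{\mathcal{H}^*_{\bm 0}\le t_*\}$ we have $\widetilde\phi_{\bm 0}\mathcal{H}^*_{\bm 0}\le \ln^5(N_*)$, except we must also use $\mathcal{H}^*_{\bm 0}\ge 0$ and handle the (harmless) case $\widetilde\phi_{\bm 0}<0$, on which the product is nonpositive. Therefore
\begin{equation*}
\mathbb{P}\big[\widetilde\phi_{\bm 0}\mathcal{H}^*_{\bm 0}\ge \ln^5(N_*)\big]\le \mathbb{P}\big(\widetilde\phi_{\bm 0}\ge t_*\big)+\mathbb{P}\big(\mathcal{H}^*_{\bm 0}\ge t_*\big).
\end{equation*}
Both terms are handled by Lemma \ref{lemma_tail} applied to the two Gaussian/sub-Gaussian bounds above: each is at most $C\,\ln^{-5/2}(N_*)\,e^{-c\ln^5(N_*)}\le C e^{-c'\ln^2(N_*)}$ for $N_*\ge 2$. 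This is exactly the bound $\Cref{const_k_prepare}e^{-\cref{const_k_prepare_1}\ln^2(N_*)}$ claimed (one can absorb the polynomial prefactor into the exponential at the cost of shrinking $\cref{const_k_prepare_1}$, since $N_*\ge\Cref{const_section_block}^{1/d}$ is large). One must be slightly careful that Lemma \ref{lemma_EM} is stated conditionally on $\mathcal{F}_{\mathcal{I}^{A,\zeta}_0}$; since the bound there is a deterministic constant, taking expectations gives the unconditional tail bound for $\mathcal{H}^*_{\bm 0}-\mathcal{M}^{\,\cdot}_{\bm 0,0}$, and $\mathcal{M}^{\,\cdot}_{\bm 0,0}=\mathbb{E}\widetilde\phi_{\bm 0}=0$ since $A=\partial^{\mathrm e}\mathcal{B}(N_*/2)$ does not contain $\bm 0$, so $\mathcal{I}^{A,-}_0=A$ carries no information forcing a nonzero start value — indeed $\mathcal{M}^{A,-}_{\bm 0,0}=\mathbb{E}(\widetilde\phi_{\bm 0}\mid\mathcal{F}_A)=0$ by independence of increments / the Markov property, which I would note explicitly.

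I expect the only real subtlety — hardly an ``obstacle'' — is the bookkeeping around the event $\{\widetilde\phi_{\bm 0}<0\}$ and the fact that $\mathcal{H}^*_{\bm 0}$ is defined to be $0$ when $\bm 0\in(\mathcal{C}^-_{\partial^{\mathrm e}\mathcal{B}(N_*/2)})^\circ$, i.e. $\mathcal{H}^*_{\bm 0}\ge 0$ always; on that portion the product $\widetilde\phi_{\bm 0}\mathcal{H}^*_{\bm 0}$ with $\widetilde\phi_{\bm 0}<0$ is $\le 0<\ln^5(N_*)$, so it contributes nothing, and when $\widetilde\phi_{\bm 0}\ge 0$ the split above applies verbatim. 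The main structural input is just that $\mathcal{H}^*_{\bm 0}$, being the endpoint of an exploration martingale with quadratic variation bounded by $\widetilde G(\bm 0,\bm 0)$, inherits a sub-Gaussian tail with an absolute-constant variance proxy; once that is in hand the lemma follows from two applications of the standard Gaussian tail estimate Lemma \ref{lemma_tail}.
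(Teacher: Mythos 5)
Your overall decomposition is correct and mirrors the paper's first step: using $\mathcal{H}^*_{\bm 0}\ge 0$ to bound
$\mathbb{P}[\widetilde\phi_{\bm 0}\mathcal{H}^*_{\bm 0}\ge \ln^5(N_*)]\le \mathbb{P}[\widetilde\phi_{\bm 0}\ge a]+\mathbb{P}[\mathcal{H}^*_{\bm 0}\ge b]$ for $ab=\ln^5(N_*)$, and then handling the $\widetilde\phi_{\bm 0}$ term by the ordinary Gaussian tail. However, there is a genuine error in your treatment of the $\mathcal{H}^*_{\bm 0}$ term. You assert that $\mathcal{M}^{A,-}_{\bm 0,0}=\mathbb{E}(\widetilde\phi_{\bm 0}\mid\mathcal{F}_A)=0$ ``by independence of increments / the Markov property'' with $A=\partial^{\mathrm e}\mathcal{B}(N_*/2)$. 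This is false: the Markov property of the GFF gives conditional independence of inside from outside \emph{given} the boundary, not independence of $\widetilde\phi_{\bm 0}$ from the boundary values. In fact
$\mathcal{M}^{A,-}_{\bm 0,0}=\sum_{z\in A}\widetilde{\mathbb{P}}_{\bm 0}(\tau_A=\tau_z)\,\widetilde\phi_z$, which the paper denotes $\mathcal{K}^*_{\bm 0}$; it is a mean-zero Gaussian with strictly positive variance of order $N_*^{2-d}$, not the deterministic value $0$. Consequently the claimed tail bound $\mathbb{P}(\mathcal{H}^*_{\bm 0}\ge s)\le\mathbb{P}(|X|\ge s/\sqrt{\Cref{const_green_1}})$ does not follow directly from Lemma \ref{lemma_EM} as you stated it; Lemma \ref{lemma_EM} only controls $\mathcal{M}^{A,-}_{\bm 0,\infty}-\mathcal{M}^{A,-}_{\bm 0,0}$.

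This is repairable: since $\mathcal{K}^*_{\bm 0}$ is Gaussian with variance bounded by $\widetilde G(\bm 0,\bm 0)\le\Cref{const_green_1}$ (indeed much smaller), you can write
$\mathbb{P}(\mathcal{H}^*_{\bm 0}\ge 2s)\le\mathbb{P}(\mathcal{K}^*_{\bm 0}\ge s)+\mathbb{P}(\mathcal{M}^{A,-}_{\bm 0,\infty}-\mathcal{M}^{A,-}_{\bm 0,0}\ge s,\langle\mathcal{M}^{A,-}_{\bm 0}\rangle_\infty\le\Cref{const_green_1})$ and use Lemma \ref{lemma_tail} on the first term and Lemma \ref{lemma_EM} on the second (note that $\{\mathcal{H}^*_{\bm 0}\ge 2s\}\subset\{\mathcal{M}^{A,-}_{\bm 0,\infty}\ge 2s\}$ since on $\{\bm 0\in(\mathcal{C}^-_A)^\circ\}$ one has $\mathcal{H}^*_{\bm 0}=0$). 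With this fix your route is a valid alternative to the paper's. The paper's own proof of the second term is more elementary: it uses the deterministic bound $\mathcal{H}^*_{\bm 0}\le\max_{y\in\partial^{\mathrm e}\mathcal{B}(N_*/2)}\widetilde\phi_y$ (the harmonic average is a convex combination of boundary values, with the interior boundary of $\mathcal{C}^-$ contributing zeros) and then invokes the concentration of the GFF maximum at order $\ln(N_*)$ from \cite{chiarini2016extremes}. Your martingale route avoids the external reference on GFF extremes, at the cost of invoking the exploration-martingale machinery and carrying the extra term $\mathcal{K}^*_{\bm 0}$; the paper's route is shorter given that reference.
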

\begin{proof}
	Since $\mathcal{H}^*_{\bm{0}}\ge 0$, we have 
	\begin{equation}\label{ineq_lemma_k_diamond_prepare_1}
		\mathbb{P}\big[\widetilde{\phi}_{\bm{0}} \mathcal{H}^*_{\bm{0}} \ge \ln^{5}(N_*)\big]\le \mathbb{P}\big[\widetilde{\phi}_{\bm{0}}  \ge \ln^{2}(N_*)\big]+\mathbb{P}\big[ \mathcal{H}^*_{\bm{0}} \ge \ln^{3}(N_*)\big]. 
	\end{equation}
	For the first term on the right-hand side of (\ref{ineq_lemma_k_diamond_prepare_1}), by Lemma \ref{lemma_tail} one has  
	\begin{equation}\label{ineq_phi0_N_0.01}
		\mathbb{P}\big[\widetilde{\phi}_{\bm{0}}  \ge \ln^2(N_*)\big] \le (2\pi)^{-\frac{1}{2}}[G(\bm{0},\bm{0})]^{\frac{1}{2}}\ln^{-2}(N_*)e^{-\frac{\ln^4(N_*)}{2G(\bm{0},\bm{0})}}.
	\end{equation}
	For the second term, since $\mathcal{H}^*_{\bm{0}}\le \max_{y\in \partial^{\mathrm{e}} \mathcal{B}(N_*/2)} \widetilde{\phi}_y$, we have 
	\begin{equation}
		\mathbb{P}\big[ \mathcal{H}^*_{\bm{0}} \ge \ln^{3}(N_*)\big] \le \mathbb{P}\big[\max\nolimits_{y\in \partial^{\mathrm{e}} \mathcal{B}(N_*/2)} \widetilde{\phi}_y \ge \ln^{3}(N_*)\big]. 
	\end{equation}
	In addition, it is known that $\max_{y\in \partial^{\mathrm{e}} \mathcal{B}(N_*/2)} \widetilde{\phi}_y $ concentrates in the order of $\ln(N_*)$. Specifically, by \cite[Theorem1.1]{chiarini2016extremes} we have
	\begin{equation}\label{ineq_lemma_k_diamond_prepare_4}
		\mathbb{P}\big[ \max\nolimits_{y\in \partial^{\mathrm{e}} \mathcal{B}(N_*/2)} \widetilde{\phi}_y \ge \ln^{3}(N_*)\big] \le Ce^{-c\ln^2(N_*)}. 
	\end{equation}
	Combining (\ref{ineq_lemma_k_diamond_prepare_1})-(\ref{ineq_lemma_k_diamond_prepare_4}), we obtain the desired bound (\ref{ineq_lemma_k_diamond_prepare}).  
\end{proof}


\begin{lemma}\label{lemma_4.5}
	For any $d\ge 3$ and $\lambda\in \mathfrak{X}_d$, we have 
	\begin{equation}\label{4.14}
	\mathbb{E}\Big[\big(1-e^{-2\widetilde{\phi}_{\bm{0}}\mathcal{H}_{\bm{0}}^{*}} \big) \cdot \mathbbm{1}_{\widetilde{\phi}_{\bm{0}}\ge 0}\Big] \ge \tfrac{1}{2} \theta_d(N_*). 
	\end{equation}
\end{lemma}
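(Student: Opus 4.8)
The plan is to prove \eqref{4.14} by reducing to a one-arm event at scale $N_*/2$, applying the set-to-set connecting formula of Lemma~\ref{lemma_connecting_hm}, and then comparing the resulting harmonic-average quantity with $\mathcal{H}^*_{\bm 0}$. \emph{Step 1 (topological reduction).} Put $A:=\partial^{\mathrm{e}}\mathcal{B}(N_*/2)$. Since $\mathcal{B}(N_*/2)\subset B(N_*/2)$ while every $x\in\partial B(N_*)$ has $|x|\ge |x|_\infty=N_*>N_*/2$, we have $\partial B(N_*)\cap\mathcal{B}(N_*/2)=\emptyset$; as $\bm 0\in\mathcal{B}(N_*/2)$, any path in $\widetilde E^{\ge 0}$ from $\bm 0$ to $\partial B(N_*)$ must visit the first lattice point along it lying outside $\mathcal{B}(N_*/2)$, which is adjacent to $\mathcal{B}(N_*/2)$ and hence belongs to $A$. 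Therefore $\{\bm 0\xleftrightarrow{\ge 0}\partial B(N_*)\}\subset\{\bm 0\xleftrightarrow{\ge 0}A\}$ and $\theta_d(N_*)\le\mathbb{P}(\bm 0\xleftrightarrow{\ge 0}A)$.

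\emph{Step 2 (Lupu's formula).} Apply Lemma~\ref{lemma_connecting_hm} with $A_1=\{\bm 0\}$ and $A_2=A$ (note $\bm 0\notin A$ by Step~1), and write $\mathcal{C}:=\mathcal{C}^-_{\{\bm 0\}\cup A}$. This gives $\mathbb{P}(\bm 0\xleftrightarrow{\ge 0}A)=\mathbb{E}\big(1-e^{-4\widehat{\mathcal H}_{\bm 0}(A,\mathcal{C})\,\widetilde\phi_{\bm 0}}\big)$. One checks $\widehat{\mathcal H}_{\bm 0}(A,\mathcal{C})\ge 0$: in the sum defining $\widehat{\mathcal H}_{\bm 0}$ only boundary points of $\mathcal{C}$ carrying positive harmonic measure matter, and those lie either on $\widetilde\partial\mathcal{C}\setminus A$ where $\widetilde\phi=0$, or on $A$ where the accessible points have $\widetilde\phi\ge 0$ (a point of $A$ with $\widetilde\phi<0$ lies in $\mathcal{C}^\circ$ and is hence inaccessible). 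Consequently the integrand is nonpositive on $\{\widetilde\phi_{\bm 0}<0\}$, so discarding that part,
\begin{equation*}
	\theta_d(N_*)\le\mathbb{P}(\bm 0\xleftrightarrow{\ge 0}A)\le\mathbb{E}\Big[\big(1-e^{-4\widehat{\mathcal H}_{\bm 0}(A,\mathcal{C})\,\widetilde\phi_{\bm 0}}\big)\mathbbm 1_{\widetilde\phi_{\bm 0}\ge 0}\Big].
\end{equation*}

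\emph{Step 3 (comparison and conclusion).} The crux is the inequality $\widehat{\mathcal H}_{\bm 0}(A,\mathcal{C})\le\mathcal{H}^*_{\bm 0}$ on $\{\widetilde\phi_{\bm 0}\ge 0\}$. On $\{\widetilde\phi_{\bm 0}>0\}$ one has $\mathcal{C}^-_{\bm 0}=\{\bm 0\}$, so $\mathcal{C}=\{\bm 0\}\cup\mathcal{C}^-_A$, and by the excursion-kernel identity used inside the proof of Lemma~\ref{lemma_connecting_hm}, $2\widehat{\mathcal H}_{\bm 0}(A,\mathcal{C})=\sum_{w\in A}\mathbb{K}_{\mathcal{C}}(\bm 0,w)\widetilde\phi_w=d^{-1}\sum_{z\sim\bm 0:\,I^\circ_{\{\bm 0,z\}}\cap\mathcal{C}^-_A=\emptyset}\mathcal{H}_z(A,\{\bm 0\}\cup\mathcal{C}^-_A)$; enlarging the stopping set to include $\bm 0$ only decreases each $\mathcal{H}_z$, so it suffices to control $\tfrac1{2d}\sum_{z\sim\bm 0}\mathcal{H}_z(A,\mathcal{C}^-_A)$ against $\mathcal{H}^*_{\bm 0}=\mathcal{H}_{\bm 0}(A,\mathcal{C}^-_A)$, which on $\{\bm 0\notin\mathcal{C}^-_A\}$ equals $\mathbb{E}[\widetilde\phi_{\bm 0}\mid\mathcal{F}_{\mathcal{C}^-_A}]$. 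This is the place where the strong Markov property of $\widetilde S$ (or equivalently of $\widetilde\phi$) at the first exit from the star $\widetilde B_{\bm 0}(1)$ must be combined with the harmonicity of the GFF at $\bm 0$ and the nonnegativity of the accessible boundary values, taking care that the incident edges of $\bm 0$ that already touch $\mathcal{C}^-_A$ are precisely the ones excluded from $\widehat{\mathcal H}_{\bm 0}$. Granting $\widehat{\mathcal H}_{\bm 0}(A,\mathcal{C})\le\mathcal{H}^*_{\bm 0}$, the elementary bound $1-e^{-4x}=(1-e^{-2x})(1+e^{-2x})\le 2(1-e^{-2x})$ for $x\ge 0$ yields, on $\{\widetilde\phi_{\bm 0}\ge 0\}$,
\begin{equation*}
	1-e^{-4\widehat{\mathcal H}_{\bm 0}(A,\mathcal{C})\,\widetilde\phi_{\bm 0}}\le 1-e^{-4\mathcal{H}^*_{\bm 0}\widetilde\phi_{\bm 0}}\le 2\big(1-e^{-2\widetilde\phi_{\bm 0}\mathcal{H}^*_{\bm 0}}\big),
\end{equation*}
and combining with Step~2 gives \eqref{4.14}. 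The main obstacle is exactly this last comparison: a crude term-by-term hitting-probability bound of $\mathcal{H}_z(A,\mathcal{C}^-_A)$ by $2d\,\mathbb{E}[\widetilde\phi_{\bm 0}\mid\mathcal{F}_{\mathcal{C}^-_A}]$ can fail when $\mathcal{C}^-_A$ comes close to $\bm 0$ along an incident edge, so one has to use the exact identity for the conditional mean at $\bm 0$ rather than a loose estimate, and be careful about which incident edges enter $\widehat{\mathcal H}_{\bm 0}$.
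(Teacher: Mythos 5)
The proposal follows the paper's skeleton up to a crucial point, but the way it tries to avoid the paper's FKG step introduces a genuine gap: the inequality
\[
\widehat{\mathcal{H}}_{\bm 0}\big(\partial^{\mathrm{e}}\mathcal{B}(N_*/2),\,\mathcal{C}^-_{\{\bm 0\}\cup\partial^{\mathrm{e}}\mathcal{B}(N_*/2)}\big)\;\le\;\mathcal{H}^*_{\bm 0}
\]
that your Step~3 rests on is \emph{false} in general, not merely delicate to prove. The problem is exactly the regime you flag and then wave away: when $\mathcal{C}^-_A$ (with $A=\partial^{\mathrm{e}}\mathcal{B}(N_*/2)$) penetrates an incident interval $I_{\{\bm 0,z_0\}}^\circ$ very close to $\bm 0$. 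In that case $\mathcal{H}^*_{\bm 0}=\mathcal{H}_{\bm 0}(A,\mathcal{C}^-_A)$ collapses, because the Brownian motion from $\bm 0$ is absorbed with overwhelming probability at a nearby point of $\mathcal{C}^-_A$ carrying $\widetilde\phi=0$; yet $\widehat{\mathcal{H}}_{\bm 0}(A,\mathcal{C})=(2d)^{-1}\sum_{z\neq z_0}\mathcal{H}_z(A,\mathcal{C})$ does not suffer the same collapse, since the trajectories from the neighbors $z\neq z_0$ are not forced through that bottleneck. Concretely, using the decomposition $\mathcal{H}_z(A,\mathcal{C}^-_A)=\mathcal{H}_z(A,\mathcal{C})+\widetilde{\mathbb{P}}_z(\tau_{\bm 0}<\tau_{\mathcal{C}^-_A})\mathcal{H}^*_{\bm 0}$ together with $\mathcal{H}^*_{\bm 0}=\sum_z p_z\,\mathcal{H}_z(A,\mathcal{C}^-_A)$ with $p_z\le\frac{1}{2d}$ and $p_z\to 0$ as $\mathcal{C}^-_A$ approaches $\bm 0$ inside $I_{\{\bm 0,z_0\}}$, one sees that the ratio $\widehat{\mathcal{H}}_{\bm 0}(A,\mathcal{C})/\mathcal{H}^*_{\bm 0}$ can be made arbitrarily large. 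No ``exact conditional-mean identity'' will rescue a false pointwise inequality.

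The paper circumvents exactly this: it \emph{first} uses the FKG inequality to restrict to the (increasing) event $\{\widetilde{B}(1)\xleftrightarrow{\le 0}\partial^{\mathrm{e}}\mathcal{B}(N_*/2)\}^c$, i.e.\ $\{\mathcal{C}^-_A\cap\widetilde{B}(1)=\emptyset\}$, which has probability at least $\tfrac12$ by~(\ref{require_constant2}) and costs a factor $\tfrac12$. On this event the bad regime simply cannot occur: all $2d$ incident intervals are free of $\mathcal{C}^-_A$, the strong Markov decomposition at the first exit of $\widetilde B(1)$ gives $\mathcal{H}^*_{\bm 0}=(2d)^{-1}\sum_{z\sim\bm 0}\mathcal{H}_z(A,\mathcal{C}^-_A)$ as a clean identity (with no lost mass to an absorbing wall), and the monotonicity of $\mathcal{H}_z(A,\cdot)$ in the stopping set (using that the accessible boundary values on $A$ are nonnegative) yields the comparison in~(\ref{4.16}). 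Your Step~1 and Step~2 are sound, and your careful treatment of the $1-e^{-4x}\le 2(1-e^{-2x})$ reduction is a nice observation (the paper's display~(\ref{4.15}) arguably has the exponent off by a factor compared with Lemma~\ref{lemma_connecting_hm}), but the pre-conditioning on $\{\mathcal{C}^-_A\cap\widetilde B(1)=\emptyset\}$ via FKG is a genuinely necessary idea, not an optional shortcut, and its absence is what breaks your Step~3.
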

\begin{proof}
	Recall that $\widetilde{\partial} \widetilde{B}(1)=\{y\in \mathbb{Z}^d:y\sim \bm{0}\}$. By the symmetry of $\widetilde{\phi}$ and (\ref{require_constant2}),
	\begin{equation}\label{ineq_c6}
	\mathbb{P}\big[\widetilde{B}(1)\xleftrightarrow{\le 0 } \partial^{\mathrm{e}} \mathcal{B}(N_*/2) \big]\le 2d  \theta_d(\tfrac{N_*}{2d}) <\tfrac{1}{2}. 
	\end{equation}
Thus, by the FKG inequality and Lemma \ref{lemma_connecting_hm} (recalling $\widehat{\mathcal{H}}_\cdot$ in (\ref{def_hat_H})), 
	\begin{equation}\label{4.15}
\begin{split}
	\tfrac{1}{2} \theta_d(N_*) \overset{(\mathrm{FKG})}{\le } &\mathbb{P}\Big[ \bm{0}\xleftrightarrow{\ge 0}\partial^{\mathrm{e}} \mathcal{B}(N_*/2), \big\{\widetilde{B}(1)\xleftrightarrow{\le 0 } \partial^{\mathrm{e}} \mathcal{B}(N_*/2)\big\}^c\Big]\\
	\overset{(\mathrm{Lemma}\ \ref{lemma_connecting_hm})}{=}& \mathbb{E}\Big\{\big[1-e^{-2\widetilde{\phi}_{\bm{0}}\widehat{\mathcal{H}}_{\bm{0}}(\partial^{\mathrm{e}} \mathcal{B}(\tfrac{N_*}{2}), \mathcal{C}^-_{\{\bm{0}\}\cup \partial^{\mathrm{e}} \mathcal{B}(\frac{N_*}{2})})} \big]  \mathbbm{1}_{\widetilde{\phi}_{\bm{0}}\ge 0, \mathcal{C}^-_{\partial^{\mathrm{e}} \mathcal{B}(\frac{N_*}{2})}\cap \widetilde{B}(1)=\emptyset}\Big\}.
\end{split}	
	\end{equation}
	On the event $\big\{\widetilde{\phi}_{\bm{0}}\ge 0, \mathcal{C}^-_{\partial^{\mathrm{e}} \mathcal{B}(N_*/2)}\cap \widetilde{B}(1)=\emptyset \big\}$, one has 
	\begin{equation}\label{4.16}
		\begin{split}
				&\widehat{\mathcal{H}}_{\bm{0}}\big(\partial^{\mathrm{e}} \mathcal{B}(N_*/2), \mathcal{C}^-_{\{\bm{0}\}\cup \partial^{\mathrm{e}} \mathcal{B}(N_*/2)}\big)\\
				 \overset{(\ref{def_hat_H})}{=}& (2d)^{-1} \sum\nolimits_{z\sim \bm{0}} \mathcal{H}_{z}\big(\partial^{\mathrm{e}} \mathcal{B}(N_*/2), \mathcal{C}^-_{\{\bm{0}\}\cup \partial^{\mathrm{e}} \mathcal{B}(N_*/2)}\big)\\
				\le &(2d)^{-1} \sum\nolimits_{z\sim \bm{0}} \mathcal{H}_{z}\big(\partial^{\mathrm{e}} \mathcal{B}(N_*/2), \mathcal{C}^-_{\partial^{\mathrm{e}} \mathcal{B}(N_*/2)}\big) = \mathcal{H}_{\bm{0}}^{*}. 
		\end{split}
	\end{equation}
	Combining (\ref{4.15}) and (\ref{4.16}), we obtain (\ref{4.14}). 
\end{proof}

Now we are ready to prove Lemma \ref{lemma_k_diamond}. 

\begin{proof}[Proof of Lemma \ref{lemma_k_diamond}]
	Without loss of generality, we take $v=\bm{0}$.

	We claim that there exists $\epsilon \in \big(2\lambda_* N_*^{-\frac{d}{2}+1}\ln^{-10}(N_*) ,\cref{const_small}\big)$ such that 
	\begin{equation}\label{ineq_claim_phi_h}
		\mathbb{P}\big[\widetilde{\phi}_{\bm{0}} \mathcal{H}^*_{\bm{0}} \ge 2\cref{const_small}^2\lambda_* N_*^{-\frac{d}{2}+1} \epsilon^{-1}\log^{-2}_2(1/\epsilon) \big] \ge 2\epsilon. 
	\end{equation}
	We first prove the lemma assuming (\ref{ineq_claim_phi_h}). By Lemma \ref{lemma_tail} we have
	\begin{equation}\label{lower_bound_phi_0}
		\mathbb{P}\big[ \widetilde{\phi}_{\bm{0}}\ge\log_2(1/\epsilon)\big] \le (2\pi)^{-\frac{1}{2}}[G(\bm{0},\bm{0})]^{\frac{1}{2}}\log_2^{-1}(1/\epsilon) e^{-\frac{\log_2^2(1/\epsilon)}{2G(\bm{0},\bm{0})}} \overset{(\ref{require_constant1})}{\le} \epsilon. 
	\end{equation}
	By $\mathcal{H}^*_{\bm{0}}\ge 0$, (\ref{ineq_claim_phi_h}) and (\ref{lower_bound_phi_0}), we get
	\begin{equation*}
		\begin{split}
			&\mathbb{P}\big[ \mathcal{H}^*_{\bm{0}} \ge 2\cref{const_small}^2\lambda_* N_*^{-\frac{d}{2}+1} \epsilon^{-1}\log^{-3}_2(1/\epsilon) \big] \\
			\overset{(\mathcal{H}^*_{\bm{0}}\ge 0)}{\ge }& \mathbb{P}\big[ \widetilde{\phi}_{\bm{0}}\mathcal{H}^*_{\bm{0}} \ge  2\cref{const_small}^2\lambda_* N_*^{-\frac{d}{2}+1} \epsilon^{-1}\log^{-2}_2(1/\epsilon) \big]- \mathbb{P}\big[ \widetilde{\phi}_{\bm{0}}\ge\log_2(1/\epsilon)\big] \overset{(\ref{ineq_claim_phi_h}),(\ref{lower_bound_phi_0})}{\ge } \epsilon.
		\end{split}
	\end{equation*}
	Thus, by taking $k_*:=\lceil \log_2(1/\epsilon) \rceil$, we conclude this lemma.

	In what follows, we prove (\ref{ineq_claim_phi_h}) by contradiction. To this end, we suppose that (\ref{ineq_claim_phi_h}) does not hold. I.e., we suppose that for any $\epsilon \in \big(2\lambda_* N_*^{-\frac{d}{2}+1}\ln^{-10}(N_*) ,\cref{const_small}\big)$, 
	\begin{equation}\label{ineq_claim_false}
		\mathbb{P}\big[\widetilde{\phi}_{\bm{0}} \mathcal{H}^*_{\bm{0}} \ge f(\epsilon) \big]
		:=	\mathbb{P}\big[\widetilde{\phi}_{\bm{0}} \mathcal{H}^*_{\bm{0}} \ge 2\cref{const_small}^2\lambda_* N_*^{-\frac{d}{2}+1} \epsilon^{-1}\log^{-2}_2(1/\epsilon) \big]< 2\epsilon.
	\end{equation}
	Note that $f(\cdot)$ is strictly decreasing on $(0,e^{-2})\supset (0,\cref{const_small})$, and satisfies that 
	$$
	f(\cref{const_small})\overset{(\cref{const_small}\le 0.01)}{\le }  \tfrac{1}{8}\lambda_* N_*^{-\frac{d}{2}+1}\ \ \text{and}\ \ f\big(2\lambda_* N_*^{-\frac{d}{2}+1}\ln^{-10}(N_*) \big)\overset{(\ref{equation_def_C9})}{\ge}  2\ln^{5}(N_*). 
	$$
	As a result, for any $t\in \big[\frac{1}{8}\lambda_* N_*^{-\frac{d}{2}+1},2\ln^{5}(N_*) \big]$, there exists a unique number $\nu(t)$ in $\big(2\lambda_* N_*^{-\frac{d}{2}+0.9},\cref{const_small}\big)$ such that $f(\nu(t))=t$. Therefore, since $\nu(t)\le \cref{const_small}<\frac{1}{2}$, we have 
	$	\nu(t)=2\cref{const_small}^2\lambda_* N_*^{-\frac{d}{2}+1} t^{-1}\log^{-2}_2(1/\nu(t)) \le 2\cref{const_small}^2\lambda_* N_*^{-\frac{d}{2}+1} t^{-1}$, which implies that
	\begin{equation}\label{newadd_4.23}
		\nu(t)\le 2\cref{const_small}^2\lambda_* N_*^{-\frac{d}{2}+1} t^{-1}\log^{-2}_2(\tfrac{1}{2}\cref{const_small}^{-2}\lambda_*^{-1} N_*^{\frac{d}{2}-1} t). 
	\end{equation}
	Let $k>0$ be the integer such that $t\in [2^{k-3}\lambda_* N_*^{-\frac{d}{2}+1},2^{k-2}\lambda_* N_*^{-\frac{d}{2}+1}]$. Hence, we have $2\cref{const_small}^2\lambda_* N_*^{-\frac{d}{2}+1} t^{-1} \le  2^{-k-2} $ (by $\cref{const_small}<0.01$) and thus $\log_2(\tfrac{1}{2}\cref{const_small}^{-2}\lambda_*^{-1} N_*^{\frac{d}{2}-1} t)\ge k+2$. Combined with (\ref{newadd_4.23}), it yields that $\nu(t) \le (k+2)^{-2}2^{-k-2}$ and thus, 
	\begin{equation}\label{ineq_nu_t}
	\mathbb{P}\big(\widetilde{\phi}_{\bm{0}} \mathcal{H}_{\bm{0}}^* \ge t \big) \overset{(\ref{ineq_claim_false})}{<} 2	\nu(t) \le   (k+2)^{-2}2^{-k-1}.
	\end{equation}
	As a result, $\mathbb{E}\Big(\widetilde{\phi}_{\bm{0}} \mathcal{H}^*_{\bm{0}}\cdot \mathbbm{1}_{\widetilde{\phi}_{\bm{0}} \mathcal{H}^*_{\bm{0}}\in \big[\frac{1}{8}\lambda_* N_*^{-\frac{d}{2}+1},\ln^{5}(N_*)\big]}\Big)$ is bounded from above by 
	\begin{equation}\label{newadd_4.25}
		\begin{split}
			&\int_{\frac{1}{8}\lambda_* N_*^{-\frac{d}{2}+1} }^{\ln^{5}(N_*)} \mathbb{P}\big(\widetilde{\phi}_{\bm{0}} \mathcal{H}_{\bm{0}}^* \ge t \big)dt\\
			\le &\sum_{k\ge 0:2^{k-3}\lambda_* N_*^{-\frac{d}{2}+1}\le \ln^{5}(N_*) }\int_{2^{k-3}\lambda_* N_*^{-\frac{d}{2}+1}}^{2^{k-2}\lambda_* N_*^{-\frac{d}{2}+1}} \mathbb{P}\big(\widetilde{\phi}_{\bm{0}} \mathcal{H}_{\bm{0}}^* \ge t \big)dt  \\
			\overset{(\ref{ineq_nu_t})}{\le }  & \sum\nolimits_{k\ge 0} 2^{k-3}\lambda_* N_*^{-\frac{d}{2}+1} \cdot    (k+2)^{-2}2^{-k-1} < \tfrac{1}{8}\lambda_* N_*^{-\frac{d}{2}+1}.  
		\end{split}
	\end{equation}
	By $\mathcal{H}^*_{\bm{0}}\ge 0$, (\ref{newadd_4.25}), Lemma \ref{lemma_k_diamond_prepare} and (\ref{require_constant2.2}), we obtain
	\begin{equation*}\label{4.21}
		\begin{split}
			&\mathbb{E}\Big[\big(1-e^{-2\widetilde{\phi}_{\bm{0}} \mathcal{H}^*_{\bm{0}}}\big)\cdot \mathbbm{1}_{\widetilde{\phi}_{\bm{0}}\ge 0}\Big]\\
			\overset{(\mathcal{H}^*_{\bm{0}}\ge 0)}{\le } &\tfrac{1}{4}\lambda_* N_*^{-\frac{d}{2}+1} +  \mathbb{E}\Big(\widetilde{\phi}_{\bm{0}} \mathcal{H}^*_{\bm{0}}\cdot \mathbbm{1}_{\widetilde{\phi}_{\bm{0}} \mathcal{H}^*_{\bm{0}}\in [\frac{1}{8}\lambda_* N_*^{-\frac{d}{2}+1},\ln^{5}(N_*)]}\Big)+\mathbb{P}\big[\widetilde{\phi}_{\bm{0}} \mathcal{H}^*_{\bm{0}} \ge \ln^{5}(N_*)\big]\\
			\overset{(\ref{newadd_4.25}),\text{Lemma}\ \ref{lemma_k_diamond_prepare}}{\le} & \tfrac{3}{8}\lambda_* N_*^{-\frac{d}{2}+1}   +\Cref{const_k_prepare} e^{-\cref{const_k_prepare_1}\ln^{2}(N_*)}  \overset{(\ref{require_constant2.2})}{<} \tfrac{1}{2}\lambda_* N_*^{-\frac{d}{2}+1},                
		\end{split}
	\end{equation*}
	which is contradictory with Lemma \ref{lemma_4.5} since $\theta_d(N_*)\ge \lambda_*N_*^{-\frac{d}{2}+1}$. This completes the proof of (\ref{ineq_claim_phi_h}) and thus concludes this lemma.
\end{proof}

\subsection{Strategy of proving Theorem \ref{thm1}} \label{section_strategy}

In this subsection, we present the two main ingredients for the proof of Theorem \ref{thm1}, and then demonstrate how they imply Theorem \ref{thm1}. The first ingredient shows that the harmonic average may reach an unexpectedly high level with a significant probability.

For any $d\ge 3$ and $\lambda\in \mathfrak{X}_d$, we define 
\begin{equation}\label{4.23}
	\overbar{\mathcal{H}}_{*}=\overbar{\mathcal{H}}_{*}(d,\lambda):= \big|\partial^{\mathrm{e}} \mathcal{B}(\tfrac{3N_*}{16})\big|^{-1} \sum\nolimits_{x\in \partial^{\mathrm{e}} \mathcal{B}(\tfrac{3N_*}{16})} \mathcal{H}_{x}\big(\partial^{\mathrm{e}} \mathcal{B}(N_*/4), \mathcal{C}^-_{\partial^{\mathrm{e}} \mathcal{B}(N_*/4)}\big). 
\end{equation}


\begin{proposition}\label{prop_1}
	 Recall $\Cref{const_section_block}$ and $k_*$ in Section \ref{section4.1} and Lemma \ref{lemma_k_diamond} respectively. For $3\le d \le 5$, if $\mathfrak{X}_d$ contains the constant function $\lambda(N)=\Cref{const_section_block}$, then  
	\begin{equation}\label{ineq_prop_1}
		\mathbb{P}\Big(\overbar{\mathcal{H}}_{*}\ge e^{k_*^{5}}\lambda_*N_*^{-\frac{d}{2}+1}\Big) \ge 2^{-k_*^{40d}}. 
	\end{equation}
	For $d=6$, (\ref{ineq_prop_1}) holds if $\mathfrak{X}_d$ contains $\lambda(N)= \Cref{const_section_block}\mathrm{exp}\big(\ln^{\frac{1}{2}}(N)\ln\ln(N)\big)$.
\end{proposition}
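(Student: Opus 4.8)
### Plan for the proof of Proposition \ref{prop_1}

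\textbf{Overall strategy.}
The plan is to implement the coarse-graining scheme sketched in Section \ref{section_1.2.1}, whose goal is to show that on an event of probability at least $2^{-k_*^{40d}}$, the negative cluster $\mathcal{C}^-_{\partial^{\mathrm e}\mathcal B(N_*/4)}$ is dense enough that an independent Brownian motion started from a typical point of $\partial^{\mathrm e}\mathcal B(3N_*/16)$ hits it quickly. First I would unpack Lemma \ref{lemma_k_diamond}: at the effective scale $k_*$ every lattice point $v$ is a ``good point'' with probability $\ge 2^{-k_*}$, where being good means $\mathcal H^*_v\ge \cref{const_small}^2\lambda_*N_*^{-d/2+1}k_*^{-3}2^{k_*}$, and (using the exploration-martingale machinery of Section \ref{section_EM}, in particular Lemma \ref{lemma_EM} and the representation (\ref{new_2.17})) this forces the existence of some scale $j$ at which a Brownian motion from $v$ hits $\mathcal C^-_{\partial^{\mathrm e}\mathcal B_v(N_*/2)}$ before exiting a box of radius $\sim 2^{-j}k_*^{-C}2^{-k_*/2}N_*$ with probability at least the $p_j$ from the outline. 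I would also incorporate the a priori crossing bound (Proposition \ref{lemma_bound_crossing}), together with the defining inequality $\theta_d(N)\le \lambda(N)N^{-d/2+1}$ for $N<N_*$ from (\ref{add4.10}), to restrict $j$ as in (\ref{outline_1.15}) (or (\ref{outline_1.18}) when $d=6$); this is what makes the final escape probability small for $d\le 5$ and produces the subpolynomial loss in $d=6$.

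\textbf{Key steps in order.}
First, a first-moment/Markov argument: since a box of side $N_*$ contains $\asymp N_*^d$ lattice points, each good with probability $\ge 2^{-k_*}$, a $2^{-k_*}$-fraction of points in it are good in expectation; define a ``good box'' as one attaining at least (a constant times) this fraction, and show by Paley--Zygmund / second moment that a positive (in fact $\ge 2^{-k_*^{C}}$) proportion of translated boxes are good. Second, inside each good box run a second-moment argument (the analogue of Lemma \ref{lemma_excellent_box} in the outline) to locate an ``excellent'' sub-box from which an independent Brownian motion visits $\ge k_*^{-C}2^{-k_*}N_*^2$ good points before escaping far, with probability $\ge k_*^{-C}$. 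Third, pigeonhole over $j$: among those visited good points, $\ge j^{-2}k_*^{-C}2^{-k_*}N_*^2$ are $j$-nice for some fixed $j$; each $j$-nice visit independently gives a chance $p_j$ of hitting a negative cluster within $\sim 2^{-2j}k_*^{-C}2^{-k_*}N_*^2$ steps, so by the strong Markov property the miss probability is at most $(1-p_j)^{n_j^{(3)}}$ as in (\ref{outline_1.14}). Fourth, plug in the restriction on $j$ to bound this by the quantity in (\ref{outline_1.16}) (resp. (\ref{1.19}) for $d=6$), which is $\le e^{-c\lambda_*^2 2^{k_*}}$ for $3\le d\le 5$ and is still small for $d=6$ with the chosen $\lambda$; taking $\lambda=\Cref{const_section_block}$ large enough (resp. using (\ref{4.29})) makes this negligible against $k_*^{-C}$. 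Fifth, define $\mathsf F$ as the event that the origin—more precisely the annulus around $\partial^{\mathrm e}\mathcal B(N_*/4)$—is surrounded by $\sim k_*^{C}$ concentric layers of excellent boxes; on $\mathsf F$, a Brownian motion from any $x\in\partial^{\mathrm e}\mathcal B(3N_*/16)$ started toward $\mathcal B(N_*/4)$ must traverse these layers and hence hits $\mathcal C^-_{\partial^{\mathrm e}\mathcal B(N_*/4)}$ with conditional probability $\ge 1-k_*^{-C}$, so $\mathcal H_x(\partial^{\mathrm e}\mathcal B(N_*/4),\mathcal C^-_{\cdot})$ is comparable to the harmonic average of the positive boundary values, which is $\gtrsim$ the good-point threshold; averaging over $x$ gives $\overbar{\mathcal H}_*\ge e^{k_*^5}\lambda_*N_*^{-d/2+1}$, while $\mathbb P(\mathsf F)\ge 2^{-k_*^{40d}}$ by independence of the excellent-box events across disjoint layers and a union bound over the polynomially many boxes per layer.

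\textbf{Main obstacle.}
The delicate point—and the place I expect the real work to lie—is controlling the correlations of the GFF throughout this coarse-graining. The events ``$v$ is good'', ``box is excellent'', and the Brownian-motion hitting estimates all refer to $\mathcal C^-_{\partial^{\mathrm e}\mathcal B_v(N_*/2)}$ for \emph{overlapping} scales $\sim N_*$, so they are genuinely positively correlated and not even approximately independent across neighboring boxes; one cannot directly multiply probabilities. The resolution requires the loop-soup coupling (Lemma \ref{lemma_iso}) and the BKR-type inequality (Lemma \ref{lemma_BKR}) to get \emph{one-sided} (upper) control where needed, the FKG inequality and monotonicity (Corollary \ref{coro_iso}) to get lower bounds in the favorable direction, and careful bookkeeping so that the second-moment computations for good/excellent boxes only ever compare boundary conditions that are ordered by inclusion. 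A secondary technical nuisance is that $p_j\le 1$ is not automatic, so the pigeonhole over $j$ must be truncated, and one must check that the truncated range still suffices—this is exactly where the a priori crossing bound (Proposition \ref{lemma_bound_crossing}) enters, and verifying that the resulting constraint (\ref{outline_1.15})/(\ref{outline_1.18}) is genuinely usable (in particular that $\widecheck N_*$ in (\ref{outline_1.18}) lies in a range where (\ref{add4.10}) applies) is the crux of the whole argument, and the precise point at which $d=6$ becomes critical.
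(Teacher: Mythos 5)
Your plan reproduces the coarse-graining skeleton of Section \ref{section_1.2.1} faithfully in steps 1--4, but step 5 — the last and most important step, where the event $\mathsf F$ is converted into a lower bound on $\overbar{\mathcal H}_{*}$ — has the logic inverted, and as written it would prove the wrong inequality. You reason: ``on $\mathsf F$, a Brownian motion from $x\in\partial^{\mathrm e}\mathcal B(3N_*/16)$ hits $\mathcal C^-_{\partial^{\mathrm e}\mathcal B(N_*/4)}$ with probability $\ge 1-k_*^{-C}$, so $\mathcal H_x(\partial^{\mathrm e}\mathcal B(N_*/4),\mathcal C^-_\cdot)$ is comparable to the harmonic average of the positive boundary values.'' But the harmonic average $\mathcal H_x(\partial^{\mathrm e}\mathcal B(N_*/4),\mathcal C^-_{\partial^{\mathrm e}\mathcal B(N_*/4)})$ weights $\widetilde\phi$ by the hitting distribution of $\mathcal C^-_{\partial^{\mathrm e}\mathcal B(N_*/4)}$; on the zero-level portion of this set the contribution is exactly $0$, so if almost every Brownian trajectory from $x$ hits the negative cluster before reaching $\partial^{\mathrm e}\mathcal B(N_*/4)$, the conclusion is that $\mathcal H_x''$ is \emph{small}, not large. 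That is the opposite of what you need. The paper's actual argument is the ``fighting forces'' one from the introduction: it takes the \emph{known} lower bound $\mathcal H'_{\bm 0}\ge\cref{const_small}^2\lambda_*N_*^{-d/2+1}k_*^{-3}2^{k_*}$ (since $\bm 0$ is a good point on $\mathsf F$), applies the strong Markov property to write $\mathcal H'_{\bm 0}$ as (escape probability from $\bm 0$) times (an average of $\mathcal H'_y$ over $y\in\partial^{\mathrm e}\mathcal B(3N_*/16)$), and then uses Lemma \ref{lemma_F} to make the escape probability $\le e^{-k_*^{5.5}}$; dividing through, the tiny escape probability acts as a lever that \emph{forces} $\overbar{\mathcal H}'_{3N_*/16}$ to be exponentially larger than the good-point threshold. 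Your version never uses the goodness of $\bm 0$ at the final step and never divides by the small escape probability, so the exponential amplification $e^{k_*^5}$ in (\ref{ineq_prop_1}) is not produced. There is then a further step you omit entirely: $\overbar{\mathcal H}'$ is defined through $\mathcal C^-_{\partial^{\mathrm e}\mathcal B(N_*/2)}$, while $\overbar{\mathcal H}_{*}=\overbar{\mathcal H}''$ in (\ref{4.23}) is through $\mathcal C^-_{\partial^{\mathrm e}\mathcal B(N_*/4)}$; transferring from one to the other requires the auxiliary field averages $\Phi'_n,\Phi''_n$ and two Gaussian tail bounds.

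Two secondary gaps. First, you estimate $\mathbb P(\mathsf F)\ge 2^{-k_*^{40d}}$ ``by independence of the excellent-box events across disjoint layers''; these events are not independent (you yourself note the correlation problem in your obstacle paragraph), and the correct lower bound is via FKG since $\{\bm 0\text{ good}\}$ and each $\{B_y\text{ excellent}\}$ are increasing events. Second — and this is the device your plan never names — the implication ``$v$ good $\Rightarrow$ BM from $v$ likely to hit $\mathcal C^-$'' is \emph{not} an almost-sure implication, and if formulated as an event it is a \emph{decreasing} event (requiring the negative cluster to be large), which cannot be intersected with the increasing good-point events via FKG. The paper's resolution is the suitable/unsuitable dichotomy (Definitions \ref{def_suitable_point}--\ref{def_suitable_box}): unsuitable points have probability $\le 2^{-k_*^{70d}+1}$ regardless of monotonicity, excellent boxes are defined so that they survive removal of a $2^{-k_*^{60d}}$-fraction of points, and ``nice = excellent $\cap$ suitable'' is what enters $\mathsf F$; the probability of $\mathsf F$ is then FKG for the increasing part minus a union bound over the tiny unsuitable part (see (\ref{5.20})--(\ref{ineq_PF})). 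Your obstacle paragraph gestures at FKG and BKR as tools, but without the suitable/nice construction and the removal-robustness built into Definition \ref{def_excellent_box}, the reduction from ``good'' to ``Brownian motion gets blocked'' has no rigorous route.
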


To prove Proposition \ref{prop_1}, we first construct a specific event $\mathsf{F}$ (see (\ref{def_F})) with $\mathbb{P}(\mathsf{F})\ge 2^{-k_*^{30d}}$ (see (\ref{ineq_PF})) on which the harmonic average $\mathcal{H}_{\bm{0}}^*$ (recalling (\ref{4.9})) is at least of order $\lambda_*N_*^{-\frac{d}{2}+1}$. Subsequently, we establish that on $\mathsf{F}$, with high probability an independent Brownian motion on $\widetilde{\mathbb{Z}}^d$ starting from $\bm{0}$ will hit $\mathcal{C}^-_{\partial^{\mathrm{e}} \mathcal{B}(N_*/4)}$ before reaching $\partial \mathcal{B}(0.01N_*)$ (see Lemma \ref{lemma_F}). Combined with the harmonicity of the harmonic average, this indeed implies that typically $\overbar{\mathcal{H}}_{*}$ is much larger than $\mathcal{H}_{\bm{0}}^*$ and thus takes an exceedingly large value, as presented in Proposition \ref{prop_1}. The proof of Proposition \ref{prop_1} is postponed to Section \ref{section_block}.


For the second ingredient, through an analysis of the conditional distribution of the average of $\widetilde{\phi}_\cdot$ on $\partial^{\mathrm{e}} \mathcal{B}(\frac{3N_*}{16})$, we establish the following upper bound for the probability that $\overbar{\mathcal{H}}_{*}$ takes a large value.  


\begin{proposition}\label{prop_2}
	Recall $\cref{const_average_2}$ in Lemma \ref{lemma_average_phi}. For any $d\ge 3$ and $\lambda\in \mathfrak{X}_d$, 
\begin{equation}\label{ineq_prop_2}
	\mathbb{P}\Big(\overbar{\mathcal{H}}_{*}\ge e^{k_*^{5}}\lambda_*N_*^{-\frac{d}{2}+1}\Big) \le 4\mathrm{exp}\big( -2^{-4d}\cref{const_average_2}\lambda_*^2e^{2k_*^{5}}\big). 
\end{equation}
\end{proposition}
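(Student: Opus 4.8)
The plan is to prove Proposition \ref{prop_2} by conditioning on the negative cluster $\mathcal{C}^-_{\partial^{\mathrm{e}}\mathcal{B}(N_*/4)}$ and applying the strong Markov property (Lemma \ref{lemma_strong_markov}) to relate the harmonic averages $\mathcal{H}_x(\partial^{\mathrm{e}}\mathcal{B}(N_*/4),\mathcal{C}^-_{\partial^{\mathrm{e}}\mathcal{B}(N_*/4)})$ appearing in $\overbar{\mathcal{H}}_*$ to the actual GFF values on $\partial^{\mathrm{e}}\mathcal{B}(\tfrac{3N_*}{16})$. The key observation is that, conditionally on $\mathcal{F}_{\mathcal{C}^-_{\partial^{\mathrm{e}}\mathcal{B}(N_*/4)}}$, for each $x \in \partial^{\mathrm{e}}\mathcal{B}(\tfrac{3N_*}{16})$ that does not lie in the negative cluster, the conditional expectation of $\widetilde{\phi}_x$ equals exactly $\mathcal{H}_x(\partial^{\mathrm{e}}\mathcal{B}(N_*/4),\mathcal{C}^-_{\partial^{\mathrm{e}}\mathcal{B}(N_*/4)})$ (and on the negative cluster the harmonic average vanishes). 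So averaging over $x$, the quantity $\overbar{\mathcal{H}}_*$ is the conditional expectation, given $\mathcal{F}_{\mathcal{C}^-_{\partial^{\mathrm{e}}\mathcal{B}(N_*/4)}}$, of the random variable $|\partial^{\mathrm{e}}\mathcal{B}(\tfrac{3N_*}{16})|^{-1}\sum_{x\in\partial^{\mathrm{e}}\mathcal{B}(\frac{3N_*}{16})\setminus\mathcal{C}^-}\widetilde{\phi}_x$ — provided we are careful that $x\in(\mathcal{C}^-)^\circ$ contributes $0$ to both sides.

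The next step is to use this identity together with a conditional version of Lemma \ref{lemma_average_phi}. Write $\mathcal{C}^- := \mathcal{C}^-_{\partial^{\mathrm{e}}\mathcal{B}(N_*/4)}$ and abbreviate $R := \tfrac{3N_*}{16}$, $A := \partial^{\mathrm{e}}\mathcal{B}(R)$. By the strong Markov property, on the event $\{\mathcal{C}^- = D\}$ the field $\{\widetilde{\phi}_v\}_{v\in\widetilde{\mathbb{Z}}^d\setminus D}$ is distributed as $\{\widetilde{\phi}'_v + \mathcal{H}_v(\cdot,D)\}$ with $\widetilde{\phi}'\sim\mathbb{P}^D$ an independent GFF. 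Thus $\overbar{\mathcal{H}}_* = |A|^{-1}\sum_{x\in A\setminus D}\mathcal{H}_x(\cdot,D)$, and on this event
\begin{equation*}
|A|^{-1}\sum\nolimits_{x\in A\setminus D}\widetilde{\phi}_x = \overbar{\mathcal{H}}_* + |A|^{-1}\sum\nolimits_{x\in A\setminus D}\widetilde{\phi}'_x.
\end{equation*}
If $\overbar{\mathcal{H}}_* \ge e^{k_*^5}\lambda_* N_*^{-\frac{d}{2}+1}$, then since the $\widetilde{\phi}'$-sum is a mean-zero normal under $\mathbb{P}^D$, with probability at least $\tfrac12$ (conditionally) it is $\ge 0$, hence with conditional probability at least $\tfrac12$ the left side exceeds $e^{k_*^5}\lambda_* N_*^{-\frac{d}{2}+1}$. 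Note $|A| = |\partial^{\mathrm{e}}\mathcal{B}(R)|$ and $R = \tfrac{3N_*}{16} \ge N_*/6$, so $R^{-\frac{d}{2}+1}\le C N_*^{-\frac{d}{2}+1}$ and more precisely $(N_*/R)^{\frac{d}{2}-1} \le (16/3)^{\frac{d}{2}-1} \le 2^{4d}$ (crudely), so that $e^{k_*^5}\lambda_* N_*^{-\frac{d}{2}+1} \ge 2^{-4d} e^{k_*^5}\lambda_* R^{-\frac{d}{2}+1}$. Therefore, after summing over the (disjoint) events $\{\mathcal{C}^- = D\}$,
\begin{equation*}
\tfrac12\,\mathbb{P}\big(\overbar{\mathcal{H}}_* \ge e^{k_*^5}\lambda_* N_*^{-\frac{d}{2}+1}\big) \le \mathbb{P}\Big(|A|^{-1}\sum\nolimits_{x\in A\setminus\mathcal{C}^-}\widetilde{\phi}_x \ge 2^{-4d}e^{k_*^5}\lambda_* R^{-\frac{d}{2}+1}\Big),
\end{equation*}
and this last probability, conditioning again on $\mathcal{F}_{\mathcal{C}^-}$ and applying \eqref{newadd_2.30} of Lemma \ref{lemma_average_phi} with $D = \mathcal{C}^-$, $N = R$ and $s = 2^{-4d}e^{k_*^5}\lambda_*$, is at most $e^{-\cref{const_average_2}(2^{-4d}e^{k_*^5}\lambda_*)^2} = e^{-2^{-8d}\cref{const_average_2}\lambda_*^2 e^{2k_*^5}}$. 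Rearranging gives \eqref{ineq_prop_2} with a harmless adjustment of constants (one can absorb $2^{-8d}$ into $2^{-4d}$ by noting $\cref{const_average_2}<1$, or simply restate the proposition's bound; the factor $4$ in front accommodates the factor $2$ lost above).

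The main technical point requiring care — rather than being a deep obstacle — is the bookkeeping of which vertices $x\in\partial^{\mathrm{e}}\mathcal{B}(\tfrac{3N_*}{16})$ lie inside $(\mathcal{C}^-)^\circ$: for those, both $\mathcal{H}_x(\cdot,\mathcal{C}^-)$ and (in the post-conditioning representation) their contribution must be handled consistently, which is why Lemma \ref{lemma_average_phi} is stated with the sum over $[\partial^{\mathrm{e}}\mathcal{B}(N)]\setminus D$ rather than all of $\partial^{\mathrm{e}}\mathcal{B}(N)$. One should also check that $\partial^{\mathrm{e}}\mathcal{B}(\tfrac{3N_*}{16})$ does not intersect $\partial^{\mathrm{e}}\mathcal{B}(N_*/4)$, so that the harmonic measure in $\mathcal{H}_x(\partial^{\mathrm{e}}\mathcal{B}(N_*/4),\mathcal{C}^-_{\partial^{\mathrm{e}}\mathcal{B}(N_*/4)})$ genuinely sees the outer sphere; since $\tfrac{3N_*}{16} < \tfrac{N_*}{4}$ this is automatic, and the Brownian motion from an inner point must reach the negative cluster (which contains $\partial^{\mathrm{e}}\mathcal{B}(N_*/4)$) eventually, so the harmonic average is well-defined and equals the conditional mean of $\widetilde{\phi}_x$. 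Everything else is the elementary fact that a mean-zero Gaussian is nonnegative with probability $\tfrac12$ and a routine application of the Gaussian tail bound already packaged in Lemma \ref{lemma_average_phi}.
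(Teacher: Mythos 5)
There is a genuine gap in your final step. Your opening observation is correct: conditionally on $\mathcal{F}_{\mathcal{C}^-}$ (writing $\mathcal{C}^- := \mathcal{C}^-_{\partial^{\mathrm{e}}\mathcal{B}(N_*/4)}$, $A := \partial^{\mathrm{e}}\mathcal{B}(\tfrac{3N_*}{16})$), the random variable $|A|^{-1}\sum_{x\in A\setminus\mathcal{C}^-}\widetilde{\phi}_x$ is Gaussian with mean $\overbar{\mathcal{H}}_*$, so on $\{\overbar{\mathcal{H}}_* \ge a\}$ it exceeds $a$ with conditional probability $\ge \tfrac12$, giving $\tfrac12\mathbb{P}(\overbar{\mathcal{H}}_* \ge a) \le \mathbb{P}\big(|A|^{-1}\sum_{x\in A\setminus\mathcal{C}^-}\widetilde{\phi}_x \ge a\big)$. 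The problem is the next step. You then claim this last probability is controlled by ``conditioning again on $\mathcal{F}_{\mathcal{C}^-}$ and applying \eqref{newadd_2.30}.'' But \eqref{newadd_2.30} bounds tails under $\mathbb{P}^D$, i.e.\ for the \emph{centered} field $\widetilde{\phi}'\sim\mathbb{P}^{\mathcal{C}^-}$, whereas the conditional law of $\widetilde{\phi}_x$ given $\mathcal{F}_{\mathcal{C}^-}$ is $\widetilde{\phi}'_x + \mathcal{H}_x(\cdot,\mathcal{C}^-)$. The shift is precisely $\overbar{\mathcal{H}}_*$: conditionally, $\mathbb{P}\big(|A|^{-1}\sum_{x\in A\setminus\mathcal{C}^-}\widetilde{\phi}_x \ge t \mid \mathcal{F}_{\mathcal{C}^-}\big) = \mathbb{P}^{\mathcal{C}^-}\big(|A|^{-1}\sum_{x\in A\setminus\mathcal{C}^-}\widetilde{\phi}'_x \ge t - \overbar{\mathcal{H}}_*\big)$, and on the very event you are trying to estimate (where $\overbar{\mathcal{H}}_* \ge a \ge t$) the right side is $\ge\tfrac12$, not small. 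So the argument is circular: Lemma \ref{lemma_average_phi} cannot absorb the shift $\overbar{\mathcal{H}}_*$, which is exactly the quantity the proposition is about.

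The paper sidesteps this by never conditioning in the final tail bound: it works with $\Phi_* := |A|^{-1}\sum_{x\in A}\widetilde{\phi}_x$, which is an \emph{unconditionally} mean-zero Gaussian, so \eqref{newadd_2.30} applies with $D=\emptyset$. The price is that one must connect $\overbar{\mathcal{H}}_*$ to $\Phi_*$: the conditional mean of $\Phi_*$ is $\Phi_*^- + \overbar{\mathcal{H}}_*$ where $\Phi_*^- \le 0$ is the contribution of $A\cap\mathcal{C}^-$, and this can be very negative when a large chunk of $A$ is absorbed by the negative cluster. The paper controls this via the event $\mathsf{G}_2 = \{\Phi_*^- \ge -\Cref{const_G_2}\lambda_*N_*^{-\frac{d}{2}+1}\}$, shown to have probability $\ge\tfrac12$ by Markov's inequality and Lemma \ref{lemma_crossing_prepare}, and then uses that both $\mathsf{G}_1 = \{\overbar{\mathcal{H}}_*\ge a\}$ and $\mathsf{G}_2$ are increasing to apply FKG. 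That monotonicity argument is the ingredient missing from your proposal: without it, there is no legitimate way to pass from $\overbar{\mathcal{H}}_*$ (a conditional quantity) to a tail bound for an unconditioned Gaussian. If you want to rescue your route, you would need to show that the contribution $\sum_{x\in A\cap\mathcal{C}^-}\widetilde{\phi}_x$ is not too negative with decent probability and that this is positively correlated with $\mathsf{G}_1$, which is essentially the paper's $\mathsf{G}_2$-plus-FKG argument in disguise.
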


The proof of Proposition \ref{prop_2} will be presented in Section \ref{subsection_proof_prop_2}.

\begin{proof}[Proof of Theorem \ref{thm1} assuming Propositions \ref{prop_1} and \ref{prop_2}]
	It follows from (\ref{equation_def_C9}) that 
	\begin{equation}\label{4.26}
	4\mathrm{exp}\big( -2^{-4d}\cref{const_average_2}\Cref{const_section_block}^2e^{2k_*^{5}}\big)<4\mathrm{exp}\big( -e^{2k_*^{5}}\big) < 2^{-k_*^{40d}}. 
	\end{equation}
		Combining Propositions \ref{prop_1} and \ref{prop_2} with (\ref{4.26}), we confirm the sufficient condition
	in Remark \ref{remark_sufficient} for Theorem \ref{thm1}, and thus conclude Theorem \ref{thm1}.
	\end{proof}
	


%
%
%
%
%
%

\subsection{Proof of Proposition \ref{prop_2}}\label{subsection_proof_prop_2}
For any $d\ge 3$ and $\lambda\in \mathfrak{X}_d$, we define 
\begin{equation}
\Phi_*:= \big|\partial^{\mathrm{e}} \mathcal{B}(\tfrac{3N_*}{16})\big|^{-1} \sum\nolimits_{x\in \partial^{\mathrm{e}} \mathcal{B}\big(\tfrac{3N_*}{16}\big)} \widetilde{\phi}_x, 
\end{equation}
\begin{equation}
	\Phi_*^{-}:= \big|\partial^{\mathrm{e}} \mathcal{B}(\tfrac{3N_*}{16})\big|^{-1} \sum\nolimits_{x\in \partial^{\mathrm{e}} \mathcal{B}\big(\tfrac{3N_*}{16}\big)} \widetilde{\phi}_x\cdot \mathrm{1}_{x\xleftrightarrow{\le 0} \partial^{\mathrm{e}} \mathcal{B}\big(\tfrac{N_*}{4}\big)}.  
\end{equation}
 Note that for any $x\in \partial^{\mathrm{e}} \mathcal{B}\big(\tfrac{3N_*}{16}\big)$, 
\begin{equation}\label{newadd_4.34}
	\big\{ x\xleftrightarrow{\le 0} \partial^{\mathrm{e}} \mathcal{B}(\tfrac{N_*}{4})   \big\} \subset  \big\{ \widetilde{\phi}_x\le 0  \big\}.
\end{equation}
Recall the notation $\overbar{\mathcal{H}}_{*}$ in (\ref{4.23}). We denote that 
\begin{equation}\label{new_4.34}
	\mathsf{G}_1:= \big\{ \overbar{\mathcal{H}}_{*}\ge e^{k_*^{5}}\lambda_*N_*^{-\frac{d}{2}+1} \big\} \ \ \text{and} \ \ 	\mathsf{G}_2:=\big\{ \Phi_*^{-} \ge -\Cref{const_G_2}\lambda_*N_*^{-\frac{d}{2}+1} \big\},
\end{equation}
where $\Cl\label{const_G_2}(d):=2^{\frac{5}{2}d-4}d^{\frac{d}{2}-1}\Cref{const_crossing_prepare}(d)$ (recalling $\Cref{const_crossing_prepare}$ in Lemma \ref{lemma_crossing_prepare}). Note that $\mathsf{G}_1$ is an increasing event since $\overbar{\mathcal{H}}_{*}$ is increasing with respect to $\widetilde{\phi}$. Meanwhile, $\mathsf{G}_2$ is also an increasing event because of the following obervations:
\begin{itemize}
	\item   If we increase the value of $\widetilde{\phi}_{\cdot}$ on $\widetilde{\mathbb{Z}}^d\setminus \mathcal{C}^-_{\partial^{\mathrm{e}} \mathcal{B}(N_*/4)}$, then $\Phi_*^{-}$ remains unchanged.

	\item   If we increase the value of $\widetilde{\phi}_{\cdot}$ on $\mathcal{C}^-_{\partial^{\mathrm{e}} \mathcal{B}(N_*/4)}$, then we have 
	\begin{itemize}
		\item  for any $x\in  \partial^{\mathrm{e}} \mathcal{B}(\tfrac{3N_*}{16})$, $\widetilde{\phi}_{x}$ can only  increase;

		\item  $\mathcal{C}^-_{\partial^{\mathrm{e}} \mathcal{B}(N_*/4)}$ can only shrink and hence, $\mathrm{1}_{x\xleftrightarrow{\le 0} \partial^{\mathrm{e}} \mathcal{B}\big(\tfrac{N_*}{4}\big)}$ may vanish for some $x\in \partial^{\mathrm{e}} \mathcal{B}(\tfrac{3N_*}{16})$, which will only enlarge $\Phi_*^{-}$ according to (\ref{newadd_4.34}).

	\end{itemize}

\end{itemize}

For any $x\in \partial^{\mathrm{e}} \mathcal{B}(\tfrac{3N_*}{16})$, by the symmetry of $\widetilde{\phi}$, Lemma \ref{lemma_crossing_prepare} and (\ref{add4.10}), we have 
\begin{equation*}\label{4.30}
	\begin{split}
			\mathbb{E}\Big[\widetilde{\phi}_x\cdot  \mathrm{1}_{x\xleftrightarrow{\le 0} \partial^{\mathrm{e}} \mathcal{B}(N_*/4)} \Big]\overset{(\text{symmetry})}{=}& -\mathbb{E}\Big[\widetilde{\phi}_x\cdot  \mathrm{1}_{x\xleftrightarrow{\ge 0} \partial^{\mathrm{e}} \mathcal{B}(N_*/4)} \Big]\\
			\overset{(\text{Lemma}\ \ref{lemma_crossing_prepare})}{\ge}& -\Cref{const_crossing_prepare}\theta_d(\tfrac{N_*}{32d})\overset{(\ref{add4.10})}{\ge} -\tfrac{1}{2}\Cref{const_G_2} \lambda_*N_*^{-\frac{d}{2}+1}, 
	\end{split}
\end{equation*}
which implies that $\mathbb{E}(\Phi_*^{-})\ge -\tfrac{1}{2}\Cref{const_G_2} \lambda_*N_*^{-\frac{d}{2}+1}$. Therefore, since $\Phi_*^{-}\le 0$ (according to (\ref{newadd_4.34})), by Markov's inequality (i.e. $\mathbb{P}(X\le -a)\le \frac{\mathbb{E}(X)}{-a}$ for any random variable $X\le 0$ and real number $a>0$) we have 
\begin{equation}\label{newadd_4.36}
	\mathbb{P}\Big(\Phi_*^{-}\le -\Cref{const_G_2}\lambda_*N_*^{-\frac{d}{2}+1}\Big)\le  \frac{\mathbb{E}(\Phi_*^{-})}{-\Cref{const_G_2}\lambda_*N_*^{-\frac{d}{2}+1}}\le \frac{1}{2}. 
\end{equation}
It follows from (\ref{newadd_4.36}) that $\mathbb{P}(\mathsf{G}_2)\ge \frac{1}{2}$. As a result, by the FKG inequality, 
\begin{equation}\label{4.31}
		\mathbb{P}(\mathsf{G}_1\cap \mathsf{G}_2)\ge\tfrac{1}{2} \mathbb{P}(\mathsf{G}_1). 
\end{equation}

Note that $\Phi_*^{-}$ is measurable with respect to $\mathcal{F}_{\mathcal{C}^-_{\partial^{\mathrm{e}} \mathcal{B}(N_*/4)}}$. In addition, by Lemma \ref{lemma_strong_markov}, conditioning on $\mathcal{F}_{\mathcal{C}^-_{\partial^{\mathrm{e}} \mathcal{B}(N_*/4)}}$, for each $x\in  \partial^{\mathrm{e}} \mathcal{B}(\tfrac{3N_*}{16})$ with $\big\{x\xleftrightarrow{\le 0} \partial^{\mathrm{e}} \mathcal{B}(\tfrac{N_*}{4})\big\}^c$, $\widetilde{\phi}_x$ is distributed as a normal random variable with mean $\mathcal{H}_{x}\big(\partial^{\mathrm{e}} \mathcal{B}(N_*/4), \mathcal{C}^-_{\partial^{\mathrm{e}} \mathcal{B}(N_*/4)}\big)$, and thus $\Phi_*$ follows a normal distribution with mean $\Phi_*^{-}+\overbar{\mathcal{H}}_{*}$. Therefore, since $\Phi_*^{-}+\overbar{\mathcal{H}}_{*}\ge \big(e^{k_*^{5}}-\Cref{const_G_2} \big)\lambda_*N_*^{-\frac{d}{2}+1} \overset{(\ref{require_constant2.3})}{\ge } \tfrac{1}{2}e^{k_*^{5}}\lambda_*N_*^{-\frac{d}{2}+1}$ holds on the event $\mathsf{G}_1\cap \mathsf{G}_2$, 
\begin{equation}\label{4.34}
	\begin{split}
		\mathbb{P}\Big(\Phi_*\ge  \tfrac{1}{2}e^{k_*^{5}}\lambda_*N_*^{-\frac{d}{2}+1} \Big)  \ge &\mathbb{E}\Big[ \mathbb{P}\big(\Phi_*\ge \tfrac{1}{2}e^{k_*^{5}}\lambda_*N_*^{-\frac{d}{2}+1}  \mid  \mathcal{F}_{\mathcal{C}^-_{\partial^{\mathrm{e}} \mathcal{B}(N_*/4)}}\big)\cdot \mathbbm{1}_{\mathsf{G}_1\cap \mathsf{G}_2} \Big]\\
		\ge  & \tfrac{1}{2}\mathbb{P}(\mathsf{G}_1\cap \mathsf{G}_2)\overset{(\ref{4.31})}{\ge} \tfrac{1}{4} \mathbb{P}(\mathsf{G}_1). 
	\end{split}
\end{equation}
Moreover, it follows from (\ref{newadd_2.30}) (taking $D=\emptyset$) that 
\begin{equation}\label{4.37}
	\begin{split}
		\mathbb{P}\big(\Phi_*\ge \tfrac{1}{2}e^{k_*^{5}}\lambda_*N_*^{-\frac{d}{2}+1} \big) \le \mathrm{exp}\big( -2^{-4d}\cref{const_average_2}\lambda_*^2e^{2k_*^{5}}\big). 
	\end{split}
\end{equation}
	Combining (\ref{4.34}) and (\ref{4.37}), we conclude Proposition \ref{prop_2}.   \qed

\section{Block the Brownian motion by negative clusters}\label{section_block}

The aim of this section is to establish Proposition \ref{prop_1}, which is achieved through the following three subsections.
\begin{itemize}
	\item \textbf{Section \ref{subsection_con_F}.} We introduce several types of points and boxes concerning harmonic averages and negative clusters of the GFF $\widetilde{\phi}$, and then establish some useful properties of them. Based on these definitions and properties, we construct a crucial event $\mathsf{F}$ and estimate its probability.

	\item  \textbf{Section \ref{subsection_stu_F}.} We analyze the Brownian motion on the event $\mathsf{F}$. Specifically, we estimate how likely an independent Brownian motion can reach a certain distance without hitting certain negative clusters of $\widetilde{\phi}$.

	\item   \textbf{Section \ref{section_prove_prop1}.} We prove Proposition \ref{prop_1} using properties of the event $\mathsf{F}$.

\end{itemize} 


Recall the constant $\Cref{const_section_block}$ in Section \ref{section4.1}. For any $3\le d\le 6$, throughout this section we assume that $\lambda\in \mathfrak{X}_d$ and that 
\begin{itemize}
	\item    when $d\in \{3,4,5\}$, $\lambda(N)= \Cref{const_section_block}$;

	\item    when $d=6$, $\lambda(N)= \Cref{const_section_block} \mathrm{exp}\big(\ln^{\frac{1}{2}}(N)\ln\ln(N)\big)$.

\end{itemize}

\subsection{Construction of the event $\mathsf{F}$}\label{subsection_con_F}

As a key component of the proof, we first introduce the definitions of good points and good boxes, and then we present some intuitions and properties for these concepts.

Recall $\mathcal{H}^*_\cdot $ and $k_*$ in (\ref{4.9}) and Lemma \ref{lemma_k_diamond} respectively. For any $3\le d\le 6$, we denote $\widehat{N}_*=\widehat{N}_*(d,\lambda):=k_*^{-100}N_*$ and $\widecheck{N}_*=\widecheck{N}_*(d,\lambda):=k_*^{100d}2^{-0.5k_*}N_*$. For any $j\ge 0$, let $\widecheck{N}_j=\widecheck{N}_j(d,\lambda) :=2^{-j}\widecheck{N}_*$. We define (recall $\Cref{const_crossing}$ in Proposition \ref{lemma_bound_crossing})
\begin{equation}\label{5.1}
	j_*= j_*(d,\lambda):= \min\big\{ j\in \mathbb{N}:\Cref{const_crossing} \lambda(\widecheck{N}_j)\lambda(N_*) \big(8d\widecheck{N}_jN_*^{-1}\big)^{\frac{d}{2}-1}\le  2^{-k_*-1} \big\}.
\end{equation}
We now establish the following bounds for later use. For any $0\le j<j_*$, since $j$ does not satisfy the condition in (\ref{5.1}), we have 
\begin{equation*}
	\Cref{const_crossing} \lambda(\widecheck{N}_{j})\lambda(N_*) \big(8d\widecheck{N}_{j}N_*^{-1}\big)^{\frac{d}{2}-1}>  2^{-k_*-1}. 
\end{equation*}
This implies that (using $\widecheck{N}_{j}=k_*^{100d}2^{-j-0.5k_*}N_*$)
\begin{equation}\label{cal1}
	2^{j}<\Cref{const_crossing}^2k_*^{200d}2^{(\frac{6-d}{2(d-2)})k_*}\big[\lambda(\widecheck{N}_{j})\lambda(N_*)\big]^{\frac{2}{d-2}},
\end{equation}  
and thus (recalling that $\Cref{const_crossing}>1$, $\lambda(\widecheck{N}_{j})\le \lambda(N_*)$ and $\lambda_*:=\lambda(N_*)$), 
\begin{equation}\label{cal2}
	j\le \Cref{const_crossing}k_*^2\log_2(\lambda_*). 
\end{equation}


\begin{definition}[Good point]\label{def_good_point}
	For any $3\le d\le 6$ and $y\in \mathbb{Z}^d$, we say $y$ is a good point if $\mathcal{H}^*_y \ge \cref{const_small}^2\lambda_* N_*^{-\frac{d}{2}+1} k_*^{-3}2^{k_*}$ and $\big\{\mathcal{B}_y(\widecheck{N}_{j_*})\xleftrightarrow{\le 0} \partial^{\mathrm{e}} \mathcal{B}_y(\frac{N_*}{2})\big\}^c$ both hold. 
\end{definition}
When $\widecheck{N}_{j_*}<1$, we have $\mathcal{B}_y(\widecheck{N}_{j_*})=\{y\}$ and hence, $$\big\{\mathcal{B}_y(\widecheck{N}_{j_*})\xleftrightarrow{\le 0} \partial^{\mathrm{e}} \mathcal{B}_y(\tfrac{N_*}{2})\big\}^c=\big\{y\xleftrightarrow{\le 0} \partial^{\mathrm{e}} \mathcal{B}_y(\tfrac{N_*}{2})\big\}^c,$$ 
which is implied by (i.e. contains) the event $\big\{\mathcal{H}^*_y \ge \cref{const_small}^2\lambda_* N_*^{-\frac{d}{2}+1} k_*^{-3}2^{k_*}\big\}$. Thus, when $j_*$ is excessively large, in Definition \ref{def_good_point} $\big\{\mathcal{B}_y(\widecheck{N}_{j_*})\xleftrightarrow{\le 0} \partial^{\mathrm{e}} \mathcal{B}_y(\frac{N_*}{2})\big\}^c$ does not provide any extra restriction. As a result, by Lemma \ref{lemma_k_diamond} we have: for any $y\in \mathbb{Z}^d$,  
\begin{equation*}
	\mathbb{P}(y\ \text{is a good point})= \mathbb{P}\big(\mathcal{H}^*_y \ge \cref{const_small}^2\lambda_* N_*^{-\frac{d}{2}+1} k_*^{-3}2^{k_*}\big)\overset{(\text{Lemma}\ \ref{lemma_k_diamond})}{\ge} 2^{-k_*}.
\end{equation*}
Otherwise (i.e.\ $\widecheck{N}_{j_*}\ge1$), by Lemma \ref{lemma_k_diamond}, (\ref{inclusion_box}), Proposition \ref{lemma_bound_crossing}, (\ref{add4.10}) and (\ref{5.1}),
\begin{equation*}
	\begin{split}
		&\mathbb{P}(y\ \text{is a good point})\\
	\overset{(\text{Lemma}\ \ref{lemma_k_diamond})}{\ge} &2^{-k_*}- \mathbb{P}\big[\mathcal{B}_y(\widecheck{N}_{j_*})\xleftrightarrow{\le 0} \partial^{\mathrm{e}}  \mathcal{B}_y(\tfrac{N_*}{2})\big]\\
		\overset{(\ref{inclusion_box})}{\ge}  & 2^{-k_*}- \mathbb{P}\big[B(\widecheck{N}_{j_*})\xleftrightarrow{\le 0} \partial  B(\tfrac{N_*}{2d})\big]          \\
			\overset{(\text{Proposition}\ \ref{lemma_bound_crossing})}{\ge} & 2^{-k_*}- \Cref{const_crossing}\widecheck{N}_{j_*}^{d-2}\theta_{d}(\widecheck{N}_{j_*})\theta_d(\tfrac{N_*}{8d})\\
			\overset{(\ref{add4.10})}{\ge } &  2^{-k_*}- \Cref{const_crossing} \lambda(\widecheck{N}_{j_*})\lambda(N_*) \big(8d\widecheck{N}_{j_*}N_*^{-1}\big)^{\frac{d}{2}-1} \overset{(\ref{5.1})}{\ge }  2^{-k_*-1}.
	\end{split}
\end{equation*}
To sum up, in both cases we have
\begin{equation}\label{5.2}
	\mathbb{P}(y\ \text{is a good point}) \ge 2^{-k_*-1}, \ \ \forall y\in \mathbb{Z}^d.
\end{equation}

%
%
%
%

\begin{definition}[Good box]\label{def_good_box}
	For any $x\in \mathbb{Z}^d$, we say $B_x(\widehat{N}_*)$ is a good box if the proportion of good points in $B_x(\widehat{N}_*)$ is at least $2^{-k_*-2}$. 
\end{definition}

For any $x\in \mathbb{Z}^d$, it follows from (\ref{5.2}) that
\begin{equation*}
	\begin{split}
		2^{-k_*-1}|B_x(\widehat{N}_*)| \le& \mathbb{E}\Big(\big|\{y\in B_x(\widehat{N}_*):y\ \text{is a good point}  \}\big|\Big)\\
		\le & \mathbb{P}\big[B_x(\widehat{N}_*)\ \text{is a good box}\big]\cdot \big|B_x(\widehat{N}_*)\big| +  2^{-k_*-2}\big|B_x(\widehat{N}_*)\big|,
	\end{split}
\end{equation*}
which implies that 
\begin{equation}\label{5.3}
	\mathbb{P}\big[B_x(\widehat{N}_*)\ \text{is a good box}\big] \ge 2^{-k_*-2}. 
\end{equation}
Note that for any $x\in \mathbb{Z}^d$, $\{x\ \text{is a good point}\}$ and $\big\{B_x(\widehat{N}_*)\ \text{is a good box}\big\}$ are both increasing events.

\begin{remark}\label{remark_good_points}
	For a good point $y\in \mathbb{Z}^d$, the harmonic average at $y$ is required to be large. In fact, according to the exploration martingale (see (\ref{new_2.17}) and Lemma \ref{lemma_EM}), this requirement implies that with high probability, the Green's function at $y$ will be decreased significantly if a killing boundary condition is posed on the negative cluster $\mathcal{C}^-_{\partial^{\mathrm{e}}\mathcal{B}_v(N_*/2)}$. In other words, the Brownian motion starting from $y$ will hit $\mathcal{C}^-_{\partial^{\mathrm{e}}\mathcal{B}_v(N_*/2)}$ with a considerable probability. In addition, for convenience of our further estimates, we need the second requirement $\big\{\mathcal{B}_y(\widecheck{N}_{j_*})\xleftrightarrow{\le 0} \partial^{\mathrm{e}} \mathcal{B}_y(N_*/2)\big\}^c$ to avoid the scenario that $\mathcal{C}^-_{\partial^{\mathrm{e}}\mathcal{B}_y(N_*/2)}$ gets too close to $y$ (because the Green's function at $y$ is too sensitive to exploration near $y$).

In an earlier version of the notion of good points (during the course of research), we directly required that $\mathcal{C}^-_{\partial^{\mathrm{e}}\mathcal{B}_y(N_*/2)}$ is sufficiently large such that the Brownian motion starting from $y$ has a considerable probability of hitting it. However, such a requirement constitutes a decreasing event. As a result, we cannot use it to construct a monotonic event $\mathsf{F}$ (we require monotonicity for the FKG inequality) where the harmonic average at some point is large, since the harmonic average is an increasing random variable. To address this problem, we maintain the requirement on harmonic averages in defining good points, and additionally, introduce the concept of suitable points and suitable boxes (see Definitions \ref{def_suitable_point} and \ref{def_suitable_box}) to achieve the transformation from high harmonic averages to high hitting probabilities for the negative clusters.

\end{remark}

The following lemma includes a key observation: Given a sufficient number of good points within a box, it is feasible to select a sub-box of considerable size such that starting from any point in this sub-box, a Brownian motion on $\widetilde{\mathbb{Z}}^d$ (or equivalently, a simple random walk) will visit numerous good points with a significant probability before escaping faraway from this sub-box.

\begin{definition}[Excellent box]\label{def_excellent_box}
	We say a box $B$ is excellent if the subset $A:=\{y\in B: y\ \text{is good}\}$ satisfies that for any $A'$ obtained from $A$ by removing at most $2^{-k_*^{60d}}\big|B(\widehat{N}_*)\big|$ points, we have 
	\begin{equation}
		\widetilde{\mathbb{P}}_z\big(\nu\ge k_*^{-3}2^{-k_*}\widehat{N}_*^2\big)\ge k_*^{-3},\ \ \forall z\in B,
	\end{equation}
	where $\nu$ is the number of points in $A'$ visited by the Brownian motion before exiting $20B$ (which is a concentric box of $B$ whose size is $20$ times that of $B$). Note that the removal of points in this definition will be echoed in Definition \ref{def_suitable_box}.
\end{definition}

\begin{remark}\label{remark_translation}
	Recalling (\ref{4.9}) and the translation invariance of the GFF, the collection of good points is translation invariant in distribution. As a result, the probability for a box to be good or excellent is also translation invariant.
\end{remark}

\begin{lemma}\label{lemma_excellent_box}
	On the event $\{B(\widehat{N}_*)\ \text{is a good box}\}$, there exists a (random) integer $l\ge 0$ with $2^{-l}\ge k_*^{-2}2^{-0.5k_*}$ and a (random) point $w\in (2^{-l-10}\widehat{N}_*\cdot \mathbb{Z}^d)\cap B(\widehat{N}_*)$ such that the box $B_w(2^{-l}\widehat{N}_*)$ is excellent. 
\end{lemma}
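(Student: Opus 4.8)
# Proof Proposal for Lemma \ref{lemma_excellent_box}

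\textbf{Overview of the strategy.} The plan is to use a second moment / averaging argument on the Brownian motion (equivalently, the simple random walk) to locate a sub-box from which the walk visits many good points with non-negligible probability. On the event $\{B(\widehat N_*)\text{ is a good box}\}$ the set $A$ of good points has density at least $2^{-k_*-2}$ in $B(\widehat N_*)$. We want to find a scale $2^{-l}\widehat N_*$ (with $2^{-l}\ge k_*^{-2}2^{-0.5k_*}$, i.e.\ finitely many scales, roughly $l\le 2k_*+2\log_2 k_*$) and a box $B_w(2^{-l}\widehat N_*)$ at that scale such that for \emph{every} starting point $z$ in the box, and even after an adversary deletes up to $2^{-k_*^{60d}}|B(\widehat N_*)|$ points of $A$, the walk from $z$ visits at least $k_*^{-3}2^{-k_*}\widehat N_*^2$ remaining good points before exiting $20$ times the box, with probability at least $k_*^{-3}$.

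\textbf{Key steps.} First I would set up the first-moment estimate: for a box $Q=B_w(2^{-l}\widehat N_*)$ and $z\in Q$, the expected number of visits by Brownian motion (killed on exiting $20Q$) to a set $A'$ is $\sum_{y\in A'}\widetilde G_{\partial(20Q)}(z,y)$, which by standard Green's function bounds (as in \eqref{bound_green} and \cite[Chapter 6]{lawler2010random}) is comparable to $\sum_{y\in A'\cap Q}(2^{-l}\widehat N_*)^{2-d}\cdot$(number of $y$ at each distance), giving order $(2^{-l}\widehat N_*)^2\cdot\frac{|A'\cap Q|}{|Q|}$ when $A'$ has positive density in $Q$. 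Second, I would set up a \emph{second-moment} bound: $\widetilde{\mathbb E}_z[\nu^2]\le C(2^{-l}\widehat N_*)^2\cdot\widetilde{\mathbb E}_z[\nu]$ roughly (visiting $A'$ at two times, summing a Green's function over the intermediate return), so that the Paley--Zygmund inequality yields $\widetilde{\mathbb P}_z(\nu\ge \tfrac12\widetilde{\mathbb E}_z[\nu])\ge c\,\widetilde{\mathbb E}_z[\nu]/(2^{-l}\widehat N_*)^2\asymp |A'\cap Q|/|Q|$. Third — this is the crux — I would run a \textbf{pigeonhole over dyadic sub-scales} to guarantee that, after the adversarial removal, some sub-box at some admissible scale $l$ still contains a density of surviving good points that is at least (say) $k_*^{-3}2^{-k_*}$ times $|Q|$, \emph{uniformly over all legal removals}. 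The point is that the total removal budget $2^{-k_*^{60d}}|B(\widehat N_*)|$ is negligible compared to $2^{-k_*-2}|B(\widehat N_*)|$, the count of good points; if we could NOT find a good sub-box after removal, then at \emph{every} admissible scale $l$ and \emph{every} sub-box $Q$, the removal would have to kill the good-point density in $Q$, and summing the needed removals over a covering of $B(\widehat N_*)$ by disjoint boxes at the coarsest admissible scale would exceed the budget — a contradiction. Concretely, take $l$ maximal admissible ($2^{-l}\asymp k_*^{-2}2^{-0.5k_*}$): $B(\widehat N_*)$ is covered by $\asymp (k_*^2 2^{0.5k_*})^d$ disjoint translates $Q_i$ of $B(2^{-l}\widehat N_*)$; since $|A|\ge 2^{-k_*-2}|B(\widehat N_*)|$, the number of $Q_i$ with $|A\cap Q_i|\ge \tfrac12\cdot 2^{-k_*-2}|Q_i|$ is at least a $c\,2^{-k_*}$-fraction of all $Q_i$; killing the good-point density in all of those boxes down to below $k_*^{-3}2^{-k_*}|Q_i|$ requires removing $\ge c\,2^{-k_*}\cdot 2^{-k_*-2}|B(\widehat N_*)|\cdot(1-o(1))$ points, which dwarfs $2^{-k_*^{60d}}|B(\widehat N_*)|$. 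So some $Q_i$ survives with density $\ge k_*^{-3}2^{-k_*}$ after any legal removal; take $w$ its center, which lies on a grid of mesh $2^{-l-10}\widehat N_*$ after a harmless refinement.

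\textbf{Assembling the conclusion and the main obstacle.} Combining the surviving-density bound with the Paley--Zygmund step: for the surviving box $Q=B_w(2^{-l}\widehat N_*)$ and any $z\in Q$ and any legal $A'$, $\widetilde{\mathbb E}_z[\nu]\gtrsim k_*^{-3}2^{-k_*}(2^{-l}\widehat N_*)^2\gtrsim k_*^{-3}2^{-k_*}\cdot k_*^{-4}2^{-k_*}\widehat N_*^2$, hmm — one must be a bit careful here: $(2^{-l}\widehat N_*)^2\asymp k_*^{-4}2^{-k_*}\widehat N_*^2$ at the coarsest scale, which gives $\widetilde{\mathbb E}_z[\nu]\gtrsim k_*^{-7}2^{-2k_*}\widehat N_*^2$, not quite $k_*^{-3}2^{-k_*}\widehat N_*^2$. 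So the real work is to choose the scale $l$ \emph{adaptively}: run the pigeonhole at \emph{every} admissible scale simultaneously and observe that at \emph{some} scale the surviving density must be substantial (a telescoping/maximal argument), so that one can afford to take the \emph{coarsest} scale at which a surviving box of density $\ge k_*^{-3}$ (not $k_*^{-3}2^{-k_*}$) exists — this is why the definition allows $2^{-l}\ge k_*^{-2}2^{-0.5k_*}$ rather than pinning $l$, and it is exactly the mechanism that upgrades the bound to $\widetilde{\mathbb P}_z(\nu\ge k_*^{-3}2^{-k_*}\widehat N_*^2)\ge k_*^{-3}$. The main obstacle I anticipate is bookkeeping the interplay of the three budgets — the good-point density $2^{-k_*-2}$, the removal budget $2^{-k_*^{60d}}$, and the target fraction $k_*^{-3}2^{-k_*}$ — across the $O(k_*)$ admissible scales, making sure the pigeonhole at the right scale produces a box that is simultaneously (i) dense after \emph{arbitrary} legal removal, and (ii) large enough in side length that the second-moment lower bound on $\nu$ clears the threshold $k_*^{-3}2^{-k_*}\widehat N_*^2$; getting the uniformity over all $z\in Q$ is routine once one notes $Q\subset 20Q$ with comfortable margin so Green's function estimates hold uniformly. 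The constants $c_\dagger^{(1)},c_\dagger^{(2)},c_\dagger^{(3)}$ invoked in \eqref{require_constant1.1} will be exactly the absolute constants emerging from the Green's-function and Paley--Zygmund estimates in this argument.
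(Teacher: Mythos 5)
Your Paley–Zygmund step has a genuine gap that the pigeonhole over dyadic sub-boxes cannot repair. You use the crude second-moment bound $\widetilde{\mathbb E}_z[\nu^2]\le C(2^{-l}\widehat N_*)^2\,\widetilde{\mathbb E}_z[\nu]$, obtained by bounding $\max_{y_1}\sum_{y_2\in A'}|y_1-y_2|^{2-d}$ by $\sum_{y\in 20Q}|y_1-y|^{2-d}\asymp(2^{-l}\widehat N_*)^2$. This gives a hitting probability $\asymp\widetilde{\mathbb E}_z[\nu]/(2^{-l}\widehat N_*)^2$, which is \emph{at best} the density of good points in $Q$. But that density can stay $\asymp 2^{-k_*}$ at \emph{every} admissible scale — e.g.\ if the good points are uniformly spread in $B(\widehat N_*)$ — which falls far short of the required $k_*^{-3}$. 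No pigeonhole over scales and sub-boxes can lift a uniform $2^{-k_*}$ density to $k_*^{-3}$, because $2^{-k_*}$ is already the global density; your claim that ``at some scale the surviving density must be substantial'' is simply false in the uniform configuration.

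The missing idea, and the crux of the paper's proof, is to pick a \emph{maximizer}: let $x_\dagger\in A$ maximize $\sum_{y\in A}|x-y|^{2-d}$, and pigeonhole over dyadic \emph{annuli} around $x_\dagger$ (not over sub-boxes by density) to find a scale $l_\dagger\le k_*$ at which the annulus captures a $\Theta(l_\dagger^{-2})$-fraction of the Green-weighted mass. The maximality of $x_\dagger$ is what tightens the second moment: for any $y_1\in A$,
\begin{equation*}
\sum_{y_2\in A}|y_1-y_2|^{2-d}\;\le\;\sum_{y\in A}|x_\dagger-y|^{2-d},
\end{equation*}
so the intermediate-return factor in the second moment is controlled by the \emph{first moment itself}, yielding $\widetilde{\mathbb E}_{x_\dagger}[\mathbf X^2]\lesssim k_*^2(\widetilde{\mathbb E}_{x_\dagger}[\mathbf X])^2$ rather than the crude $(2^{-l}\widehat N_*)^2\widetilde{\mathbb E}_{x_\dagger}[\mathbf X]$. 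Paley–Zygmund then gives probability $\gtrsim k_*^{-2}$ \emph{independent of the density}, which absorbs into $k_*^{-3}$ by the choice of constants. The transfer from $x_\dagger$ to all of $B_w(2^{-l}\widehat N_*)$ is by Harnack and the invariance principle, and the robustness to removing $2^{-k_*^{60d}}|B(\widehat N_*)|$ points is an afterthought because the removed mass is negligible against $\sum_{y\in A_\dagger}|x_\dagger-y|^{2-d}$. Your first-moment and Green-function ingredients are in the right spirit, but without selecting a maximizing base point, the second moment cannot be compared to the square of the first and the proof does not close.
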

\begin{proof}
	The proof is based on the second moment method, with a careful choice for $l$ and $w$. For illustration, let us consider an extreme case where all good points form a sub-box $B' \subset B(\widehat{N}_*)$. Then naturally (for the second moment method to work) we should choose $B'$ as the desired sub-box (which is excellent). A moment of thinking leads to a choice of $w$ such that a Brownian motion from $w$ will have the maximal expected number of visits to good points, and a choice of $l$ such that this expectation has a significant contribution from the good points in $B_w(2^{-l} \widehat{N}_*)$. We next carry out the proof where the preceding heuristic will be slightly modified.


Let $A$ be the collection of good points in $B(\widehat{N}_*)$. Since $B(\widehat{N}_*)$ is a good box,
\begin{equation}\label{5.5}
	|A|\ge 2^{-k_*-2}\big|B(\widehat{N}_*)\big| \ge  2^{-k_*}\widehat{N}_*^d.
\end{equation}
Let $x_\dagger$ be the point $x\in A$ maxmizing $\sum_{y \in A}|x-y|^{2-d}$ (recall that we set $0^{-a}=1$ for $a>0$). Since $A\subset B_{x_\dagger}(2\widehat{N}_*)= \cup_{l\ge 1}[B_{x_\dagger}(2^{2-l}\widehat{N}_*)\setminus B_{x_\dagger}(2^{1-l}\widehat{N}_*)]\cup \{x_\dagger\}$ and $\sum_{l\ge 1}l^{-2}<2$, there exists an integer $l_\dagger\ge 1$ such that 
\begin{equation}\label{5.6}
	\begin{split}
		\sum\nolimits_{y \in A_\dagger}|x_\dagger-y|^{2-d}
		\ge \tfrac{1}{2}l_{\dagger}^{-2}\sum\nolimits_{y \in A\setminus \{x_\dagger\}}|x_\dagger-y|^{2-d}, 
	\end{split}
\end{equation}
where $A_\dagger:= A\cap [B_{x_\dagger}(2^{2-l_\dagger}\widehat{N}_*)\setminus B_{x_\dagger}(2^{1-l_\dagger}\widehat{N}_*)]$. In fact, $l_\dagger$ cannot be too large. To see this, on the one hand, by (\ref{5.5}) and $A\subset B_{x_\dagger}(2\widehat{N}_*)\subset \mathcal{B}_{x_\dagger}(2d^{\frac{1}{2}}\widehat{N}_*)$, 
	\begin{equation}\label{5.7}
	\sum\nolimits_{y \in A\setminus \{x_\dagger\}}|x_\dagger-y|^{2-d} \ge   (2d^{\frac{1}{2}}\widehat{N}_*)^{2-d}(|A|-1) \ge d^{-2d}2^{-k_{*}}\widehat{N}_*^2.  
\end{equation}
On the other hand, $A_\dagger \subset B_{x_\dagger}(2^{2-l_\dagger}\widehat{N}_*)\setminus B_{x_\dagger}(2^{1-l_\dagger}\widehat{N}_*)$ implies that $|A_\dagger|\le 3^d(2^{2-l_\dagger}\widehat{N}_*)^d$ and that $|x_\dagger-y|\ge 2^{1-l_\dagger}\widehat{N}_*$ for all $y\in A_{\dagger}$. As a result, we have 
	\begin{equation}\label{5.8}
	\begin{split}
		\sum\nolimits_{y \in A_\dagger}|x_\dagger-y|^{2-d}\le 	3^d(2^{2-l_\dagger}\widehat{N}_*)^d( 2^{1-l_\dagger}\widehat{N}_*)^{2-d}
		\le  d^{4d}2^{-2l_\dagger}\widehat{N}_*^2.
	\end{split}
\end{equation}
By (\ref{5.6}), (\ref{5.7}) and (\ref{5.8}), we have $2^{-k_*}\le d^{6d}l_{\dagger}^{2}2^{1-2l_\dagger}\le 10d^{6d}2^{-1.5l_\dagger}$ (since $t^22^{-2t}<5\cdot 2^{-1.5t}$ for all $t\ge 1$), which together with $k_*\ge \log_2(1/\cref{const_small})\ge 100d\log_2(10d)$ implies that $l_{\dagger}\le k_*$. Combined with $2^{-k_*}\le d^{6d}l_{\dagger}^{2}2^{1-2l_\dagger}$, it yields that
\begin{equation}\label{new_5.9}
  2^{-l_\dagger}\ge d^{-4d} k_*^{-1}2^{-0.5k_*}.
\end{equation}
Meanwhile, by (\ref{5.6}), (\ref{5.7}) and $l_{\dagger}\le k_*$, we get 
\begin{equation}\label{new_5.10}
	\sum\nolimits_{y \in A_\dagger}|x_\dagger-y|^{2-d} \ge \tfrac{1}{2}d^{-2d}k_*^{-2}2^{-k_{*}}\widehat{N}_*^2. 
\end{equation}


Let $A_\dagger'$ be an arbitrary set obtained from $A_\dagger$ by removing at most $2^{-k_*^{60d}}\big|B(\widehat{N}_*)\big|$ points. Since $|A_\dagger\setminus A_\dagger'|\le 2^{-k_*^{60d}}\big|B(\widehat{N}_*)\big|\le  2^{-k_*^{60d}}3^d\widehat{N}_*^d$ and $A_\dagger\subset \mathbb{Z}^d\setminus B_{x_\dagger}(2^{1-l_\dagger}\widehat{N}_*)$, 
\begin{equation*}\label{new_5.11}
	\sum\nolimits_{y\in A_\dagger\setminus A_\dagger'} |x_\dagger-y|^{2-d}\le 2^{-k_*^{60d}}3^d\widehat{N}_*^d (2^{1-l_\dagger}\widehat{N}_*)^{2-d} \overset{(l_\dagger\le k_*)}{\le } 2^{-k_*^{55d}} \widehat{N}_*^{2}. 
\end{equation*}
Combined with (\ref{new_5.10}), it yields that 
\begin{equation}\label{new_add_5.11}
	\begin{split}
			\sum\nolimits_{y\in A_\dagger'} |x_\dagger-y|^{2-d} \ge& \sum\nolimits_{y\in A_\dagger} |x_\dagger-y|^{2-d}- 2^{-k_*^{55d}} \widehat{N}_*^{2}\\
			\overset{(\ref{new_5.10})}{\ge} &\big(1- 2^{-k_*^{50d}}\big)  \sum\nolimits_{y\in A_\dagger} |x_\dagger-y|^{2-d}\\
			 \ge & \frac{1}{2}\sum\nolimits_{y\in A_\dagger} |x_\dagger-y|^{2-d}. 
	\end{split}
\end{equation}
For any $y\in A_\dagger'$, we denote $\mathsf{A}_y:=\big\{\tau_y<\tau_{\partial B_{x_\dagger}(2^{4-l_\dagger}\widehat{N}_*)}\big\}$. Let $\mathbf{X}:=\sum_{y\in A_\dagger'}\mathbbm{1}_{\mathsf{A}_y}$. For any $x, y\in B_{x_\dagger}(2^{2-l_\dagger}\widehat{N}_*)$, by \cite[Proposition 1.5.10]{lawler2013intersections} one has
\begin{equation}\label{5.9}
	\widetilde{\mathbb{P}}_x\big(\mathsf{A}_y\big)\asymp |x-y|^{2-d}. 
\end{equation}
Combining (\ref{new_5.10}), (\ref{new_add_5.11}) and (\ref{5.9}), we get 
\begin{equation}\label{new_5.14}
	\begin{split}
			\widetilde{\mathbb{E}}_{x_\dagger}(\mathbf{X})\overset{(\ref{5.9})}{\ge} &c\sum\nolimits_{y\in A_\dagger'} |x_\dagger-y|^{2-d} \\
			 \overset{(\ref{new_add_5.11})}{\ge}&  \frac{c}{2}\sum\nolimits_{y\in A_\dagger} |x_\dagger-y|^{2-d}  
			 \overset{(\ref{new_5.10})}{\ge}  c'k_*^{-2}2^{-k_{*}}\widehat{N}_*^2. 
	\end{split}
\end{equation}
In addition, by (\ref{range_k_*}), the right-hand side of (\ref{new_5.14}) is bounded from below by 
\begin{equation}\label{add_5.16}
\mathbb{I}_d:= ck_*^{-202}\lambda_*N_*^{\frac{6-d}{2}}\ln^{-10}(N_*). 
\end{equation}
In fact, we have $\mathbb{I}_d\ge  c\ln^{10}(N_*)$ for $3\le d\le 6$. To see this, note that (\ref{range_k_*}) implies $\ln(N_*)\ge ck_*$. For $3\le d\le 5$, by $\ln(N_*)\ge ck_*$, $\lambda_*\ge 1$ and $N_*^{\frac{6-d}{2}}\ge \ln^{300}(N_*)$, we have $\mathbb{I}_d\ge  c\ln^{10}(N_*)$. For $d=6$,  since $\lambda_*=\Cref{const_section_block} \mathrm{exp}\big(\ln^{\frac{1}{2}}(N_*)\ln\ln(N_*)\big)\ge \ln^{300}(N_*)$, we also have $\mathbb{I}_6\ge  c\ln^{10}(N_*)$. In conclusion, 
\begin{equation}\label{add_5.17}
	\widetilde{\mathbb{E}}_{x_\dagger}(\mathbf{X}) \ge  c\ln^{10}(N_*). 
\end{equation}


Now we estimate $\widetilde{\mathbb{E}}_{x_\dagger}(\mathbf{X}^2)$. For any $y_1,y_2\in A_\dagger'$, by the strong Markov property and (\ref{5.9}), we have 
\begin{equation}\label{5.10}
	\begin{split}
		\widetilde{\mathbb{P}}_{x_\dagger}\big(\mathsf{A}_{y_1}\cap \mathsf{A}_{y_2}\big)
		\le& \sum\nolimits_{i\in \{1,2\}} \widetilde{\mathbb{P}}_{x_\dagger}\big(\mathsf{A}_{y_i}\big) \widetilde{\mathbb{P}}_{y_i}\big(\mathsf{A}_{y_{2-i}}\big)\\
		\le &C|y_1-y_2|^{2-d}\sum\nolimits_{i\in \{1,2\}}\widetilde{\mathbb{P}}_{x_\dagger}\big(\mathsf{A}_{y_i}\big). 
	\end{split}
\end{equation}
	Therefore, we obtain the following estimate for $\widetilde{\mathbb{E}}_{x_\dagger}(\mathbf{X}^2)$: 
	\begin{equation}\label{new5.16}
		\begin{split}
				\widetilde{\mathbb{E}}_{x_\dagger}(\mathbf{X}^2)=&	  \sum\nolimits_{y_1, y_2\in A_\dagger'}\widetilde{\mathbb{P}}_{x_\dagger}\big(\mathsf{A}_{y_1}\cap \mathsf{A}_{y_2}\big) \\
			\overset{(\ref{5.10})}{\le} & 2C\sum\nolimits_{y_1\in A_\dagger'}\widetilde{\mathbb{P}}_{x_\dagger}\big(\mathsf{A}_{y_1}\big) \sum\nolimits_{y_2\in A_\dagger'}|y_1-y_2|^{2-d} \\
			\overset{(\text{maximality of}\ x_\dagger)}{\le } &2C\widetilde{\mathbb{E}}_{x_\dagger}(\mathbf{X})  \Big(\sum\nolimits_{y \in A\setminus \{x_\dagger\}}|x_\dagger-y|^{2-d}+1\Big)\\
			\overset{(\ref{5.6}),l_{\dagger}\le k_*}{\le } & 4Ck_*^2 \widetilde{\mathbb{E}}_{x_\dagger}(\mathbf{X}) \Big(\sum\nolimits_{y \in A_\dagger}|x_\dagger-y|^{2-d}+1\Big)\\
			\overset{(\ref{new_add_5.11})}{\le }  & 8Ck_*^2\widetilde{\mathbb{E}}_{x_\dagger}(\mathbf{X})  \Big(\sum\nolimits_{y \in A_\dagger'}|x_\dagger-y|^{2-d}+1 \Big)\\
			\overset{(\ref{5.9})}{\le } &  C'k_*^2 \Big[ \widetilde{\mathbb{E}}_{x_\dagger}(\mathbf{X}) +1 \Big]^{2}\overset{(\ref{add_5.17})}{\le }C''k_*^2\Big[\widetilde{\mathbb{E}}_{x_\dagger}\big(\mathbf{X}\big)\Big]^2.
		\end{split}
	\end{equation}
		 By (\ref{new_5.14}), (\ref{new5.16}) and the Paley–Zygmund inequality, one has 
		 \begin{equation}\label{5.15}
		 	\begin{split}
		 		\widetilde{\mathbb{P}}_{x_\dagger}\big(\mathbf{X}\ge ck_*^{-2}2^{-k_*}\widehat{N}_*^2\big) \overset{(\ref{new_5.14})}{\ge } & \widetilde{\mathbb{P}}_{x_\dagger}\big(\mathbf{X}\ge  \tfrac{1}{2}\widetilde{\mathbb{E}}_{x_\dagger}\mathbf{X}\big)\\
		 	 \overset{(\text{Paley–Zygmund ineq})}{\ge }& 4^{-1} \big[\widetilde{\mathbb{E}}_{x_\dagger}\big(\mathbf{X}\big)\big]^2/ \widetilde{\mathbb{E}}_{x_\dagger}\big(\mathbf{X}^2\big)	\overset{(\ref{new5.16})}{\ge } c'k_*^{-2}. 
		 	\end{split}
		 \end{equation}

	When $1\le l_\dagger\le 3$, we take $l=0$ and $w=\bm{0}$. Otherwise (i.e. $l_\dagger\ge 4$), we take $l=l_\dagger-3$ and let $w$ be the closest point in $(2^{-l-10}\widehat{N}_*\cdot \mathbb{Z}^d)\cap B(\widehat{N}_*)$ to $x_\dagger$ (we break the tie in some predetermined manner). For any $z\in B_w(2^{-l}\widehat{N}_*)$, by the invariance principle, we have
	\begin{equation}\label{5.16}
		\widetilde{\mathbb{P}}_z\big[\tau_{B_{x_\dagger}(2^{-l_\dagger}\widehat{N}_*)}<\tau_{\partial B_w(2^{-l+1}\widehat{N}_*)}\big]\ge c\in (0,1).
	\end{equation}
	Meanwhile, by Harnack's inequality (see e.g. \cite[Theorem 6.3.9]{lawler2010random}), we have 
	\begin{equation}\label{5.17}
		\widetilde{\mathbb{P}}_{z'}\big(\mathbf{X}\ge c k_*^{-2}2^{-k_*}\widehat{N}_*^2\big)\asymp \widetilde{\mathbb{P}}_{x_\dagger}\big(\mathbf{X}\ge c k_*^{-2}2^{-k_*}\widehat{N}_*^2\big),\ \forall z'\in B_{x_\dagger}(2^{-l_\dagger}\widehat{N}_*). 
	\end{equation}
	Let $\nu_{w,l}'$ be the number of points in $A_\dagger'$ visited by the Browinian motion before exiting $B_w(20\cdot 2^{-l}\widehat{N}_*)$. By the strong Markov property, (\ref{5.15}), (\ref{5.16}) and (\ref{5.17}), we get: for any $z\in B_w(2^{-l}\widehat{N}_*)$, 
	\begin{equation}\label{5.18}
		\begin{split}
			&\widetilde{\mathbb{P}}_{z}\big(\nu_{w,l}'\ge ck_*^{-2}2^{-k_*}\widehat{N}_*^2\big)\\
			\overset{(\ref{5.16})}{\ge } &c' \cdot  \min\nolimits_{z'\in B_{x_\dagger}(2^{-l_\dagger}\widehat{N}_*)}\widetilde{\mathbb{P}}_{z'}\big(\mathbf{X}\ge c k_*^{-2}2^{-k_*}\widehat{N}_*^2\big)\\
			\overset{(\ref{5.17})}{\ge } &c'' \widetilde{\mathbb{P}}_{x_\dagger}\big(\mathbf{X}\ge c k_*^{-2}2^{-k_*}\widehat{N}_*^2\big)	\overset{(\ref{5.15})}{\ge }  c'''k_*^{-2}. 
		\end{split}
	\end{equation}
	In conclusion, by (\ref{new_5.9}) and (\ref{5.18}), there exist constants $c_\dagger^{(1)},c_\dagger^{(2)},c_\dagger^{(3)}>0$ such that $2^{-l}\ge c_\dagger^{(1)}k_*^{-1} 2^{-0.5k_*}$ and $\widetilde{\mathbb{P}}_{z}\big(\nu'_{w,l}\ge c_\dagger^{(2)}k_*^{-2} 2^{-k_*}\widehat{N}_*^2\big)\ge c_\dagger^{(3)}k_*^{-2}$ for all $z\in B_w(2^{-l}\widehat{N}_*)$. Thus, since $k_*^{-1}\le \min\{c_\dagger^{(1)},c_\dagger^{(2)},c_\dagger^{(3)}\}$ (which follows from (\ref{require_constant1.1})), we obtain that $	2^{-l}\ge  k_*^{-2}2^{-0.5k_*}$, and that $B_w(2^{-l}\widehat{N}_*)$ is an excellent box.   
	\end{proof}

	

	By applying the union bound, we derive from Lemma \ref{lemma_excellent_box} and (\ref{5.3}) that 
	\begin{equation*}
		\begin{split}
		&	\sum_{l\ge 0: 2^{-l}\ge k_*^{-2}2^{-0.5k_*}}\sum_{w \in (2^{-l-10}\widehat{N}_*\cdot \mathbb{Z}^d)\cap B(\widehat{N}_*)}  \mathbb{P}\big[B_w(2^{-l}\widehat{N}_*)\ \text{is excellent}\big]\\
			\overset{(\text{Lemma}\ \ref{lemma_excellent_box})}{\ge } &\mathbb{P}\big[B(\widehat{N}_*)\ \text{is good}\big]	\overset{(\ref{5.3})}{\ge }   2^{-k_*-2}. 
		\end{split}
	\end{equation*}
	Combined with $l\le k_*$ and $\big|(2^{-l-10}\widehat{N}_*\cdot \mathbb{Z}^d)\cap B(\widehat{N}_*)\big|\le 2^{dk_*}$ (both of which follow from $2^{-l}\ge k_*^{-2}2^{-0.5k_*}$), it implies that there exists a (deterministic) integer $l_\diamond\ge 0$ with $2^{-l_\diamond}\ge k_*^{-2}2^{-0.5k_*}$ and a (deterministic) point $w_\diamond\in \mathbb{Z}^d$ such that
	\begin{equation}\label{5.22}
		\mathbb{P}\big[B_{w_\diamond}(2^{-l_\diamond}\widehat{N}_*)\ \text{is excellent}\big] \ge 2^{-2dk_*}. 
	\end{equation}
	Let $N_\diamond:= 2^{-l_\diamond}\widehat{N}_*$. By (\ref{5.22}) and the translation invariance of the excellent box in distribution (see Remark \ref{remark_translation}), we have 
\begin{equation}\label{new_5.18}
	\mathbb{P}(\mathsf{D}_y):= \mathbb{P}\big[B_y(N_\diamond)\ \text{is excellent}\big] \ge 2^{-2dk_*},  \ \ \forall y\in \mathbb{Z}^d. 
\end{equation}
	Note that $\mathsf{D}_y$ is an increasing event. We also define the event 
	\begin{equation}
		\mathsf{D}:= \bigcap_{y\in [-k_*^{10},k_*^{10}]^d\cap \mathbb{Z}^d}  \mathsf{D}_{N_\diamond \cdot y }  \cap  \{\bm{0}\ \text{is a good point}\}  .
	\end{equation}
	By the FKG inequality, (\ref{5.2}) and (\ref{new_5.18}), we have 
	\begin{equation}\label{5.20}
		\mathbb{P}(\mathsf{D})\ge 2^{-k_*^{20d}}. 
	\end{equation}

We next construct the event $\mathsf{F}$ by posing some additional restrictions to the event $\mathsf{D}$. To this end, as mentioned in Remark \ref{remark_good_points} we introduce the definitions of suitable points and suitable boxes to facilitate the proof that the Brownian motion is likely to be blocked by negative clusters when crossing excellent boxes.

For any $x\in \mathbb{Z}^d$, we denote 
\begin{equation}
	\mathcal{K}_x^*= \mathcal{K}_x^*(d,\lambda):= \sum\nolimits_{z\in \partial^{\mathrm{e}}  \mathcal{B}_x(N_*/2)} \widetilde{\mathbb{P}}_x\big(\tau_{\partial^{\mathrm{e}}  \mathcal{B}_x(N_*/2)}=\tau_z\big) \widetilde{\phi}_z. 
\end{equation}
Recall $\mathcal{M}_{x,t}^{A,-}$ and $\langle \mathcal{M}_x^{A,-} \rangle_t$ in Section \ref{section_EM}, and recall $\mathcal{H}_{\cdot}^{*}$ in (\ref{4.9}). Let
\begin{equation*}
	\mathcal{M}_{x,t}^{*,-}:=\mathcal{M}_{x,t}^{\partial^{\mathrm{e}}  \mathcal{B}_x(N_*/2),-} \ \text{and}\ \langle \mathcal{M}_x^{*,-} \rangle_t:= \langle \mathcal{M}_x^{\partial^{\mathrm{e}}  \mathcal{B}_x(N_*/2),-} \rangle_t.
\end{equation*}

\begin{definition}[suitable point]\label{def_suitable_point}
For $x\in \mathbb{Z}^d$, we say $x$ is unsuitable if the event $\mathsf{A}^{\mathcal{K}}_x\cup \mathsf{A}^{\mathcal{H}}_x$ happens, where 
	\begin{equation}
		\mathsf{A}^{\mathcal{K}}_x:= \Big\{\mathcal{K}_x^*\ge \tfrac{1}{2}\cref{const_small}^2\lambda_* N_*^{-\frac{d}{2}+1} k_*^{-3}2^{k_*}\Big\},
	\end{equation}
	\begin{equation}
		\mathsf{A}^{\mathcal{H}}_x := \Big\{\mathcal{H}_{x}^{*}\ge \cref{const_small}^2\lambda_* N_*^{-\frac{d}{2}+1} k_*^{-3}2^{k_*},\langle \mathcal{M}_x^{*,-} \rangle_\infty \le \cref{const_small}^4\lambda_*^2 N_*^{-d+2} k_*^{-40d}2^{2k_*}  \Big\}. 
	\end{equation}
	We also say $x$ is suitable if it is not unsuitable. 
\end{definition}


Recall that our assumption on $\lambda$ implies that $\lambda_*\ge \Cref{const_section_block}$. Therefore, by (\ref{newadd_2.30_2}) and (\ref{equation_def_C9}), we have: for any $x\in \mathbb{Z}^d$,
\begin{equation}\label{5.26}
	\mathbb{P}\big(\mathsf{A}^{\mathcal{K}}_x\big)\overset{(\ref{newadd_2.30_2})}{\le} e^{-\cref{const_average_2}2^{-d}\cref{const_small}^4\lambda_*^2k_*^{-6}2^{2k_*}}\overset{(\ref{equation_def_C9})}{\le } e^{-k_*^{-6}2^{2k_*}} \le  2^{-k_*^{70d}}.
\end{equation}
Since  $\mathcal{M}_{x,0}^{*,-}=\mathcal{K}_x^*$ and $\{\mathcal{H}_x^*>0\}\subset \{x\xleftrightarrow{\le 0} \partial^{\mathrm{e}}B_x(N_*/2)\}^c\subset \{\mathcal{M}_{x,\infty}^{*,-}=\mathcal{H}_x^*\}$, 
\begin{equation}\label{5.27}
	\begin{split}
		\mathbb{P}\Big[\mathsf{A}^{\mathcal{H}}_x\cap \big( \mathsf{A}^{\mathcal{K}}_x\big)^c\Big]\le &	\mathbb{P}\Big[\mathcal{M}_{x,\infty}^{*,-}-\mathcal{M}_{x,0}^{*,-}\ge \tfrac{1}{2}\cref{const_small}^2\lambda_* N_*^{-\frac{d}{2}+1} k_*^{-3}2^{k_*},\\
		&\ \ \ \ \ \langle \mathcal{M}_x^{*,-} \rangle_\infty \le \cref{const_small}^4\lambda_*^2 N_*^{-d+2} k_*^{-40d}2^{2k_*} \Big] \le 2^{-k_*^{70d}},
	\end{split}
\end{equation}
where we used Lemma \ref{lemma_EM} in the last inequality. By (\ref{5.26}) and (\ref{5.27}), we obtain
\begin{equation}\label{5.28}
	\mathbb{P}\big(x\ \text{is unsuitable}\big)\le 2^{-k_*^{70d}+1}.
\end{equation}

\begin{definition}[Suitable box]\label{def_suitable_box}
For any $x\in \mathbb{Z}^d$, we say a box $B_x(N_\diamond)$ is suitable if the proportion of unsuitable points in $B_x(N_\diamond)$ is at most $2^{-k_*^{60d}}$. Note that $2^{-k_*^{60d}}$ is also the maximal proportion of removed points in Definition \ref{def_excellent_box}.
\end{definition}

For any $x\in \mathbb{Z}^d$, it follows from (\ref{5.28}) that
\begin{equation}\label{5.29}
	\begin{split}
		\mathbb{P}\big[B_x(N_\diamond)\ \text{is not suitable}\big] \le& \frac{\mathbb{E}\big(\big|\{y\in B_x(N_\diamond):y\ \text{is unsuitable}  \}\big|\big)}{2^{-k_*^{60d}}|B_x(N_\diamond)|}\\
		\le &\frac{2^{-k_*^{70d}+1}|B_x(N_\diamond)|}{2^{-k_*^{60d}}|B_x(N_\diamond)|}\le 2^{-k_*^{50d}}. 
	\end{split}
\end{equation}

\begin{definition}[Nice box]
For any $x\in \mathbb{Z}^d$, we say the box $B_x(N_\diamond)$ is nice if it is excellent and suitable.
\end{definition}

As promised at the beginning, we define the event 
\begin{equation}\label{def_F}
	\mathsf{F}:= \bigcap_{y\in [-k_*^{10},k_*^{10}]^d\cap \mathbb{Z}^d} \big\{ B_{N_\diamond\cdot y}(N_\diamond)\ \text{is nice}\big\}\cap  \{\bm{0}\ \text{is a good point}\}. 
\end{equation}
Therefore, by (\ref{5.20}) and (\ref{5.29}), we have 
\begin{equation}\label{ineq_PF}
	\mathbb{P}\big( \mathsf{F}\big)\ge 2^{-k_*^{20d}}- \big|[-k_*^{10},k_*^{10}]^d\cap \mathbb{Z}^d\big|\cdot 2^{-k_*^{50d}}\ge  2^{-k_*^{30d}}. 
\end{equation}

\subsection{Brownian motions on the event $\mathsf{F}$}\label{subsection_stu_F}

The core of proving Proposition \ref{prop_1} is to show that on the event $\mathsf{F}$, with high probability, an independent Brownian motion on $\widetilde{\mathbb{Z}}^d$ starting from $\bm{0}$ will hit $\mathcal{C}^{-}_{\partial \mathcal{B}(N_*/4)}$ before exiting $\mathcal{B}(0.01N_*)$ (see Lemma \ref{lemma_F}). To achieve this, we need to show that when crossing a nice box, the
Brownian motion hits the negative clusters with a significant probability.

\begin{lemma}\label{lemma_escape}
	For any $3\le d\le 6$, suppose that $\lambda\in\mathfrak{X}_d$ satisfies the conditions given at the beginning of this section. Then on the event $\{B(N_\diamond)\ \text{is nice}\}$,
\begin{equation}\label{ineq_escape}
	\widetilde{\mathbb{P}}_z\Big[\tau_{\partial^{\mathrm{e}}B(40N_\diamond)} > \tau_{\mathcal{C}^{-}_{\partial^{\mathrm{e}}  \mathcal{B}(3N_*/8)}}\Big] \ge  k_*^{-4}, \ \ \forall z\in B(N_\diamond).
\end{equation}
\end{lemma}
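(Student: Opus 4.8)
The plan is to exploit the structure of a nice box, namely that it is both \emph{excellent} and \emph{suitable}, and to feed the excellent-box hitting estimate into a multi-scale escape-avoidance argument. Fix $z\in B(N_\diamond)$ and consider an independent Brownian motion started at $z$. By the definition of excellent box (Definition \ref{def_excellent_box}), if we let $A$ be the set of good points in $B(N_\diamond)$ and $A'$ be the set obtained from $A$ by deleting the unsuitable points — by suitability (Definition \ref{def_suitable_box}) at most $2^{-k_*^{60d}}|B(\widehat N_*)|$ points are deleted, which is exactly the tolerance built into Definition \ref{def_excellent_box} — then with probability at least $k_*^{-3}$ the Brownian motion visits at least $k_*^{-3}2^{-k_*}\widehat N_*^2$ points of $A'$ before exiting $20B(N_\diamond)\subset B(40N_\diamond)$. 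So it suffices to show: conditionally on visiting that many good \emph{and} suitable points, the walk hits $\mathcal{C}^-_{\partial^{\mathrm e}\mathcal B(3N_*/8)}$ with high probability; then combine the two events, noting $k_*^{-3}$ times a quantity close to $1$ still exceeds $k_*^{-4}$.

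The heart of the matter is the conditional statement, and this is where the heuristic in Section \ref{section_1.2.1} must be made rigorous. For a good suitable point $y$, Definition \ref{def_good_point} gives $\mathcal H^*_y\ge \cref{const_small}^2\lambda_* N_*^{-d/2+1}k_*^{-3}2^{k_*}$, while suitability (the complement of $\mathsf A^{\mathcal K}_y$ and $\mathsf A^{\mathcal H}_y$ in Definition \ref{def_suitable_point}) forces $\mathcal K^*_y<\tfrac12\cref{const_small}^2\lambda_* N_*^{-d/2+1}k_*^{-3}2^{k_*}$ and, since $\mathcal H^*_y$ is large, also $\langle\mathcal M_y^{*,-}\rangle_\infty>\cref{const_small}^4\lambda_*^2 N_*^{-d+2}k_*^{-40d}2^{2k_*}$ — i.e., the quadratic variation of the exploration martingale for $\mathcal C^-_{\partial^{\mathrm e}\mathcal B_y(N_*/2)}$ is large. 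By the identity \eqref{new_2.17}, a large quadratic variation means $\sum_{v\in\mathcal C^-_{\partial^{\mathrm e}\mathcal B_y(N_*/2)}}\widetilde{\mathbb P}_y(\tau=\tau_v)\widetilde G(y,v)$ is large, which using $\widetilde G(y,v)\le \Cref{const_green_1}|y-v|^{2-d}$ and the definition of good point (the negative cluster stays at distance $\ge\widecheck N_{j_*}$ from $y$) translates into a lower bound of order $p_j := j^{-2}2^{-j(d-2)}(\text{const})\lambda_*^2 k_*^{-C}2^{((d-2)/2)k_*}$ on the probability that an independent Brownian motion from $y$ hits $\mathcal C^-_{\partial^{\mathrm e}\mathcal B_y(N_*/2)}$ before exiting a box of radius roughly $2^{-j}k_*^{-C}2^{-k_*/2}N_*$, for some scale $j$. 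One then pigeonholes: among the $\gtrsim k_*^{-3}2^{-k_*}\widehat N_*^2$ visited good suitable points, a fraction $\gtrsim j^{-2}$ share a common scale $j$; each such visit, by the strong Markov property, gives an independent $p_j$-chance of hitting the cluster, and since consecutive such visits are separated by typically $2^{-2j}k_*^{-C}2^{-k_*}N_*^2$ Brownian steps, the number of independent trials is $n_j^{(3)}\gtrsim j^{-2}2^{2j}k_*^{-C}$, so the probability of never hitting is $\le(1-p_j)^{n_j^{(3)}}\le \exp(-p_j n_j^{(3)})$, which is the estimate \eqref{outline_1.14}. The scale restriction \eqref{cal1}--\eqref{cal2} coming from $j_*$ (and the crossing-probability bound Proposition \ref{lemma_bound_crossing}) controls how large $j$ can be, and plugging that in yields a bound of the form \eqref{outline_1.16}, which for $3\le d\le 5$ (and for $d=6$ with the chosen function $\lambda$, using \eqref{4.29}) is superpolynomially small in $k_*$, in particular $\le k_*^{-100}$ say.

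The main obstacle, and the step requiring the most care, is converting the large-quadratic-variation statement into a genuine hitting probability with a quantitatively useful scale $j$, and then handling the strong-Markov bookkeeping of repeated excursions cleanly. One subtlety is that the exploration martingale is defined with respect to the negative cluster of $\partial^{\mathrm e}\mathcal B_y(N_*/2)$, which is a different (larger) cluster than $\mathcal C^-_{\partial^{\mathrm e}\mathcal B(3N_*/8)}$; I would handle this by observing that since $z$ and all the good points lie in $B(N_\diamond)$ with $N_\diamond\le \widehat N_*=k_*^{-100}N_*$, and the relevant hitting occurs within distance $O(2^{-k_*/2}N_*)\ll N_*$ of a good point $y\in B(N_\diamond)$, any part of $\mathcal C^-_{\partial^{\mathrm e}\mathcal B_y(N_*/2)}$ that the walk hits near $y$ is in fact connected through the negative level-set to $\partial^{\mathrm e}\mathcal B(3N_*/8)$ — because $\partial^{\mathrm e}\mathcal B_y(N_*/2)\subset \mathcal B(3N_*/8)^c$ when $y$ is near the center, so $\mathcal C^-_{\partial^{\mathrm e}\mathcal B_y(N_*/2)}$ reaches out past radius $3N_*/8$ and hence meets (and is part of) $\mathcal C^-_{\partial^{\mathrm e}\mathcal B(3N_*/8)}$. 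A second subtlety is ensuring the excursions from distinct visited good points are sufficiently decorrelated; here I would not need full independence — it suffices to lower bound, excursion by excursion via the strong Markov property applied at successive visit times, the conditional probability of hitting the cluster during that excursion, and the independence of the \emph{Brownian motion} from the GFF makes the $p_j$ lower bounds deterministic given the GFF configuration on the nice box. Finally, one must check the constant $C$ in the exponents is absorbed by the smallness conditions \eqref{require_constant1}--\eqref{require_constant1_section5} on $k_*$ and by \eqref{equation_def_C9}; this is routine given how those constants were chosen.
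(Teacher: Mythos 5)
Your proposal is correct and follows essentially the same route as the paper: the decomposition into the event $\mathsf{G}$ (Brownian motion visits many good and suitable points) with $\widetilde{\mathbb{P}}_z(\mathsf{G})\ge k_*^{-3}$, the pigeonholing over scales via $j$-nice points (Lemma~\ref{lemma_5.10}), the strong-Markov excursion count controlled by the exit-time estimate \eqref{ineq_ldp}, the scale restriction from $j_*$ and Proposition~\ref{lemma_bound_crossing}, and the inclusion $\mathcal{C}^{-}_{\partial^{\mathrm{e}}\mathcal{B}_y(N_*/2)}\cap \widetilde{B}_y(\widecheck{N}_{j-1})\subset \mathcal{C}^{-}_{\partial^{\mathrm{e}}\mathcal{B}(3N_*/8)}$ are exactly the ingredients the paper assembles in Lemmas~\ref{lemma_5.10}--\ref{lemma5.12}. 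The only place your sketch glosses over a step is the ``number of independent trials'' count: since excursion durations are random one cannot assert a fixed number of trials directly, and the formal argument instead carves out the low-probability bad event $\mathsf{A}_*$ (that some excursion exceeds $\Cref{const_ldp}j^2k_*^2\widecheck{N}_j^2$ steps) and handles it via \eqref{ineq_ldp} before applying the geometric bound $(1-\widecheck{\mathbb{I}}_j)^{\mathcal{J}_*}$, which is what \eqref{claim_5.58}--\eqref{5.57} do.
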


Before proving Lemma \ref{lemma_escape}, we need some preparations as follows. Recall that $\widecheck{N}_*=k_*^{100d}2^{-0.5k_*}N_*$ and $\widecheck{N}_{j}= 2^{-j}\widecheck{N}_*$ for $j\ge 0$. Also recall $j_*$ in (\ref{5.1}).

For each good and suitable point $x$, it follows from Definitions \ref{def_good_point} and \ref{def_suitable_point} that the following events happen: 
\begin{equation}\label{5.33}
	\big\{\mathcal{B}_x(\widecheck{N}_{j_*})\xleftrightarrow{\le 0} \partial^{\mathrm{e}} \mathcal{B}_x(N_*/2)\big\}^c,
\end{equation}
\begin{equation}\label{5.34}
	\begin{split}
		\langle \mathcal{M}_x^{*,-} \rangle_\infty\overset{(\ref{new_2.17})}{=}&\sum\nolimits_{v\in \mathcal{C}^{-}_{\partial^{\mathrm{e}}  \mathcal{B}_x(N_*/2)} } \widetilde{\mathbb{P}}_x\Big[\tau_{\mathcal{C}^{-}_{\partial^{\mathrm{e}}  \mathcal{B}_x(N_*/2)}}=\tau_{v}<\infty \Big] \widetilde{G}(v,x)      \\
		\ge & \cref{const_small}^4\lambda_*^2 N_*^{-d+2} k_*^{-40d}2^{2k_*}. 
	\end{split}	 
\end{equation}
  In fact, $\langle \mathcal{M}_x^{*,-} \rangle_\infty$ can be decomposed as follows: 
\begin{equation}\label{5.35}
	\begin{split}
		\langle \mathcal{M}_x^{*,-} \rangle_\infty= \sum\nolimits_{j\ge 0} \langle \mathcal{M}_x^{*,-} \rangle_\infty^{j} ,
	\end{split}
\end{equation}
	where we define 
\begin{equation*}
	\langle \mathcal{M}_x^{*,-} \rangle_\infty^{0}:=\sum_{v\in \mathcal{C}^{-}_{\partial^{\mathrm{e}}  \mathcal{B}_x(N_*/2)}\setminus \widetilde{B}_x(\widecheck{N}_*) } \widetilde{\mathbb{P}}_x\Big[\tau_{\mathcal{C}^{-}_{\partial^{\mathrm{e}}  \mathcal{B}_x(N_*/2)}}=\tau_{v}<\infty\Big] \widetilde{G}(v,x),
\end{equation*}
and for each $j\ge 1$, 
\begin{equation*}
	\langle \mathcal{M}_x^{*,-} \rangle_\infty^{j}:= \sum_{v\in \mathcal{C}^{-}_{\partial^{\mathrm{e}}  \mathcal{B}_x(N_*/2)}\cap [\widetilde{B}_x(\widecheck{N}_{j-1})\setminus \widetilde{B}_x(\widecheck{N}_{j})] } \widetilde{\mathbb{P}}_x\Big[\tau_{\mathcal{C}^{-}_{\partial^{\mathrm{e}}  \mathcal{B}_x(N_*/2)}}=\tau_{v}<\infty\Big] \widetilde{G}(v,x). 
\end{equation*}
Note that (\ref{5.33}) implies that 
\begin{equation}\label{new_add_5.38}
		\langle \mathcal{M}_x^{*,-} \rangle_\infty^{j} = 0,\ \ \  \forall j\ge j_*+1.  
\end{equation}

\begin{lemma}\label{lemma_5.10}
	When $3\le d\le 6$, for any good and suitable point $x\in \mathbb{Z}^d$, there exists a (random) integer $j\in [1,j_*]$ such that 
	\begin{equation}\label{5.38}
		\begin{split}
			\langle \mathcal{M}_x^{*,-} \rangle_\infty^{j}
			\ge \tfrac{1}{4}j^{-2}\cref{const_small}^4\lambda_*^2 N_*^{-d+2} k_*^{-40d}2^{2k_*}. 
		\end{split}
	\end{equation}
\end{lemma}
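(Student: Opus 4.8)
The statement decomposes the total quadratic variation $\langle \mathcal{M}_x^{*,-}\rangle_\infty$ into the dyadic shells $\langle \mathcal{M}_x^{*,-}\rangle_\infty^{j}$ indexed by $j\ge 0$ (see \eqref{5.35}), and asks us to locate a single shell $j\in[1,j_*]$ carrying a substantial fraction of the total. The plan is a straightforward pigeonhole argument, but it requires first disposing of the outermost shell $j=0$.

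First I would bound $\langle \mathcal{M}_x^{*,-}\rangle_\infty^{0}$ from above. Recall that $\langle \mathcal{M}_x^{*,-}\rangle_\infty^{0}$ only involves points $v\in \mathcal{C}^{-}_{\partial^{\mathrm{e}} \mathcal{B}_x(N_*/2)}\setminus \widetilde{B}_x(\widecheck N_*)$, i.e.\ points at graph distance at least $\widecheck N_* = k_*^{100d}2^{-0.5k_*}N_*$ from $x$. By \eqref{bound_green}, $\widetilde{G}(v,x)\le \Cref{const_green_1}|v-x|^{2-d}\le C\widecheck N_*^{\,2-d}$ for all such $v$, and $\sum_{v}\widetilde{\mathbb{P}}_x[\tau_{\mathcal{C}^-_{\cdot}}=\tau_v<\infty]\le 1$, so
\[
  \langle \mathcal{M}_x^{*,-}\rangle_\infty^{0}\le C\widecheck N_*^{\,2-d}= C k_*^{100d(2-d)}2^{(0.5d-1)k_*}N_*^{2-d}.
\]
Since $d\ge 3$, the exponent $100d(2-d)$ on $k_*$ is very negative; comparing with the lower bound $\cref{const_small}^4\lambda_*^2 N_*^{-d+2}k_*^{-40d}2^{2k_*}$ from \eqref{5.34} (and using $\lambda_*\ge 1$, $2^{(0.5d-1)k_*}\le 2^{2k_*}$ for $d\le 6$), one sees $\langle \mathcal{M}_x^{*,-}\rangle_\infty^{0}$ is smaller by at least a factor $k_*^{-10d}$, hence
\[
  \langle \mathcal{M}_x^{*,-}\rangle_\infty^{0}\le \tfrac14\,\cref{const_small}^4\lambda_*^2 N_*^{-d+2} k_*^{-40d}2^{2k_*}\le \tfrac14\langle \mathcal{M}_x^{*,-}\rangle_\infty.
\]

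Next, combining \eqref{5.35}, \eqref{new_add_5.38} (which kills all shells with $j\ge j_*+1$, using that $x$ is good, hence \eqref{5.33} holds), and the bound on the $j=0$ term, I get
\[
  \sum_{j=1}^{j_*}\langle \mathcal{M}_x^{*,-}\rangle_\infty^{j}
  = \langle \mathcal{M}_x^{*,-}\rangle_\infty - \langle \mathcal{M}_x^{*,-}\rangle_\infty^{0}
  \ge \tfrac34\langle \mathcal{M}_x^{*,-}\rangle_\infty
  \ge \tfrac34\,\cref{const_small}^4\lambda_*^2 N_*^{-d+2} k_*^{-40d}2^{2k_*}.
\]
Now a weighted pigeonhole over $j\in\{1,\dots,j_*\}$ with weights $j^{-2}$ (using $\sum_{j\ge1}j^{-2}=\pi^2/6<2$) produces some $j$ with
\[
  \langle \mathcal{M}_x^{*,-}\rangle_\infty^{j}\ge \tfrac{j^{-2}}{\sum_{i\ge1}i^{-2}}\sum_{i=1}^{j_*}\langle \mathcal{M}_x^{*,-}\rangle_\infty^{i}
  \ge \tfrac14 j^{-2}\cref{const_small}^4\lambda_*^2 N_*^{-d+2} k_*^{-40d}2^{2k_*},
\]
which is exactly \eqref{5.38}.

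The only genuinely delicate point is the estimate of the $j=0$ shell: one must check that the threshold defining a suitable point (the lower bound $\cref{const_small}^4\lambda_*^2 N_*^{-d+2}k_*^{-40d}2^{2k_*}$ on $\langle \mathcal{M}_x^{*,-}\rangle_\infty$) genuinely dominates the trivial contribution $C\widecheck N_*^{\,2-d}$ coming from far-away points, across the whole range $3\le d\le 6$; this is where the large polynomial power $k_*^{100d}$ built into the definition of $\widecheck N_*$ is used, and one should double-check the $d=6$ case where $2^{(0.5d-1)k_*}=2^{2k_*}$ matches the leading $2^{2k_*}$ in the threshold but is still beaten by the $k_*^{100d(2-d)}$ versus $k_*^{-40d}$ comparison. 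Everything else is bookkeeping with \eqref{bound_green} and elementary summation.
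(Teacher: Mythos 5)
Your proposal is correct and follows essentially the same route as the paper: bound the $j=0$ shell above by $C\widecheck N_*^{2-d}$ via \eqref{bound_green}, check that this is dominated by the suitability threshold in \eqref{5.34} (using the large power $k_*^{100d}$ in $\widecheck N_*$ and $d\le 6$), invoke \eqref{new_add_5.38} to kill all shells $j>j_*$, and conclude by a weighted pigeonhole with weights $j^{-2}$ using $\sum j^{-2}<2$. The only cosmetic differences are that the paper absorbs the $j=0$ term into a factor $\tfrac12$ rather than $\tfrac14$, and states the pigeonhole in the contrapositive form, but neither affects the argument.
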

\begin{proof}
By (\ref{bound_green}), $\lambda_*\ge \Cref{const_section_block}$, (\ref{equation_def_C9}) and $d\le 6$, we have 
		\begin{equation*}\label{5.36}
		\begin{split}
			\langle \mathcal{M}_x^{*,-} \rangle_\infty^{0}\overset{(\ref{bound_green})}{\le } 	\Cref{const_green_1}(k_*^{100d}2^{-0.5k_*}N_*)^{2-d}\overset{(\ref{equation_def_C9})}{\le } \tfrac{1}{2}\cref{const_small}^4\lambda_*^2 N_*^{-d+2} k_*^{-40d}2^{2k_*}. 
		\end{split}
	\end{equation*}
	Combined with (\ref{5.34}) and (\ref{new_add_5.38}), it yields that 
\begin{equation*}
	\sum\nolimits_{1\le j\le j_*} \langle \mathcal{M}_x^{*,-} \rangle_\infty^{j} \ge \tfrac{1}{2}\cref{const_small}^4\lambda_*^2 N_*^{-d+2} k_*^{-40d}2^{2k_*}.
\end{equation*}
		As a result, by $\sum_{n\ge 1}n^{-2}<2$ there exists $j\in [1,j_*]$ such that (\ref{5.38}) holds.
\end{proof}

We denote the event 
\begin{equation*}
	\mathsf{G}:= \Big\{\widetilde{S}_\cdot\ \text{visits at least}\ k_*^{-3}2^{-k_*}\widehat{N}_*^2\ \text{good and suitable points before}\ \tau_{\partial^{\mathrm{e}}B(20N_\diamond)} \Big\}. 
\end{equation*}
Recalling Definitions \ref{def_excellent_box} and \ref{def_suitable_box}, we know that on $\{B(N_\diamond)\ \text{is a nice box}\}$, 
\begin{equation}\label{ineq_PG}
\widetilde{\mathbb{P}}_z\big(\mathsf{G}\big)\ge k_*^{-3}, \ \ \forall z\in B(N_\diamond).
\end{equation}
 We say $x$ is $j$-nice if (\ref{5.38}) holds. By Lemma \ref{lemma_5.10}, each good and suitable point is $j$-nice for some $j\in [1,j_*]$. As a result, we have 
 \begin{equation}\label{new_5.39}
	\mathsf{G} \subset \cup_{1\le j\le j_*}\mathsf{G}_j, 
\end{equation}
where we define 
\begin{equation}\label{event_Gj}
	\mathsf{G}_j := \Big\{\widetilde{S}_\cdot\ \text{visits at least}\ \tfrac{1}{2}j^{-2}k_*^{-3}2^{-k_*}\widehat{N}_*^2\ \  j\text{-nice points before}\ \tau_{\partial^{\mathrm{e}}B(20N_\diamond)} \Big\}. 
\end{equation}
By (\ref{ineq_PG}) and (\ref{new_5.39}), to get (\ref{ineq_escape}), it remains to upper-bound the probabilities
 $\widetilde{\mathbb{P}}_z\big(\mathsf{G}_j,\tau_{\partial^{\mathrm{e}}B(40N_\diamond)}< \tau_{\mathcal{C}^{-}_{\partial^{\mathrm{e}}  \mathcal{B}(3N_*/8)}}\big)$ for all $z\in B(N_\diamond) $ and $1\le j\le j_*$. To achieve this, we need the following lemma.

\begin{lemma}\label{lemma_green}
	For any $d\ge 3$, there exists $\cl\label{const_hit}(d)\in (0,1)$ such that for any $R\ge 1$ and $D\subset \widetilde{B}(2R)\setminus \widetilde{B}(R)$, 
	\begin{equation}\label{new_add_5.42}
	\widetilde{\mathbb{P}}_{\bm{0}}\big[\tau_{D}< \tau_{\partial B(4R)}\big]\ge \cref{const_hit} \widetilde{\mathbb{P}}_{\bm{0}}\big[\tau_{D}<\infty\big]. 
	\end{equation}
\end{lemma}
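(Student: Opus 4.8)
\textbf{Proof plan for Lemma \ref{lemma_green}.}

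The statement asks for a lower bound on the probability that a Brownian motion on $\widetilde{\mathbb{Z}}^d$ started at $\bm 0$ hits a set $D$ contained in the annular region $\widetilde B(2R)\setminus \widetilde B(R)$ before exiting $B(4R)$, in terms of the unconditional hitting probability of $D$. The plan is to compare the two probabilities by conditioning on the first visit to $D$ and controlling the "escape to infinity after reaching the annulus" contribution. First I would use the last-exit (or first-entrance) decomposition: writing $\tau_D$ for the hitting time of $D$ and $\sigma := \tau_{\partial B(4R)}$, the event $\{\tau_D<\infty\}$ splits according to whether $\tau_D<\sigma$ or $\sigma\le \tau_D<\infty$. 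On the latter event the walk must first exit $B(4R)$, then (from a point of $\partial^{\mathrm{e}} B(4R)$, hence at distance $\asymp R$ from the origin) come back to hit $D\subset \widetilde B(2R)$. By the strong Markov property, $\widetilde{\mathbb{P}}_{\bm 0}(\sigma\le \tau_D<\infty)\le \max_{w\in\partial^{\mathrm e}B(4R)}\widetilde{\mathbb{P}}_w(\tau_D<\infty)$, and since $D\subset \widetilde B(2R)$ and $|w|\asymp R$, a standard Green's-function / last-exit estimate (cf. \cite[Lemmas 6.3.4 and 6.4.2]{lawler2010random}, already invoked in Lemmas \ref{lemma_uniform_distribution} and \ref{lemma_hat_Hy}) bounds this by $C\,\widetilde{\mathbb{P}}_{\bm 0}(\tau_D<\infty)$ times a constant that can in fact be made strictly less than $1$ once one notes the return must recross the sphere $\partial^{\mathrm e}B(2R)$.

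More precisely, the key step is the reverse comparison $\widetilde{\mathbb{P}}_w(\tau_D<\infty)\le C'\widetilde{\mathbb{P}}_{\bm 0}(\tau_D<\infty)$ for all $w\in\partial^{\mathrm e}B(4R)$: using the last-exit decomposition of $D$ through $\widetilde\partial D\subset \widetilde B(2R)\setminus\widetilde B(R)$, $\widetilde{\mathbb{P}}_v(\tau_D<\infty)=\sum_{u\in\widetilde\partial D}\widetilde G(v,u)\,\nu_D(u)$, and $\widetilde G(w,u)\asymp |w-u|^{2-d}\asymp R^{2-d}\asymp |u|^{2-d}\asymp \widetilde G(\bm 0,u)$ uniformly in $u\in\widetilde\partial D$ and $w\in\partial^{\mathrm e}B(4R)$, because all of $\bm 0$, $w$, $u$ lie at mutual distances of order $R$ except that $u$ may be much closer to $\bm 0$ than to $w$, in which case $\widetilde G(\bm 0,u)\ge \widetilde G(w,u)$ anyway. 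This yields $\widetilde{\mathbb{P}}_w(\tau_D<\infty)\le C'\widetilde{\mathbb{P}}_{\bm 0}(\tau_D<\infty)$. Combining, $\widetilde{\mathbb{P}}_{\bm 0}(\tau_D<\sigma)=\widetilde{\mathbb{P}}_{\bm 0}(\tau_D<\infty)-\widetilde{\mathbb{P}}_{\bm 0}(\sigma\le\tau_D<\infty)\ge \widetilde{\mathbb{P}}_{\bm 0}(\tau_D<\infty)-C''\,\widetilde{\mathbb{P}}_{\bm 0}(\sigma<\infty)\cdot\max_{w}\widetilde{\mathbb P}_w(\tau_D<\infty)$. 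Since $D$ is at distance $\ge R$ from the origin, $\widetilde{\mathbb{P}}_{\bm 0}(\tau_D<\infty)\le C_1 R^{2-d}\cdot(\text{cap-type factor})$, and one must show the subtracted term is at most a fixed fraction, say $\tfrac12$, of $\widetilde{\mathbb{P}}_{\bm 0}(\tau_D<\infty)$.

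To make the subtraction quantitative I would instead argue directly: by the strong Markov property at $\tau_{\partial^{\mathrm e}B(2R)}$ for a walk that has reached $D$, the walk from $\bm 0$ reaches $D$, which lies in $\widetilde B(2R)$; to have $\sigma\le\tau_D$ it must first travel from $\partial^{\mathrm e}B(2R)$-ish region out to $\partial B(4R)$ and back. A cleaner route: condition on $z_0:=\widetilde S_{\tau_{\widetilde\partial D\text{-shell}}}$, the first entry to the shell $\widetilde B(2R)\setminus\widetilde B(R)$ through $\partial^{\mathrm e}\mathcal B(R)$ or $\partial^{\mathrm e}\mathcal B(2R)$; from $z_0$, which is at distance $\asymp R$ from both $\bm 0$ and from $\partial B(4R)$, gambler's-ruin / Harnack estimates give $\widetilde{\mathbb{P}}_{z_0}(\tau_D<\tau_{\partial B(4R)})\ge c\,\widetilde{\mathbb{P}}_{z_0}(\tau_D<\infty)$ with $c\in(0,1)$ a dimensional constant (the walk from $z_0$ need only avoid a sphere at distance of order $R$ while performing an order-$R^2$-time exploration, which has probability bounded below). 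Then $\widetilde{\mathbb{P}}_{\bm 0}(\tau_D<\tau_{\partial B(4R)})\ge \widetilde{\mathbb{P}}_{\bm 0}(\text{reach shell before }\partial B(4R))\cdot c\cdot \min_{z_0}\widetilde{\mathbb{P}}_{z_0}(\tau_D<\infty)$, and a matching upper bound $\widetilde{\mathbb{P}}_{\bm 0}(\tau_D<\infty)\le \max_{z_0}\widetilde{\mathbb{P}}_{z_0}(\tau_D<\infty)$ (every path from $\bm 0$ to $D$ crosses the shell's outer boundary), combined with Harnack to replace $\max_{z_0}$ by $C\min_{z_0}$, closes the estimate with $\cref{const_hit}=c/C$. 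The main obstacle is the uniform two-sided Harnack comparison of $\widetilde{\mathbb{P}}_{z_0}(\tau_D<\infty)$ over all entry points $z_0$ on the shell boundary: $D$ can be an arbitrary (possibly very thin, possibly very close to one side of the shell) subset, so the harmonic function $z\mapsto\widetilde{\mathbb{P}}_z(\tau_D<\infty)$ need not be comparable at two boundary points that are close to different parts of $D$; this must be circumvented by summing over $u\in\widetilde\partial D$ and using the pointwise Green's-function bounds \eqref{bound_green} rather than a black-box Harnack inequality, exactly as in the last-exit computations of the earlier lemmas in this section.
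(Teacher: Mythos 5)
Your proposal circles around the right ideas but never lands on the decomposition that actually closes the argument, and your first attempt has a genuine quantitative gap. Here is the issue. In the first route you write
$\widetilde{\mathbb{P}}_{\bm 0}(\tau_D<\sigma)
 = \widetilde{\mathbb{P}}_{\bm 0}(\tau_D<\infty)-\widetilde{\mathbb{P}}_{\bm 0}(\sigma\le\tau_D<\infty)$
and then try to show the subtracted term is at most a fixed fraction of $\widetilde{\mathbb{P}}_{\bm 0}(\tau_D<\infty)$. But the best you can get from the last-exit/Green's-function comparison is
$\max_{w\in\partial^{\mathrm e}B(4R)}\widetilde{\mathbb{P}}_w(\tau_D<\infty)\le C'\,\widetilde{\mathbb{P}}_{\bm 0}(\tau_D<\infty)$
with $C'\ge 1$ (and nothing forces $C'<1$: when $D$ hugs $\partial^{\mathrm e}\mathcal{B}(R)$, $\bm 0$ is only a factor-of-two closer to $D$ than $w$ is). Since $\widetilde{\mathbb{P}}_{\bm 0}(\sigma<\tau_D)$ can be close to $1$, the subtracted term need not be smaller than $\widetilde{\mathbb{P}}_{\bm 0}(\tau_D<\infty)$ at all, and this subtraction argument does not close.

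In your ``cleaner route'' you correctly identify the real obstacle: conditioning on the first entry $z_0$ to the shell puts $z_0$ potentially adjacent to an arbitrarily thin $D$, so $z\mapsto\widetilde{\mathbb{P}}_z(\tau_D<\infty)$ is \emph{not} uniformly comparable over those entry points, and the $\min_{z_0}$ and $\max_{z_0}$ you need to compare can be very different. You then say this should be fixed by summing over $\widetilde\partial D$ with pointwise Green's-function bounds; the direction is right but the placement of your decomposition sphere is still the problem. The paper's fix is to decompose at $\partial B(3R)$ instead, i.e.\ at a sphere that is simultaneously at distance of order $R$ from $D$ (since $D\subset\widetilde B(2R)$), from $\bm 0$, and from the killing boundary $\partial B(4R)$. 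One-step strong Markov at $\tau_{\partial B(3R)}$ gives
\begin{equation*}
\widetilde{\mathbb{P}}_{\bm 0}\!\big[\tau_{D}< \tau_{\partial B(4R)}\big]\ \ge\ \widetilde{\mathbb{P}}_{\bm 0}\!\big[\tau_{D}> \tau_{\partial B(3R)}\big]\ \min_{y\in \partial B(3R)} \widetilde{\mathbb{P}}_{y}\!\big[\tau_{D}< \tau_{\partial B(4R)}\big]\ +\ \widetilde{\mathbb{P}}_{\bm 0}\!\big[\tau_{D}< \tau_{\partial B(3R)}\big],
\end{equation*}
\begin{equation*}
\widetilde{\mathbb{P}}_{\bm 0}\!\big[\tau_{D}< \infty\big]\ \le\ \widetilde{\mathbb{P}}_{\bm 0}\!\big[\tau_{D}> \tau_{\partial B(3R)}\big]\ \max_{y\in \partial B(3R)} \widetilde{\mathbb{P}}_{y}\!\big[\tau_{D}< \infty\big]\ +\ \widetilde{\mathbb{P}}_{\bm 0}\!\big[\tau_{D}< \tau_{\partial B(3R)}\big],
\end{equation*}
where the second summand --- the ``hit $D$ before crossing $\partial B(3R)$'' contribution that defeated your shell-boundary Harnack --- is \emph{identical} on both sides. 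It then remains only to compare $\min_{y\in \partial B(3R)} \widetilde{\mathbb{P}}_{y}[\tau_{D}< \tau_{\partial B(4R)}]$ with $\max_{y\in \partial B(3R)} \widetilde{\mathbb{P}}_{y}[\tau_{D}< \infty]$. By \cite[Proposition 6.5.1]{lawler2010random}, both are $\asymp R^{2-d}\,\mathrm{cap}(D)$ uniformly over $y\in\partial B(3R)$ (all points of $\partial B(3R)$ are at comparable distance $\asymp R$ from $D$, and also at distance $\asymp R$ from the absorbing sphere $\partial B(4R)$), so the comparison holds with a dimensional constant. This is the ingredient your proposal was missing: the decomposition sphere should live in the gap between $D$ and $\partial B(4R)$, not on the shell boundary where $D$ can sit.
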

\begin{proof}
	By the strong Markov property, we have 
	\begin{equation}\label{add_5.46}
		\begin{split}
			&\widetilde{\mathbb{P}}_{\bm{0}}\big[\tau_{D}< \tau_{\partial B(4R)}\big] \\ \ge & \widetilde{\mathbb{P}}_{\bm{0}}\big[\tau_{D}> \tau_{\partial B(3R)}\big] \min_{y\in \partial B(3R)} \widetilde{\mathbb{P}}_{y}\big[\tau_{D}< \tau_{\partial B(4R)}\big]+  \widetilde{\mathbb{P}}_{\bm{0}}\big[\tau_{D}< \tau_{\partial B(3R)}\big],
		\end{split}
	\end{equation}
		\begin{equation}\label{add_5.47}
		\begin{split}
			\widetilde{\mathbb{P}}_{\bm{0}}\big[\tau_{D}< \infty\big]\le  \widetilde{\mathbb{P}}_{\bm{0}}\big[\tau_{D}> \tau_{\partial B(3R)}\big] \max_{y\in \partial B(3R)} \widetilde{\mathbb{P}}_{y}\big[\tau_{D}< \infty\big]+  \widetilde{\mathbb{P}}_{\bm{0}}\big[\tau_{D}< \tau_{\partial B(3R)}\big]. 
		\end{split}
	\end{equation}
	According to the potential theory of random walk (see e.g. \cite[Proposition 6.5.1]{lawler2010random}), we know that for any $y\in \partial B(3R)$, both $\widetilde{\mathbb{P}}_{y}\big[\tau_{D}< \tau_{\partial B(4R)}\big]$ and $\widetilde{\mathbb{P}}_{y}\big[\tau_{D}< \infty\big]$ are of the same order as the product of $R^{2-d}$ and the capacity of $D$. Therefore, there exists a constant $c(d)\in (0,1)$ such that 
	\begin{equation*}
		\min_{y\in \partial B(3R)} \widetilde{\mathbb{P}}_{y}\big[\tau_{D}< \tau_{\partial B(4R)}\big] \ge c\max_{y\in \partial B(3R)} \widetilde{\mathbb{P}}_{y}\big[\tau_{D}< \infty\big]. 
	\end{equation*}	
Combined with (\ref{add_5.46}) and (\ref{add_5.47}), it concludes this lemma. 
\end{proof}


We present further preparations as follows. Suppose that $x\in B(N_\diamond)$ is a $j$-nice point for some $j\in [1,j_*]$. We abbreviate 
\begin{equation}
	\widecheck{\mathcal{C}}_x^j:= \mathcal{C}^{-}_{\partial^{\mathrm{e}}  \mathcal{B}_x(N_*/2)}\cap [\widetilde{B}_x(\widecheck{N}_{j-1})\setminus \widetilde{B}_x(\widecheck{N}_{j})]. 
\end{equation}
By (\ref{bound_green}) and Lemma \ref{lemma_green}, one has 
\begin{equation*}
	\begin{split}
	\langle	\mathcal{M}_x^{*,-} \rangle_\infty^{j} \le& \Cref{const_green_1}\widecheck{N}_{j}^{2-d}\widetilde{\mathbb{P}}_x\big[\tau_{\widecheck{\mathcal{C}}_x^j}<\infty\big]
	\le \Cref{const_green_1} \cref{const_hit}^{-1}\widecheck{N}_{j}^{2-d}\widetilde{\mathbb{P}}_x\big[\tau_{\widecheck{\mathcal{C}}_x^j}<\tau_{\partial B_x(2\widecheck{N}_{j-1})}\big].
	\end{split}
\end{equation*}  
Combined with (\ref{5.38}), it yields that (recalling that $\widecheck{N}_{j}=k_*^{100d}2^{-j-0.5k_*}N_*$)
\begin{equation}\label{newadd_5.49}
	\widetilde{\mathbb{P}}_x\big[\tau_{\widecheck{\mathcal{C}}_x^j}<\tau_{\partial B_x(2\widecheck{N}_{j-1})}\big] \ge  \widecheck{\mathbb{I}}_j:=\cref{const_section5_1} j^{-2}2^{-j(d-2)} \lambda_*^{2}k_*^{60d}2^{(\frac{6-d}{2}) k_*}, 
\end{equation}
where $\cl\label{const_section5_1}:=2^{-d}\Cref{const_green_1}^{-1}  \cref{const_small}^4\cref{const_hit} $. For sufficiently small $j\ge 1$, $\widecheck{\mathbb{I}}_j$ could potentially be larger than $1$, thereby violating (\ref{newadd_5.49}). However, in the context of our proof, such a ``contradiction'' will provide further restriction on $j$ and thus facilitate the proof. Precisely, it follows from $\widecheck{\mathbb{I}}_j\le 1$ that $j^{2}2^{j(d-2)}\ge \cref{const_section5_1}\lambda_*^{2}k_*^{60d}2^{(\frac{6-d}{2}) k_*}$ and thus,
\begin{equation}\label{bound_check_Nj}
	\widecheck{N}_j=2^{-j}k_*^{100d}2^{-0.5k_*}N_*\le (\lambda_*^{-1}2^{-k_*})^{1/d}N_*.
\end{equation}
In light of (\ref{newadd_5.49}), it would be useful to bound the exit time from a box. According to \cite[Proposition 2.4.5]{lawler2010random}, there exists a constant $\Cl\label{const_ldp}(d)>1$ such that
\begin{equation}\label{ineq_ldp}
	\widetilde{\mathbb{P}}_x\big(\tau_{\partial B_x(2\widecheck{N}_{j-1})}\ge  \Cref{const_ldp}j^2k_*^2\widecheck{N}_{j}^2\big) \le e^{-j^2k_*^2}.  
\end{equation}

The following lemma is crucial for this section.

\begin{lemma}\label{lemma5.12}
	Recall the event $\mathsf{G}_j$ in (\ref{event_Gj}). For any $z\in B(N_\diamond)$, 
	\begin{equation}
		\sum\nolimits_{1\le j\le j_*}\widetilde{\mathbb{P}}_z\Big[\mathsf{G}_j,\tau_{\partial^{\mathrm{e}}B(40N_\diamond)}< \tau_{\mathcal{C}^{-}_{\partial^{\mathrm{e}}  \mathcal{B}(3N_*/8)}}\Big]\le e^{-k_*}. 
	\end{equation}
\end{lemma}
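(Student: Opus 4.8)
The plan is to bound, for each scale $j\in[1,j_*]$, the probability that an independent Brownian motion starting from $z\in B(N_\diamond)$ visits at least $\tfrac12 j^{-2}k_*^{-3}2^{-k_*}\widehat N_*^2$ many $j$-nice points before $\tau_{\partial^{\mathrm e}B(20N_\diamond)}$, yet fails to hit $\mathcal{C}^-_{\partial^{\mathrm e}\mathcal B(3N_*/8)}$ before $\tau_{\partial^{\mathrm e}B(40N_\diamond)}$. The key mechanism is the one already described in the proof outline (Section~1.2): each time the walk is at a $j$-nice point $x$, by (\ref{newadd_5.49}) it has probability at least $\widecheck{\mathbb I}_j$ of hitting $\widecheck{\mathcal C}_x^j\subset \mathcal{C}^-_{\partial^{\mathrm e}\mathcal B_x(N_*/2)}$ before exiting $B_x(2\widecheck N_{j-1})$; and since $x\in B(N_\diamond)$ with $N_\diamond\le \widehat N_*$ small compared to $N_*$, while $\widecheck N_{j-1}\le \widecheck N_*\ll N_*$, the set $\widecheck{\mathcal C}_x^j$ is contained in $\mathcal{C}^-_{\partial^{\mathrm e}\mathcal B(3N_*/8)}$ and the excursion stays inside $B(40N_\diamond)$ (this containment needs to be checked using $\widecheck N_* = k_*^{100d}2^{-0.5k_*}N_*$ and $2^{-k_*}\le\cref{const_small}$, so $\widecheck N_*$ is a tiny fraction of $N_*$, and $N_\diamond = 2^{-l_\diamond}\widehat N_* = 2^{-l_\diamond}k_*^{-100}N_*$ with $2^{-l_\diamond}\ge k_*^{-2}2^{-0.5k_*}$, hence $40N_\diamond$ is still $\ll N_*$; we also need $2\widecheck N_{j-1}$ plus the distance from $z$ to be within $B(40N_\diamond)$, which follows if $\widecheck N_*\le N_\diamond$, i.e. $k_*^{100d}2^{-0.5k_*}\le 2^{-l_\diamond}k_*^{-100}$ — this should hold since $l_\diamond$ is small).

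The next step is to convert "visits many $j$-nice points" into "has many independent chances to hit." On the event $\mathsf G_j$ restricted to not yet having hit the negative cluster, partition the trajectory at successive visits to distinct $j$-nice points. Each visit initiates an excursion which, with probability $\ge\widecheck{\mathbb I}_j$, hits $\widecheck{\mathcal C}_x^j$ before exiting $B_x(2\widecheck N_{j-1})$, which in turn (by the containment above) constitutes hitting $\mathcal{C}^-_{\partial^{\mathrm e}\mathcal B(3N_*/8)}$ before $\tau_{\partial^{\mathrm e}B(40N_\diamond)}$. Using (\ref{ineq_ldp}), each such excursion also takes at most $\Cref{const_ldp}j^2k_*^2\widecheck N_j^2$ steps except on an event of probability $e^{-j^2k_*^2}$; so to accumulate $n_j^{(1)}:=\tfrac12 j^{-2}k_*^{-3}2^{-k_*}\widehat N_*^2$ distinct $j$-nice visits the walk needs at least $n_j^{(3)}:=n_j^{(1)}/(\Cref{const_ldp}j^2k_*^2\widecheck N_j^2)$ "rounds." By the strong Markov property, the probability of surviving (not hitting) all $n_j^{(3)}$ rounds is at most $(1-\widecheck{\mathbb I}_j)^{n_j^{(3)}}+ (\text{a sum of }e^{-j^2k_*^2}\text{ error terms})\le \exp(-\widecheck{\mathbb I}_j n_j^{(3)})+ n_j^{(3)}e^{-j^2k_*^2}$. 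Plugging in the values — $\widecheck N_j = k_*^{100d}2^{-j-0.5k_*}N_*$, $\widehat N_* = k_*^{-100}N_*$, and $\widecheck{\mathbb I}_j = \cref{const_section5_1}j^{-2}2^{-j(d-2)}\lambda_*^2 k_*^{60d}2^{(\frac{6-d}{2})k_*}$ — gives $\widecheck{\mathbb I}_j n_j^{(3)}\ge c\, j^{-6}2^{(4-d)j}\lambda_*^2 k_*^{-C}2^{(\frac{6-d}{2})k_*}$, exactly as in (\ref{outline_1.14}). Then one uses the constraint (\ref{cal1})–(\ref{cal2}): for $j<j_*$ we have $2^j\le \Cref{const_crossing}^2 k_*^{200d}2^{(\frac{6-d}{2(d-2)})k_*}[\lambda(\widecheck N_j)\lambda_*]^{2/(d-2)}$, so $2^{(4-d)j}$ can be bounded below and the product $\widecheck{\mathbb I}_j n_j^{(3)}$ stays $\gtrsim k_*^{-C}$ for $d\le 6$ (with a genuinely large power of $\lambda_*$ and $2^{k_*}$ when $d<6$, and with $\lambda_*\ge\Cref{const_section_block}$ still forcing it large when $d=6$). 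Summing $\sum_{j\le j_*}[\exp(-\widecheck{\mathbb I}_j n_j^{(3)}) + n_j^{(3)}e^{-j^2k_*^2}]$ over $j$, using $j_*\le \Cref{const_crossing}k_*^2\log_2\lambda_*$, yields the bound $e^{-k_*}$.

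The main obstacle I expect is making the "many distinct $j$-nice points visited $\Rightarrow$ many independent hitting attempts" argument fully rigorous while staying inside the bounded region $B(40N_\diamond)$: one must be careful that the excursions from different $j$-nice points genuinely give independent trials (handled by the strong Markov property applied at the successive first-visit times to new $j$-nice points), that the length bound (\ref{ineq_ldp}) combines correctly over many rounds without the error terms $e^{-j^2k_*^2}$ accumulating to something non-negligible (this is fine since $n_j^{(3)}$ is at most polynomial in $\widehat N_*$ while $e^{-j^2k_*^2}$ is super-polynomially small in $k_*$ and $k_*$ itself is at least logarithmic in $N_*$ by (\ref{range_k_*})), and — the genuinely delicate point — that hitting $\widecheck{\mathcal C}_x^j$ before exiting $B_x(2\widecheck N_{j-1})$ really does imply hitting $\mathcal{C}^-_{\partial^{\mathrm e}\mathcal B(3N_*/8)}$ before $\tau_{\partial^{\mathrm e}B(40N_\diamond)}$. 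For the latter, I would verify: (i) $\widecheck{\mathcal C}_x^j\subset\mathcal{C}^-_{\partial^{\mathrm e}\mathcal B_x(N_*/2)}$, and since $x\in B(N_\diamond)\subset B(\widehat N_*)$ and $\widehat N_* = k_*^{-100}N_*\ll N_*$, the negative cluster $\mathcal{C}^-_{\partial^{\mathrm e}\mathcal B_x(N_*/2)}$ passing through $\widecheck{\mathcal C}_x^j$ (which lies within distance $\widecheck N_{j-1}\le\widecheck N_*\ll N_*$ of $x$, hence within $\mathcal B(3N_*/8)$) is connected to $\partial^{\mathrm e}\mathcal B_x(N_*/2)\supset$ some point of $\mathcal B(N_*)\setminus\mathcal B(3N_*/8)$... actually the cleaner route is to observe that any point in $\widecheck{\mathcal C}_x^j$ that the walk hits is already in $\mathcal{C}^-_{\partial^{\mathrm e}\mathcal B(3N_*/8)}$ provided $\mathcal{C}^-_{\partial^{\mathrm e}\mathcal B_x(N_*/2)}$, restricted to $\mathcal B(3N_*/8)$, is contained in $\mathcal{C}^-_{\partial^{\mathrm e}\mathcal B(3N_*/8)}$ — which holds because $\partial^{\mathrm e}\mathcal B_x(N_*/2)$ separates $\widecheck{\mathcal C}_x^j$ from infinity only through $\mathcal B(3N_*/8)$ when $x$ is near the origin; and (ii) $B_x(2\widecheck N_{j-1})\subset B(40N_\diamond)$, i.e. $|x|+2\widecheck N_{j-1}\le 40N_\diamond$, which needs $\widecheck N_* \le 13 N_\diamond$ roughly, a comparison of $k_*^{100d}2^{-0.5k_*}N_*$ with $2^{-l_\diamond}k_*^{-100}N_*$; here I would need to double-check whether $l_\diamond$ can be large enough to break this, and if so replace $B(40N_\diamond)$-excursions by excursions within $B(3N_*/8)$ directly and argue the walk started from $z$ genuinely passes near these $j$-nice points. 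This geometric bookkeeping, rather than any probabilistic subtlety, is where the care is needed.
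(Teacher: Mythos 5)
Your proposal follows essentially the same route as the paper: decompose over scales $j\in[1,j_*]$, treat each visit to a $j$-nice point as a hitting trial for the negative cluster with per-trial probability governed by (\ref{newadd_5.49}), control trial lengths via (\ref{ineq_ldp}), count how many trials fit inside the $\mathsf{G}_j$ threshold, and invoke (\ref{cal1})--(\ref{cal2}) together with the dimension-dependent case analysis to see that $\widecheck{\mathbb{I}}_j\mathcal{J}_*$ is large. The exponents, the special role of $d\le 6$, and the summation over $j$ all match the paper.

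However, the geometric concern you flag in point~(ii) is a genuine flaw in your formulation, not a technicality to be verified. The containment $B_x(2\widecheck{N}_{j-1})\subset B(40N_\diamond)$ can fail badly for small $j$: one has $\widecheck{N}_{j-1}\le\widecheck{N}_*=k_*^{100d}2^{-0.5k_*}N_*$ while $N_\diamond\ge k_*^{-102}2^{-0.5k_*}N_*$, so $\widecheck{N}_*/N_\diamond$ can be as large as $k_*^{100d+102}$. Thus the step ``a successful trial (hitting $\widecheck{\mathcal{C}}_x^j$ before exiting $B_x(2\widecheck{N}_{j-1})$) constitutes hitting $\mathcal{C}^-_{\partial^{\mathrm{e}}\mathcal{B}(3N_*/8)}$ before $\tau_{\partial^{\mathrm{e}}B(40N_\diamond)}$'' is not valid, and your fallback (switching to excursions within $B(3N_*/8)$) detaches the trial from the specific per-trial estimate (\ref{newadd_5.49}), which is tied to $B_x(2\widecheck{N}_{j-1})$. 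The paper sidesteps this entirely by inverting the logic into a deterministic event inclusion (\ref{claim_5.58}): on $\mathsf{G}_j\cap\{\tau_{\partial^{\mathrm{e}}B(40N_\diamond)}<\tau_{\mathcal{C}^-_{\partial^{\mathrm{e}}\mathcal{B}(3N_*/8)}}\}\cap\mathsf{A}_*^c$, the first $\mathcal{J}_*$ excursions together visit strictly fewer $j$-nice points than the $\mathsf{G}_j$ threshold, which forces $\tau_{\mathcal{J}_*}^+<\tau_{\partial^{\mathrm{e}}B(20N_\diamond)}\le\tau_{\partial^{\mathrm{e}}B(40N_\diamond)}<\tau_{\mathcal{C}^-}$; in particular the walk has not hit $\mathcal{C}^-\supset\widecheck{\mathcal{C}}_{x_l}^j$ by time $\tau_{\mathcal{J}_*}^+$, so $l_\ddagger\ge\mathcal{J}_*$. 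This uses only the set containment $\widecheck{\mathcal{C}}_{x_l}^j\subset\mathcal{C}^-_{\partial^{\mathrm{e}}\mathcal{B}(3N_*/8)}$ (valid because $x_l\in B(20N_\diamond)$ and $20N_\diamond+\widecheck{N}_{j-1}\ll N_*/8$), and never requires $B_{x_l}(2\widecheck{N}_{j-1})\subset B(40N_\diamond)$. You should restructure your ``many independent trials'' step into this event-inclusion form; otherwise the per-trial estimate is not usable at the small-$j$ scales where $\widecheck{N}_{j-1}$ exceeds $N_\diamond$.
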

\begin{proof}
	We arbitrarily take $j\in [1,j_*]$. Let $\{\widetilde{S}_t\}_{t\ge 0}$ be a Brownian motion on $\widetilde{\mathbb{Z}}^d$, which starts from $z$ and is independent of the GFF $\widetilde{\phi}$. We define a sequence of stopping times as follows. We set $\tau_0^{+}:=0$. For any $l\ge 1$, we define
	\begin{itemize}
		\item  $\tau_l^{-}:= \inf\big\{t\ge \tau_{l-1}^{+}: \widetilde{S}_t\ \text{is}\ j \text{-nice} \big\}$ and  let $x_l:= \widetilde{S}_{\tau_l^{-}}$;

		\item  $\tau_l^{+}:=  \inf\big\{t\ge \tau_l^{-}:  \widetilde{S}_t \in \partial B_{x_l}(2\widecheck{N}_{j-1})  \big\}$. 
		
	\end{itemize}
	Let $l_\ddagger$ be the smallest $l\ge 1$ such that $\tau_l^{-}\le \tau_{\widecheck{\mathcal{C}}_{x_l}^j} \le \tau_l^{+}$. We claim that 
	\begin{equation}\label{claim_5.58}
		\mathsf{G}_j\cap \big\{\tau_{\partial^{\mathrm{e}}B(40N_\diamond)} < \tau_{\mathcal{C}^{-}_{\partial^{\mathrm{e}}  \mathcal{B}(3N_*/8)}}\big\} \subset \big\{l_\ddagger\ge \mathcal{J}_* \big\} \cup \mathsf{A}_*,
	\end{equation}
	where $\mathcal{J}_*:=\Cref{const_ldp}^{-1}j^{-4}2^{2j}k_*^{-300d}$ and $\mathsf{A}_*:=\bigcup_{l=1}^{\mathcal{J}_*}\big\{\tau_l^{+}-\tau_l^{-}\ge \Cref{const_ldp}j^2k_*^2\widecheck{N}_{j}^2   \big\}$. In fact, on $\mathsf{G}_j\cap \big\{\tau_{\partial^{\mathrm{e}}B(40N_\diamond)} < \tau_{\mathcal{C}^{-}_{\partial^{\mathrm{e}}  \mathcal{B}(3N_*/8)}}\big\}\cap \mathsf{A}_*^c$, the number of $j$-nice points visited by $\widetilde{S}_t$ for $0\le t\le \tau_{\mathcal{J}_*}^{+}$ is at most (recall that $\widecheck{N}_{j}=k_*^{100d}2^{-j-0.5k_*}N_*$ and $\widehat{N}_*=k_*^{-100}N_*$)
	\begin{equation}\label{cal}
		\begin{split}
				 \sum\nolimits_{1\le l\le \mathcal{J}_*}(\tau_l^{+}-\tau_l^{-})\le  \Cref{const_ldp}^{-1}j^{-4}2^{2j}k_*^{-300d}\cdot \Cref{const_ldp}j^2k_*^2\widecheck{N}_{j}^2 <\tfrac{1}{2}j^{-2}k_*^{-3}2^{-k_*}\widehat{N}_*^2.  
		\end{split}
	\end{equation}
	However, since $\mathsf{G}_j\cap \big\{\tau_{\partial^{\mathrm{e}}B(40N_\diamond)} < \tau_{\mathcal{C}^{-}_{\partial^{\mathrm{e}}  \mathcal{B}(3N_*/8)}}\big\}$ happens, we know that $\widetilde{S}_t$ will visit at least  $\tfrac{1}{2}j^{-2}k_*^{-3}2^{-k_*}\widehat{N}_*^2$ $j$-nice points before hitting $\mathcal{C}^{-}_{\partial^{\mathrm{e}}  \mathcal{B}(3N_*/8)}$. Combined with (\ref{cal}), this implies that for every $1\le l\le \mathcal{J}_*$, $\widetilde{S}_t$ for $\tau_l^{-}\le t\le \tau_l^{+}$ does not intersect $\mathcal{C}^{-}_{\partial^{\mathrm{e}}  \mathcal{B}(3N_*/8)}$, and thus does not intersect $\tau_{\widecheck{\mathcal{C}}_{x_l}^j}$ (otherwise, since $B_{x_l}(\widecheck{N}_{j-1})\subset \mathcal{B}(3N_*/8)$ and $\partial^{\mathrm{e}}  \mathcal{B}_{x_l}(N_*/2)\subset \mathbb{Z}^d\setminus \mathcal{B}(3N_*/8)$, both of which follow from $x_l\in B(N_\diamond)$, we have $\widecheck{\mathcal{C}}_{x_l}^j\subset \mathcal{C}^{-}_{\partial^{\mathrm{e}}  \mathcal{B}_{x_l}(N_*/2)}\cap \widetilde{B}_{x_l}(\widecheck{N}_{j-1})\subset\mathcal{C}^{-}_{\partial^{\mathrm{e}}  \mathcal{B}(3N_*/8)} $ and hence $\widetilde{S}_t$ must intersect $\mathcal{C}^{-}_{\partial^{\mathrm{e}}  \mathcal{B}(3N_*/8)}$). In conclusion, we obtain that 
	\begin{equation}
		\mathsf{G}_j\cap \big\{\tau_{\partial^{\mathrm{e}}B(40N_\diamond)} < \tau_{\mathcal{C}^{-}_{\partial^{\mathrm{e}}  \mathcal{B}(3N_*/8)}}\big\}\cap \mathsf{A}_*^c \subset \big\{l_\ddagger\ge \mathcal{J}_* \big\},
	\end{equation}
	which implies our claim (\ref{claim_5.58}).


	By (\ref{ineq_ldp}), (\ref{claim_5.58}) and $\mathcal{J}_*\le 2^{2j}<e^{\frac{1}{2}j^2k_*^2}$, we have 
	\begin{equation}\label{5.56}
		\begin{split}
			\widetilde{\mathbb{P}}_z\big(\mathsf{G}_j,\tau_{\partial^{\mathrm{e}}B(40N_\diamond)}< \tau_{\mathcal{C}^{-}_{\partial^{\mathrm{e}}  \mathcal{B}(3N_*/8)}}\big)\le \widetilde{\mathbb{P}}_z(l_\ddagger\ge \mathcal{J}_* )+e^{-\frac{1}{2}j^2k_*^2}. 
		\end{split}
	\end{equation}
	Moreover, on the event $\{l_\ddagger\ge \mathcal{J}_*\}$, for any $1\le l\le \mathcal{J}_*$, we know that the Brownian motion $\widetilde{S}_t$ starting from $x_l$ does not hit $\widecheck{\mathcal{C}}_{x_l}^j$ before reaching $\partial B_{x_l}(2\widecheck{N}_{j-1})$. Thus, by the strong Markov property and (\ref{newadd_5.49}), we have: for any $1\le j\le j_*$,
	\begin{equation}\label{5.57}
		\widetilde{\mathbb{P}}_z(l_\ddagger\ge \mathcal{J}_* ) \le \big(1-\widecheck{\mathbb{I}}_{j}\big)^{\mathcal{J}_*} \le e^{-\widecheck{\mathbb{I}}_{j}\mathcal{J}_*},
	\end{equation}
	 where $\widecheck{\mathbb{I}}_{j}\mathcal{J}_*=\cref{const_section5_1}\Cref{const_ldp}^{-1} j^{-6}2^{j(4-d)}k_*^{-240d}2^{(\frac{6-d}{2}) k_*}\lambda_*^2$. See Figure \ref{fig_block_rw} for an illustration.

	 	\begin{figure}[h!]
	 	\centering
	 	\includegraphics[width=0.37\textwidth]{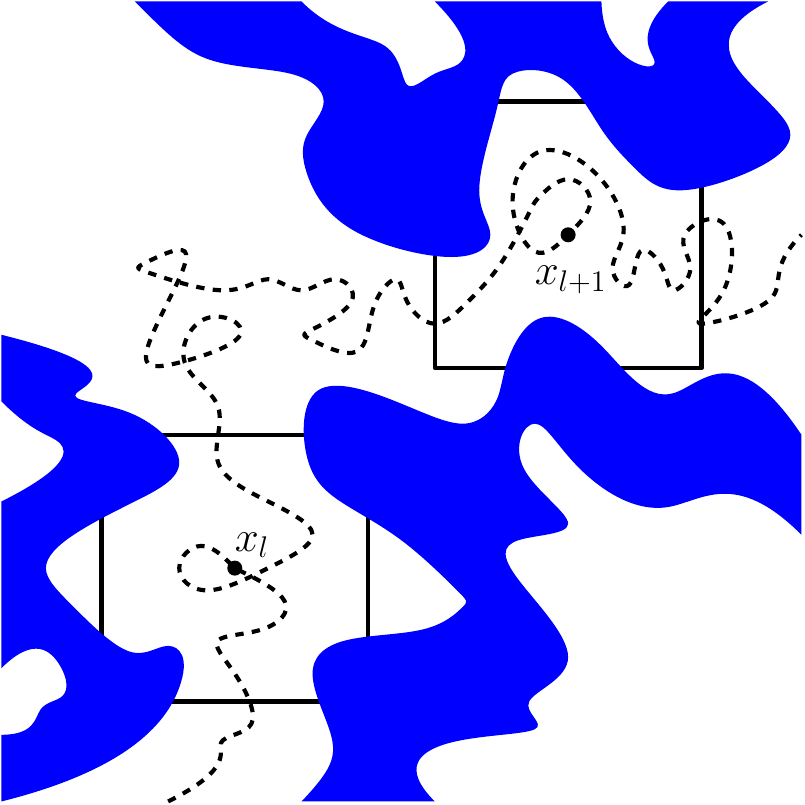}
	 	\caption{An illustration for the proof of Lemma \ref{lemma5.12}. Every time the Brownian motion (i.e. the dashed curve) starts from a $j$-nice point $x_l$, it has a significant probability of hitting the negative cluster $\mathcal{C}^{-}_{\partial^{\mathrm{e}}  \mathcal{B}(3N_*/8)}$ (i.e. the blue area) before exiting $B_{x_l}(2\widecheck{N}_{j-1})$. Moreover, with high probability the number of points visited by the Brownian motion upon exiting $B_{x_l}(2\widecheck{N}_{j-1})$ is $O(j^2k_{*}^2\widecheck{N}_{j}^2)$.   }\label{fig_block_rw}.  
	 \end{figure}

	  In what follows, we discuss the lower bounds for $\widecheck{\mathbb{I}}_j\mathcal{J}_*$ separately for the cases when $d\in \{3,4\}$, $d=5$ and $d=6$. Let $\cl\label{const_section5_4}:=(\cref{const_section5_1}\Cref{const_crossing}^{-1}\Cref{const_ldp}^{-1})^{10d}$.

	 \textbf{When $d\in \{3,4\}$:} In this case, since $2^{j(4-d)}\ge 1$ and $2^{(\frac{6-d}{2}) k_*}\ge k_*^{300d}$, we derive from (\ref{cal1}) and (\ref{cal2}) that
	 \begin{equation}\label{5.60}
	 	\widecheck{\mathbb{I}}_{j}\mathcal{J}_*\ge  \cref{const_section5_4}j^2k_*^{10} \log_2^{10}(\lambda_*).
	 \end{equation}

	 \textbf{When $d=5$:} For $d\in \{5,6\}$, by (\ref{cal1}) and (\ref{cal2}), we have 
	 \begin{equation}\label{5.61}
	 	\begin{split}
	 		&j^{-6}2^{j(4-d)}k_*^{-240d}2^{(\frac{6-d}{2}) k_*}\lambda_*^2 \\
	 		\ge& \Cref{const_crossing}^{-2d}j^2k_*^{-1000d}2^{(\frac{6-d}{d-2})k_*}  \lambda_*^{\frac{2(6-d)}{d-2}} \Big[\frac{\lambda_*}{\lambda(\widecheck{N}_{j})}\Big]^{\frac{2(d-4)}{d-2}}\log_2^{-8}(\lambda_*). 
	 	\end{split}
	 \end{equation}
    In the particular case when $d = 5$, since $2^{(\frac{6-d}{d-2})k_*}\ge k_*^{2000d}$ and $\lambda_*= \lambda(\widecheck{N}_{j})$, we have 
    \begin{equation}\label{5.62}
    	\widecheck{\mathbb{I}}_{j}\mathcal{J}_*\ge \cref{const_section5_4}j^2 k_*^{10}\lambda_*^{\frac{2}{3}}\log_2^{-8}(\lambda_*)\ge  \cref{const_section5_4}j^2k_*^{10} \log_2^{10}(\lambda_*).  
    \end{equation}

	  \textbf{When $d=6$:} By (\ref{4.29}), (\ref{bound_check_Nj}) and (\ref{5.61}), we have
	  \begin{equation}\label{5.63}
	  	\begin{split}
	  		\widecheck{\mathbb{I}}_{j}\mathcal{J}_*\overset{(\ref{5.61})}{\ge} & \cref{const_section5_4} j^2k_*^{-1000d}\Big[\frac{\lambda_*}{\lambda(\widecheck{N}_{j})}\Big]^{-1}\log_2^{-8}(\lambda_*)\\
	  	\overset{(\ref{bound_check_Nj})}{\ge } &\cref{const_section5_4} j^2k_*^{-1000d}\Big[\frac{\lambda_*}{\lambda\big((\lambda_*^{-1}2^{-k_*})^{1/d}N_*\big)}\Big]^{-1}\log_2^{-8}(\lambda_*)\\
	  	\overset{(\ref{4.29})}{\ge } & \cref{const_section5_4} j^2k_*^{-1000d} \log_2^{2000d}(\lambda_*2^{k_*})\log_2^{-8}(\lambda_*)\\
	  	\ge& \cref{const_section5_4}j^2k_*^{10}\log_2^{10}(\lambda_*). 
	  	\end{split}
	  \end{equation}
In conclusion, for any $3\le d\le 6$, combining (\ref{5.60}), (\ref{5.62}) and (\ref{5.63}), we have $\widecheck{\mathbb{I}}_{j}\mathcal{J}_*\ge \cref{const_section5_4}j^2k_*^{10}\log_2^{10}(\lambda_*)$ for all $ 1\le j\le j_*$. Combined with (\ref{5.57}), it yields that 
	\begin{equation}\label{5.66}
		\widetilde{\mathbb{P}}_z(l_\ddagger\ge \mathcal{J}_* ) \le e^{-\cref{const_section5_4}j^2k_*^{10}\log_2^{10}(\lambda_*)}.
	\end{equation}
	By (\ref{equation_def_C9}), (\ref{5.56}), (\ref{5.66}) and $\lambda_*\ge \Cref{const_section_block}$, we conclude this lemma: 
	\begin{equation}
		\begin{split}			&\sum\nolimits_{1\le j \le j_*}\widetilde{\mathbb{P}}_z\big(\mathsf{G}_j,\tau_{\partial^{\mathrm{e}}B(40N_\diamond)}< \tau_{\mathcal{C}^{-}_{\partial^{\mathrm{e}}  \mathcal{B}(3N_*/8)}}\big)\\
		\overset{(\ref{5.56}),(\ref{5.66})}{\le } &  \sum\nolimits_{j\ge 1}   e^{-\cref{const_section5_4}j^2k_*^{10}\log_2^{10}(\lambda_*)} +  e^{-\frac{1}{2}j^2k_*^2}   \\
		\overset{(\lambda_*\ge \Cref{const_section_block})}{\le }	 & 10\big(e^{-\cref{const_section5_4}k_*^{10}\log_2^{10}(\Cref{const_section_block})} +  e^{-\frac{1}{2}k_*^2} \big) \overset{(\ref{equation_def_C9})}{\le }10\big(e^{-k_*^{10}} +  e^{-\frac{1}{2}k_*^2} \big) <e^{-k_*} .    \qedhere  
		\end{split}
	\end{equation}
	\end{proof}
	

	Based on Lemma \ref{lemma5.12}, it is now straightforward to prove Lemma \ref{lemma_escape}. 
	\begin{proof}[Proof of Lemma \ref{lemma_escape}]
		 
		 For any $z\in B(N_\diamond)$, by (\ref{new_5.39}) we have 
		 \begin{equation*}
		 	\begin{split}
		 			\widetilde{\mathbb{P}}_z\big(\tau_{\partial^{\mathrm{e}}B(40N_\diamond)} > \tau_{\mathcal{C}^{-}_{\partial^{\mathrm{e}}  \mathcal{B}(3N_*/8)}}\big)
		 	\ge \widetilde{\mathbb{P}}_z\big(\mathsf{G}\big)-\sum_{1\le j\le j_*}\widetilde{\mathbb{P}}_z\big(\mathsf{G}_j,\tau_{\partial^{\mathrm{e}}B(40\widecheck{N}_\diamond)} > \tau_{\mathcal{C}^{-}_{\partial^{\mathrm{e}}  \mathcal{B}(3N_*/8)}}\big). 
		 	\end{split}
		 \end{equation*}
		 Combined with (\ref{ineq_PG}) and Lemma \ref{lemma5.12}, it concludes the desired bound:
		\begin{equation}
			\begin{split}
					\widetilde{\mathbb{P}}_z\big(\tau_{\partial^{\mathrm{e}}B(40N_\diamond)} > \tau_{\mathcal{C}^{-}_{\partial^{\mathrm{e}}  \mathcal{B}(3N_*/8)}}\big)
			\ge k_*^{-3}-e^{-k_*}>k_*^{-4}.  \qedhere
			\end{split}
		\end{equation}
	\end{proof}

\begin{lemma}\label{lemma_F}
	With the same condition as in Lemma \ref{lemma_escape}, on the event $\mathsf{F}$, 
	\begin{equation}\label{claim_F}
		\widetilde{\mathbb{P}}_{\bm{0}}\Big[\tau_{\partial \mathcal{B}(0.01N_*)}< \tau_{\mathcal{C}^-_{\partial^{\mathrm{e}} \mathcal{B}(N_*/4)}} \Big]\le e^{-k_*^{5.5}}.  
	\end{equation}
\end{lemma}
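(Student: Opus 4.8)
The plan is to use a standard multiscale/chaining argument that pieces together the single-box estimate from Lemma \ref{lemma_escape} over the grid of nice boxes guaranteed by the event $\mathsf{F}$. On $\mathsf{F}$, every box $B_{N_\diamond\cdot y}(N_\diamond)$ with $y\in [-k_*^{10},k_*^{10}]^d\cap\mathbb{Z}^d$ is nice, so a Brownian motion started anywhere in such a box has probability at least $k_*^{-4}$ of hitting $\mathcal{C}^{-}_{\partial^{\mathrm{e}}\mathcal{B}(3N_*/8)}$ before exiting the concentric box $B(40N_\diamond)$; note $\mathcal{C}^{-}_{\partial^{\mathrm{e}}\mathcal{B}(3N_*/8)}\subset\mathcal{C}^-_{\partial^{\mathrm{e}}\mathcal{B}(N_*/4)}$ since $3N_*/8\ge N_*/4$, so hitting the former suffices. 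First I would observe that $N_\diamond=2^{-l_\diamond}\widehat{N}_*$ with $2^{-l_\diamond}\ge k_*^{-2}2^{-0.5k_*}$ and $\widehat{N}_*=k_*^{-100}N_*$, so $N_\diamond$ is polynomially-times-$2^{-0.5k_*}$ smaller than $N_*$; consequently the annulus $\mathcal{B}(0.01N_*)$ contains at least of order $(0.01N_*/N_\diamond)\gtrsim k_*^{-102}2^{0.5k_*}$ disjoint ``shells'' of nice boxes that the Brownian motion must traverse, and this number comfortably exceeds $k_*^{10}$ — in fact one only needs on the order of $k_*^{6}$ successive independent trials, each succeeding with probability $k_*^{-4}$, to drive the failure probability below $e^{-k_*^{5.5}}$.

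The execution would run as follows. Define a sequence of stopping times: $\sigma_0:=0$, and for $i\ge 1$, $\sigma_i:=\inf\{t\ge\sigma_{i-1}:\widetilde S_t\in\partial^{\mathrm e}B(40N_\diamond)\text{-translate of the box containing }\widetilde S_{\sigma_{i-1}}\}$; more precisely, at time $\sigma_{i-1}$ the walker sits in some lattice box $B_{N_\diamond\cdot y_{i-1}}(N_\diamond)$, and one lets it run until it exits the concentric box $B_{N_\diamond\cdot y_{i-1}}(40N_\diamond)$. By Lemma \ref{lemma_escape} (and translation invariance, Remark \ref{remark_translation}, applied \emph{on the event} $\mathsf{F}$ where all the relevant boxes are nice), conditionally on $\mathcal{F}_{\sigma_{i-1}}$ and on $\widetilde\phi$, the probability that the walker hits $\mathcal{C}^{-}_{\partial^{\mathrm{e}}\mathcal{B}(3N_*/8)}$ during $[\sigma_{i-1},\sigma_i]$ is at least $k_*^{-4}$, provided the box $B_{N_\diamond\cdot y_{i-1}}(40N_\diamond)$ still lies inside $\mathcal{B}(0.01N_*)$ — which it does for all $i$ up to of order $k_*^{-103}2^{0.5k_*}N_\diamond^{-1}\cdot N_*\gg k_*^{6}$ of these steps, since the diameter of each such super-box is $O(N_\diamond)$ and the total displacement after $m$ steps is $O(mN_\diamond)$. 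One also needs each $y_i$ to stay within $[-k_*^{10},k_*^{10}]^d$, which holds for the first $O(k_*^{10})$ steps. Then, by the strong Markov property applied iteratively (the conditional estimate holds uniformly in the starting point inside the box),
\begin{equation}
\widetilde{\mathbb{P}}_{\bm 0}\Big[\tau_{\partial\mathcal B(0.01N_*)}<\tau_{\mathcal C^-_{\partial^{\mathrm e}\mathcal B(N_*/4)}}\Big]\le \big(1-k_*^{-4}\big)^{m}\le e^{-k_*^{-4}m}
\end{equation}
for $m$ as large as $c\,k_*^{10}\wedge (c\,k_*^{-103}2^{0.5k_*})$, which is at least $k_*^{9.5}$ (say) for $k_*$ large; taking $m=k_*^{9.5}$ gives $e^{-k_*^{5.5}}$, as desired. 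I would double-check the constant bookkeeping so that the number of available shells really dominates $k_*^{5.5+4}$ comfortably using $k_*\ge\log_2(1/\cref{const_small})$.

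The main obstacle, and the one deserving the most care, is the \emph{geometry and the uniform-in-starting-point} aspect of the iteration: one must verify that exiting $B_{N_\diamond\cdot y}(40N_\diamond)$ lands the walker in a new lattice box $B_{N_\diamond\cdot y'}(N_\diamond)$ that is again nice on $\mathsf{F}$ (this is why the $[-k_*^{10},k_*^{10}]^d$ window in the definition \eqref{def_F} of $\mathsf{F}$ was chosen so generously, and why the box-size $40N_\diamond$ in Lemma \ref{lemma_escape} and the window size are compatible), and that the walker cannot ``escape to $\partial\mathcal B(0.01N_*)$'' in fewer than $\gg k_*^{6}$ such super-box crossings — this is a deterministic diameter bound, $\mathrm{diam}\,B_y(40N_\diamond)=80\sqrt d\,N_\diamond$ versus radius $0.01N_*$, using $N_\diamond\le k_*^{-100}N_*$. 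A secondary point is that the estimate in Lemma \ref{lemma_escape} is conditional on $\widetilde\phi$ (it is a statement about $\widetilde{\mathbb P}_z$ for fixed GFF configuration on the event $\{B(N_\diamond)\text{ nice}\}$), so one never needs to integrate out $\widetilde\phi$; the chaining is purely over the Brownian path, and the target set $\mathcal{C}^-_{\partial^{\mathrm e}\mathcal B(N_*/4)}$ is fixed once $\widetilde\phi$ is fixed. Handling the boundary step — when the super-box first pokes out of the window or out of $\mathcal B(0.01N_*)$ — just requires stopping the iteration one step early, which costs nothing in the bound. Once these geometric inclusions are pinned down, the probabilistic content is the elementary $(1-p)^m\le e^{-pm}$ estimate.
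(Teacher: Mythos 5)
Your proposal follows essentially the same chaining strategy as the paper's proof: on $\mathsf{F}$, one concatenates applications of Lemma~\ref{lemma_escape} via the strong Markov property over a sequence of intermediate stopping times, yielding a geometric decay. The structural difference is the stopping-time skeleton. The paper uses a deterministic sequence of concentric spherical shells centered at the origin, $\partial\mathcal{B}(2lk_*^{0.1}N_\diamond)$ for $l=1,\dots,k_*^{9.8}$, defining $\tau_l^{\pm}$ as entrance/exit times of the corresponding annulus; the crossing intervals $[\tau_l^-,\tau_l^+]$ are disjoint by construction, their number is fixed at $k_*^{9.8}$, and the walker's position at each $\tau_l^-$ lies on $\partial\mathcal{B}(2lk_*^{0.1}N_\diamond)\subset\mathcal{B}(2k_*^{9.9}N_\diamond)$, automatically inside the region covered by the nice boxes of $\mathsf{F}$. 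You instead iterate path-dependent box-to-box exits ($\sigma_i$ the first exit of the $40N_\diamond$-super-box around the walker's current lattice box), which requires tracking the walker's total drift: to keep the first $m$ crossings inside the window $[-k_*^{10},k_*^{10}]^dN_\diamond$ you need $40\sqrt{d}\,m \lesssim k_*^{10}$, i.e.\ $k_*^{0.5}\gtrsim 40\sqrt{d}$ for $m=k_*^{9.5}$, which holds only because $\cref{const_small}$ is taken small enough that $k_*$ is large --- a point you flag but do not pin down. The paper's shell construction sidesteps this bookkeeping.

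The one genuine imprecision in your write-up is the claimed inclusion $\mathcal{C}^{-}_{\partial^{\mathrm{e}}\mathcal{B}(3N_*/8)}\subset\mathcal{C}^-_{\partial^{\mathrm{e}}\mathcal{B}(N_*/4)}$ ``since $3N_*/8\ge N_*/4$.'' This is false as stated: $\mathcal{C}^-_{\,\cdot\,}$ is monotone increasing in its source set, and $\partial^{\mathrm{e}}\mathcal{B}(3N_*/8)$ does not contain $\partial^{\mathrm{e}}\mathcal{B}(N_*/4)$ (the two spheres are disjoint). Moreover, the translated Lemma~\ref{lemma_escape} applied to a nice box $B_{x'}(N_\diamond)$ gives a hitting estimate for $\mathcal{C}^{-}_{\partial^{\mathrm{e}}\mathcal{B}_{x'}(3N_*/8)}$, which is centered at $x'$, not at the origin. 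What is actually required, and what the paper proves in (\ref{inclusion_5.59}), is the \emph{local} inclusion $\mathcal{C}^{-}_{\partial^{\mathrm{e}}\mathcal{B}_{x'}(3N_*/8)}\cap\widetilde{B}_x(200N_\diamond)\subset\mathcal{C}^-_{\partial^{\mathrm{e}}\mathcal{B}(N_*/4)}$: since $x$ is close to the origin, any negative path starting inside $\widetilde{B}_x(200N_\diamond)$ and reaching the far sphere $\partial^{\mathrm{e}}\mathcal{B}_{x'}(3N_*/8)$ must first cross $\partial^{\mathrm{e}}\mathcal{B}(N_*/4)$. Your argument needs precisely this local version, not the global one.
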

\begin{proof}
Before delving into the details of the proof, one may see Figure \ref{fig_cg} for an illustration. We define a sequence of stopping times as follows. For any $l\ge 1$, let 
\begin{equation}
	\tau_l^-:= \inf\big\{t\ge 0: \widetilde{S}_t\in \partial \mathcal{B}(2l k_*^{0.1}N_\diamond) \big\},
\end{equation}
\begin{equation}
	\tau_l^+:= \inf\Big\{t\ge \tau_l^-: \widetilde{S}_t\in \partial \mathcal{B}\big((2l-1) k_*^{0.1}N_\diamond\big)\cup \partial \mathcal{B}\big((2l+1) k_*^{0.1}N_\diamond\big) \Big\}. 
\end{equation}
For the Brownian motion $\widetilde{S}_\cdot\sim \widetilde{\mathbb{P}}_{\bm{0}}$, one has 
\begin{equation*}
	0<\tau_1^-<\tau_1^+< \tau_2^-<\tau_2^+<...<\tau_{k_*^{9.8}}^-<\tau_{k_*^{9.8}}^+< \tau_{\partial \mathcal{B}(0.01N_*)}. 
\end{equation*}
This implies that on the event $\big\{\tau_{\partial \mathcal{B}(0.01N_*)}< \tau_{\mathcal{C}^-_{\partial \mathcal{B}(N_*/4)}}\big\}$, the Brownian motion $\widetilde{S}_\cdot$ cannot hit $\tau_{\mathcal{C}^-_{\partial \mathcal{B}(N_*/4)}}$ during every time interval $[\tau_l^-,\tau_l^+]$ for $1\le l\le k_*^{9.8}$. As a result, by the strong Markov property, we have 
\begin{equation}\label{5.39}
	\begin{split}
		\widetilde{\mathbb{P}}_{\bm{0}}\Big[\tau_{\partial \mathcal{B}(0.01N_*)}< \tau_{\mathcal{C}^-_{\partial \mathcal{B}(N_*/4)}} \Big] \le \prod_{l=1}^{k_*^{9.8}}\max_{x\in \partial \mathcal{B}(2l k_*^{0.1}N_\diamond)} \widetilde{\mathbb{P}}_{x}\Big[\tau_{\partial B_x(200N_\diamond)}<\tau_{\mathcal{C}^-_{\partial \mathcal{B}(N_*/4)}} \Big]. 
	\end{split}
\end{equation}
On the event $\mathsf{F}$ (recalling (\ref{def_F})), for any $1\le l\le k_*^{9.8}$ and $x\in \partial \mathcal{B}(2l k_*^{0.1}N_\diamond)$, there exists $x'\in \mathbb{Z}^d$ such that $B_{x'}(N_\diamond)$ is a nice box containing $x$. Since $x\in \mathcal{B}(0.01N_*)$ and $|x-x'|\le d^{\frac{1}{2}}N_\diamond$, we have $B_{x'}(40N_\diamond)\subset B_x(200N_\diamond)$ and for any $v\in \widetilde{B}_x(200N_\diamond)$, 
\begin{equation*}
  \big\{ v\xleftrightarrow{\le 0} \partial^{\mathrm{e}} \mathcal{B}_{x'}(3N_*/8) \big\}  \subset   \big\{ v\xleftrightarrow{\le 0} \partial^{\mathrm{e}} \mathcal{B}(N_*/4) \big\}, 
\end{equation*}
which implies the following inclusion: 
\begin{equation}\label{inclusion_5.59}
	 \mathcal{C}^{-}_{\partial^{\mathrm{e}} \mathcal{B}_{x'}(3N_*/8)}\cap \widetilde{B}_x(200N_\diamond) \subset \mathcal{C}^{-}_{\partial^{\mathrm{e}} \mathcal{B}(N_*/4)}. 
\end{equation}
Note that $\tau_{\partial B_x(200 N_\diamond)} \ge \tau_{\partial^e B_{x'}(40N_\diamond)}$ because $B_{x'}(40N_\diamond)\subset B_x(200N_\diamond)$. Therefore, by (\ref{inclusion_5.59}) and Lemma \ref{lemma_escape}, we obtain 
\begin{equation*}
	\begin{split}
		\widetilde{\mathbb{P}}_{x}\Big[\tau_{\partial B_x(200N_\diamond)}<\tau_{\mathcal{C}^-_{\partial^{\mathrm{e}} \mathcal{B}(N_*/4)}} \Big] 
		\le  \widetilde{\mathbb{P}}_{x}\Big[\tau_{\partial^{\mathrm{e}} B_{x'}(40N_\diamond)}<\tau_{\mathcal{C}^{-}_{\partial^{\mathrm{e}} \mathcal{B}_{x'}(3N_*/8)}} \Big] \overset{(\text{Lemma}\ \ref{lemma_escape})}{\le } 1-k_*^{-4}.  
	\end{split}
\end{equation*}
Combined with (\ref{5.39}), it concludes this lemma:
\begin{equation}
	\begin{split}
			\widetilde{\mathbb{P}}_{\bm{0}}\Big[\tau_{\partial \mathcal{B}(0.01N_*)}< \tau_{\mathcal{C}^-_{\partial^{\mathrm{e}} \mathcal{B}(N_*/4)}} \Big] \le (1-k_*^{-4})^{k_*^{9.8}}\le e^{-k_*^{5.5}}.   \qedhere  
	\end{split}
\end{equation}
\end{proof}

 	\begin{figure}[h!]
	\centering
	\includegraphics[width=0.4\textwidth]{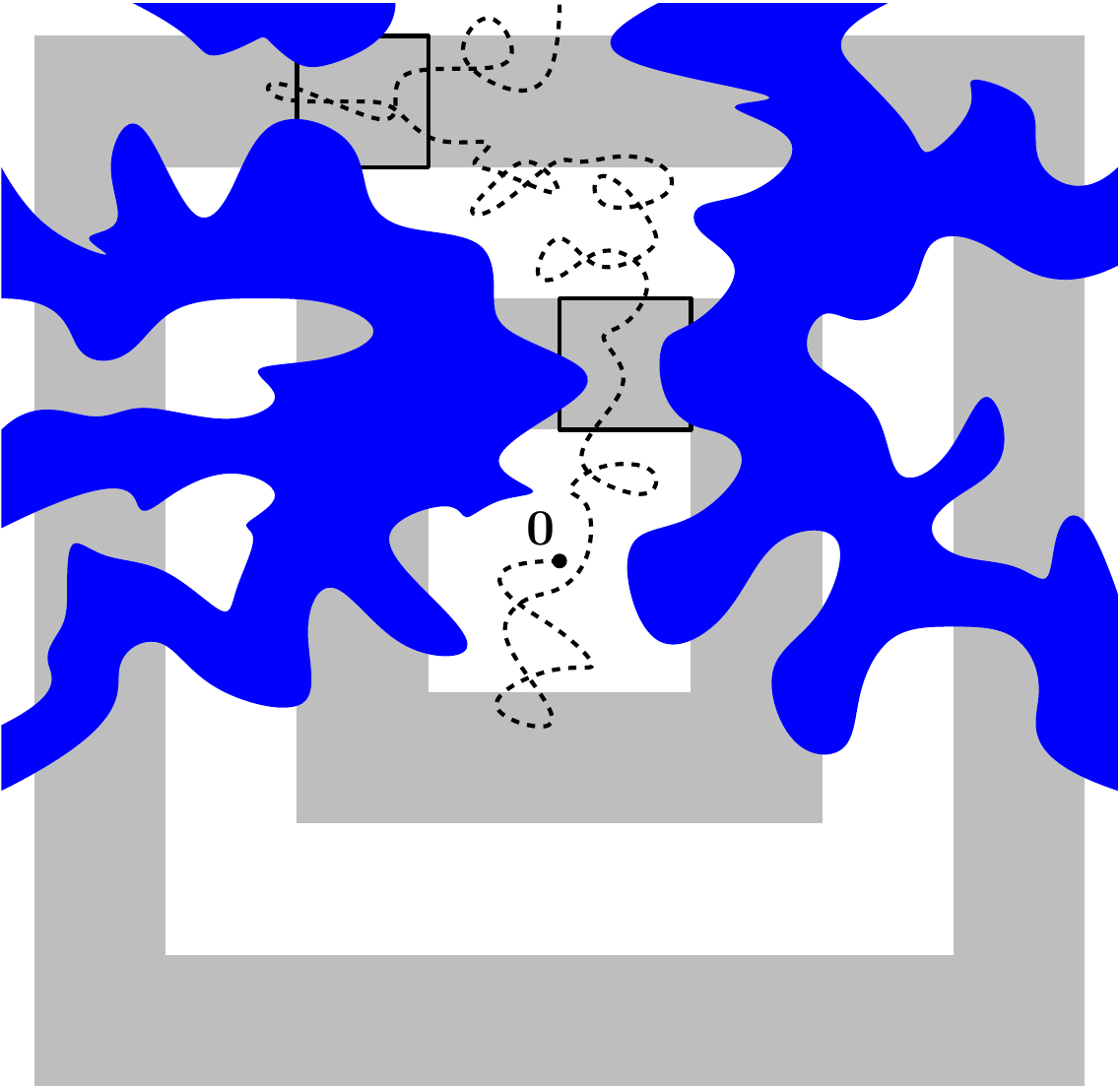}
	\caption{An illustration for the proof of Lemma \ref{lemma_F}. For the Brownian motion starting from the origin (i.e. the dashed curve), before exiting $\mathcal{B}(0.01N_*)$ it must cross numerous annuli of the form $\mathcal{B}\big((2l+1) k_*^{0.1}N_\diamond\big)\setminus \mathcal{B}\big((2l-1) k_*^{0.1}N_\diamond\big)$ for $l\ge 1$ (i.e. the gray area). In each crossing, the event $\mathsf{F}$ ensures that the Brownian motion must encounter some nice box (i.e. the box with a black boundary) and thus has a significant probability of hitting the negative cluster $\mathcal{C}^-_{\partial^{\mathrm{e}} \mathcal{B}(N_*/4)}$ (i.e. the blue area).  }\label{fig_cg}
\end{figure}

\subsection{Proof of Proposition \ref{prop_1}}  \label{section_prove_prop1}
 For any $v\in \widetilde{\mathbb{Z}}^d$, we abbreviate that 
\begin{equation}
	\mathcal{H}_{v}':= \mathcal{H}_{v}\big( \partial^{\mathrm{e}} \mathcal{B}(N_*/2) , \mathcal{C}^{-}_{\partial^{\mathrm{e}} \mathcal{B}(N_*/2)}\big),
\end{equation} 
\begin{equation}
	\mathcal{H}_{v}'':= \mathcal{H}_{v}\big( \partial^{\mathrm{e}} \mathcal{B}(N_*/4) , \mathcal{C}^{-}_{\partial^{\mathrm{e}} \mathcal{B}(N_*/4)}\big).
\end{equation} 
For any $1\le n\le \tfrac{N_*}{4}$, we denote 
\begin{equation}
	\overbar{\mathcal{H}}_{n}':= |\partial^{\mathrm{e}}\mathcal{B}(n)|^{-1} \sum_{v\in \partial^{\mathrm{e}}\mathcal{B}(n)} \mathcal{H}_{v}'\ \ \text{and}\ \	\overbar{\mathcal{H}}_{n}'':= |\partial^{\mathrm{e}}\mathcal{B}(n)|^{-1} \sum_{v\in \partial^{\mathrm{e}}\mathcal{B}(n)} \mathcal{H}_{v}''. 
\end{equation}

On the event $\mathsf{F}$ (recall (\ref{def_F})), $\bm{0}$ is a good point. I.e., $\mathcal{H}_{\bm{0}}'\ge \cref{const_small}^2\lambda_* N_*^{-\frac{d}{2}+1} k_*^{-3}2^{k_*}$. By the strong Markov property, we know that $\mathcal{H}_{\bm{0}}'$ is bounded from above by 
\begin{equation}\label{add5.73}
	\widetilde{\mathbb{P}}_{\bm{0}}\Big[\tau_{\partial \mathcal{B}(0.01N_*)}< \tau_{\mathcal{C}^-_{\partial^{\mathrm{e}} \mathcal{B}(N_*/4)}} \Big]  \max_{x\in \partial \mathcal{B}(0.01N_*)} \sum_{y\in \partial^{\mathrm{e}} \mathcal{B}(\frac{3N_*}{16})} \widetilde{\mathbb{P}}_{x}\Big[\tau_{\partial^{\mathrm{e}} \mathcal{B}(\frac{3N_*}{16})}=\tau_{y} \Big] \mathcal{H}_{y}'. 
\end{equation}
Moreover, according to \cite[Lemma 6.3.7]{lawler2010random}, there exists $\Cl\label{const_lemma_6.3.7}(d)>0$ such that for any $x\in \partial \mathcal{B}(0.01N_*)$ and $y\in \partial^{\mathrm{e}} \mathcal{B}(\tfrac{3N_*}{16})$, 
\begin{equation}\label{add5.74}
	\widetilde{\mathbb{P}}_{x}\Big[\tau_{\partial^{\mathrm{e}} \mathcal{B}(\frac{3N_*}{16})}=\tau_{y} \Big] \le \Cref{const_lemma_6.3.7}\big|\partial^{\mathrm{e}} \mathcal{B}(\tfrac{3N_*}{16})\big|^{-1}. 
\end{equation}
By (\ref{require_constant1_section5}), (\ref{add5.73}), (\ref{add5.74}) and Lemma \ref{lemma_F}, we have 
\begin{equation}
	\begin{split}
			\mathcal{H}_{\bm{0}}'\overset{(\ref{add5.74})}{\le }& \Cref{const_lemma_6.3.7}  \widetilde{\mathbb{P}}_{\bm{0}}\Big[\tau_{\partial \mathcal{B}(0.01N_*)}< \tau_{\mathcal{C}^-_{\partial^{\mathrm{e}} \mathcal{B}(N_*/4)}} \Big]\overbar{\mathcal{H}}_{\frac{3N_*}{16}}'\\
			  \overset{(\text{Lemma}\ \ref{lemma_F})}{\le }&\Cref{const_lemma_6.3.7}e^{-k_*^{5.5}} \overbar{\mathcal{H}}_{\frac{3N_*}{16}}' \overset{(\ref{require_constant1_section5})}{\le }e^{-k_*^{5.4}} \overbar{\mathcal{H}}_{\frac{3N_*}{16}}'. 
	\end{split}
\end{equation}
Combined with $\mathcal{H}_{\bm{0}}'\ge \cref{const_small}^2\lambda_* N_*^{-\frac{d}{2}+1} k_*^{-3}2^{k_*}$, it yields the following inclusion:
\begin{equation}\label{inclusion_5.63}
	\mathsf{F} \subset \Big\{ \overbar{\mathcal{H}}_{\frac{3N_*}{16}}'\ge   e^{k_*^{5.3}}\lambda_* N_*^{-\frac{d}{2}+1}  \Big\}.
\end{equation}
The inclusion (\ref{inclusion_5.63}) implies that (recalling that $\overbar{\mathcal{H}}_*=\overbar{\mathcal{H}}_{\frac{3N_*}{16}}''$ in (\ref{4.23}))
\begin{equation}\label{5.45}
	\begin{split}
		&\mathbb{P}\Big(\overbar{\mathcal{H}}_{*}\ge e^{k_*^{5}}\lambda_*N_*^{-\frac{d}{2}+1}\Big)\\
			 \ge  &	\mathbb{P}\big(\mathsf{F}\big)- \mathbb{P}\Big(\overbar{\mathcal{H}}_{\frac{3N_*}{16}}'\ge   e^{k_*^{5.3}}\lambda_* N_*^{-\frac{d}{2}+1} ,\overbar{\mathcal{H}}_{\frac{3N_*}{16}}''\le  e^{k_*^{5}}\lambda_*N_*^{-\frac{d}{2}+1} \Big). 
	\end{split} 
\end{equation}

For any $1\le n\le \tfrac{N_*}{4}$, we define 
\begin{equation}
	\Phi_n':= |\partial^{\mathrm{e}}\mathcal{B}(n)|^{-1} \sum_{z\in \partial^{\mathrm{e}}\mathcal{B}(n)} \widetilde{\phi}_z\cdot \mathbbm{1}_{\big\{z\xleftrightarrow{\le 0}\partial^{\mathrm{e}} \mathcal{B}(N_*/2)\big\}^c }, 
\end{equation}
\begin{equation}
	\Phi_n'':= |\partial^{\mathrm{e}}\mathcal{B}(n)|^{-1} \sum_{z\in \partial^{\mathrm{e}}\mathcal{B}(n)} \widetilde{\phi}_z\cdot \mathbbm{1}_{\big\{z\xleftrightarrow{\le 0}\partial^{\mathrm{e}} \mathcal{B}(N_*/4)\big\}^c }. 
\end{equation}
For any $z\in \partial^{\mathrm{e}}\mathcal{B}(n)$, since $\big\{z\xleftrightarrow{\le 0}\partial^{\mathrm{e}} \mathcal{B}(N_*/4)\big\}\subset \{\widetilde{\phi}_z\le 0\}$, one has 
\begin{equation*}
	\begin{split}
			&\widetilde{\phi}_z\cdot \mathbbm{1}_{\big\{z\xleftrightarrow{\le 0}\partial^{\mathrm{e}} \mathcal{B}(N_*/2)\big\}^c }-\widetilde{\phi}_z\cdot  \mathbbm{1}_{\big\{z\xleftrightarrow{\le 0}\partial^{\mathrm{e}} \mathcal{B}(N_*/4)\big\}^c } \\
			=&\widetilde{\phi}_z\cdot\mathbbm{1}_{\big\{z\xleftrightarrow{\le 0}\partial^{\mathrm{e}} \mathcal{B}(N_*/2)\big\}^c\cap \big\{z\xleftrightarrow{\le 0}\partial^{\mathrm{e}} \mathcal{B}(N_*/4)\big\} } \le 0, 
	\end{split}
\end{equation*}
which implies that $\Phi_n''\ge \Phi_n'$. As a result, we have 
\begin{equation}\label{5.48}
	\begin{split}
		&	\mathbb{P}\Big(\overbar{\mathcal{H}}_{\frac{3N_*}{16}}'\ge   e^{k_*^{5.3}}\lambda_* N_*^{-\frac{d}{2}+1} ,\overbar{\mathcal{H}}_{\frac{3N_*}{16}}''\le  e^{k_*^{5}}\lambda_*N_*^{-\frac{d}{2}+1} \Big)\\
		\le &\mathbb{P}\Big(\overbar{\mathcal{H}}_{\frac{3N_*}{16}}'\ge   e^{k_*^{5.3}}\lambda_* N_*^{-\frac{d}{2}+1} ,\Phi_{\frac{3N_*}{16}}'\le  e^{k_*^{5.2}}\lambda_*N_*^{-\frac{d}{2}+1} \Big) \\
		&+\mathbb{P}\Big( \overbar{\mathcal{H}}_{\frac{3N_*}{16}}''\le  e^{k_*^{5}}\lambda_*N_*^{-\frac{d}{2}+1} ,\Phi_{\frac{3N_*}{16}}''\ge  e^{k_*^{5.2}}\lambda_*N_*^{-\frac{d}{2}+1}\Big). 
	\end{split}
\end{equation}
By Lemma \ref{lemma_strong_markov}, conditioning on $\mathcal{F}_{\mathcal{C}^{-}_{\partial^{\mathrm{e}} \mathcal{B}(N_*/2)}}$, $\{\widetilde{\phi}_z\}_{z\in \mathcal{B}(N_*/2)\setminus \mathcal{C}^{-}_{\partial^{\mathrm{e}} \mathcal{B}(N_*/2)}}$ has the same distribution as $\{\widetilde{\phi}_z'+\mathcal{H}'_z\}_{z\in \mathcal{B}(N_*/2)\setminus \mathcal{C}^{-}_{\partial^{\mathrm{e}} \mathcal{B}(N_*/2)}}$, where $\widetilde{\phi}'_\cdot\sim \mathbb{P}^{\mathcal{C}^{-}_{\partial^{\mathrm{e}} \mathcal{B}(N_*/2)}}$. By the symmetry of $\widetilde{\phi}'$ and (\ref{newadd_2.30}), we have (letting $\widehat{\Phi}_{\frac{3N_*}{16}}':=|\partial^{\mathrm{e}}\mathcal{B}(\tfrac{3N_*}{16})|^{-1} \sum_{z\in \partial^{\mathrm{e}}\mathcal{B}(\frac{3N_*}{16})} \widetilde{\phi}_z'$) 
\begin{equation}\label{5.68}
	\begin{split}
		&\mathbb{P}\Big(\overbar{\mathcal{H}}_{\frac{3N_*}{16}}'\ge   e^{k_*^{5.3}}\lambda_* N_*^{-\frac{d}{2}+1} ,\Phi_{\frac{3N_*}{16}}'\le  e^{k_*^{5.2}}\lambda_*N_*^{-\frac{d}{2}+1} \Big) \\
		\le &\mathbb{E}\Big[\mathbb{P}^{\mathcal{C}^{-}_{\partial^{\mathrm{e}} \mathcal{B}(N_*/2)}}\Big(\widehat{\Phi}_{\frac{3N_*}{16}}'\le (e^{k_*^{5.2}}-e^{k_*^{5.3}})\lambda_*N_*^{-\frac{d}{2}+1}  \Big)\Big]\\
		\overset{(\text{symmetry})}{=}&\mathbb{E}\Big[\mathbb{P}^{\mathcal{C}^{-}_{\partial^{\mathrm{e}} \mathcal{B}(N_*/2)}}\Big(\widehat{\Phi}_{\frac{3N_*}{16}}'\ge (e^{k_*^{5.3}}-e^{k_*^{5.2}})\lambda_*N_*^{-\frac{d}{2}+1}  \Big)\Big]
		\overset{(\ref{newadd_2.30})}{\le}e^{-\cref{const_average_2}\lambda_*^2e^{k_*^5}}. 
	\end{split}
\end{equation}
Similarly, conditioning on $\mathcal{F}_{\mathcal{C}^{-}_{\partial^{\mathrm{e}} \mathcal{B}(N_*/4)}}$, $\{\widetilde{\phi}_z\}_{z\in \mathcal{B}(N_*/4)\setminus \mathcal{C}^{-}_{\partial^{\mathrm{e}} \mathcal{B}(N_*/4)}}$ has the same distribution as $\{\widetilde{\phi}_z''+\mathcal{H}''_z\}_{z\in \mathcal{B}(N_*/4)\setminus \mathcal{C}^{-}_{\partial^{\mathrm{e}} \mathcal{B}(N_*/4)}}$,  where $\widetilde{\phi}''_\cdot\sim \mathbb{P}^{\mathcal{C}^{-}_{\partial^{\mathrm{e}} \mathcal{B}(N_*/4)}}$. Thus, by the arguments employed in proving (\ref{5.68}), we also have 
\begin{equation}\label{5.52}
	\begin{split}
		\mathbb{P}\Big( \overbar{\mathcal{H}}_{\frac{3N_*}{16}}''\le  e^{k_*^{5}}\lambda_*N_*^{-\frac{d}{2}+1} ,\Phi_{\frac{3N_*}{16}}''\ge  e^{k_*^{5.2}}\lambda_*N_*^{-\frac{d}{2}+1}\Big)
		\le e^{-\cref{const_average_2}\lambda_*^2e^{k_*^5}}.
	\end{split}
\end{equation}
Combining (\ref{5.48}), (\ref{5.68}) and (\ref{5.52}), we get 
\begin{equation}\label{5.53}
	\mathbb{P}\Big(\overbar{\mathcal{H}}_{\frac{3N_*}{16}}'\ge   e^{k_*^{5.3}}\lambda_* N_*^{-\frac{d}{2}+1} ,\overbar{\mathcal{H}}_{\frac{3N_*}{16}}''\le  e^{k_*^{5}}\lambda_*N_*^{-\frac{d}{2}+1} \Big) \le 2e^{-\cref{const_average_2}\lambda_*^2e^{k_*^5}}\overset{(\ref{equation_def_C9})}{\le } 2e^{-e^{k_*^5}}.    
\end{equation}

By (\ref{ineq_PF}), (\ref{5.45}) and (\ref{5.53}), we conclude Proposition \ref{prop_1}: 
\begin{equation}
	\mathbb{P}\Big(\overbar{\mathcal{H}}_{*}\ge e^{k_*^{5}}\lambda_*N_*^{-\frac{d}{2}+1}\Big) \ge 2^{-k_*^{30d}} -2e^{-e^{k_*^5}} \ge 2^{-k_*^{40d}}.    \pushQED{\qed} 
	\qedhere
	\popQED  
\end{equation}

    \section{Proof of Theorem \ref{thm2}}\label{section_thm2}

    As mentioned in Remark \ref{remark_thm2}, all upper bounds in Theorem \ref{thm2} are now confirmed since Theorem \ref{thm1} and Proposition \ref{lemma_bound_crossing} have been established. To prove the lower bounds in Theorem \ref{thm2}, in the low dimensional cases (i.e. $3\le d\le 6$), we use the exploration martingale (recall Section \ref{section_EM}). As for the high dimensional cases (i.e. $d\ge 7$), we employ the tree expansion argument presented in \cite[Section 3.4]{cai2023one}.

    \subsection{Proof for low dimensions} \label{section6.1}
    
    Note that the proof in this subsection holds for all $d\ge 3$. For any $N\ge 1$, we denote the normal random variable
    \begin{equation}\label{6.1}
    	\mathcal{Q}_N:= \sum\nolimits_{z\in \partial B(N)} \widetilde{\mathbb{P}}_{\bm{0}}\big(\tau_{\partial B(N)}=\tau_{z}\big)\widetilde{\phi}_z. 
    \end{equation}
    Since $\mathbb{E}(\widetilde{\phi}_{z_1}\widetilde{\phi}_{z_2})=\widetilde{G}(z_1,z_2)$, the variance of $\mathcal{Q}_N$ can be written as 
    \begin{equation}\label{6.2}
    	\sigma_N^2:= \sum\nolimits_{z_1,z_2\in \partial B(N)} \widetilde{\mathbb{P}}_{\bm{0}}\big(\tau_{\partial B(N)}=\tau_{z_1}\big)\widetilde{\mathbb{P}}_{\bm{0}}\big(\tau_{\partial B(N)}=\tau_{z_2}\big)\widetilde{G}(z_1,z_2). 
    \end{equation}
    Since $\sum_{z\in \partial B(N)}\widetilde{\mathbb{P}}_{\bm{0}}\big(\tau_{\partial B(N)}=\tau_{z}\big)=1$, there exists $\cl\label{const_section6}(d)>0$ such that 
    \begin{equation}\label{ineq_lemma_6.1}
    \sigma_N^2\ge \min\nolimits_{z_1,z_2\in \partial B(N)} \widetilde{G}(z_1,z_2) \ge  \cref{const_section6}N^{2-d}.
    \end{equation}

 We consider the exploration process $\mathcal{I}^{\partial B(N),\pm}_t$ and the corresponding martingale $\mathcal{M}_{\bm{0},t}^{\partial B(N),\pm}$ (recall Section \ref{section_EM}). For any $A\subset \mathbb{Z}^d$, we denote the sign cluster containing $A$ by 
 \begin{equation}\label{new6.7}
 	\mathcal{C}_A^{\pm}:=  \Big\{v\in \widetilde{\mathbb{Z}}^d: v\xleftrightarrow{\le 0} A\ \text{or}\  v\xleftrightarrow{\ge 0} A \Big\}. 
 \end{equation} 
 Note that $\mathcal{I}^{\partial B(N),\pm}_0= \partial B(N)$, $\mathcal{I}^{\partial B(N),\pm}_\infty= \mathcal{C}_{\partial B(N)}^{\pm}$, $\mathcal{M}_{\bm{0},0}^{\partial B(N),\pm}= \mathcal{Q}_N$ (by (\ref{6.1})) and 
 \begin{equation}\label{6.8}
 \big\{\mathcal{C}_{\partial B(N)}^{\pm}\cap B(n)\neq \emptyset\big\}=  \big\{B(n) \xleftrightarrow{\ge 0} \partial B(N)   \big\}\cup  \big\{B(n) \xleftrightarrow{\le 0} \partial B(N)   \big\}. 
 \end{equation}
On the event $\{\mathcal{C}_{\partial B(N)}^{\pm}\cap B(n)= \emptyset\}$, we know that the exploration process $\mathcal{I}^{\partial B(N),\pm}_t$ stops before intersecting $B(n)$ and thus, we have $\mathcal{M}_{\bm{0},\infty}^{\partial B(N),\pm}-\mathcal{M}_{\bm{0},0}^{\partial B(N),\pm}= -\mathcal{Q}_N$ (since $\mathcal{M}_{\bm{0},\infty}^{\partial B(N),\pm}=0$) and 
\begin{equation*}
	\begin{split}
		\langle\mathcal{M}_{\bm{0}}^{\partial B(N),\pm}\rangle_\infty \overset{(\ref{new_2.17})}{\le }  \max\nolimits_{z\in \partial B(n)} \widetilde{G}(\bm{0}, z) \overset{(\ref{bound_green})}{\le } \Cref{const_green_1}n^{2-d}. 
	\end{split}
\end{equation*}
Therefore, by Lemma \ref{lemma_EM} and the symmetry of the Brownian motion, we have 
\begin{equation}\label{6.10}
	\begin{split}
		&\mathbb{P}\big[\mathcal{C}_{\partial B(N)}^{\pm}\cap B(n)= \emptyset\mid  \mathcal{F}_{\partial B(N)}\big]\\
		\le &\mathbb{P}\big[\mathcal{M}_{\bm{0},\infty}^{\partial B(N),\pm}-\mathcal{M}_{\bm{0},0}^{\partial B(N),\pm}= -\mathcal{Q}_N,\langle\mathcal{M}_{\bm{0}}^{\partial B(N),\pm}\rangle_\infty\le \Cref{const_green_1}n^{2-d}\mid  \mathcal{F}_{\partial B(N)}\big]\\
		\le  &\mathbb{P}\big(|X|\ge\Cref{const_green_1}^{-\frac{1}{2}}n^{\frac{d}{2}-1} |\mathcal{Q}_N|\big), 
	\end{split}
\end{equation}
where $X\sim N(0,1)$ is independent of $\mathcal{Q}_N$. Recall that $\mathcal{Q}_N$ is a mean-zero normal random variable with variance $\sigma_N^2\ge \cref{const_section6}N^{2-d}$ (by (\ref{ineq_lemma_6.1})). Thus, by taking the integral on the both sides of (\ref{6.10}) (with respect to $\mathcal{F}_{\partial B(N)}$), we get 
\begin{equation}\label{new6.10}
	\begin{split}
		\mathbb{P}\big[\mathcal{C}_{\partial B(N)}^{\pm}\cap B(n)= \emptyset\big]
		\le \mathbb{P}\Big[|XY^{-1}|\ge\Cref{const_green_1}^{-\frac{1}{2}}\cref{const_section6}^{\frac{1}{2}}\big(nN^{-1}\big)^{\frac{d}{2}-1}\Big],  
	\end{split}
\end{equation}
where $Y\sim N(0,1)$ is independent of $X$. Since $Z:=XY^{-1}$ has the the Cauchy distribution with density function $\pi^{-1}(1+t^2)^{-1}$, we derive from (\ref{new6.10}) that 
\begin{equation}\label{6.11}
	\begin{split}
		\mathbb{P}\big[\mathcal{C}_{\partial B(N)}^{\pm}\cap B(n)\neq  \emptyset\big] \ge &	\mathbb{P}\Big[|Z|\le \Cref{const_green_1}^{-\frac{1}{2}}\cref{const_section6}^{\frac{1}{2}}\big(nN^{-1}\big)^{\frac{d}{2}-1}\Big]\\
		\ge &\frac{2}{\pi}\int_{0\le t\le\Cref{const_green_1}^{-\frac{1}{2}}\cref{const_section6}^{\frac{1}{2} }(nN^{-1})^{\frac{d}{2}-1}} (1+t^{2})^{-1}dt\\
		\ge & c(nN^{-1})^{\frac{d}{2}-1}. 
	\end{split}
\end{equation}
By the symmetry of $\widetilde{\phi}$, (\ref{6.8}) and (\ref{6.11}), we conclude the lower bounds in Theorem \ref{thm2} for $3\le d\le 6$:
\begin{equation*}
	\begin{split}
		\mathbb{P}\Big[B(n) \xleftrightarrow{\ge 0} \partial B(N)  \Big]\overset{(\text{symmetry})}{\ge } & \tfrac{1}{2}\mathbb{P}\Big[B(n) \xleftrightarrow{\ge 0} \partial B(N) \ \text{or}\ B(n) \xleftrightarrow{\le 0} \partial B(N) \Big]\\
		\overset{(\ref{6.8})}{= }& \tfrac{1}{2}\mathbb{P}\Big[\mathcal{C}_{\partial B(N)}^{\pm}\cap B(n)\neq \emptyset\Big]\overset{(\ref{6.11})}{\ge }c\Big(\frac{n}{N}\Big)^{\frac{d}{2}-1}.    \pushQED{\qed} 
		\qedhere
		\popQED
	\end{split} 
\end{equation*}

    \subsection{Proof for high dimensions}


  When $N\ge n\ge 1$ and $n/N\ge 0.1$, according to Section \ref{section6.1}, there exists $\cl\label{const_section_6.2}(d)\in (0,1)$ such that 
  \begin{equation*}
  	\mathbb{P}\big[B(n)\xleftrightarrow{\ge 0}  \partial B(N)\big] \ge \cref{const_section_6.2}.
  \end{equation*}
  As a result, in this case, in order to have (\ref{thm2_1.9}) it suffices to take $\cref{const_thm2_6}=\cref{const_section_6.2}$.
  

  Now we assume that $N\ge n\ge 1$ and $n/N\le 0.1$. Let $\mathbf{X}:= \sum_{x\in \partial B(n)}\mathbbm{1}_{x\xleftrightarrow{\ge 0}\partial B(N)}$. By the Paley–Zygmund inequality, we have 
    \begin{equation}\label{6.12}
    	\mathbb{P}\big[B(n)\xleftrightarrow{\ge 0} \partial B(N)\big]=\mathbb{P}\big(\mathbf{X}>0\big)\ge \frac{(\mathbb{E}\mathbf{X})^2}{\mathbb{E}(\mathbf{X}^2)}. 
    \end{equation}
    By (\ref{ineq_high_d}) and $|\partial B(n)|\asymp n^{d-1}$, we have $\mathbb{E}(\mathbf{X})\ge cn^{d-1}N^{-2}$. Thus, to obtain the lower bounds in Theorem \ref{thm2} for $d\ge 7$, it remains to upper-bound 
    \begin{equation}\label{add6.10}
    	\mathbb{E}(\mathbf{X}^2) = \sum\nolimits_{x,y\in \partial B(n)} \mathbb{P}\big[x\xleftrightarrow{\ge 0} \partial B(N),y\xleftrightarrow{\ge 0} \partial B(N)\big].
    \end{equation}

   In this subsection, we abbreviate ``$\xleftrightarrow{\cup\widetilde{\mathcal{L}}_{1/2}}$'' as ``$\xleftrightarrow{}$''. By Lemma \ref{lemma_iso} we have 
    \begin{equation}
    	\begin{split}
    	 \mathbb{P}\big[x\xleftrightarrow{\ge 0} \partial B(N),y\xleftrightarrow{\ge 0} \partial B(N)\big]
    	  \le \mathbb{P}\big[x\xleftrightarrow{} \partial B(N),y\xleftrightarrow{} \partial B(N)\big]. 
    	\end{split}
    \end{equation}
     Recall the definitions of glued loops and the BKR inequality (see Lemma \ref{lemma_BKR}) in Section \ref{section_loop_soup}. Applying the tree expansion for loop soups (see \cite[Lemma 3.5]{cai2023one}), we know that on the event $\big\{ x\xleftrightarrow{} \partial B(N),y\xleftrightarrow{} \partial B(N)\big\}$, there exists a glued loop $\gamma_*$ and three loop clusters $\{\mathcal{C}_i\}_{i=1}^{3}$ composed of different collections of glued loops such that $x\in \mathcal{C}_1$, $y\in \mathcal{C}_2$, $\mathcal{C}_3\cap \partial B(n)\neq \emptyset$ and $\gamma_*\cap \mathcal{C}_i\neq \emptyset$ for all $i\in \{1,2,3\}$. In what follows, we estimate the probability of $\big\{ x\xleftrightarrow{} \partial B(N),y\xleftrightarrow{} \partial B(N)\big\}$ with different restrictions on $\gamma_*$ and $\{\mathcal{C}_i\}_{i=1}^{3}$.
    \begin{enumerate}
    	\item  When $\mathcal{C}_1$ or $\mathcal{C}_2$ intersects $\partial B(N/3)$: Without loss of generality, we assume $\mathcal{C}_1\cap \partial B(N/3)\neq \emptyset$, which implies $x \xleftrightarrow{\mathcal{C}_1}\partial B(N/3)$. Combined with the fact that $\mathcal{C}_2\cup\mathcal{C}_3\cup \gamma_*$ connects $y$ and $\partial B(N)$ and contains a different collection of glued loops from $\mathcal{C}_1$, it yields that $\{x\xleftrightarrow{}\partial B(N/3)\}\circ \{y\xleftrightarrow{} \partial B(N)\}$ happens, whose probability is at most (using the BKR inequality)
    	\begin{equation}
    		\begin{split}
    			\mathbb{P}\big[x\xleftrightarrow{} \partial B(N/3)\big]\mathbb{P}\big[y\xleftrightarrow{} \partial B(N/3)\big]
    		\overset{(n/N\le 0.1)}{\le } \theta_d^2(N/5) \overset{(\ref{ineq_high_d})}{\le } CN^{-4}. 
    		\end{split}
    	\end{equation}

    	\item  When $\mathcal{C}_1\cup \mathcal{C}_2\subset \widetilde{B}(N/3)$ and $\mathcal{C}_3\cap \gamma_*\subset \widetilde{B}(2N/3)$: For each $i\in \{1,2,3\}$, there exists $z_i\in \mathcal{C}_i\cap \mathbb{Z}^d$ such that $\mathrm{dist}(\{z_i\},\mathcal{C}_i\cap \gamma_*)\le d$ (since $\mathcal{C}_i$ is continuous and the length of every interval $I_e$ is $d$). Then we have $z_1\xleftrightarrow{\mathcal{C}_1} x$, $z_2\xleftrightarrow{\mathcal{C}_2} y$, $z_3\xleftrightarrow{\mathcal{C}_3} \partial B(N)$ and $\gamma_*$ intersects $\widetilde{B}_{z_i}(1)$ for all $i\in \{1,2,3\}$ (these four events happen disjointly), whose probability can be upper-bounded by (using the BKR inequality and the fact that the loop measure of loops intersecting $\widetilde{B}_{z_i}(1)$ for $i\in \{1,2,3\}$ is at most $C|z_1-z_2|^{2-d} |z_2-z_3|^{2-d} |z_3-z_1|^{2-d}$; see \cite[Equation (\ref{4.14})]{cai2023one})
    	\begin{equation}
    		\begin{split}
    			&C\sum_{z_1,z_2\in B(N/3),z_3\in B(\frac{2N}{3}+1)}|z_1-z_2|^{2-d} |z_2-z_3|^{2-d} |z_3-z_1|^{2-d}\\
    			&\ \ \ \ \ \ \ \ \ \ \ \ \ \ \ \ \ \ \ \ \ \ \ \ \ \ \  \ \ \ \cdot |z_1-x|^{2-d} |z_2-y|^{2-d} \mathbb{P}\big[z_3\xleftrightarrow{} \partial B(N)\big]\\
    			\overset{(\ref{ineq_high_d})}{\le} & C'N^{-2}\sum_{z_1,z_2\in B(N/3),z_3\in B(\frac{2N}{3}+1)}|z_1-z_2|^{2-d} |z_2-z_3|^{2-d} |z_3-z_1|^{2-d}\\
    			&\ \ \ \ \ \ \ \ \ \ \ \ \ \ \ \ \ \ \ \ \ \ \ \ \ \ \ \ \ \ \ \ \  \ \ \ \cdot |z_1-x|^{2-d} |z_2-y|^{2-d}\\
    			\le &C''N^{-2}|x-y|^{4-d}
    		\end{split}
    	\end{equation}
    	 where we used \cite[Equation (4.12)]{cai2023one} in the last inequality.

    	\item  When $\mathcal{C}_1\cup \mathcal{C}_2\subset \widetilde{B}(N/3)$ and $\mathcal{C}_3\cap \gamma_*\not\subset \widetilde{B}(2N/3)$: We keep the notations $z_1,z_2$ as in Case (2). If $\gamma_*\subset \widetilde{B}(N)$, we define $z_3$ as in Case (2). In this case, $\mathcal{C}_3\cap \gamma_*\not\subset \widetilde{B}(2N/3)$ implies that $z_3\in B(N)\setminus B(\frac{2N}{3}-1)$. Otherwise (i.e. $\gamma_*\not\subset \widetilde{B}(N)$), since $\gamma_*$ is continuous and intersects $\mathcal{C}_1\subset \widetilde{B}(N/3)$, there exists $z_3\in \gamma_*\cap \partial B(N)$($\subset B(N)\setminus B(\frac{2N}{3}-1)$). Similar to Case (2), we have $z_1\xleftrightarrow{\mathcal{C}_1} x$, $z_2\xleftrightarrow{\mathcal{C}_2} y$, $z_3\xleftrightarrow{\mathcal{C}_3} \partial B(N)$ and $\gamma_*$ intersects $\widetilde{B}_{z_i}(1)$ for all $i\in \{1,2,3\}$, whose probability can be bounded from above by 
    	\begin{equation}\label{new_add_6.17}
    		\begin{split}
    			&\mathbb{I}:=C\sum_{z_1,z_2\in B(N/3),z_3\in B(N)\setminus B(\frac{2N}{3}-1)}|z_1-z_2|^{2-d} |z_2-z_3|^{2-d} |z_3-z_1|^{2-d}\\
    			&\ \ \ \ \ \ \ \ \ \ \ \ \ \ \ \ \ \ \ \ \ \ \ \ \ \  \ \ \ \ \ \  \  \ \ \ \ \ \ \ \ \ \cdot |z_1-x|^{2-d} |z_2-y|^{2-d}\mathbb{P}\big[z_3\xleftrightarrow{} \partial B(N)\big].
    		\end{split}
    	\end{equation}
    	For the sum in (\ref{new_add_6.17}), restricted to $z_3\in \partial B(k)$ (where $\frac{2N}{3}-1\le k\le N$), we have that $\min\{|z_2-z_3|,|z_3-z_1|\}\ge cN$ (since $z_1,z_2\in B(N/3)$) and that $\mathbb{P}\big[z_3\xleftrightarrow{} \partial B(N)\big]\le C(N-k)^{-2}$ (by (\ref{ineq_high_d})). As a result, for $\frac{2N}{3}-1\le k\le N$, 
    	\begin{equation}\label{new_6.18}
    		\begin{split}
    			&\sum_{z_1,z_2\in B(N/3),z_3\in \partial B(k)}|z_1-z_2|^{2-d} |z_2-z_3|^{2-d} |z_3-z_1|^{2-d}\\
    			&\ \ \ \ \ \ \ \ \ \ \ \ \ \ \ \ \ \ \ \ \ \ \ \ \  \cdot |z_1-x|^{2-d} |z_2-y|^{2-d}\mathbb{P}\big[z_3\xleftrightarrow{} \partial B(N)\big]\\
    			\overset{(|\partial B(k)|\asymp N^{d-1})}{\le } &CN^{3-d}(N-k)^{-2} \sum_{z_1,z_2\in B(N/3)}|z_1-z_2|^{2-d}|z_1-x|^{2-d} |z_2-y|^{2-d}\\
    			\le &C'N^{3-d}(N-k)^{-2}\sum_{z_2\in B(N/3)}|z_2-x|^{4-d} |z_2-y|^{2-d}\\
    			\le &C''N^{3-d}(N-k)^{-2}|x-y|^{6-d}, 
    		\end{split}
    	\end{equation}
    	where we used \cite[Lemma 4.3 and Equation (4.10)]{cai2023one} respectively in the last two inequalities. Combining (\ref{new_6.18}) and $\sum_{\frac{2N}{3}-1\le k\le N}(N-k)^{-2}\le 3$, we have (recalling $\mathbb{I}$ in (\ref{new_add_6.17}))
    	\begin{equation}
    		\mathbb{I}\le CN^{3-d}|x-y|^{6-d}. 
    	\end{equation}
  
    \end{enumerate}
    In conclusion, we obtain that 
    \begin{equation}\label{new_add_6.21}
    	\begin{split}
    	&\mathbb{P}\big[x\xleftrightarrow{} \partial B(N),y\xleftrightarrow{} \partial B(N)\big]\\
    	\le &C\big(N^{-4}+ N^{-2}|x-y|^{4-d}+N^{3-d}|x-y|^{6-d}\big). 
    	\end{split}  
    \end{equation}
    Moreover, \cite[Equation (4.4)]{cai2023one} shows that for any $a< d-1$, 
    \begin{equation*}
    	\max\nolimits_{x\in \mathbb{Z}^d}\sum\nolimits_{y\in \partial B(n)} |x-y|^{-a}\le Cn^{d-1-a}.
    \end{equation*}
    Combined with $|\partial B(n)|\asymp n^{d-1}$, it implies 
    \begin{equation}\label{6.22}
    	\sum\nolimits_{x,y\in \partial B(n)}|x-y|^{4-d}\le Cn^{d-1}\max\nolimits_{x\in \mathbb{Z}^d}\sum\nolimits_{y\in \partial B(n)} |x-y|^{4-d}\le C'n^{d+2},
    \end{equation}
    \begin{equation}\label{6.23}
    	\sum\nolimits_{x,y\in \partial B(n)}|x-y|^{6-d}\le Cn^{d-1}\max\nolimits_{x\in \mathbb{Z}^d}\sum\nolimits_{y\in \partial B(n)} |x-y|^{6-d}\le C'n^{d+4}.
    \end{equation}
    Combining (\ref{new_add_6.21}), (\ref{6.22}) and (\ref{6.23}), we have 
    \begin{equation}\label{6.24}
    	\begin{split}
    		\mathbb{E}(\mathbf{X}^2)\overset{(\ref{add6.10})}{=} &  \sum\nolimits_{x,y\in \partial B(n)} \mathbb{P}\big[x\xleftrightarrow{\ge 0} \partial B(N),y\xleftrightarrow{\ge 0} \partial B(N)\big]\\
    		\le& C\big(n^{2d-2}N^{-4}+n^{d+2}N^{-2}+n^{d+4}N^{3-d}\big)\\
    		\le&  C'\big(n^{2d-2}N^{-4}+n^{d+2}N^{-2}\big),
    	\end{split}
    \end{equation}
    where in the last inequality we used  $\frac{n^{d+2}N^{-2}}{n^{d+4}N^{3-d}}= N^{d-5}n^{-2}\ge 1$ for $d\ge 7$ .

    By (\ref{6.12}), $\mathbb{E}(\mathbf{X})\ge cn^{d-1}N^{-2}$ and (\ref{6.24}), we conclude the lower bounds for $d\ge 7$ in Theorem \ref{thm2}:
    \begin{equation*}
    	\begin{split}
    		\mathbb{P}\big[B(n)\xleftrightarrow{\ge 0} \partial B(N)\big]\ge \frac{c(n^{d-1}N^{-2})^2}{n^{2d-2}N^{-4}+n^{d+2}N^{-2}}= c\big(1+ n^{4-d}N^2\big)^{-1}. \pushQED{\qed} 
    		\qedhere
    		\popQED
    	\end{split}
    \end{equation*}

\section*{Acknowledgments}

We warmly thank Tom Hutchcroft and Gady Kozma for helpful discussions. J. Ding is partially supported by NSFC Key
Program Project No. 12231002 and the Xplorer prize.

	\bibliographystyle{plain}
	\bibliography{ref}
	
\end{document}